\newtheorem{te}{Theorem}[section]
\newtheorem{prop}[te]{Proposition}
\newtheorem{pb}[te]{Problem}
\newtheorem{co}[te]{Corollary}
\newtheorem{conj}[te]{Conjecture}
\newtheorem{lemma}[te]{Lemma}
\newtheorem{fact}{Fact}
\theoremstyle{definition}
\newtheorem{notation}[te]{Notation}
\newtheorem{de}[te]{Definition}
\newtheorem{ex}[te]{Example}
\theoremstyle{remark}
\newtheorem{rk}[te]{Remark}
\newlength{\plarg}
\renewcommand{\phi}{\varphi}
\renewcommand{\leq}{\leqslant}
\renewcommand{\geq}{\geqslant}
\newcommand{\actson}{\curvearrowright}
\newcommand {\bbZ} {{\mathbb {Z}}}
\title{Around first-order rigidity of Coxeter groups}
\date{\today}
\thanks{The second named author was supported by project PRIN 2022 ``Models, sets and classifications", prot. 2022TECZJA, and by INdAM Project 2024 (Consolidator grant) ``Groups, Crystals and Classifications''.}
\author{Simon Andr{\' e}}
\author{Gianluca Paolini}
\begin{document}

\begin{abstract}
By the work of Sela, for any free group $F$, the Coxeter group $W_ 3 = \mathbb{Z}/2\mathbb{Z} \ast \mathbb{Z}/2\mathbb{Z} \ast \mathbb{Z}/2\mathbb{Z}$ is elementarily equivalent to $W_3 \ast F$, and so Coxeter groups are not closed under elementary equivalence among finitely generated groups. In this paper we show that if we restrict to models which are generated by finitely many torsion elements (finitely torsion-generated), then we can recover striking rigidity results. Our main result is that if $(W, S)$ is a Coxeter system whose irreducible components are either spherical, or affine or (Gromov) hyperbolic, and $G$ is finitely torsion-generated and elementarily equivalent to $W$, then $G$ {\em is itself} a Coxeter group. This combines results of the second author et al. from \cite{MUHLHERR2022297, PS23} with the following main hyperbolic result: if $W$ is a Coxeter hyperbolic group and $G$ is $\mathrm{AE}$-equivalent to $W$ and finitely torsion-generated, then $G$ belongs to a finite collection of Coxeter groups (modulo isomorphism). Furthermore, we show that there are two hyperbolic Coxeter groups $W$ and $W'$ which are non-isomorphic but $\mathrm{AE}$-equivalent. We also show that, on other hand, if we restrict to certain specific classes of Coxeter groups then we can recover the strongest possible form of first-order rigidity, which we call first-order torsion-rigidity, namely the Coxeter group $W$ is the only finitely torsion-generated model of its theory. Crucially, we show that this form of rigidity holds for the following classes of Coxeter groups: even hyperbolic Coxeter groups and free products of one-ended or finite hyperbolic Coxeter groups. We conjecture that the same kind of phenomena occur for the {\em whole class} of Coxeter groups. In this direction, we prove that if $W$ and $W'$ are even Coxeter groups which are elementarily equivalent, then they are isomorphic. \mbox{This last result generalizes the analogous right-angled result from \cite{https://doi.org/10.1112/blms/bdp103}.}
\end{abstract}

\maketitle

\tableofcontents

\section{Introduction}
A Coxeter group is a group $W$ which admits a presentation with generating set $S$ of the form $(ss')^{m(s, s')} = e$ for every $s,s'\in S$, where $m: S \times S \rightarrow \{1, 2, ..., \infty\}$ is a matrix such that $m(s, s') = m(s', s)$ and $m(s, s') = 1$ if and only if $s = s'$. 
The pair $(W,S)$ is called a \emph{Coxeter system}. Instead of the matrix $m$ one often uses a complete $\{1, 2, ..., \infty\}$-labelled graph $\Gamma = (S, R)$ whose edges are labelled by the numbers $m(s, s')$. Some authors omit the edge labelled $2$, and in this case we talk about the Coxeter diagram of $(W, S)$, while some authors omit the edge labelled $\infty$, in this case we talk about the Coxeter graph of $(W, S)$.
Clearly, neither the condition of being generated by involutions nor the condition of being a Coxeter group are first-order conditions, and so, obviously, Coxeter groups are not closed under elementary equivalence. More interestingly, finitely generated Coxeter groups are not closed under elementary equivalence among finitely generated groups, as by the work of Sela \cite{SelX}, for every group $G$ elementarily equivalent to a (possibly cyclic) free group, the Coxeter group $W_ 3 = \mathbb{Z}/2\mathbb{Z} \ast \mathbb{Z}/2\mathbb{Z} \ast \mathbb{Z}/2\mathbb{Z}$ is elementarily equivalent to $W_3 \ast G$, which is clearly not a Coxeter group. But what is the most fundamental difference between the groups $W_3$ and $W_3 \ast G$, for $G$ as above? Arguably, it is that the first group is generated by torsion elements while the latter is not. In this paper we will show that this is the only obstruction to the recovery of {\em striking rigidity results} among finitely generated Coxeter groups. In the present work we will only consider finitely \mbox{generated groups and so this assumption will be implicit throughout the paper.}

Recall that a Coxeter group is said to be \emph{irreducible} if it cannot be written as a non-trivial direct product (equivalently, if its Coxeter diagram is connected). Every Coxeter group is the direct product of irreducible Coxeter groups, called its \emph{irreducible components}, that correspond to the connected components of its Coxeter diagram. A Coxeter group is said to be \emph{affine} if it virtually abelian and infinite. Among the infinite and non-affine Coxeter groups, a class of particular interest is that of non-elementary (Gromov) hyperbolic Coxeter groups (recall that a hyperbolic group is said to be \emph{non-elementary} if it is not virtually abelian, or equivalently if it contains a free group on two generators). For instance, the triangle groups corresponding to regular tessellations of the sphere, the Euclidean plane and the hyperbolic plane are respectively finite, affine and non-elementary hyperbolic Coxeter groups.

The main result of this paper is the following theorem, where we say that a group is {\em  finitely torsion-generated} if it is \mbox{generated by finitely many torsion elements.}


        \begin{te}\label{final_main_theorem}
 Let $(W, S)$ be a Coxeter system and suppose that all the irreducible components of $(W, S)$ are either finite, affine or non-elementary hyperbolic. If $G$ is finitely torsion-generated and \mbox{elementarily equivalent to $W$, then $G$ is Coxeter.} 
	\end{te}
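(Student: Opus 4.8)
The plan is to reduce the statement to the three already-available pieces indicated in the abstract: the spherical/affine rigidity results of \cite{MUHLHERR2022297, PS23}, and the main hyperbolic $\mathrm{AE}$-result of this paper (that an $\mathrm{AE}$-equivalent, finitely torsion-generated group $G$ of a hyperbolic Coxeter group lies in a finite collection of Coxeter groups). The first step is to decompose the problem along the irreducible components. Write $W = W_1 \times \cdots \times W_n$ where each $W_i$ is irreducible and is either finite, affine, or non-elementary hyperbolic. The key structural input is that this direct product decomposition is visible first-order: one must show that the center and the structure of commuting pieces of $W$ are definable, so that any $G \equiv W$ inherits a corresponding decomposition $G = G_1 \times \cdots \times G_k$. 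Here one should group together the spherical and affine components (whose product is virtually abelian, hence in particular a definable piece controlled by the center and finite-index abelian subgroups) and handle the hyperbolic components separately. The cleanest route is: the product $V$ of the spherical and affine irreducible components is the unique maximal virtually abelian direct factor (up to the usual caveats), while the product of the hyperbolic components is the complementary factor; the challenge is to make ``maximal virtually abelian direct factor'' and its complement genuinely first-order, presumably via an explicit formula using centralizers of torsion elements, as is standard in the Coxeter/RACG literature.

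\smallskip

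Next, I would treat the three types of factors. For the virtually abelian part, by \cite{MUHLHERR2022297, PS23} the corresponding factor of $G$ must itself be a (finite or affine) Coxeter group — indeed elementary equivalence already pins it down up to the known finite ambiguity, and crucially elementary equivalence implies $\mathrm{AE}$-equivalence, so the hyperbolic hypotheses are available in the strong form. For each non-elementary hyperbolic component $W_i$, since $G \equiv W$ implies $G_i$ is $\mathrm{AE}$-equivalent (in particular elementarily equivalent) to $W_i$ and is finitely torsion-generated — one must check that torsion-generation of $G$ passes to each direct factor, which should follow because torsion elements of a direct product decompose coordinatewise and generate each factor — the main hyperbolic result of the paper applies to give that $G_i$ belongs to a finite list of Coxeter groups. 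In particular $G_i$ is Coxeter. Finally, a finite direct product of Coxeter groups is a Coxeter group (take the disjoint union of the Coxeter diagrams), so $G = G_1 \times \cdots \times G_k$ is Coxeter, which is the conclusion.

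\smallskip

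The main obstacle is the definability of the direct-product decomposition and, within it, the separation of the virtually abelian part from the hyperbolic part by a first-order formula uniform enough that it transfers to every model $G \equiv W$. Subtleties: (i) a priori the decomposition of $G$ need not refine the decomposition of $W$ componentwise, so one has to argue that the ``abelian'' and ``hyperbolic'' blocks are each individually definable (e.g.\ the hyperbolic block as the subgroup generated by elements with suitable centralizer behaviour, the abelian block as a definable complement); (ii) one must ensure the finitely-torsion-generated hypothesis descends to the relevant factors of $G$ — this is where one uses that in $W$ the generating involutions lie in the factors, so the analogous definable statement ``the group is generated by $N$ torsion elements each lying in the prescribed definable pieces'' holds in $W$ and hence in $G$. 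Once these definability points are in place, the rest is assembly from the cited results, the passage $\equiv \Rightarrow \mathrm{AE}$-equivalence, and the closure of Coxeter groups under finite direct products.
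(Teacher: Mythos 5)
Your high-level strategy matches the paper's: group the spherical and affine components into an abelian-by-finite block $W^*_1$, group the non-elementary hyperbolic components into a complementary block $W^*_2$, show that this decomposition is first-order and transfers to any $G\equiv W$, then dispatch the abelian-by-finite part via \cite{PS23} (together with \cite{LASSERRE2014213}) and the hyperbolic part via Theorem \ref{preservation}. You also correctly flag that the real obstacle is making the decomposition definable, and you correctly observe that finitely-torsion-generated passes to direct factors.

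The genuine gap is precisely the step you describe as ``presumably via an explicit formula using centralizers of torsion elements, as is standard in the Coxeter/RACG literature.'' This is not standard in that literature, and centralizers alone are not the tool used here. The paper's proof hinges on Theorem~\ref{the_domain_ab_by_finite_th}, which is an adaptation of the \emph{domain} machinery of Kvaschuk--Myasnikov--Remeslennikov \cite{KVASCHUK200578}: one uses the operation $x\diamond y = [\mathrm{gp}_G(x),\mathrm{gp}_G(y)]$ on normal closures and the derived relation $x\perp y$ to define the abelian-by-finite block as $\mathrm{Aff}(H)=\{g : g\perp g\}\cdot C'$ for a suitable finite $C'$, and to define the hyperbolic block as a product of ``components'' $\mathrm{Comp}(H,a_i)$, each a domain. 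Lemma~\ref{standard} is what makes this work in the hyperbolic setting (non-elementary hyperbolic groups without nontrivial finite normal subgroups are domains), so that $W^*_2\in D_k$. Without this, you have no first-order sentence that both isolates the abelian-by-finite factor and certifies that the complement is a product of $k$ domains, which is what lets you apply \cite[Corollary~1.8]{MUHLHERR2022297} and Theorem~\ref{preservation} on the hyperbolic side. So the proposal identifies the right skeleton but leaves the central technical ingredient unproved; as written, it is not a proof but a correct plan with the key lemma missing.

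Two minor inaccuracies worth noting. First, you write that $G\equiv W$ implies $G_i$ is ``$\mathrm{AE}$-equivalent (in particular elementarily equivalent)'' to $W_i$; the implication goes the other way, though your later usage is correct. Second, you say the abelian-by-finite factor of $G$ is pinned down ``up to the known finite ambiguity''; in fact \cite{PS23} gives full first-order rigidity for irreducible affine Coxeter groups, so on the abelian-by-finite side the paper gets $G_1\simeq W^*_1$ on the nose (via \cite[Theorem~2.2]{LASSERRE2014213}); the finite ambiguity is a hyperbolic phenomenon only.
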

	
	Our Theorem~\ref{final_main_theorem} relies on three main ingredients which use different techniques:
	\begin{enumerate}[(A)]
	\item an expansion of the results from \cite{MUHLHERR2022297} on domains (cf. \cite{KVASCHUK200578});
	\item the first-order rigidity of the irreducible affine Coxeter groups recently proved in \cite{PS23} (interestingly this lead to a proof of profinite rigidity of these groups);
	\item the following major result on hyperbolic Coxeter groups (where we say that two groups are \emph{$\mathrm{AE}$-equivalent} if they satisfy the same first-order sentences of the form $\forall\bar{x}\exists\bar{y}\theta(\bar{x},\bar{y})$ with $\theta(\bar{x},\bar{y})$ quantifier-free).
\end{enumerate}	

	\begin{te}\label{main_hyperbolic_theorem} Let $W$ be a hyperbolic Coxeter group and let $G$ be a finitely torsion-generated group. If $W$ and $G$ are $\mathrm{AE}$-equivalent, then $G$ is a hyperbolic Coxeter group. Moreover, there is only a finite number $e_W$ of such groups $G$. Furthermore, there exist hyperbolic Coxeter groups $W$ such that $e_W > 1$ (see Theorem \ref{counterexample_Cox_intro} below).
\end{te}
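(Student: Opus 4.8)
The plan is to combine the structure theory of hyperbolic Coxeter groups with Sela-style model-theoretic rigidity of hyperbolic groups. I would begin with the cheap observations. For every integer $n$, ``there is an element of order $n$'' is an existential sentence, and for every finite group $H$, ``there is a subgroup isomorphic to $H$'' is existential as well; since $\mathrm{AE}$-equivalence forces agreement of the existential theories (and hence, by negation, of the universal theories), $G$ and $W$ have exactly the same set $O$ of orders of torsion elements and the same isomorphism types of finite subgroups. As $W$ is hyperbolic, $O$ is finite, so each of the finitely many torsion generators of $G$ has order in the fixed finite set $O$. Next I would show $G$ is hyperbolic: this uses that hyperbolicity is detected by the $\forall\exists$-theory among finitely generated groups, a fact established by the first author refining Sela's analysis of the first-order theory of hyperbolic groups. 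From here on $W$ and $G$ are two hyperbolic groups sharing their $\mathrm{AE}$-theory, with $W$ Coxeter and $G$ torsion-generated.

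Being hyperbolic, $G$ is finitely presented and equationally Noetherian. Agreement of the existential theories is exactly the statement that each of $W,G$ is a limit group over the other, and the full $\mathrm{AE}$-agreement rigidifies this. Concretely, I would encode a finite presentation of $G$ together with finitely many inequations as an existential sentence, transport it to $W$, and run the shortening argument: homomorphisms $G\to W$ factor through finitely many maximal limit quotients, and equational Noetherianity forces one of these quotients to be $G$ itself, producing a ``test'' map $\psi\colon G\to W$ that is injective on balls of any prescribed radius; symmetrically one gets $\varphi\colon W\to G$. Feeding $\psi,\varphi$ into co-Hopf-type rigidity for hyperbolic groups, one obtains that the canonical JSJ decompositions (Bowditch) of $G$ and $W$ have the same underlying graph-of-groups shape, with corresponding edge groups isomorphic and corresponding rigid, resp.\ quadratically hanging, vertex groups $\mathrm{AE}$-equivalent: thus $G$ is assembled from the same pieces as $W$ up to $\mathrm{AE}$-equivalence of the vertex groups.

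The heart of the proof --- and the step I expect to be the main obstacle --- is to upgrade this into an honest Coxeter presentation of $G$. The plan is to use torsion-generation to choose inside $G$ a finite set $R$ of involutions that generate $G$ (built from the torsion generators together with finitely many conjugators matching the reflections of $W$ across the JSJ comparison), and then to certify that $(G,R)$ is a Coxeter system: each pair of elements of $R$ generates a finite or infinite dihedral group, and Tits's exchange condition holds. The point is that each of these is a \emph{uniformly bounded} $\forall\exists$-property of the chosen tuple, so it transfers from the corresponding tuple of reflections in $W$, where it holds because $W$ is Coxeter, to $G$ through the $\mathrm{AE}$-equivalence; equivalently one builds a faithful $G$-action on a Davis-type $\mathrm{CAT}(0)$ complex with $G$-invariant mirrors. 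The subtle point is \emph{completeness}, i.e.\ that the natural surjection from the Coxeter group on the resulting diagram onto $G$ is injective; here pure logic seems not to suffice, and I would rule out a proper quotient using the geometric faithfulness of the previous step (bounded cancellation for $\psi$, or a co-volume estimate for the two proper cocompact actions on the Davis complex). This is where the hypothesis that the components of $W$ are hyperbolic is essential, since it is what makes Sela's machinery and Bowditch's JSJ available.

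Finally, for finiteness of $e_W$: once $G=W(\Gamma)$ is known to be hyperbolic Coxeter, the maximal order $M$ of a finite subgroup is an $\mathrm{AE}$-invariant, so the spherical subsets of the vertex set of $\Gamma$ have size bounded in terms of $M$ (bounding the dimension of the Davis complex), the edge labels lie in the fixed finite set $O$, and acylindrical accessibility bounds the number of vertices and edges of the JSJ of $G$ independently of $\Gamma$; a combinatorial argument on Coxeter diagrams of bounded dimension, bounded label set and bounded visual complexity then leaves only finitely many possibilities for $\Gamma$, hence for $G$ up to isomorphism, and this bound is $e_W$. Sharpness --- that $e_W>1$ for some hyperbolic Coxeter $W$ --- is witnessed by the explicit pair of non-isomorphic hyperbolic Coxeter groups constructed in Theorem~\ref{counterexample_Cox_intro}, which in fact share their entire first-order theory because they differ only in a piece that Sela's theory of free and surface groups cannot detect.
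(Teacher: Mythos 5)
Your high-level setup (transferring existential information about torsion and finite subgroups, invoking that hyperbolicity transfers along $\mathrm{AE}$-equivalence, and comparing JSJ-type decompositions via a shortening argument) is broadly consistent in spirit with the paper, but the step you flag as the ``main obstacle'' --- certifying that a chosen generating set of involutions gives a Coxeter presentation of $G$ --- is exactly where your argument has a genuine gap, and the paper sidesteps it by a much sharper device that you do not mention: Deodhar's theorem that a reflection subgroup of a Coxeter group is itself a Coxeter group. The paper does not try to build a Coxeter presentation of $G$ from scratch. Instead (Theorem~\ref{preservation}) it constructs a map $\varphi'\colon G\to W$ which is injective and whose image is generated by conjugates of $W$-reflections, so $\varphi'(G)$ is a reflection subgroup of $W$, hence Coxeter by \cite{Deodhar}. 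Your alternative --- verifying Tits's exchange (or deletion) condition via $\mathrm{AE}$-transfer --- cannot work as stated, because the exchange condition quantifies over words of unbounded length and is not a $\forall\exists$-sentence about a fixed tuple; you acknowledge ``pure logic seems not to suffice'' for the completeness/injectivity of the putative Coxeter presentation, and the geometric fix you sketch (bounded cancellation, covolume estimates) is not carried out and is not in the paper.

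Two further structural differences are worth noting. First, the paper's organizing decomposition is the reduced Stallings splitting over \emph{finite} groups (compatible with the Coxeter presentation via Proposition~\ref{davis}), not primarily the $\mathcal{Z}$-JSJ; the $\mathcal{Z}$-JSJ and centered splittings enter only to control preretractions. Second, the crucial technical input that makes everything rigid is Lemma~\ref{is_a_tree}: a \emph{torsion-generated} group never maps onto $\mathbb{Z}$, so every splitting has a tree as underlying graph and every QH vertex has underlying orbifold of genus $0$; this forces every $\mathbf{C}_G$-preretraction to be non-degenerate and injective (Corollary~\ref{isom to a conjugate}), which is what drives Proposition~\ref{main_theorem1} and Corollary~\ref{main_theorem1.1}. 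Your proposal never identifies this torsion-generation constraint, yet it is what replaces, in this setting, the quasi-core hypotheses of \cite{And18}. The finiteness of $e_W$ then follows directly from Corollary~\ref{finitely_many}: $G$ and $W$ have reduced Stallings splittings that are trees with the same (finitely many, finitely constrained) vertex groups and finite edge groups, so there are only finitely many isomorphism classes of such graphs of groups; no diagram-counting argument of the kind you propose is needed. Finally, the ``$e_W>1$'' example in Theorem~\ref{counterexample_Cox_intro} is only shown to be $\mathrm{AE}$-equivalent (via legal large extensions from \cite{And19a}), not elementarily equivalent as you assert; the latter is conjectural.
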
	

	Despite our heavy use of hyperbolic geometry in establishing our Theorem~\ref{final_main_theorem}, we stress that our result goes well-beyond the hyperbolic world, as whenever $(W, S)$ has at least one infinite irreducible affine component which is not the infinite dihedral group, or $(W, S)$ has at least two distinct irreducible infinite hyperbolic components, then the Coxeter group $W$ is {\em not} hyperbolic. Actually, the methods employed toward the proof of first-order rigidity of the affine irreducible Coxeter groups from \cite{PS23} are representation theoretic in nature, and so behind our Theorem \ref{final_main_theorem} there are very different techniques at play. All this justifies the following main conjecture.
	
	\begin{conj}\label{the_main_conj} For any Coxeter group $W$, if $G$ is finitely torsion-generated and elementarily equivalent to $W$, then $G$ is Coxeter. 
\end{conj}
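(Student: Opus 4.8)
\emph{Reduction to the irreducible case.} The first move is to reduce to irreducible Coxeter groups, exactly as in the proof of Theorem~\ref{final_main_theorem}. Using an extension of the domain machinery of ingredient~(A) (see \cite{MUHLHERR2022297, KVASCHUK200578}), if $G$ is elementarily equivalent to $W$ and $W = W_1 \times \cdots \times W_n$ is the decomposition into irreducible components, then $G$ splits correspondingly as $G = G_1 \times \cdots \times G_n$ with each $G_i$ elementarily equivalent to $W_i$; moreover, since a finite torsion-generating set of $G$ projects, via the first-order-definable pieces of the product structure, onto torsion-generating sets of the factors, each $G_i$ is again finitely torsion-generated. Hence it suffices to prove the conjecture for an irreducible $W$. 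If $W$ is finite or affine we conclude by the first-order rigidity of \cite{PS23}, and if $W$ is non-elementary hyperbolic by Theorem~\ref{main_hyperbolic_theorem}. So the genuinely open case is that of an \emph{irreducible Coxeter group $W$ that is infinite, non-affine and not hyperbolic}: by Moussong's criterion these are precisely the irreducible $W$ whose Davis complex contains a $2$-flat, equivalently those admitting a standard parabolic subgroup which is either an irreducible affine group of rank $\geq 3$ or a direct product of two infinite standard parabolics.

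\emph{An inductive scheme via relative hyperbolicity.} The plan is to induct on the number of vertices of the Coxeter diagram of $W$, with the finite, affine and non-elementary hyperbolic cases serving as the base of the induction. For the inductive step, suppose first that $W$ is relatively hyperbolic in the sense of Caprace, i.e.\ hyperbolic relative to a canonical finite family $\calp$ of proper standard parabolic subgroups, each of which is a Coxeter group on strictly fewer vertices. One then develops a Makanin--Razborov / shortening theory for the class of groups that are finitely torsion-generated and elementarily equivalent to $W$, relative to the peripheral structure $\calp$, following the scheme of \cite{SelX} and its relatively hyperbolic refinements (Groves; Reinfeldt--Weidmann): such a $G$ is a limit of marked quotients of $W$, the limiting action is on an $\bbR$-tree whose vertex stabilizers are either peripheral or rigid, a Rips--Bestvina--Feighn analysis together with a shortening argument constrains $G$ to be built from $W$ by elementary moves supported on peripheral subgroups, and the inductive hypothesis — which identifies the finitely-torsion-generated models of the theories of the members of $\calp$ as themselves Coxeter — is exactly what makes the conclusion ``$G$ is Coxeter'' propagate upward. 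When $W$ fails to be relatively hyperbolic (higher-rank affine or product-type parabolic obstructions), one must instead peel off these flat or product substructures and treat them by the representation-theoretic methods behind ingredient~(B) of \cite{PS23} — or, more ambitiously, by a first-order structure theory for groups acting on $\mathrm{CAT}(0)$ spaces with isolated flats or with rank-one directions — before descending to a relatively hyperbolic, hence inductively-handled, core.

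\emph{Recovering a Coxeter presentation.} The output of the previous step is a group $G$ acting on a Davis-like complex whose ``reflection-type'' torsion elements satisfy the braid relations prescribed by the diagram of $W$. To upgrade this to the statement that $G$ \emph{is} Coxeter one invokes reflection-rigidity phenomena, as already exploited in \cite{MUHLHERR2022297, PS23}, to the effect that the isomorphism type of a Coxeter group, together with the conjugacy classes of its reflections and the orders of their pairwise products, is pinned down by finitely much first-order data; combined with the fact that being finitely torsion-generated and elementarily equivalent to $W$ forces $G$ to contain a finite set of involutions whose pairwise products have exactly the orders dictated by the diagram of $W$, one argues that these involutions generate $G$ with precisely the Coxeter relations and no others, by comparing first-order-detectable invariants such as centralizers of reflections and, at the local level, by feeding in the even-Coxeter and free-product rigidity already established in this paper (which generalizes \cite{https://doi.org/10.1112/blms/bdp103}).

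\emph{Main obstacle.} The crux is the second step. No Makanin--Razborov theory is presently available for the full class of Coxeter groups (nor for general acylindrically hyperbolic groups), and even in the relatively hyperbolic case the control one needs over the peripheral subgroups is precisely what the induction is trying to establish, so the recursion has to be organised so that every peripheral appearance occurs on strictly fewer vertices, with the affine and finite cases of \cite{PS23} and the hyperbolic case of Theorem~\ref{main_hyperbolic_theorem} as genuine base steps. Compounding this, the standard limit-group constructions interact badly with torsion: one must show that a finitely-torsion-generated limit of Coxeter groups remains torsion-generated and Coxeter, which is delicate since torsion need not survive the ultrapower/limit passage intact. Finally, in contrast with the hyperbolic situation of Theorem~\ref{main_hyperbolic_theorem}, where only finitely many $G$ arise, one should be prepared, a priori, for infinitely many or for non-isomorphic but elementarily equivalent models — a possibility which Conjecture~\ref{the_main_conj}, asking only that every such $G$ be Coxeter rather than that it be unique, still tolerates.
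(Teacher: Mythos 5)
The statement you are asked to prove is a \emph{conjecture} in the paper: the authors explicitly state that a full solution ``seems out of reach for now'' and ``would require the invention of group theoretic tools not yet available to us.'' What the paper actually establishes is (i) the reduction to the irreducible case (Theorem~\ref{reduction_to_dragon}, proved via the domain machinery of Theorem~\ref{the_domain_ab_by_finite_th}), and (ii) the conjecture for Coxeter groups all of whose irreducible components are spherical, affine, or non-elementary hyperbolic (Theorem~\ref{final_main_theorem}). Your first paragraph reproduces essentially this much, and that part is consistent with the paper --- though note that the paper's splitting argument is not a generic ``the product structure is first-order definable'' claim: it hinges on the specific fact that the non-affine infinite hyperbolic components are domains (Lemma~\ref{standard}) and the remaining part is abelian-by-finite, which is exactly why Theorem~\ref{final_main_theorem} is restricted to those component types.

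The genuine gap is that your second and third steps are not a proof but a research programme, and you concede as much in your ``Main obstacle'' paragraph. The inductive scheme via relative hyperbolicity is never carried out: there is no Makanin--Razborov or shortening theory currently available for finitely torsion-generated models of the theory of a general (even relatively hyperbolic) Coxeter group, the claim that the limiting $\mathbb{R}$-tree analysis ``constrains $G$ to be built from $W$ by elementary moves supported on peripheral subgroups'' is asserted rather than established, and the non-relatively-hyperbolic case (higher-rank affine parabolics, product-type flats) is dispatched with ``one must instead peel off these substructures,'' which is not an argument. Likewise, the final step of ``recovering a Coxeter presentation'' presupposes that the torsion generators of $G$ can be conjugated into reflection-like position satisfying exactly the braid relations of $W$, which is precisely the content one would need to prove; in the hyperbolic case the paper achieves this only through the heavy machinery of Stallings/JSJ splittings, preretractions, and Deodhar's theorem on reflection subgroups (Theorem~\ref{preservation}), none of which is available in the general setting. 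So the correct assessment is: the reduction and the base cases match what the paper proves, but the core of the conjecture remains open, and your proposal does not close it.
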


	As a further step toward Conjecture~\ref{the_main_conj} we also prove the following result.
	
	\begin{te}\label{reduction_to_dragon} Conjecture~\ref{the_main_conj} is true if and only if it is true for irreducible Coxeter groups.
\end{te}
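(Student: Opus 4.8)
The forward implication is immediate, since irreducible Coxeter groups are in particular Coxeter groups. For the converse, suppose Conjecture~\ref{the_main_conj} holds for all irreducible Coxeter groups, let $W$ be an arbitrary Coxeter group with irreducible components $W_1, \dots, W_n$ (so that $W = W_1 \times \cdots \times W_n$), and let $G$ be a finitely torsion-generated group elementarily equivalent to $W$. The plan is to produce a matching decomposition $G = G_1 \times \cdots \times G_n$ with each $G_i$ elementarily equivalent to $W_i$. Granting this, the proof concludes quickly: a direct factor of a finitely torsion-generated group is again finitely torsion-generated — projecting a finite torsion generating set of $G$ onto the factor $G_i$ yields a finite torsion generating set of $G_i$, as homomorphic images of torsion elements are torsion — so each $G_i$ is finitely torsion-generated and the assumed case of the conjecture gives that each $G_i$ is a Coxeter group; and a finite direct product of Coxeter groups is a Coxeter group, realized by the disjoint union of the corresponding Coxeter diagrams. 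Hence $G$ is Coxeter.

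The real content is thus the transfer of the decomposition of $W$ into irreducible components to an arbitrary model $G$ of $\mathrm{Th}(W)$. I would carry this out in two steps. First, isolate the spherical part: grouping the finite irreducible components of $(W,S)$ into a single finite Coxeter group $W_{\mathrm f}$ and the remaining infinite irreducible components into $W^{\infty}$, one has $W = W_{\mathrm f} \times W^{\infty}$, and $W_{\mathrm f}$ is exactly the finite radical of $W$, since an infinite irreducible Coxeter group has trivial center and no non-trivial finite normal subgroup. Using rigidity of finite groups (a finite group is determined up to isomorphism by its first-order theory), the control afforded by ingredient (A) — the ``domains'' of \cite{MUHLHERR2022297, KVASCHUK200578} — and the structure of the center of $W$ (a finite elementary abelian $2$-group, a property transferred by elementary equivalence), I would show that $G \cong F \times G^{\infty}$ with $F \cong W_{\mathrm f}$ and $G^{\infty}$ elementarily equivalent to $W^{\infty}$. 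Second, peel off the infinite irreducible components one at a time: again using the domain machinery of \cite{MUHLHERR2022297} together with general results on the interplay between elementary equivalence and direct products of finitely generated groups (as studied e.g. by Oger), one shows that $G^{\infty}$ splits as a direct product of models of the first-order theories of the infinite irreducible components of $(W,S)$. Reassembling, and invoking finite rigidity for the directly indecomposable finite factors constituting $W_{\mathrm f}$, yields the matching decomposition $G = G_1 \times \cdots \times G_n$.

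The main obstacle is precisely this transfer of the direct product decomposition: elementary equivalence need not respect direct products, so $G$ cannot be split by soft model theory alone. One must certify in a first-order way that every model of $\mathrm{Th}(W)$ splits along the same lines as $W$, and for this I expect to rely on genuinely Coxeter-theoretic input — triviality of the finite radical and of the center of an infinite irreducible Coxeter group, the classification of centers of Coxeter groups as finite elementary abelian $2$-groups, and above all the domain decomposition of \cite{MUHLHERR2022297, KVASCHUK200578}. Everything downstream of the splitting — finite rigidity for the spherical factors, stability of finite torsion-generation under passing to direct factors, and closure of Coxeter groups under finite direct products — should be routine.
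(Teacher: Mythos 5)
Your overall plan (decompose $W$ into irreducible components, transfer the decomposition to any elementarily equivalent finitely torsion-generated $G$, note that direct factors inherit finite torsion-generation, then invoke the assumed irreducible case) is the right framework and matches the paper's intention — the paper's proof simply says to argue as in the proof of Theorem~\ref{final_main_theorem}. However, your specific choice of decomposition introduces a genuine gap. You split $W$ as $W_{\mathrm f}\times W^{\infty}$ with $W_{\mathrm f}$ the finite part and $W^{\infty}$ \emph{all} infinite irreducible components, and you then propose to peel off the factors of $W^{\infty}$ one at a time ``using the domain machinery.'' But an affine irreducible Coxeter group $A$ is \emph{not} a domain: it contains a normal finite-index subgroup $Z\cong\mathbb{Z}^r$, and for any nontrivial $x,y\in Z$ the normal closures $\mathrm{gp}_A(x),\mathrm{gp}_A(y)$ lie in $Z$ and hence commute, so $x\diamond y=1$. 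Consequently the domain/$D_k$ machinery cannot detect, let alone split off, the affine factors, and your second step fails as stated whenever $W$ has an infinite affine irreducible component. Appealing to Oger-type results on elementary equivalence of direct products will not rescue this: those results do not by themselves produce a matching direct-product decomposition in an arbitrary model.

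The fix is the one used in the proof of Theorem~\ref{final_main_theorem}: put the affine components \emph{together with} the finite ones into a single abelian-by-finite block $W^*_1$, and let $W^*_2$ be the product of the infinite non-affine irreducible components, which \emph{are} domains (this is the role of the expansion of the domain results of \cite{MUHLHERR2022297,KVASCHUK200578}). Theorem~\ref{the_domain_ab_by_finite_th} is formulated precisely for this mixed shape $H_1\times H_2$ with $H_1$ abelian-by-finite and $H_2\in D_k$, and yields $G=G_1\times G_2$ with $G_1\equiv W^*_1$ and $G_2\equiv W^*_2$, $G_2\in D_k$. One does \emph{not} decompose $G_1$ further: one instead invokes the first-order rigidity of abelian-by-finite Coxeter groups (via \cite{LASSERRE2014213} and \cite{PS23}) to get $G_1\simeq W^*_1$ directly. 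The factors of $G_2$ are each finitely torsion-generated and elementarily equivalent to an infinite non-affine irreducible Coxeter group, so the assumed irreducible case of the conjecture applies to them. Your concluding observations — closure of Coxeter groups under finite products and inheritance of finite torsion-generation by direct factors — are correct and do finish the argument once the decomposition is obtained correctly.
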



	Notice that by Theorem~\ref{final_main_theorem} we know that Conjecture~\ref{the_main_conj} is true for irreducible spherical, affine or hyperbolic Coxeter groups. All in all, our results all point in the same direction: in a very strong sense being a Coxeter group actually {\em is a first-order property}. On the other hand, a complete solution to Conjecture~\ref{the_main_conj} seems out of reach for now, and it is the opinion of the authors that a full solution of the conjecture would \mbox{require the invention of group theoretic tools not yet available to us.}

\medskip

This is not the end of the story, in fact the motivating questions of the present paper were actually questions of first-order rigidity. We recall that a finitely generated group $G$ is said to be {\em first-order rigid} if $G$ is (up to isomorphism) the only finitely generated model of its first-order theory. The term first-order rigid was introduced in \cite{lub_rigidity}, where it was proved that irreducible non-uniform higher-rank characteristic zero arithmetic lattices (e.g. $\mathrm{SL}_n(\mathbb{Z})$ for $n \geq 3$) are first-order rigid, but the notion has been considered long before (see in particular \cite{doi:10.1142/S0218196703001286}) mainly under the name of {\em quasi-axiomatizability}. By the solution to Tarski's problem about the elementary equivalence of non-abelian free groups, we know that non-abelian free groups are {\em not} first-order rigid. More strongly, by Sela's paper \cite{SelX} we know that if $G$ and $H$ are non-trivial and at least one of them is not $\mathbb{Z}/2\mathbb{Z}$, and $K$ is {\em any} finitely generated group elementarily equivalent to a non-abelian free group, then $G \ast H \ast K$ is elementarily equivalent to $G \ast H$. In particular, for example, the universal Coxeter group on three generators $W_ 3 = \mathbb{Z}/2\mathbb{Z} \ast \mathbb{Z}/2\mathbb{Z} \ast \mathbb{Z}/2\mathbb{Z}$ is such that there are infinitely many isomorphism types of finitely generated groups elementarily equivalent to $W_3$. On the other hand, by Theorem \ref{main_hyperbolic_theorem}, we know that there are only finitely many isomorphism types of finitely torsion-generated groups elementarily equivalent to $W_3$. But, a priori, we cannot expect solely from the hyperbolic assumption that $W_3$ is the only finitely torsion-generated model of its theory, as Theorem \ref{main_hyperbolic_theorem} shows that there are AE-equivalent Coxeter groups which are not isomorphic (and we believe that these groups are elementarily equivalent, assuming the widely conjectured $\forall\exists$-elimination of quantifiers for hyperbolic groups with torsion). This motivates the introduction of the \mbox{following condition (which by our result \ref{corollary} below we will see that $W_3$ satisfies).}


\begin{de}\label{torsion_rigidity_def}
	A finitely torsion-generated group $G$ is said to be \emph{first-order torsion-rigid} if, for every finitely torsion-generated group $G'$, the groups $G$ and $G'$ are elementarily equivalent if and only if they are isomorphic. We define in the same way the notion of \emph{$\mathrm{AE}$-torsion-rigidity} by simply considering $\mathrm{AE}$-equivalence.
\end{de}

Before moving forward, we observe that removing the assumption ``finitely'' in Definition~\ref{torsion_rigidity_def}  does not lead to interesting results. For example, let $W_2=\mathbb{Z}/2\mathbb{Z} \ast \mathbb{Z}/2\mathbb{Z}$ be the universal Coxeter group of rank $2$. Then, by Theorem \ref{virtually_cyclic} or by \cite{PS23}, we know that this group is first-order rigid. Furthermore, $W_2$ is the infinite dihedral group and so it is of the form $\mathbb{Z} \rtimes \mathbb{Z}/2\mathbb{Z}$. 
    As $W_2$ is abelian-by-finite, by \cite{Oger88} it embeds elementarily in its profinite completion $\widehat{W}_2$ which is isomorphic to $\widehat{\mathbb{Z}} \rtimes \mathbb{Z}/2\mathbb{Z}$ (cf. e.g. \cite[Proposition 2.6]{GZ11}). As $\widehat{W}_2$ is an elementary extension of $W_2$, by \cite[1.14 (3)]{MUHLHERR2022297} we have that $\widehat{W}_2$ is generated by involutions, but this group is not generated by finitely many involutions since the profinite completion of $\mathbb{Z}$ is uncountable. Similarly, although we will show below that $W_{n+2} = W_2 \ast \underbrace{\mathbb{Z}/2\mathbb{Z} \cdots \mathbb{Z}/2\mathbb{Z}}_n$ is first-order torsion-rigid, by \cite{SelX} this group is elementarily equivalent to $\widehat{W}_2 \ast \underbrace{\mathbb{Z}/2\mathbb{Z} \cdots \mathbb{Z}/2\mathbb{Z}}_n$ which is generated by involutions but is not isomorphic to $W_{n+2}$ as it is uncountable. This is why we require that $G'$ is {\em finitely} torsion-generated~in~Definition~\ref{torsion_rigidity_def}.


\medskip \noindent
Notice that first-order torsion-rigidity is somehow the best possible result that can be expected for hyperbolic Coxeter groups, because non-elementary hyperbolic groups are far from first-order rigid: by the work of Sela \cite{SelX}, the hyperbolic Coxeter group $W_3$ is elementarily equivalent to the free product $W_3\ast\mathbb{Z}$, which is not a Coxeter group, and more generally we believe that $G$ and $G\ast\mathbb{Z}$ are elementarily equivalent for any non-elementary hyperbolic group $G$ without a non-trivial normal finite subgroup (this result is true for torsion-free hyperbolic groups by \cite{Sel09}, and it was partially generalized in the presence of torsion in \cite{And19a,AF22}). Hence, generalizing our opening remarks on free product of groups, we observe that there is a huge gulf between first-order rigidity and first-order torsion-rigidity. The main result of our paper in this direction is the following (see \ref{stallings_definition} for the definition of a Stallings splitting).
	
	\begin{te}\label{main_theorem2_intro}
Let $G$ be a torsion-generated hyperbolic group, and let $\Delta$ be a reduced Stallings splitting of $G$. Suppose that the following condition holds: for every edge group $F$ of $\Delta$, the image of the natural map $N_G(F)\rightarrow \mathrm{Aut}(F)$ is equal to $\mathrm{Inn}(F)$. Then $G$ is first-order torsion-rigid (in fact, it is $\mathrm{AE}$-torsion-rigid).
\end{te}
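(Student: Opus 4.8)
The plan is to reconstruct the whole Stallings picture of $G$ on the $G'$ side using the (torsion-refined) hyperbolic model theory of Sela and André et al., and then to read the normalizer hypothesis as the precise condition that trivializes the only remaining source of flexibility, namely twisting along the finite edge groups of $\Delta$. So let $G'$ be a finitely torsion-generated group that is $\mathrm{AE}$-equivalent to $G$; the goal is an isomorphism $G'\cong G$, obtained using only $\forall\exists$-sentences. First I would show that $G'$ is itself a torsion-generated hyperbolic group. This is exactly the input already underlying Theorem~\ref{main_hyperbolic_theorem} (which is proved in the generality of torsion-generated hyperbolic groups, not only the Coxeter ones): hyperbolicity of a finitely generated group, together with its isoperimetric constant for a fixed generating set, is detected by $\forall\exists$-sentences after the standard reformulations, building on \cite{Sel09,And19a,AF22}; since $G'$ is finitely torsion-generated, it is then torsion-generated hyperbolic. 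Fix a reduced Stallings splitting $\Delta'$ of $G'$, to be compared with $\Delta$; its edge groups are finite and its vertex groups are finite or one-ended hyperbolic.

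Next I would match the two splittings up to the gluing data, invoking the torsion version of the theory of elementary cores and hyperbolic towers (Perin and Sela; André in the presence of torsion, cf. \cite{And19a,AF22}). From the $\mathrm{AE}$-equivalence one produces, via test sequences and Merzlyakov-type formal solutions over hyperbolic groups with torsion, homomorphisms $G\to G'$ and $G'\to G$ that are non-degenerate for the splittings: injective on finite subgroups, and sending one-ended vertex groups into conjugates of one-ended vertex groups. Using equational Noetherianity of hyperbolic groups, co-Hopf-type properties of one-ended hyperbolic groups, and the crucial fact that a one-ended \emph{torsion-generated} hyperbolic group is its own elementary core (it has no proper hyperbolic floor structure, since torsion elements are elliptic in any floor splitting and a floor retraction onto a proper subgroup would be incompatible with torsion-generation), one deduces that these maps induce isomorphisms between the one-ended vertex groups of $\Delta$ and those of $\Delta'$, and that the finite vertex- and edge-group data agree as well. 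Equivalently, after this step $\Delta$ and $\Delta'$ carry the same vertex and edge groups, and the only possible discrepancy between $G$ and $G'$ lies in the combinatorics of how the finite edge groups are glued in; by the same accessibility argument used for Theorem~\ref{main_hyperbolic_theorem} this leaves at most finitely many isomorphism types, each obtained from $G$ by a finite sequence of Dehn twists $\tau_\alpha$ along edges, with $\alpha\in\Aut(F)$ an automorphism of the corresponding finite edge group $F$.

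It then remains to see that under our hypothesis every such twist is trivial. Two graph-of-groups decompositions with finite edge groups, the same underlying graph, and the same vertex and edge groups have isomorphic fundamental groups once one quotients the common deformation space by slide moves, conjugation moves, and absorbable twists; and a twist $\tau_\alpha$ along an edge with group $F$ is absorbable precisely when $\alpha$ lies in the image of $N_G(F)\to\Aut(F)$ — such an $\alpha$ being realized by conjugating $F$ by an element of an adjacent vertex group, hence by an element of $G$ normalizing $F$, and conversely this image is exactly the group of twists available along the edge. By hypothesis this image equals $\Inn(F)$ for every edge group $F$, so every available twist is inner and changes nothing; the deformation space contributes a single isomorphism type. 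Combining this with the matching of the previous step yields $G'\cong G$. Since each ingredient can be arranged to be $\forall\exists$-expressible (the geometric constants, the formal-solution inputs, the co-Hopf statements recast with explicit bounds), one concludes that $G$ is in fact $\mathrm{AE}$-torsion-rigid.

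The main obstacle is the middle step: extracting, from a single quantifier alternation and with no induction on quantifier depth available, homomorphisms in both directions that respect the Stallings/JSJ structure and are injective enough to force the one-ended factors to be isomorphic, all while finite subgroups may be conjugated and permuted in subtle ways. This requires the full strength of the hyperbolic-tower technology over hyperbolic groups with torsion, careful bookkeeping of maximal finite subgroups, and — decisively — the fact that torsion-generation makes $G$, and each of its one-ended vertex groups, rigid in the tower sense; it is precisely this rigidity that upgrades the finite-list conclusion of Theorem~\ref{main_hyperbolic_theorem} to genuine $\mathrm{AE}$-torsion-rigidity once the normalizer hypothesis kills the residual twisting.
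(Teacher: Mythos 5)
Your first two phases --- deducing that $G'$ is torsion-generated hyperbolic, and using the $\mathrm{AE}$-equivalence to produce morphisms $\varphi:G\to G'$, $\varphi':G'\to G$ matching the one-ended factors, finite subgroups and Stallings vertex groups --- are in the spirit of the paper's Proposition~\ref{main_theorem1} and Corollary~\ref{main_theorem1.1}, which are obtained via preretractions and centered splittings rather than directly through elementary cores and towers; your remark that torsion-generation kills any nontrivial floor structure plays the role of Corollary~\ref{isom to a conjugate}. The gap is entirely in the final step, and it is decisive.

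You claim that a twist $\tau_\alpha$ along an edge with finite group $F$ is absorbable precisely when $\alpha$ lies in the image of $N_G(F)\to\Aut(F)$, and then conclude from the hypothesis (image $=\Inn(F)$) that the gluing data are uniquely determined. That equivalence is false, and the paper's own Section~\ref{example} defeats it. There $G=A\ast_C B$ with $C\simeq(\mathbb{Z}/2\mathbb{Z})^5$, and the images of $N_A(C)$ and $N_B(C)$ in $\Aut(C)$ are $S(\{2,3,4\})$ and $S(\{1,2,4\})\times S(\{3,5\})$ respectively; since $N_G(C)=N_A(C)\ast_C N_B(C)$ (Lemma~\ref{normalizer_is_a_tree}), the image of $N_G(C)\to\Aut(C)$ is the \emph{subgroup generated by} these two, which is all of $S_5$ and in particular contains $\kappa=(2\ 1\ 3\ 5)$ --- yet the paper proves the $\kappa$-twisted group $G'$ is \emph{not} isomorphic to $G$. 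Absorbability of a twist on $A\ast_C B$ is governed by a product decomposition $\kappa=\beta\alpha^{-1}$ with $\alpha$ realizable in $N_A(C)$ and $\beta$ in $N_B(C)$, which is in general a strictly smaller set than the generated subgroup; your criterion conflates the two, and so would wrongly predict $G\simeq G'$ in Section~\ref{example}. Even a corrected criterion would not close the argument: you would still need to show that the specific twist produced by the $\mathrm{AE}$-equivalence is one of the absorbable ones. The paper avoids deformation-space reasoning of this kind altogether. Instead it uses the normalizer hypothesis to \emph{straighten} the morphisms on the splittings $N=N_G(G_e)=N_1\ast_C\cdots\ast_C N_r$ of the normalizer trees --- replacing $\psi|_{N_i}$ by $\mathrm{ad}(hg^{-1})\circ\psi|_{N_i}$ where $h\in C'$ realizes the outer action of $g\in N'$, an $h$ which exists precisely by the hypothesis --- so that $N_G(G_e)$ is mapped isomorphically onto $N_{G'}(G'_{e'})$, and then applies the tree-of-cylinders criterion of Lemma~\ref{perin} (together with the induction of Proposition~\ref{induction} over edge-group orders) to conclude that $\chi'\circ\chi$ is an automorphism. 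Reconstructing this tree-of-cylinders step, or a correct substitute for it, is what your proposal is missing.
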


Theorem \ref{main_theorem2_intro} does not remain true without the assumption on the edge groups of $\Delta$, even if $G$ is a Coxeter hyperbolic group. In fact, using results from \cite{And19a}, we also prove the following result (see Section \ref{example}).

	\begin{te}\label{counterexample_Cox_intro} There are finite Coxeter groups $C, A_1, B_1, A_2, B_2$, where $C$ is a special subgroup of each of the groups $A_1, B_1, A_2, B_2$, such that the Coxeter groups $W = A_1 \ast_C B_1$ and $W' = A_2 \ast_C B_2$ \mbox{are $\mathrm{AE}$-equivalent but non-isomorphic.}
\end{te}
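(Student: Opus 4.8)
The plan is to produce the pair by \emph{twisting an amalgam along its edge group by a diagram automorphism}. I would choose a finite Coxeter system $(C,T)$ admitting a nontrivial diagram automorphism $\sigma$ (a label-preserving permutation of $T$), together with finite Coxeter systems $(A,S_A)$, $(B,S_B)$ with $T\subseteq S_A$, $T\subseteq S_B$, so that $C=\langle T\rangle$ is a common special subgroup; concretely one can try $C$ of type $A_1\times A_1\times A_1$, with $\sigma$ one transposition of the three simple generators. Let $W$ be the Coxeter group whose diagram is obtained by glueing the diagrams of $A$ and $B$ along that of $C$ (so $W=A\ast_C B$), and let $W'$ be obtained by glueing them along $C$ \emph{through} $\sigma$ (so $W'=A\ast_{C,\sigma}B$, the copy of $C$ inside $B$ being pre-composed by $\sigma$); since $\sigma$ is a diagram automorphism, $W'$ is again a genuine Coxeter group, of the form $A_2\ast_C B_2$ with $A_2\cong A$ and $B_2\cong B$ (the latter with $C$ re-embedded via $\sigma$, still onto the same special subgroup). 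The diagrams are to be arranged so that: (a) the glued diagrams satisfy Moussong's criterion and are non-elementary, hence $W,W'$ are non-elementary hyperbolic Coxeter groups; (b) $\sigma$ is \emph{not} realised on $C$ by any automorphism of $A$ or of $B$ preserving $C$, which will force $W\not\cong W'$; and (c) \emph{some} non-inner automorphism of $C$ \emph{is} realised by $N_A(C)$ or $N_B(C)$, so that the image of $N_W(C)=N_A(C)\ast_C N_B(C)$ in $\Aut(C)$ strictly contains $\Inn(C)$. Requirement (c) is exactly the failure of the hypothesis of Theorem~\ref{main_theorem2_intro}, and it is the leverage for producing AE-equivalence; it is consistent with (b) precisely because $C$ is chosen with several nontrivial diagram automorphisms, so that the realised one may differ from $\sigma$.

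\textbf{Non-isomorphism.} Since $A,B$ are finite and $T\subsetneq S_A$, $T\subsetneq S_B$, the one-edge splittings $A-C-B$ and $A-C-B$ (twisted) are reduced splittings of $W$ and $W'$ over the finite group $C$ whose vertex groups are finite and do not split further over a finite subgroup; hence, up to the standard collapse/slide moves, these are the reduced Stallings splittings. By the rigidity of the Stallings (Dunwoody) decomposition of a hyperbolic group, any isomorphism $W\to W'$ carries the Bass--Serre tree of one to that of the other, sending vertex groups to conjugates of vertex groups and the edge group to a conjugate of the edge group; for Coxeter groups one may moreover take the relevant splittings to be visual, so that this identification is forced to be the obvious one. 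Reading it off on $C$ shows that $\sigma$ would have to lie in the subgroup of $\Aut(C)$ generated by the restrictions to $C$ of $\{\alpha\in\Aut(A):\alpha(C)=C\}$ and $\{\beta\in\Aut(B):\beta(C)=C\}$ (and, if $A\cong B$, possibly after swapping the two factors, which re-identifies $C$ in the same way), contradicting (b). Hence $W\not\cong W'$; alternatively one separates them by a coarser computable invariant, e.g.\ the isomorphism type of $N_\bullet(C)=N_A(C)\ast_C N_B(C)$.

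\textbf{AE-equivalence.} This is the substantive step, and it is where the results of \cite{And19a} on hyperbolic groups with torsion enter. The point is that, although $\sigma$ is realised by no automorphism of the amalgam, the difference between $W$ and $W'$ is invisible to $\forall\exists$-sentences. Using the structure of the canonical (Stallings/JSJ) splitting together with André's machinery of formal solutions/test sequences, one constructs, going back and forth, homomorphisms $W\to W'$ and $W'\to W$ which are the identity on the $A$-vertex group and a word-conjugation–type map on the $B$-vertex group, and which are $\Sigma_1$-elementary on the relevant pieces; iterating yields a common $\Sigma_2$-elementary extension of $W$ and $W'$, whence $W\equiv_{\mathrm{AE}}W'$. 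Concretely one invokes the statement of \cite{And19a} to the effect that, under a condition on $\sigma$ strictly weaker than realisability by $\Aut(A,C)\cdot\Aut(B,C)$ but strong enough to be absorbed by test sequences — roughly, that $\sigma$ is compatible, modulo $\Inn(C)$, with the automorphisms of $C$ actually realised inside the amalgam and that it respects the $W$-conjugacy classes of the torsion elements of $C$ — the twisted amalgam $A\ast_{C,\sigma}B$ is $\mathrm{AE}$-equivalent to $A\ast_C B$; it then remains to check that our $\sigma$ satisfies this while still failing (b).

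\textbf{Main obstacle.} The crux is precisely the tension built into the construction: $\sigma$ must be chosen large enough to escape the automorphisms of the vertex groups $A$ and $B$ (so $W\not\cong W'$) yet small enough to be washed out by André's $\forall\exists$-transfer argument (so $W\equiv_{\mathrm{AE}}W'$), together with the requirement that $N_W(C)$ nonetheless realise a non-inner automorphism of $C$; and one must then actually carry out André's test-sequence argument in the presence of torsion. By comparison, the verifications of Moussong's criterion, of the uniqueness of the visual splitting, and of the automorphism computations for the explicit finite Coxeter groups $A$, $B$, $C$ are routine.
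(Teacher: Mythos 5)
Your overall strategy is the right one and matches the paper's: construct two amalgams of finite Coxeter groups over a common special subgroup $C$, differing only by a twist of the inclusion $C\hookrightarrow B$ by an automorphism $\sigma$ of $C$; then show that $\sigma$ cannot be "unwound'' by automorphisms of $A$ and $B$ (giving non-isomorphism), while some weaker flexibility makes the two groups $\mathrm{AE}$-equivalent. Your non-isomorphism analysis is essentially the paper's: one reduces via the rigidity of the one-edge splitting to the constraint that $\sigma$ be a product $\beta\alpha^{-1}$ with $\alpha$ coming from $\Aut(A,C)$ and $\beta$ from $\Aut(B,C)$, and one must check that no such factorisation exists. (Be careful: your condition (b) — "$\sigma$ is not realised by any automorphism of $A$ or of $B$'' — is strictly weaker than what you actually use later, namely that $\sigma$ does not lie in the product set $P_B\cdot P_A\subseteq\Aut(C)$; those are different when $P_A$ and $P_B$ do not commute, and the paper's construction depends precisely on this non-commutativity inside $S_5$.)

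The genuine gap is the $\mathrm{AE}$-equivalence step. Your paragraph invokes "test sequences and formal solutions'' from \cite{And19a} and a back-and-forth producing a common $\Sigma_2$-elementary extension, but this is not a proof — it is a description of the kind of technology one would like to use, with a hypothesis on $\sigma$ left unformulated. The paper instead isolates a clean black-box from \cite{And19a}: if two non-elementary hyperbolic groups $G,G'$ admit legal large extensions $\Gamma=G\ast_C$ and $\Gamma'=G'\ast_C$ (HNN over $C$ with trivial monodromy, subject to $N_G(C)$ non-elementary and $E_G(N_G(C))=C$) which are isomorphic, then $G$ and $G'$ are $\mathrm{AE}$-equivalent (this is the paper's Theorem~\ref{theorem_legal}). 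Everything then reduces to an elementary computation inside $\Aut(C)\cong S_5$: one must exhibit permutations $\alpha\in P_A$, $\beta\in P_B$ satisfying a compatibility relation of the form $\kappa\alpha\kappa^{-1}=\beta\kappa$ which, together with the stable letter, lets one build an explicit isomorphism $\Gamma\cong\Gamma'$. Without identifying this theorem your $\mathrm{AE}$-equivalence argument has no engine.

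The other gap is that you never produce the actual finite Coxeter data, and the candidate you do suggest — $C$ of type $A_1^3$ with $\sigma$ a transposition — is likely to fail. If $\sigma$ has order $2$, the legal-large-extension compatibility relation $\kappa\alpha\kappa^{-1}=\beta\kappa$ collapses to $\kappa=\beta\alpha^{-1}$, which is exactly the factorisation the non-isomorphism step must rule out; so one would be squeezed from both sides. The paper therefore takes $C\cong(\mathbb{Z}/2\mathbb{Z})^5$ with $\kappa$ a $4$-cycle, and chooses $A\cong S_3\times S_7\times S_4$ and $B\cong S_7\times S_4$ precisely so that $\Aut(A)=\Inn(A)$, $\Aut(B)=\Inn(B)$, and the images $P_A=S(\{2,3,4\})$ and $P_B=S(\{1,2,4\})\times S(\{3,5\})$ in $S_5$ are small, non-commuting, and leave just enough room: $\kappa\notin P_BP_A$ (witnessed by $\kappa(5)=2$ versus $\alpha(5)=5$ and $\beta(5)\in\{3,5\}$) yet the required relation has a solution. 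Your "main obstacle'' paragraph correctly diagnoses that this tension is the crux, but resolving it is the entire content of the proof, and a candidate must actually be exhibited and verified.
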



\medskip 
Note that the groups $W$ and $W'$ in Theorem \ref{counterexample_Cox_intro} are virtually free, hence they are hyperbolic. The main application of Theorem \ref{main_theorem2_intro} is the following major rigidity result.

\begin{te}\label{corollary}
The following groups are $\mathrm{AE}$-torsion-rigid:
\begin{enumerate}[(1)]
    \item hyperbolic even Coxeter groups;
    \item free products of torsion-generated one-ended hyperbolic groups.
\end{enumerate}
\end{te}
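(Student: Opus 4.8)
We derive both statements from Theorem~\ref{main_theorem2_intro}. Recall that if $G$ is a torsion-generated hyperbolic group and $\Delta$ is a reduced Stallings splitting of $G$, then the condition we must check is that for every edge group $F$ of $\Delta$ the image of $N_G(F)\to\Aut(F)$ equals $\Inn(F)$; equivalently, $N_G(F)=F\cdot C_G(F)$, i.e.\ every conjugation of $F$ by an element of its normalizer is already realised by an element of $F$. So in each case the plan is simply to exhibit \emph{one} reduced Stallings splitting whose edge groups enjoy this property, and then quote Theorem~\ref{main_theorem2_intro}.

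\emph{Case (2).} Write $G=G_1\ast\cdots\ast G_n$ with each $G_i$ a torsion-generated one-ended hyperbolic group. Then $G$ is hyperbolic and torsion-generated, and since no $G_i$ is finite or splits over a finite subgroup, the reduced Stallings splitting of $G$ is this free-product decomposition, realised as a graph of groups over a tree with vertex groups $G_1,\dots,G_n$ and \emph{trivial} edge groups. For $F=\{e\}$ one has $\Aut(F)=\Inn(F)=\{e\}$, so the hypothesis of Theorem~\ref{main_theorem2_intro} is satisfied vacuously and $G$ is $\mathrm{AE}$-torsion-rigid. (If $n=1$ then $G$ is one-ended, its Stallings splitting is trivial, there are no edge groups, and the conclusion is the same.)

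\emph{Case (1).} Let $W$ be a hyperbolic even Coxeter group with system $(W,S)$, $S$ finite; then $W$ is torsion-generated by $S$ and hyperbolic. First I would invoke the visual structure of splittings of Coxeter groups over finite subgroups (Mihalik--Tschantz, together with Dunwoody accessibility of hyperbolic groups): $W$ admits a reduced Stallings splitting $\Delta$ in which every edge group is a conjugate $gW_Ag^{-1}$ of a finite (i.e.\ spherical) special subgroup $W_A$, $A\subseteq S$. By conjugating, it suffices to verify the normalizer condition for $F=W_A$ itself. Here evenness enters decisively: for any $A\subseteq S$ the assignment $s\mapsto s$ for $s\in A$ and $s\mapsto e$ for $s\in S\setminus A$ respects every Coxeter relation — a relation $(st)^{m(s,t)}=e$ with $t\in A$, $s\notin A$ maps to $t^{m(s,t)}=e$, which holds because $m(s,t)$ is even (and when $m(s,t)=\infty$ there is no relation) — hence defines a retraction $r_A\colon W\to W_A$ with $r_A|_{W_A}=\id$. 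If now $g\in N_W(W_A)$, then $gwg^{-1}\in W_A$ for every $w\in W_A$, and therefore
\[
gwg^{-1}=r_A(gwg^{-1})=r_A(g)\,r_A(w)\,r_A(g)^{-1}=r_A(g)\,w\,r_A(g)^{-1},
\]
with $r_A(g)\in W_A$. Thus conjugation by $g$ acts on $W_A$ as the inner automorphism induced by $r_A(g)$, so the image of $N_W(W_A)\to\Aut(W_A)$, which always contains $\Inn(W_A)$, equals $\Inn(W_A)$. Hence $\Delta$ satisfies the hypothesis of Theorem~\ref{main_theorem2_intro}, and $W$ is $\mathrm{AE}$-torsion-rigid.

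The only step that is not purely formal is the appeal, in case (1), to the existence of a visual reduced Stallings splitting of a Coxeter group whose edge groups are conjugate to spherical special subgroups; I expect this to be the main point to pin down carefully (the one-ended case, where the splitting is trivial, being automatic). Everything else is immediate: torsion-generation and hyperbolicity of the groups involved, the vacuous check in case (2), and the short retraction argument that forces the outer action on each edge group to be trivial in case (1).
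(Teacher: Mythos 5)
Your proof is correct and follows essentially the same route as the paper: in case (2) the edge groups are trivial so the hypothesis of Theorem~\ref{main_theorem2_intro} is vacuous, and in case (1) you use the retraction onto a spherical special subgroup (available precisely because the system is even) to see that the normalizer acts on each edge group by inner automorphisms. The only presentational difference is that for the existence of a reduced Stallings splitting of the Coxeter group with ($S$-)special finite edge groups you appeal to Mihalik--Tschantz and Dunwoody accessibility, whereas the paper already provides exactly this as Proposition~\ref{davis} (a rephrasing of Propositions 8.8.1--8.8.2 of Davis's book) and uses that, with Bahls cited for the retraction; citing Proposition~\ref{davis} would make your argument self-contained within the paper.
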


Note in particular that hyperbolic 2-spherical Coxeter groups are $\mathrm{AE}$-torsion-rigid. Indeed, such groups have property (FA) of Serre, and thus they are one-ended. Despite our rigidity results, Theorem~\ref{counterexample_Cox_intro} shows that the situation is complex and requires further analysis, we thus leave the following open problems.

\begin{pb}
Find necessary and sufficient conditions under which a hyperbolic Coxeter group is first-order torsion-rigid, or $\mathrm{AE}$-torsion-rigid.
\end{pb}

\begin{pb}
Find a geometric interpretation for the number $e_W$ from \ref{main_hyperbolic_theorem}.
\end{pb}


	At this point we might wonder, what about first-order torsion-rigidity for non-hyperbolic Coxeter groups? As mentioned above, in \cite{PS23} it was proved that for affine Coxeter groups we actually have full first-order rigidity, but a direct product of an infinite affine Coxeter group and a hyperbolic Coxeter group $W$ with $e_W >1$ will serve as a (conjectural) counterexample to torsion first-order rigidity outside both the hyperbolic and the affine world. All this is to say that as in the case of Conjecture~\ref{the_main_conj} we believe that these rigidity (resp. partial lack of rigidity) phenomena have nothing to do with hyperbolicity, and that the latter is more of a method than an intrinsic dividing line, in the realm of Coxeter groups. In this respect we prove the following result.
%
%

	\begin{te}\label{rigidity_Coxeter+} Let $W$  and $W'$ be even Coxeter groups. If $W$ and $W'$ are elementarily equivalent, then they are isomorphic. Furthermore, $W$ is an algebraically prime model of its first-order theory, i.e., \mbox{it embeds in every model of its theory.}
\end{te}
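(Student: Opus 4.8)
The plan is to prove the two assertions of Theorem~\ref{rigidity_Coxeter+} in sequence, building the second (algebraic primeness) on the structural analysis used for the first. For the isomorphism statement, the strategy is to extract from the first-order theory of an even Coxeter group $W$ enough invariants to recover its Coxeter diagram (with all even labels and all $\infty$-labels), since a Coxeter system is even-rigid: two even Coxeter systems give isomorphic groups if and only if their labelled diagrams coincide (this is classical, following from the reflection-independence results of Bahls--Mihalik and the analysis of even Coxeter systems). So it suffices to show that the labelled diagram of $W$ is an elementary invariant. First I would recover the set of conjugacy classes of reflections, or rather a suitable first-order surrogate: for even Coxeter groups one can detect, for a pair of involutions $s,t$, whether $\langle s,t\rangle$ is finite dihedral of a given order $2m$ (a $\forall$-type condition bounding the order of $st$, combined with sentences saying such a configuration exists), and whether $st$ has infinite order. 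The key point is that in an even Coxeter group every element of finite order is conjugate into a finite special subgroup, and the "local" structure around a reflection — which reflections it commutes with, which ones it generates a dihedral group of order $2m$ with — is governed by the diagram.

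The central step is therefore to encode the diagram $\Gamma$ of $W$ by a first-order sentence $\sigma_\Gamma$ that any elementarily equivalent even Coxeter group $W'$ must satisfy, and conversely to show $\sigma_{\Gamma}$ forces $\Gamma' = \Gamma$. The natural route, parallel to the right-angled case treated in \cite{https://doi.org/10.1112/blms/bdp103}, is: (i) express that $W'$ is generated by $n$ involutions $s_1,\dots,s_n$ with prescribed pairwise orders $m(s_i,s_j)$ of the products — this is an $\exists$-sentence if $n$ is fixed and we can fix the number of vertices because $\mathrm{rank}$ of an even Coxeter group is recoverable (e.g. via the abelianization $W^{\mathrm{ab}} \cong (\mathbb{Z}/2)^n$, whose size is an elementary invariant since finite quotients / torsion in the abelianization are expressible); (ii) express, via a $\forall$-sentence, that these generators satisfy a complete set of relations, i.e. that the obvious surjection from the Coxeter group onto $W'$ is an isomorphism. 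Part (ii) is where even-ness is essential and where I expect the main obstacle: one must verify that the Coxeter presentation of an even group is "first-order detectable", meaning the universal property can be pinned down by finitely many universally quantified statements about the generators — this amounts to a quantitative/effective form of the solution to the word problem for even Coxeter groups that is uniform across all even Coxeter systems on $n$ generators with bounded edge labels, and to the rigidity input ensuring no other labelled diagram yields the same relations.

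For the algebraic primeness assertion, I would argue as follows. Let $M \models \mathrm{Th}(W)$; we want an embedding $W \hookrightarrow M$. Realizing in $M$ the $\exists$-sentence from step (i) above produces involutions $s_1',\dots,s_n' \in M$ whose pairwise product-orders match those of the standard generators $s_1,\dots,s_n$ of $W$. This yields a canonical homomorphism $\varphi: W \to M$ sending $s_i \mapsto s_i'$ (well-defined since the $s_i'$ satisfy the defining relations of $W$). It remains to show $\varphi$ is injective, i.e. that no nontrivial word in the $s_i$ dies in $M$. Here the point is that non-triviality of a given word $w(s_1,\dots,s_n) \ne e$ in $W$ is a first-order fact (an open formula with parameters the generators) that is part of $\mathrm{Th}(W)$ in the appropriate sense: more precisely, for each reduced word $w$ the sentence "for all involutions $x_1,\dots,x_n$ with the prescribed product-orders, $w(x_1,\dots,x_n) \ne e$" is either in $\mathrm{Th}(W)$ or its negation is, and using the even-rigidity plus the word-problem solution one shows it is in $\mathrm{Th}(W)$ precisely when $w \ne_W e$. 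Applying these sentences to $s_1',\dots,s_n'$ gives injectivity of $\varphi$. The hard part, again, is the uniform effectivity: showing that for even Coxeter groups the relevant family of universal sentences genuinely captures triviality/non-triviality of words — equivalently, that the even Coxeter structure is recognizable by a first-order scheme — and I would isolate this as the key lemma, reducing it to the combinatorics of even Coxeter diagrams and Tits' solution to the word problem specialized to the even case.
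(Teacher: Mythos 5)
Your broad plan---extract the even Coxeter diagram as an elementary invariant and produce a map from $W$ into any model of its theory by realizing an existential sentence---is in the right spirit, but the step you flag as ``the main obstacle'' in part~(ii) is a genuine gap, and the obstacle is harder than you suggest. There is no single first-order sentence, universal or otherwise, expressing that an $n$-tuple of involutions $(x_1,\dots,x_n)$ with prescribed pairwise product orders generates a group with \emph{no} relations beyond those of the Coxeter presentation: injectivity of the canonical surjection from the Coxeter group onto $\langle x_1,\dots,x_n\rangle$ is a condition that quantifies over words of arbitrary length, and no uniform effective bound on even Coxeter presentations turns it into a single sentence. The same issue breaks your primeness argument: for a fixed $w\ne_W e$, the sentence ``for all involutions with the prescribed product orders, $w\ne e$'' is in general \emph{false} in $W$, since one can find involutions in $W$ with the correct pairwise orders satisfying extra relations.

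The paper is organised precisely to avoid writing such a sentence. Its formula $\varphi(\bar x)$ (Notation~\ref{formula_notation}) is richer than pairwise product orders: it demands that the $\langle X_{u_i}\rangle$ be maximal among subgroups of bounded order and that every bounded finite subgroup be conjugate into some $\langle X_{u_i}\rangle$, so it pins down the finite subgroup lattice, not just the diagram labels. Given a witness in $W'$, the retractions of Fact~\ref{proj_fact} produce canonical elements $t_1,\dots,t_n$ generating $W'$ with the correct finite and infinite pairwise orders ($(\star_1)$, $(\star_2)$). Then, rather than proving directly that $(W',T)$ is a Coxeter system, the proof argues by contradiction: if it is not, the Deletion Condition (Fact~\ref{deletion_condition}) fails at some specific word, and that failure is an \emph{existential} statement $\varphi'$, which transfers back to $W$ by elementary equivalence. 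In $W$ the Hopf property (Fact~\ref{hopfian}) forces $s_i\mapsto t_i$ to be an automorphism, so modulo an automorphism each $a_i$ lies in $s_i^W$, and Fact~\ref{the_even_isomorphism_fact} (even self-similarities are isomorphisms onto their images) shows the witness generates a Coxeter system---contradicting $\varphi'$. The primeness claim (Lemma~\ref{alg_prime_lemma}) then follows because realizing the full $\varphi$ in any model $G\models\mathrm{Th}(W)$ yields a genuine Coxeter system of the same type, not merely a tuple with the right product orders. So the finite-subgroup conditions in $\varphi$, together with the inversion of quantifiers via the Deletion Condition and the Hopf property, are doing essential work your scheme leaves unaccounted for.
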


	Theorem~\ref{rigidity_Coxeter+} generalizes the analogous result for right-angled Coxeter groups (a subclass of even Coxeter groups) from \cite{https://doi.org/10.1112/blms/bdp103}. Finally, notice that the ``furthermore'' of \ref{rigidity_Coxeter+} cannot be improved as it is shown in \cite{PS23} that irreducible affine Coxeter groups do not have prime models (and there are even affine Coxeter groups, e.g. the triangle Coxeter group $(4, 4, 2)$).

 \medskip \noindent

\textbf{Notation:} throughout the paper, if $g$ denotes an element of some group $G$, we denote by $\mathrm{ad}(g)$ the inner automorphism of $G$ mapping $h\in G$ to $ghg^{-1}$.



\section{Coxeter groups}\label{prelim_coxeter}

A \emph{Coxeter group} is a group $W$ that admits a presentation of the form \[\langle s_1,\ldots,s_n \ \vert \ (s_is_j)^{m_{ij}}=1, \ \text{for all} \ i,j \rangle\] where $m_{ii}=1$ and $m_{ij}\in \mathbb{N}\cup \lbrace \infty\rbrace$ for every $1\leq i,j\leq n$ (the relation $(s_is_j)^{\infty}=1$ means that $s_is_j$ has infinite order). Note that each generator $s_i$ has order two, and that $m_{ij}=2$ if and only if $s_i$ and $s_j$ commute. The set $S=\lbrace s_1,\ldots,s_n\rbrace$ is called a \emph{Coxeter generating set}, and the pair $(W,S)$ is called a \emph{Coxeter system}. A subgroup $H\subseteq W$ is called \emph{$S$-special} if $H$ is generated by a subset of $S$, and a subgroup $H\subseteq W$ is called a \emph{reflection subgroup} if it is generated by elements from $S^W$ (i.e., if $H=\langle H\cap S^W\rangle$). We will need a result proved by Deodhar in \cite{Deodhar}, which asserts that a reflection subgroup of a Coxeter group is a Coxeter group.

A Coxeter system is even if $m_{ij}$ is \emph{even} or $\infty$ for every $1\leq i,j\leq n$, in which case it can be proved that $(W,S)$ is the only Coxeter system for $W$, and therefore it makes sense to say that $W$ is an even Coxeter group. An even Coxeter group is called a \emph{right-angled Coxeter group} if $s_is_j=s_js_i$ or $(s_is_j)^{\infty}=1$ for every $1\leq i,j\leq n$. 

Hyperbolic Coxeter groups form an important subclass of Coxeter groups. Moussong proved in his PhD thesis that a Coxeter group is hyperbolic if and only if it does not contain $\mathbb{Z}^2$, if and only if there is no pair of disjoint subsets $T_1,T_2\subseteq T$ such that $\langle T_1\rangle$ and $\langle T_2\rangle$ commute and are infinite, and there is no subset $T\subseteq S$ such that $(\langle T\rangle,T)$ is an affine Coxeter system of rank $\geq 3$ (note that affine Coxeter systems have been completely classified, and that affine Coxeter groups are virtually abelian).

A Coxeter system $(W,S)$ is said to be \emph{2-spherical} if $m_{ij}$ is finite for every $1\leq i,j\leq n$. Note that such a Coxeter group $W$ has property (FA) of Serre (and therefore $W$ is one-ended) because it is generated by a set $S$ composed of involutions such that, for every $s_1,s_2\in S$, $s_1s_2$ has finite order. 

We will need the following refinement of Stallings' decomposition theorem (we refer the reader to Subsection \ref{stallings_definition} for the definition of a Stallings splitting), which is a rephrasing of Propositions 8.8.1 and 8.8.2 in \cite{davis}. 


\begin{prop}\label{davis}
Let $(G,S)$ be a Coxeter system. If $G$ has more than one end (i.e., if $G$ is infinite and not one-ended), then there exist three proper subsets $S_0,S_1,S_2\subseteq S$ such that $S_1\cup S_2 =S$, $S_1\cap S_2=S_0$, $\langle S_0\rangle$ is finite and $G$ is the amalgamated product of $\langle S_1\rangle$ and $\langle S_2\rangle$ over $\langle S_0\rangle$ (where the injections of $\langle S_0\rangle$ into $\langle S_1\rangle$ and $\langle S_2\rangle$ are given simply by inclusion). By iterating this process, we get a reduced Stallings splitting of $G$ where each vertex group and each edge group is $S$-special (i.e., generated by some subset of $S$), and whose underlying graph is a tree. 
\end{prop}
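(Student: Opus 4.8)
The plan is to read off the one-step splitting from Davis's analysis of the ends of a Coxeter group — it is, after a change of vocabulary, exactly \cite[Propositions 8.8.1 and 8.8.2]{davis} — and then to obtain the full reduced Stallings splitting by an easy induction on $|S|$. Let $\Theta$ be the graph on vertex set $S$ with $s\sim t$ iff $m(s,t)<\infty$ (the Coxeter diagram together with its $2$-labelled edges). The easy half of the one-step statement is: if there is $S_0\subsetneq S$ with $\langle S_0\rangle$ finite such that the full subgraph of $\Theta$ on $S\setminus S_0$ is disconnected, say $S\setminus S_0=A\disjoint B$ with $A,B\neq\es$ and no $\Theta$-edge between $A$ and $B$, then putting $S_1=S_0\cup A$ and $S_2=S_0\cup B$ one sees that the Coxeter relations of $(W,S)$ partition into those supported on $S_1$ and those supported on $S_2$, with none straddling $A$ and $B$ since $m(a,b)=\infty$ there; hence the standard presentation of $W$ is literally that of $\langle S_1\rangle\ast_{\langle S_0\rangle}\langle S_2\rangle$ (inclusions given by inclusion of generating sets), and the usual facts about special subgroups give that $\langle S_1\rangle,\langle S_2\rangle,\langle S_0\rangle$ are Coxeter groups with the expected systems and that the inclusions are injective. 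As $A,B\neq\es$, $S_1$ and $S_2$ are proper, and $\langle S_0\rangle$ is finite by hypothesis. The substantive half — if $W$ has more than one end then such an $S_0$ exists — is Davis's theorem: the number of ends of $W$ is the number of ends of the Davis complex $\Sigma$, and when $\Sigma$ has more than one end a separating compact subset can be taken to correspond to a coset of a finite special subgroup $\langle S_0\rangle$, the separation amounting precisely to the disconnectedness of $\Theta\setminus S_0$. I would simply quote \cite{davis} for this.

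For the iteration I would apply the above to $W$ to obtain $W=\langle S_1\rangle\ast_{\langle S_0\rangle}\langle S_2\rangle$, observe that $\langle S_1\rangle$ and $\langle S_2\rangle$ are Coxeter groups whose standard generating sets have size strictly smaller than $|S|$, and split each of them again whenever it has more than one end; the process terminates since $|S|<\infty$ and each split strictly decreases the sizes of the generating sets of the pieces. When it stops, each remaining vertex group is one-ended or finite, because a Coxeter group with more than one end always admits a further visual split by the first part — in particular infinite dihedral pieces break up into two copies of $\bbZ/2\bbZ$ — and every vertex or edge group produced is generated by a subset of some $S_i$, hence of $S$, so is $S$-special. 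One book-keeping point: when refining a vertex $\langle S_1\rangle=\langle S_1'\rangle\ast_{\langle S_0'\rangle}\langle S_1''\rangle$, an edge group $\langle S_0\rangle$ previously attached to it has $S_0\subseteq S_1$ with $\langle S_0\rangle$ finite, so $S_0$ cannot meet both sides of the new split (else $\langle S_0\rangle$ would contain an infinite dihedral subgroup); hence $S_0\subseteq S_1'$ or $S_0\subseteq S_1''$ and the edge reattaches unambiguously to one of the two new vertices. Thus each refinement replaces a vertex of the underlying graph by an edge with two fresh endpoints, so the underlying graph, built from a single point by successive bisections, is a tree. Finally the resulting graph of groups is a \emph{reduced} Stallings splitting: the edge groups are finite, the vertex groups are one-ended or finite, and it is reduced because at every split $S_0\subsetneq S_1$ and $S_0\subsetneq S_2$, so no edge group coincides with an adjacent vertex group and no edge can be collapsed.

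The only non-formal ingredient is the implication ``$W$ has more than one end $\Rightarrow$ $\Theta\setminus S_0$ disconnects for some finite special $\langle S_0\rangle$''. Stallings' structure theorem by itself merely produces an abstract splitting of $W$ over \emph{some} finite subgroup; promoting this to a splitting that is visible on the Coxeter system $(W,S)$ — i.e.\ over a special subgroup with the two factors also special — is exactly the step that uses the geometry of the Davis complex, and I would take it wholesale from \cite[Propositions 8.8.1 and 8.8.2]{davis}. Everything else in the argument — the translation of the combinatorial condition into an amalgamated product, the termination of the iteration, the tree shape of the underlying graph, and reducedness — is routine.
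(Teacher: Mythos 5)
The paper offers no proof of this proposition at all — it is quoted as ``a rephrasing of Propositions 8.8.1 and 8.8.2 in \cite{davis}'' with nothing further said — so your reconstruction goes beyond what the authors wrote. Your account of the one-step visual split and of the iteration is mostly sound, and your edge-reattachment observation (a finite $\langle S_0\rangle$ cannot straddle the two sides of a later refinement, so it reattaches unambiguously to one of the two new vertices) is correct and worth making explicit. But your final assertion, that the iterated splitting is automatically reduced ``because at every split $S_0\subsetneq S_1$ and $S_0\subsetneq S_2$,'' is wrong. That inequality holds at the moment the edge is created; $\langle S_1\rangle$ may subsequently be refined, and the old edge group $\langle S_0\rangle$ can then coincide with one of the new vertex groups.

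Here is a concrete instance. Take $S=\{x,y,z,w\}$ with $m(x,y)=m(x,z)=m(x,w)=m(y,w)=2$ and $m(y,z)=m(z,w)=\infty$. Splitting along $S_0=\{x,y\}$ gives $W=\langle x,y,z\rangle\ast_{\langle x,y\rangle}\langle x,y,w\rangle$; the right factor is $(\bbZ/2\bbZ)^3$, finite, while the left factor $\langle x,y,z\rangle\cong\bbZ/2\bbZ\times D_\infty$ is two-ended. Its only visual cut is $S_0'=\{x\}$, giving $\langle x,y,z\rangle=\langle x,y\rangle\ast_{\langle x\rangle}\langle x,z\rangle$. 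The edge group $\langle S_0\rangle=\langle x,y\rangle$ reattaches to the new vertex $\langle x,y\rangle$, so edge group and vertex group now coincide and the splitting is not reduced. (With a better first cut, $S_0=\{x\}$, the output is reduced in one step, but your procedure does not prescribe the cut.) The fix is easy: at the end, collapse every edge whose group equals an endpoint vertex group. If $\langle S_0\rangle=\langle S_1'\rangle$ for the edge joining $\langle S_1'\rangle$ to $\langle S_2\rangle$, then $S_0=S_1'$ (distinct subsets of $S$ generate distinct special subgroups) and $S_1'\subseteq S_2$, so the collapse keeps all vertex and edge groups $S$-special and the underlying graph a tree. As written, though, your reducedness claim is false and this final step is missing.
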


We will also use the following lemma, which is a corollary of the previous proposition.

\begin{lemma}\label{Stallings_even}
Let $W$ be a Coxeter group, and let $\Delta$ be a reduced Stallings splitting of $W$. Then $W$ is even if and only if every vertex group of $\Delta$ is even.
\end{lemma}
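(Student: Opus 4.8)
## Proof proposal for Lemma \ref{Stallings_even}

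The plan is to prove both directions by leveraging Proposition \ref{davis} together with the fact (stated earlier, via Deodhar's theorem and the classification of even Coxeter systems) that an even Coxeter group admits a \emph{unique} Coxeter system, so that ``$W$ is even'' can be tested on any one Coxeter generating set. First I would fix the concrete reduced Stallings splitting $\Delta$ supplied by Proposition \ref{davis}: its underlying graph is a tree, and every vertex group and every edge group is $S$-special, of the form $\langle T\rangle$ for some $T\subseteq S$, with all the edge inclusions being literal inclusions of special subgroups. The key elementary observation is that if $(W,S)$ is a Coxeter system and $T\subseteq S$, then $(\langle T\rangle, T)$ is a Coxeter system whose Coxeter matrix is simply the restriction $m|_{T\times T}$; in particular $\langle T\rangle$ is an even Coxeter group whenever $(W,S)$ is even, and conversely the parity of the labels $m(s,s')$ for $s,s'\in T$ is detected inside $\langle T\rangle$.

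For the forward direction, suppose $W$ is even. Then in \emph{its} Coxeter system $(W,S)$ every $m_{ij}$ is even or $\infty$. Each vertex group of $\Delta$ is $\langle T\rangle$ for some $T\subseteq S$, hence is a Coxeter group with Coxeter matrix $m|_{T\times T}$, all of whose entries are even or $\infty$; so each vertex group is an even Coxeter group. For the converse, suppose every vertex group of $\Delta$ is even. I would argue that $S=\bigcup_v T_v$, the union over vertices $v$ of the special subsets $T_v\subseteq S$ generating the vertex groups — this is immediate from the iterated construction in Proposition \ref{davis}, since at each stage $S_1\cup S_2=S$ so no generator is ever lost. Now take any $s,s'\in S$ with $m(s,s')<\infty$, i.e.\ $ss'$ of finite order. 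I claim $s$ and $s'$ lie in a common vertex group. Indeed, consider the action of $W$ on the Bass--Serre tree of $\Delta$: the vertex groups are exactly the (conjugates of) vertex stabilizers up to the splitting, the elements of $S$ are elliptic (each $\langle s\rangle$ is finite hence elliptic), and if $s$ and $s'$ had no common fixed point in the tree then $\langle s,s'\rangle$ would contain a hyperbolic element, contradicting that $ss'$ has finite order (a dihedral group $\langle s,s'\rangle$ with $ss'$ of finite order is finite, hence elliptic, hence fixes a vertex). The stabilizer of that vertex is a conjugate of a vertex group $\langle T_v\rangle$; since elements of $S$ that fix a vertex of the tree lie in the corresponding $S$-special vertex group itself (here I would invoke that $S$-special subgroups are ``visible'' in the splitting — an element of $S$ conjugate into $\langle T_v\rangle$ already lies in $\langle T_v\rangle$, by the normal-form/convexity properties of special subgroups in the Davis splitting), we get $s,s'\in T_v$. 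Since $\langle T_v\rangle$ is even, $m(s,s')$ is even. As $s,s'$ were arbitrary with $m(s,s')<\infty$, the system $(W,S)$ is even, i.e.\ $W$ is an even Coxeter group.

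The main obstacle I anticipate is the converse direction, specifically the claim that two generators $s,s'\in S$ with $ss'$ of finite order must lie together inside a \emph{single} vertex group $\langle T_v\rangle$ (as a subset, not merely up to conjugacy). The ``up to conjugacy'' version follows cleanly from Bass--Serre theory as sketched, but upgrading it to genuine membership in some $T_v$ requires the fact that in the Davis/Stallings splitting the $S$-special subgroups are exactly the subgroups of the form $W_T$ and that $S\cap W_T=T$, together with the statement that each $s\in S$ that is elliptic and fixes a given vertex actually belongs to that vertex's special subgroup; this is where I would lean on the structure theory from \cite{davis} (Propositions 8.8.1--8.8.2) and the standard fact that $W_{T_1}\cap W_{T_2}=W_{T_1\cap T_2}$ for special subgroups. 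A clean alternative, avoiding the tree action, is an induction on the number of edges of $\Delta$: apply the very first splitting $W=\langle S_1\rangle \ast_{\langle S_0\rangle}\langle S_2\rangle$ from Proposition \ref{davis}; if $m(s,s')<\infty$ then since $\langle S_0\rangle$ is finite and $S_1\cup S_2=S$, a short normal-form argument in the amalgam shows $\{s,s'\}\subseteq S_1$ or $\{s,s'\}\subseteq S_2$ (an element of finite order in an amalgamated product is conjugate into a factor, and a product $ss'$ of two $S$-involutions of finite order forces both into one factor up to the special-subgroup visibility again), and then one recurses inside $\langle S_1\rangle$ or $\langle S_2\rangle$ with its induced Stallings splitting, whose vertex groups are among those of $\Delta$ and hence even. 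Either route reduces the lemma to bookkeeping about special subgroups, which I would treat as routine given the cited results.
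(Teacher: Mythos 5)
Your approach matches the paper's almost exactly: forward direction via Proposition \ref{davis} (vertex groups are $S$-special, hence even), and converse via the Bass--Serre tree of the Davis splitting (generators lying in no common vertex group have no common fixed point, hence generate an infinite dihedral group). You were right to single out the ``visibility'' step as the crux, but note that the precise statement you lean on --- that an element of $S$ which is conjugate into a vertex group $\langle T_v\rangle$ must already lie in $\langle T_v\rangle$ --- is false in general. For instance, in $W=\langle a,b,c \mid a^2=b^2=c^2=(ab)^3=(bc)^3=(ac)^\infty=1\rangle$, the reduced Stallings splitting is $\langle a,b\rangle \ast_{\langle b\rangle}\langle b,c\rangle$, and $a$ is conjugate (inside $\langle a,b\rangle\simeq S_3$) to $b\in\langle b,c\rangle$, yet $a\notin\langle b,c\rangle$. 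What does hold, and what you actually need, is only the elementary fact $S\cap\langle T\rangle=T$ for $T\subseteq S$. Your inductive alternative then goes through cleanly and with no visibility input at all: if $m(s,s')<\infty$ but, say, $s\in S_1\setminus S_0$ and $s'\in S_2\setminus S_0$, then $s\in\langle S_1\rangle\setminus\langle S_0\rangle$ and $s'\in\langle S_2\rangle\setminus\langle S_0\rangle$, so $ss'$ is a cyclically reduced length-two element of $\langle S_1\rangle\ast_{\langle S_0\rangle}\langle S_2\rangle$ and has infinite order, a contradiction; hence $\{s,s'\}\subseteq S_1$ or $\{s,s'\}\subseteq S_2$, and one recurses. (The parenthetical ``up to the special-subgroup visibility again'' in your last paragraph should therefore be dropped.) This inductive normal-form argument is exactly what the paper's terse assertion --- that $s_i,s_j$ lying in different vertex groups have no common fixed point in the Bass--Serre tree --- is implicitly resting on, so your unpacking is the correct one.
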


\begin{proof}
If $(W,S)$ is even then by Proposition \ref{davis} every vertex group $W_v$ of $\Delta$ is an $S$-special subgroup of $W$, so $W_v$ is even. Conversely, suppose that every vertex group of $\Delta$ is even, and let $s_i,s_j\in S$. If $s_i$ and $s_j$ belong to the same vertex group $W_v$ of $\Delta$ then $m_{ij}$ is even or infinite as $W_v$ is even. If $s_i$ and $s_j$ do not belong to the same vertex group of $\Delta$ then they don't have a common fixed point in the Bass-Serre tree of $T$, and therefore they generate an infinite dihedral group (otherwise the dihedral group $\langle s_i,s_j\rangle$ would be finite, hence elliptic in the tree $T$), so $m_{ij}=\infty$. 
\end{proof}

\section{Hyperbolic orbifolds}

A compact connected hyperbolic two-dimensional orbifold $\mathcal{O}$ is by definition the quotient of a convex subset $\bar{\mathcal{O}}$ of the hyperbolic plane $\mathbb{H}^2$, with geodesic boundary, by a non-elementary subgroup $G$ of $\mathrm{Isom}^{\pm}(\mathbb{H}^2)$ acting properly discontinuously (equivalently, $G$ is discrete), cocompactly and faithfully on $\bar{\mathcal{O}}$. This group $G$ is by definition the orbifold fundamental group of $\mathcal{O}$, denoted by $\pi_1^{\mathrm{orb}}(\mathcal{O})$. Equivalently, $G$ is a Fuchsian group. 

If $G$ is torsion-free, then $\mathcal{O}$ is a surface. If $G$ does not contain reflections, then the singular set of $\mathcal{O}$ is a finite collection of points ($\mathcal{O}$ has no mirrors), called the \emph{conical points} of $\mathcal{O}$, and we say that $\mathcal{O}$ is a \emph{conical orbifold}. 

Note that if we do not assume that the action $G\actson \bar{\mathcal{O}}$ is faithful, then $G$ is a finite extension of an orbifold group by the kernel of this action (which is largest normal finite subgroup of $G$), which leads to the following definition.

\begin{de}\label{FBO}A group $G$ is called a \emph{finite-by-orbifold group} if it is an extension \[1\rightarrow F\rightarrow G \rightarrow \pi_1^{\mathrm{orb}}(\mathcal{O})\rightarrow 1\]where $\mathcal{O}$ is a conical compact connected hyperbolic two-dimensional orbifold, possibly with (geodesic) boundary, and $F$ is an arbitrary finite group called the \emph{fiber}. An \emph{extended boundary subgroup} of $G$ is the preimage in $G$ of a boundary subgroup of the orbifold fundamental group $\pi_1^{\mathrm{orb}}(\mathcal{O})$. We define in the same way the \emph{extended conical subgroups} of $G$. The orbifold $\mathcal{O}$ is called \emph{the underlying orbifold of $G$}.\end{de}

In this paper, all the orbifolds will be conical, compact, connected, hyperbolic and two-dimensional unless otherwise specified, and we will omit repeating some of these assumptions.

\begin{de}A morphism between two finite-by-orbifold groups is called a \emph{morphism of finite-by-orbifold groups} if it sends each extended boundary subgroup injectively into an extended boundary subgroup, and if it is injective on finite subgroups.
\end{de}

\begin{de}\label{pinch}Let $F\hookrightarrow G\overset{q}{\twoheadrightarrow} \pi_1(\mathcal{O})$ be a finite-by-orbifold group and let $G'$ be a group. Let $\varphi: G\rightarrow G'$ be a morphism. Let $\alpha$ be a two-sided and non-boundary-parallel simple loop on $\mathcal{O}$ representing an element of infinite order, and let $C_{\alpha}=q^{-1}(\alpha)\simeq F\rtimes\mathbb{Z}$ (well defined up to conjugacy). The curve $\alpha$ (or the subgroup $C_{\alpha}$) is said to be pinched by $\varphi$ if $\varphi(C_{\alpha})$ is finite, and the morphism $\varphi$ is said to be \emph{non-pinching} if it does not pinch any two-sided and non-boundary-parallel simple loop. Otherwise, $\varphi$ is said to be \emph{pinching}. 
\end{de}

\begin{de}\label{chi}For a conical hyperbolic orbifold $\mathcal{O}$, we denote by $\chi(\mathcal{O})$ the opposite of the orbifold Euler characteristic of $\mathcal{O}$. If $G$ is an extension of $\pi_1^{\mathrm{orb}}(\mathcal{O})$ by a finite group $F$, we define $\chi(G)=\chi(\mathcal{O})/\vert F\vert$, called the \emph{complexity} of $G$. If $\Sigma$ denotes the underlying surface of $\mathcal{O}$ and $n_1,\ldots,n_k$ denote the orders of the conical points of $\mathcal{O}$, we get \[\chi(G)=-\chi^{\mathrm{Euler}}(\Sigma)+\frac{1}{\vert F\vert}\sum_{i=1}^k \frac{1}{n_i}.\]\end{de}

The following lemma appears in \cite{And18} with the assumption that the orbifolds $\mathcal{O}$ and $\mathcal{O}'$ have nonempty boundary (see \cite[Lemma 2.36]{And18}), but here we need a version of this lemma that remains true without this restriction.

\begin{lemma}\label{complexity}
Let $\mathcal{O}$ and $\mathcal{O}'$ be conical hyperbolic orbifolds. Let $G$ and $G'$ be two finite extensions of the orbifold fundamental groups of $\mathcal{O}$ and $\mathcal{O}'$. Let $\varphi : G \rightarrow G'$ be a nonpinching morphism of finite-by-orbifold groups such that $\varphi(G)$ is not contained in an extended conical or boundary subgroup of $G'$. Then $\chi(G)\geq \chi(G')$, with equality if and only if $\varphi$ is an isomorphism.
\end{lemma}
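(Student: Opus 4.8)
The plan is to reduce to the bounded-boundary case already proved in \cite[Lemma 2.36]{And18} by a doubling (or capping-off) argument. First I would recall that $\chi(G)$ is defined via the opposite orbifold Euler characteristic divided by the fiber order, and that the formula in Definition~\ref{chi} is additive under gluing along boundary curves. The key observation is that the hypotheses on $\varphi$ — non-pinching, a morphism of finite-by-orbifold groups, and with image not contained in an extended conical or boundary subgroup — are all stable under the natural operations on $\pi_1^{\mathrm{orb}}$ induced by filling in boundary components with disks carrying a single conical point of large order, or by doubling the orbifold along its boundary. So the strategy is: modify $\mathcal{O}$ and $\mathcal{O}'$ near each boundary circle so as to make them boundaryless (or reduce to the case with boundary), show that $\varphi$ extends to a morphism $\hat\varphi$ between the modified finite-by-orbifold groups that is still non-pinching and still has non-degenerate image, apply the known inequality $\chi(\hat G)\geq\chi(\hat G')$, and then track how $\chi$ changes under the modification to recover $\chi(G)\geq\chi(G')$.

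Concretely, here is the order of steps I would carry out. Step 1: handle the case where $\mathcal{O}$ already has nonempty boundary — this is exactly \cite[Lemma 2.36]{And18}, so nothing to do. Step 2: suppose $\mathcal{O}$ is closed. If $\mathcal{O}'$ also is closed, one can either invoke a direct computation comparing areas (via the Gauss–Bonnet formula, since $\chi(\mathcal{O})$ is proportional to the hyperbolic area of $\bar{\mathcal{O}}$ and a non-pinching $\pi_1$-morphism with non-degenerate image is, after passing to finite-index subgroups that are surface groups, $\pi_1$-surjective up to finite index or factors through a covering), or else excise a small disk from $\mathcal{O}$ around a generic point and a corresponding disk from $\mathcal{O}'$ to reduce to Step 1; the excision changes $\chi$ by a controlled amount on both sides. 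Step 3: the equality case. Here I would argue that if $\chi(G)=\chi(G')$ then no "room" was lost: the non-pinching condition forces $\varphi$ to be injective (a non-injective $\pi_1$-morphism of hyperbolic orbifold groups that does not pinch a curve must either kill a boundary/conical element — excluded — or strictly decrease complexity, essentially because the image is again a Fuchsian-type group of strictly smaller covolume), and surjectivity onto a non-degenerate image of equal complexity forces the index to be $1$; injectivity plus finite image on finite subgroups plus the complexity bookkeeping then gives that $\varphi$ is an isomorphism. The converse direction of the equality statement is trivial since $\chi$ is an isomorphism invariant.

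The main obstacle I anticipate is Step 2/3 in the closed–to–closed case, precisely because one cannot simply cite \cite{And18}: one must either set up the doubling carefully so that the doubled orbifold is still conical (no new mirrors appear — doubling along geodesic boundary of a mirrorless orbifold does stay mirrorless, which is fine) and check that $\varphi$ genuinely extends to the double (this requires that $\varphi$ send the boundary subgroups compatibly, which is part of the morphism-of-finite-by-orbifold-groups hypothesis), or one must run the area/covering argument by hand, dealing with the fiber $F$ and the possibility that $\varphi$ factors through an orbifold covering of positive degree (which is exactly where the strict inequality $\chi(G)>\chi(G')$ comes from). A secondary subtlety is making sure that after capping/doubling the image of the extended morphism is still not contained in an extended conical or boundary subgroup of the modified target — this should follow because such a containment would pull back to a containment before modification, but it needs a sentence. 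Once these points are handled, the statement follows from the bounded case plus elementary additivity of Euler characteristic.
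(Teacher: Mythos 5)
Your Step 1 (nonempty boundary, cite \cite[Lemma 2.36]{And18}) matches the paper, as does the reduction from $G$ to $Q=\pi_1^{\mathrm{orb}}(\mathcal{O})$ via \cite[Lemmas 2.34, 2.38]{And18}. The gap is in Step 2, the closed--closed case, which is precisely the content the paper had to add beyond \cite{And18}. Your three concrete reduction schemes do not work there. Doubling and capping require boundary and so do not apply to closed orbifolds. The excision scheme fails at the point where you would need $\varphi$ to ``genuinely extend'' to the punctured groups: the surjection $q\colon\pi_1(\mathcal{O}\setminus\mathrm{pt})\twoheadrightarrow\pi_1(\mathcal{O})$ kills a normally generated (infinitely generated) subgroup, and a lift $\hat\varphi$ of $\varphi$ along $q$ and $q'$ has no reason to send the new boundary loop $\partial$ into the cyclic boundary subgroup $\langle\partial'\rangle$ of the punctured target --- it only has to land in $\ker q'$, which is much bigger. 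Being a morphism of finite-by-orbifold groups constrains the \emph{original} boundary subgroups, not the artificial ones created by excision, so the hypothesis you invoke does not help. The Gauss--Bonnet alternative is also not a proof as stated: the phrase ``$\pi_1$-surjective up to finite index or factors through a covering'' is exactly the dichotomy you would need to establish (why should a non-pinching, non-injective surjection onto a finite-index subgroup behave like a covering?), and Step 3 is circular for the same reason: ``a non-injective non-pinching morphism must strictly decrease complexity'' is the statement being proved, not a fact you can cite.

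The paper handles the closed case by a different, more elementary device: it passes to the underlying \emph{surface} groups by killing the conical elements, obtaining an induced \emph{surjection} $\sigma\colon\pi_1(\Sigma)\to\pi_1(\Sigma'')$, and then compares genera by looking at abelianizations, splitting into four cases according to the orientability of $\Sigma$ and $\Sigma''$ (the orientable-to-nonorientable case needs an extra argument with the orientation double cover). Injectivity in the equality case then falls out of Hopficity of surface groups plus the conical-point bookkeeping in the formula of Definition~\ref{chi}. If you want to salvage your proposal, you should replace Step 2 with this abelianization argument; the doubling/capping/excision language should be dropped, since it does not give a morphism between the modified groups.
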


\begin{proof}Let $F$ and $F'$ denote the fibers of $G$ and $G'$ respectively, and let $Q=G/F=\pi_1^{\mathrm{orb}}(\mathcal{O})$ and $Q'=G'/F'=\pi_1^{\mathrm{orb}}(\mathcal{O}')$. By Lemma 2.38 in \cite{And18}, $\varphi$ induces a nonpinching morphism of orbifolds $\psi: Q \rightarrow Q'$ whose image is not contained in a boundary or conical subgroup of $Q'$. By \cite[Lemma 2.34]{And18}, $\psi(Q)$ has finite index in $Q'$ (note that there is a typo in \cite[Lemma 2.34]{And18}: "infinite" should be replaced by "finite"). Let $d=[Q':\psi(Q)]$. We have $\chi(\psi(Q))=d\chi(Q')$, and we will prove that $\chi(Q)\geq \chi(\psi(Q))$ with equality if and only if $\psi$ is injective. But $\varphi(F)$ is contained in $F'$, and $\varphi$ is injective on $F$, so we get $\chi(G)\geq \chi(G')$ with equality if and only if $d=1$, $\psi$ is injective and $\varphi(F)=F'$, or equivalently if and only if $\varphi$ is an isomorphism.

\smallskip \noindent It remains to prove that $\chi(Q)\geq \chi(\psi(Q))$ with equality if and only if $\psi$ is injective. As already mentioned, in the case where $\mathcal{O}$ and $\mathcal{O}'$ have nonempty boundary, this is true by \cite[Lemma 2.36]{And18} (note that for finite extensions of conical hyperbolic orbifolds with nonempty boundary, the complexity $\chi$ defined in \cite{And18} coincides with the one given in Definition \ref{chi} above). For simplicity, let us assume that $\mathcal{O}$ and $\mathcal{O}'$ are closed, even though the argument given below would work for orbifolds with nonempty boundary as well (note that it can't happen that $\mathcal{O}$ is closed while $\mathcal{O}'$ has nonempty boundary, or vice versa). Let $\mathcal{O}''$ denote the underlying orbifold of $\psi(Q)$, and let $\Sigma$ and $\Sigma''$ be the underlying surfaces of $\mathcal{O}$ and $\mathcal{O}''$ respectively. There is a canonical map $\pi$ from $Q$ to $\pi_1(\Sigma)$ given by forgetting the orbifold structure, i.e., by killing the conical elements. Similarly, let $\pi'': \psi(Q)\rightarrow \pi_1(\Sigma'')$ denote the canonical map. Note that the conical elements of $Q$ belong to the kernel of $\pi''\circ \psi$, so there exists a (unique) morphism $\sigma : \pi_1(\Sigma)\rightarrow \pi_1(\Sigma')$ such that $\sigma\circ \pi = \pi''\circ \psi$. Moreover, $\sigma$ is surjective. 

\smallskip \noindent First, suppose that the surfaces $\Sigma$ and $\Sigma''$ are orientable. One easily sees, for instance by considering the abelianizations of $\pi_1(\Sigma)$ and $\pi_1(\Sigma'')$, that $\Sigma$ has larger genus than $\Sigma''$. Therefore $-\chi^{\mathrm{Euler}}(\Sigma)\geq -\chi^{\mathrm{Euler}}(\Sigma'')$. Then, since $\psi$ is injective on the conical subgroups of $Q$, every conical subgroup of $\psi(Q)$ has an isomorphic preimage in $Q$. It follows from the formula given in Definition \ref{chi} that $\chi(Q)\geq \chi(\psi(Q))$. Moreover, it is clear from the formula that this is an equality if and only $\chi^{\mathrm{Euler}}(\Sigma)=\chi^{\mathrm{Euler}}(\Sigma'')$ and every conical subgroup of $\psi(Q)$ has a unique preimage in $Q$, in which case $Q$ and $\psi(Q)$ are isomorphic, and $\psi$ is injective by the Hopf property.

\smallskip \noindent If $\Sigma$ and $\Sigma''$ are nonorientable, the same proof as in the previous paragraph works without change, using the fact that the abelianization of a nonorientable closed surface of genus $g$ is isomorphic to $\mathbb{Z}^{g-1}\times \mathbb{Z}/2\mathbb{Z}$.

\smallskip \noindent If $\Sigma$ is nonorientable of genus $g$ and $\Sigma''$ is orientable of genus $g''$ then we get $g-1\geq 2g''$, again by considering the abelianizations, and thus $-\chi^{\mathrm{Euler}}(\Sigma)=g-2\geq 2g''-2 = -\chi^{\mathrm{Euler}}(\Sigma'')$ and $\chi(Q)\geq \chi(\psi(Q))$. This is a strict inequality as otherwise $g=2g''$, which contradicts $g-1\geq 2g''$. 

\smallskip \noindent Last, if $\Sigma$ is orientable of genus $g$ and $\Sigma''$ is nonorientable of genus $g''$ then we get $2g\geq g''$, and thus $-\chi^{\mathrm{Euler}}(\Sigma)=2g-2\geq g''-2 = -\chi^{\mathrm{Euler}}(\Sigma'')$ and $\chi(Q)\geq \chi(\psi(Q))$. Let us prove that this is a strict inequality. The surface $\Sigma''$ has an orientable cover $\Sigma''_0$ of degree two, and so $H''=\pi_1(\Sigma''_0)$ is a subgroup of $\pi_1(\Sigma'')$ of index 2. One can check that the genus $g_0''$ of $\Sigma''_0$ is equal to $g''-1$. The preimage $H$ of $H''$ in $\pi_1(\Sigma)$ by $\sigma$ is a subgroup of index 2, so $H''$ is isomorphic to the fundamental group of a closed orientable surface $\Sigma_0$ of genus $2g-1$. Therefore, if $2g-2=g''-2$ then $\sigma$ induces an isomorphism between $H$ and $H''$, and thus $\sigma$ is an isomorphism between $\pi_1(\Sigma)$ and $\pi_1(\Sigma')$, which is a contradiction.\end{proof}




\color{black}

\section{Splittings of groups}

\subsection{Simplicial trees and splittings of groups}\label{simplicial}

We consider actions of finitely generated groups on simplicial trees. However, it is often very useful to think of trees as metric spaces rather than as combinatorial objects: we define a metric $d$ on a simplicial tree $T$ making each edge isometric to the closed interval $[0,1]$, and a simplicial action of a group $G$ on $T$ can naturally be promoted to an action by isometry on $(T,d)$. This metric is called the simplicial metric. These two points of view will be used extensively in this paper.

An element $g\in G$ or a subgroup $H\subseteq G$ is said to be \emph{elliptic} in $T$ if it fixes a point in $T$. An element $g$ that is not elliptic is \emph{hyperbolic} in $T$: there exists a unique subset $A(g)\subseteq T$ isometric to $\mathbb{R}$ that is preserved by $g$ and on which $g$ acts by translation.\color{black}

Let $G$ be a finitely generated group and let $\mathcal{A}$ be a class of groups. A \emph{splitting} or \emph{decomposition} of $G$ over $\mathcal{A}$ is an isomorphism between $G$ and the fundamental group of a graph of groups $\Delta$ with edge groups in $\mathcal{A}$. In general we omit mentioning the isomorphism and simply say that $\Delta$ is a splitting of $G$. By Bass-Serre theory \textcolor{black}{(see for instance Serre's book \cite{Serre})}, a splitting of $G$ over $\mathcal{A}$ corresponds to an action of $G$ without edge-inversions on a simplicial tree $T$, called the Bass-Serre tree of the splitting, such that edge stabilizers are in $\mathcal{A}$. 


A splitting $\Delta$ of $G$ is said to be \emph{reduced} if the following condition holds: if $e=[v,w]$ is an edge in the Bass-Serre tree of $\Delta$ such that $G_e=G_v$, then $v$ and $w$ are in the same $G$-orbit (in other words, the endpoints of $e$ must be identified in $\Delta$, \textcolor{black}{and so the image of $e$ in $\Delta$ is a loop}). 


\subsection{Stallings decompositions}\label{stallings_definition}

Here we take for $\mathcal{A}$ (defined in Subsection \ref{simplicial} above) the class of finite groups. Under the assumption that there exists a constant $K<\infty$ such that every finite subgroup of $G$ has order at most $K$, Linnell proved in \cite{Lin83} that $G$ splits as a finite graph of groups with finite edge groups and whose vertex groups do not split non-trivially over finite groups. By Stallings theorem about ends of groups, the vertex groups of this splitting are finite or one-ended. Such a splitting is called a \emph{Stallings splitting or decomposition} of $G$. It is not unique in general, but the conjugacy classes of one-ended vertex groups do not depend on a particular Stallings decomposition of $G$ (in other words, the $G$-orbits of one-ended vertex groups are the same in all Bass-Serre trees of Stallings decompositions of $G$); moreover, the conjugacy classes of finite vertex groups do not depend on a particular \emph{reduced} Stallings decomposition of $G$. A subgroup of $G$ that is conjugate to a one-ended vertex group in a Stallings decomposition of $G$ is called a \emph{one-ended factor} of $G$.

Recall that if $G$ is a hyperbolic group or a Coxeter group then $G$ has only finitely many conjugacy classes of finite subgroups, and thus $G$ has Stallings decompositions. More generally, any finitely generated group whose existential theory is contained in that of a hyperbolic or Coxeter group has Stallings decompositions (because the order of a finite subgroup is bounded in such a group).

In fact, if $G$ is a Coxeter group, Proposition \ref{davis} shows that one can read a Stallings decomposition of $G$ directly from a Coxeter presentation.

\subsection{The canonical JSJ decomposition of a hyperbolic group}\label{canonical_JSJ}

We denote by $\mathcal{Z}$ the class of groups that are virtually cyclic with infinite center. Here we take for $\mathcal{A}$ (defined in Subsection \ref{simplicial}) the class $\mathcal{Z}$. 

Any one-ended hyperbolic group has a canonical splitting over $\mathcal{Z}$, that is a splitting over $\mathcal{Z}$ that can be constructed in a natural and uniform way (see \cite{Sel97, Bow98, GL16}). In \cite{Bow98}, this decomposition is constructed from the topology of the Gromov boundary of $G$, and in \cite{GL16} it is constructed by means of the so-called tree of cylinders (see Subsection \ref{tree of cylinders} below for more details about the tree of cylinders). This decomposition is a powerful tool to study the group $G$ and its first-order theory. We mention below the features of the canonical JSJ decomposition of $G$ over $\mathcal{Z}$ that will be important for this paper, and we refer the reader to \cite{GL16} for further details.

\begin{de}\label{def_rig_vert} Let $G$ be a one-ended hyperbolic group. Let $T$ be a splitting of $G$ over $\mathcal{Z}$. A vertex $v$ of $T$ and its stabilizer $G_v$ are said to be \emph{rigid} if $G_v$ is \emph{universally elliptic}, i.e., if it is elliptic in every splitting of $G$ over $\mathcal{Z}$. Otherwise, $v$ and $G_v$ are said to be \emph{flexible}.\end{de}

The following terminology was introduced by Rips and Sela in \cite{RS97}, see also \cite[Definition 5.13]{GL16}.

\begin{de}\label{QH}Let $\Delta$ be a graph of groups and let $G$ be its fundamental group. A vertex $v$ of $\Delta$, and its stabilizer $G_v$, are said to be \emph{quadratically hanging} (denoted by QH) if $G_v$ is a finite-by-orbifold group (see Definition \ref{FBO}) such that, for any edge $e$ incident to $v$ in $\Delta$, the edge group $G_e$ is contained in an extended boundary subgroup of $G_v$ or is finite (in which case $G_e$ is contained in an extended conical subgroup or in the fiber of $G_v$).\end{de}

\begin{rk}\label{empty_boundary}
Note that the underlying orbifold of $G_v$ does not necessarily have non-empty boundary or conical points.
\end{rk}

The following theorem is one of the most important results of the theory of JSJ decompositions of hyperbolic groups: it provides a description of the flexible vertices of the canonical JSJ decomposition over $\mathcal{Z}$. The result below is stated for a hyperbolic group, but it remains true in a much broader context. We refer the reader to \cite[Chapter III]{GL16} (see in particular \cite[Theorem 6.2]{GL16}).

\begin{te}\label{flexible implique rigide}Let $G$ be a one-ended hyperbolic group. If a vertex group $G_v$ of the canonical JSJ decomposition of $G$ over $\mathcal{Z}$ is flexible, then $G_v$ is quadratically hanging (see Definition \ref{QH}).\end{te}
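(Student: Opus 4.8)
The plan is to reduce the statement to the general JSJ machinery of Guirardel--Levitt. First I would recall that a one-ended hyperbolic group $G$ is finitely presented and accessible, and that the class $\mathcal{Z}$ of virtually cyclic groups with infinite center is closed under the operations required to run the JSJ theory of \cite{GL16} (it is stable under taking subgroups commensurable to a given one and satisfies the relevant smallness hypotheses inside a hyperbolic group, since virtually cyclic subgroups of $G$ are precisely the small subgroups). Thus $G$ admits a canonical JSJ decomposition over $\mathcal{Z}$, namely the tree of cylinders of the (deformation space of the) JSJ; this is the splitting referred to in the statement. The point is then to identify its flexible vertices.

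The key step is to invoke the description of flexible vertices in a JSJ decomposition over slender (here virtually cyclic) subgroups: by \cite[Theorem 6.2]{GL16}, in the canonical JSJ over $\mathcal{Z}$ of a finitely presented one-ended group, every flexible vertex group is quadratically hanging (QH), i.e. it is a finite-by-orbifold group in the sense of Definition~\ref{FBO} whose incident edge groups are contained in extended boundary subgroups or are finite. The only thing to check to apply that theorem is that the ambient group is hyperbolic enough that the relevant ``fillings'' produce the two-dimensional hyperbolic orbifold structure rather than a more degenerate Seifert-type piece; this is automatic here because $G$ is word-hyperbolic, so any QH vertex group is itself hyperbolic, hence its underlying orbifold carries a hyperbolic (not Euclidean) structure, and the fiber is finite because finite normal subgroups of a hyperbolic group are bounded. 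One should also note Remark~\ref{empty_boundary}: the orbifold may be closed (no boundary, no conical points), which happens exactly when $G_v = G$ is itself a closed hyperbolic orbifold group up to finite kernel; this case is allowed by Definition~\ref{QH} and causes no trouble.

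Concretely, the steps in order are: (1) verify $\mathcal{Z}$ is an admissible edge-group class for \cite{GL16} inside a hyperbolic group and that the tree of cylinders construction applies, giving the canonical JSJ tree $T$; (2) recall that rigid vertices are by Definition~\ref{def_rig_vert} the universally elliptic ones, so ``flexible'' means there is some $\mathcal{Z}$-splitting in which $G_v$ acts non-trivially; (3) apply \cite[Theorem 6.2]{GL16} to conclude such a $G_v$ is QH; (4) upgrade the abstract QH structure to the finite-by-(hyperbolic orbifold) form of Definition~\ref{FBO} using hyperbolicity of $G$ to bound the fiber and to force a hyperbolic orbifold, and to rule out the exceptional non-orbifold flexible pieces that can occur over more general edge groups.

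The main obstacle is step~(4): matching the output of the general Guirardel--Levitt theorem --- whose flexible vertices are a priori only ``QH with fiber'' in their terminology, possibly with edge groups merely of finite index in boundary subgroups and with the orbifold a priori allowed to be Euclidean or to have empty boundary in subtle ways --- to the precise normalized notion of quadratically hanging subgroup we have fixed in Definition~\ref{QH}. This is essentially bookkeeping, handled by the hyperbolicity of $G$ (which excludes Euclidean orbifolds and infinite fibers) and by passing to the tree of cylinders (which normalizes the edge groups to be exactly extended boundary subgroups rather than finite-index subgroups of them), but it is the only place where something must actually be verified rather than cited verbatim; everything else is a direct appeal to \cite{GL16} and to standard properties of hyperbolic groups.
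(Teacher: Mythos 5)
Your proposal is correct and takes essentially the same approach as the paper: the theorem is a direct citation of \cite[Theorem 6.2]{GL16}, which is exactly what the paper does (it explicitly refers the reader to \cite[Chapter III]{GL16} and does not attempt an independent proof). Your step~(4) commentary — using hyperbolicity of $G$ to exclude Euclidean orbifolds and infinite fibers, and passing to the tree of cylinders to normalize edge groups to actual extended boundary subgroups — is accurate and is effectively how one reconciles the Guirardel--Levitt terminology with Definition~\ref{QH}, but the paper treats all of this as implicit in the citation rather than spelling it out.
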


In this paper, we refer to the canonical JSJ decomposition over $\mathcal{Z}$ constructed in \cite{GL16} by means of the tree of cylinders as \emph{the} $\mathcal{Z}$-JSJ decomposition of $G$, denoted by $\mathbf{JSJ}_G$. Proposition \ref{JSJ} below summarizes the properties of $\mathbf{JSJ}_G$ that will be useful.  

\begin{prop}\label{JSJ}Let $G$ be a one-ended hyperbolic group and let $T$ be the Bass-Serre tree of $\mathbf{JSJ}_G$.
\begin{enumerate}[(1)]
\item\textbf{Bipartition.} Every edge of $T$ joins a vertex labelled by a maximal virtually cyclic group to a vertex labelled by a group which is not virtually cyclic.
\item If $v$ is a QH vertex of $T$, then for every edge $e$ incident to $v$ in $T$, the edge group $G_e$ coincides with an extended boundary subgroup of $G_v$.
\item If $v$ is a QH vertex of $T$, then for every extended boundary subgroup $H$ of $G_v$, there exists an edge $e$ incident to $v$ in $T$ such that $G_e=H$. Moreover, the edge $e$ is unique.
\item \textbf{Acylindricity.} If an element $g\in G$ of infinite order fixes a segment of length $\geq 2$ in $T$, then this segment has length exactly 2 and its midpoint has virtually cyclic stabilizer. 
\item If $G_v$ is a flexible vertex group of $T$, then it is QH (as already mentioned in Theorem \ref{flexible implique rigide}).
\end{enumerate}
\end{prop}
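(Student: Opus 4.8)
The plan is to deduce Proposition \ref{JSJ} by collecting standard facts about \emph{trees of cylinders}, the key point being that $\mathbf{JSJ}_G$ is, by its very definition in \cite{GL16}, the tree of cylinders $T_c$ of (any tree in) the $\mathcal{Z}$-JSJ deformation space of $G$, formed for the commensurability (co-elementarity) relation on infinite $\mathcal{Z}$-subgroups, which is admissible because $G$ is hyperbolic. I would first isolate the one input specific to hyperbolicity that is used throughout: the existence of \emph{elementary closures}. In a hyperbolic group every infinite-order element $g$ lies in a unique maximal virtually cyclic subgroup $E(g)$; two infinite virtually cyclic subgroups are commensurable if and only if they have the same elementary closure; and $E(g) = \Comm_G(\langle g\rangle)$, so it is its own relative commensurator inside every subgroup of $G$ containing it, and two distinct maximal virtually cyclic subgroups of $G$ intersect in a finite group. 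The only other external ingredient needed is Bowditch's theorem, quoted above as Theorem \ref{flexible implique rigide}.

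Points (1) and (5). The bipartition of $T_c$ into a set $V_0$ of \emph{cylinder} vertices and a set $V_1$ of (images of) vertices of the JSJ tree lying in at least two cylinders, with edges joining each $V_1$-vertex to every cylinder it meets, is the defining combinatorial structure of a tree of cylinders. The stabilizer of a cylinder is the setwise stabilizer of a commensurability class of infinite virtually cyclic subgroups, which by the elementary-closure dichotomy equals the maximal virtually cyclic group $E(g)$ for any $g$ representing that class; this is the ``maximal virtually cyclic'' side of (1). For the other side, a $V_1$-vertex $v$ meets two distinct cylinders, so two edges at $v$ carry edge groups in different commensurability classes; being edge groups of a $\mathcal{Z}$-splitting these are infinite, so $G_v$ contains two non-commensurable infinite-order elements and hence is not virtually cyclic. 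Statement (5) is then simply Theorem \ref{flexible implique rigide} applied to $\mathbf{JSJ}_G$, which is the canonical $\mathcal{Z}$-JSJ in the sense of that theorem.

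Points (2) and (3). I would fix a QH vertex $v$, so that $G_v$ is a finite-by-orbifold group over a compact hyperbolic $2$-orbifold $\mathcal{O}$. Since $G$ is one-ended it admits no splitting over a finite group, hence $\mathbf{JSJ}_G$ has no finite edge group, and the QH condition forces every edge group at $v$ to be an infinite virtually cyclic subgroup contained in an extended boundary subgroup of $G_v$, each such extended boundary subgroup being itself infinite virtually cyclic. The geometric facts I would invoke about $\mathcal{O}$ are that distinct boundary components give non-commensurable subgroups of $\pi_1^{\mathrm{orb}}(\mathcal{O})$ and that each boundary subgroup is self-commensurating there, so that each extended boundary subgroup is self-commensurating in $G_v$. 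It then follows that the cylinders meeting $v$ are in bijection with the extended boundary subgroups of $G_v$, and that the group of the edge from $v$ to the cylinder of an extended boundary subgroup $B$ computes to be exactly $B$; this is the content of the local analysis of QH vertices in a tree of cylinders carried out in \cite{GL16}, and it gives (2) together with (3), uniqueness in (3) being the injectivity of this bijection. (If $\mathcal{O}$ is closed the statements are vacuous, consistently with Remark \ref{empty_boundary}.)

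Point (4). By \cite{GL16} --- or directly from the fact, recorded above, that distinct maximal virtually cyclic subgroups of a hyperbolic group intersect in a finite group --- the action of $G$ on $T_c = \mathbf{JSJ}_G$ is $2$-acylindrical: the pointwise stabilizer of any segment of length at least $3$ is finite, so an infinite-order $g$ fixes at most a segment of length $2$. Suppose then that $g$ fixes a segment $a - m - b$ of length exactly $2$ with $a \neq b$. By the bipartition, either $m \in V_0$, in which case $G_m$ is virtually cyclic by (1), as desired; or $m \in V_1$ and $a, b \in V_0$, in which case $\langle g\rangle$ lies in both of the maximal virtually cyclic subgroups $G_a$ and $G_b$, forcing $a$ and $b$ to be one and the same cylinder, a contradiction. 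Hence the midpoint of such a segment is always a cylinder vertex, which establishes (4). The step I expect to be the main obstacle is the edge-group computation at QH vertices underlying (2)--(3): one must rule out that the tree-of-cylinders construction shrinks an edge group to a proper finite-index subgroup of a boundary subgroup, or merges or omits boundary subgroups, and this rests on the self-commensuration of boundary subgroups of compact hyperbolic $2$-orbifolds together with the precise form of the construction; everything else reduces to the defining properties of trees of cylinders, to this $2$-acylindricity, and to the elementary-closure dichotomy in hyperbolic groups.
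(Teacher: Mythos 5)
The paper states Proposition \ref{JSJ} without proof, presenting it explicitly as a summary of properties of the tree-of-cylinders $\mathcal{Z}$-JSJ decomposition and referring to \cite{GL16} (and to Theorem \ref{flexible implique rigide} for item (5)). Your sketch correctly unpacks how each item follows from the tree-of-cylinders construction together with the elementary-closure dichotomy in hyperbolic groups, which is precisely the source the paper's citation points to; in particular your treatment of the bipartition, the self-commensuration of extended boundary subgroups behind (2)--(3), and the derivation of $2$-acylindricity from the fact that distinct maximal virtually cyclic subgroups of a hyperbolic group have finite intersection all match the intended (implicit) argument.
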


\begin{rk}
Note that each virtually cyclic vertex group of $\mathbf{JSJ}_G$ is rigid (in the sense of \ref{def_rig_vert}) because it contains an edge group as a subgroup of finite index, and edge groups are universally elliptic by definition of a JSJ decomposition (see \cite{GL16}).

\smallskip \noindent Note that in the definition of a QH subgroup (see Definition \ref{QH}), every infinite incident edge group is assumed to be contained in an extended boundary subgroup of $G_v$, but is not necessarily equal to this subgroup, so the fourth point above is stronger than the definition of a QH subgroup.
\end{rk}

When the hyperbolic group $G$ is not one-ended, we consider decompositions of $G$ over the class of groups that are either finite or virtually cyclic with infinite center, denoted by $\overline{\mathcal{Z}}$. Such a decomposition of $G$ can be obtained from a reduced Stallings splitting of $G$, say $\mathbf{S}_G$, by replacing each vertex $x$ such that $G_x$ is one-ended by $\mathbf{JSJ}_{G_x}$ (the canonical JSJ decomposition of $G_x$ over $\mathcal{Z}$). The new edge groups are defined as follows: if $e=[x,y]$ is an edge of (the Bass-Serre tree of) $\mathbf{S}_G$, then $G_e$ is finite, so $G_e$ fixes a vertex $x'$ in $\mathbf{JSJ}_{G_x}$ and a vertex $y'$ in $\mathbf{JSJ}_{G_y}$, and the edge $e$ in $\mathbf{S}_G$ is simply replaced by the edge $e'=[x',y']$ in $\mathbf{JSJ}_G$. Note that the vertices $x'$ and $y'$ fixed by $G_e$ are not necessarily unique, and that the reduced Stallings splitting $\mathbf{S}_G$ is not unique in general, therefore the resulting splitting of $G$ is not unique in general, but for convenience we will still use the notation $\mathbf{JSJ}_G$ to denote one such splitting.

We now define a subgroup of $\mathrm{Aut}(G)$, called the modular group of $G$, as follows.

\begin{de}\label{modul}Let $G$ be a one-ended hyperbolic group. The \emph{modular group} $\mathrm{Mod}(G)$ of $G$ is the subgroup of $\mathrm{Aut}(G)$ consisting of all automorphisms $\sigma$ satisfying the following conditions:
\begin{enumerate}[(1)]
\item[$\bullet$]the restriction of $\sigma$ to each rigid vertex group (cf. \ref{def_rig_vert}) of $\mathbf{JSJ}_G$ coincides with the conjugacy by an element of $G$;
\item[$\bullet$]the restriction of $\sigma$ to each finite subgroup of $G$ coincides with the conjugacy by an element of $G$;
\item[$\bullet$]$\sigma$ acts trivially on the underlying graph of $\mathbf{JSJ}_G$.
\end{enumerate}
\end{de}

The following result was proved by Sela for torsion-free hyperbolic groups in \cite{Sel09} and later generalized by Reinfeldt and Weidmann to hyperbolic groups possibly with torsion (see \cite[Theorem 4.2]{RW14}). The proof relies on the so-called \emph{shortening argument} of Sela.

\begin{te}\label{SA}Let $\Gamma$ be a hyperbolic group and let $G$ be a one-ended finitely generated group. There exists a finite set $F\subseteq G\setminus\lbrace 1\rbrace$ such that, for every non-injective homomorphism $\phi : G\rightarrow\Gamma$, there exists a modular automorphism $\sigma\in\mathrm{Mod}(G)$ such that $\phi\circ\sigma$ kills an element of $F$.
\end{te}

\subsection{Centered splittings}\label{centered_construction}

\subsubsection{Definition of a centered splitting}

A centered splitting of a group $G$ is a splitting over $\overline{\mathcal{Z}}$ (the class of groups that are either finite or virtually cyclic with infinite center) that satisfies a list of nice properties inherited from the canonical JSJ decomposition of a one-ended hyperbolic group (see Subsection \ref{canonical_JSJ}), even though the group $G$ is not assumed to be one-ended and finite edge groups are allowed. The following definition is a slight variant of Definition 3.8 in \cite{And18}. Recall (see Remark \ref{empty_boundary}) that we allow QH vertex groups to have empty boundary (which is not the case in \cite{And18}).


\begin{de}\label{centered}Let $\Delta$ be a graph of groups and let $G$ be its fundamental group. Let $V$ be the set of vertices of (the underlying graph of) $\Delta$. We suppose that $\vert V\vert \geq 2$. The graph $\Delta$ is said to be \emph{centered} if the following conditions hold.
\begin{enumerate}[(1)]
\item \textbf{Strong bipartition.} The underlying graph is bipartite in a strong sense: there exists a vertex $v\in V$, called the \emph{central vertex}, such that every edge connects $v$ to a vertex in $V\setminus \lbrace v\rbrace$. Moreover, the vertex $v$ is QH. 
\item For every edge $e$, the edge group $G_e$ coincides with an extended boundary or conical subgroup of $G_v$.
\item For every extended boundary or conical subgroup $H$ of $G_v$, there exists an edge $e$ such that $G_e$ is conjugate to $H$ in $G_v$. Moreover, the edge $e$ is unique.
\item \textbf{Acylindricity.} Let $H$ be a subgroup of $G$, and suppose that $H$ is not contained in the fiber of $G_v$. If $H$ fixes a segment of length $\geq 2$ in the Bass-Serre tree of the splitting, then this segment has length exactly 2 and its endpoints are translates of $v$.
\end{enumerate}
\end{de}



\subsubsection{Construction of a centered splitting from $\mathbf{JSJ}_G$}

Let us explain how centered splittings appear naturally. The construction described below will be used several times in this paper. 

Let $G$ be a hyperbolic group. If the group $G$ is one-ended and if the canonical $\mathcal{Z}$-JSJ decomposition $\mathbf{JSJ}_G$ has at least two vertices and at least one QH vertex $v$, then the splitting $\mathbf{C}_G$ of $G$ obtained from $\mathbf{JSJ}_G$ by collapsing to a point every connected component of $\mathbf{JSJ}_G\setminus \lbrace v\rbrace$ is a centered splitting (this is an easy consequence of Proposition \ref{JSJ}).

If $G$ is not assumed to be one-ended, we describe below a similar but slightly more complicated construction. Let $\mathbf{JSJ}_G$ be a splitting of $G$ over $\overline{\mathcal{Z}}$ as defined at the end of Subsection \ref{canonical_JSJ} (this splitting is not canonical anymore in general). Suppose that $\mathbf{JSJ}_G$ has at least two vertices, and that it has at least one QH vertex $v$ whose underlying orbifold is not a closed surface (i.e., $G_v$ has at least one extended boundary or conical subgroup). Note that $G_v$ may be one-ended (which happens if and only if all the edges in $\mathbf{JSJ}_G$ incident to $v$ have finite stabilizer, or equivalently if $G_v$ appears as a vertex group in a Stallings splitting $\mathbf{S}_G$).

First, let us define a splitting $\Delta$ of $G_v$ as follows: if $G_v$ has no extended conical subgroup, $\Delta$ is simply a point; otherwise, let $C_1,\ldots, C_n$ be some representatives of the conjugacy classes of the extended conical subgroups of $G_v$, define the set of vertices of $\Delta$ as $v',v_1,\ldots,v_n$, labelled by $G_v,C_1,\ldots,C_n$ respectively, and define the edges of $\Delta$ as follows: for every vertex $v_i$, there is an edge $[v',v_i]$ labelled by $C_i$ if $i\leq n$. 

Then, let $\mathbf{JSJ}'_G$ be the splitting of $G$ obtained from $\mathbf{JSJ}_G$ by replacing the vertex $v$ by $\Delta$, with the edges incident to $\Delta$ (viewed as a subgraph of $\mathbf{JSJ}'_G$) defined as follows, where $F$ denotes the fiber of $G_v$:
\begin{enumerate}[(1)]
    \item every edge $e=[w,v]$ of $\mathbf{JSJ}_G$ such that $G_e$ is infinite is replaced by an edge $[w,v']$ in $\mathbf{JSJ}'_G$ labelled by $G_e$;
    \item for every edge $e=[w,v]$ of $\mathbf{JSJ}_G$ with $w\neq v$ such that $G_e$ is finite and such that $G_e\not\subseteq F$, $G_e$ is contained in a unique extended conical subgroup $C_i$ (or rather in a conjugate of $C_i$ in $G_v$), and we define an edge $[w,v_i]$, labelled by $G_e$;
    \item for every edge $e$ of $\mathbf{JSJ}_G$ whose endpoints are $v$ and such that $G_e$ is finite and $G_{e}\not\subseteq F$, $G_e$ is contained in a unique extended conical subgroup $C_i$ (or rather in a conjugate of $C_i$ in $G_v$), and we define an edge whose endpoints are $v_i$, labelled by $G_e$;
    \item every edge $e=[w,v]$ of $\mathbf{JSJ}_G$, with $w\neq v$, such that $G_e$ is finite and $G_{e}\subseteq F$ is replaced by an edge whose endpoints are $w$, labelled by $G_e$;
    \item every edge $e$ of $\mathbf{JSJ}_G$ whose endpoints are $v$ and such that $G_e$ is finite and $G_{e}\subseteq F$ is replaced by an edge whose endpoints are $w$, labelled by $G_e$, where $w$ is any vertex adjacent to $v$ in $\mathbf{JSJ}_G$ (note that there is at least one such vertex by assumption). 
\end{enumerate}

Last, let $\mathbf{C}_G$ be the splitting of $G$ obtained from $\mathbf{JSJ}'_G$ by collapsing to a point every connected component of $\mathbf{JSJ}'_G\setminus \lbrace v'\rbrace$. We claim that $\mathbf{C}_G$ is a centered splitting of $G$ in the sense of Definition \ref{centered}. The fact that $v'$ is QH follows from the fact that $v$ is QH in $\mathbf{JSJ}_G$. The first condition of Definition \ref{centered} is clearly satisfied by $\mathbf{C}_G$ by construction. The second and third conditions of Definition \ref{centered} follow from the second and third items of Proposition \ref{JSJ} (for the extended boundary subgroups) and from the construction (for the extended conical subgroups). It remains to prove that the fourth condition of Definition \ref{centered} (acylindricity) is satisfied by $\mathbf{C}_G$. So let $H$ be a subgroup of $G$ that is not contained in the fiber $F$ of $G_{v'}=G_v$, suppose that $H$ fixes a segment of length $\geq 2$ in the Bass-Serre tree of $\mathbf{C}_G$, and let us prove that this segment has length exactly 2 and that its endpoints are translates of $v'$. If $H$ is infinite, the result follows immediately from the acylindricity condition in Proposition \ref{JSJ} (fourth item), so let's suppose that $H$ is finite. Suppose towards a contradiction that $H$ fixes a path of length two whose middle vertex is a translate of $v'$. After conjugating, one can suppose without loss of generality that the middle vertex is $v'$. Hence $H$ fixes two adjacent edges $e=[w,v']$ and $e'=[v',w']$. But by construction of the splitting $\mathbf{C}_G$, if $n$ denotes the number of conjugacy classes of extended conical subgroups of $G_{v'}$, there are exactly $n$ orbits of edges $e_1,\ldots ,e_n$ incident to $v'$ in the Bass-Serre tree of $\mathbf{C}_G$, and we have:
\begin{enumerate}[(1)]
    \item for every $1\leq i\neq j\leq n$ and every $g\in G_{v'}$, $G_{e_i}\cap gG_{e_j}g^{-1}=F$;
    \item for every $1 \leq i \leq n$ and every $g\in G_{v'}\setminus G_{e_i}$, $G_{e_i}\cap  gG_{e_j}g^{-1}=F$.
\end{enumerate}
This contradicts the fact that $H$ fixes $e$ and $e'$ with common vertex $v'$. Hence the segment fixed by $H$ is of length exactly 2 and its endpoints are translates of $v'$, which finishes the proof.

\subsection{Tree of cylinders and applications}\label{tree of cylinders}

Let $k\geq 1$ be an integer, let $G$ be a finitely generated group, and let $T$ be a splitting of $G$ over finite groups of order exactly $k$. The deformation space of $T$ is by definition the set of $G$-trees that have the same elliptic subgroups as $T$ (or equivalently, which can be obtained from $T$ by some collapse and expansion moves). In \cite{GL11}, Guirardel and Levitt construct a tree that only depends on the deformation space of $T$. This tree is called the tree of cylinders of $T$, denoted by $T_c$. We summarize below the construction of the tree of cylinders $T_c$.

First, we define an equivalence relation $\sim$ on the set of edges of $T$. We declare two edges $e$ and $e'$ to be equivalent if $G_e=G_{e'}$. Since all edge stabilizers have the same order, the union of all edges having a given stabilizer $C$ is a subtree $Y_C$, called a cylinder of $T$. In other words, $Y_C$ is $\mathrm{Fix}(C)$ (the subset of $T$ pointwise fixed by $C$). Two distinct cylinders meet in at most one point. Let us define two sets as follows:
\begin{enumerate}[(1)]
    \item[$\bullet$]$V_0$ is the set of vertices of $T$ that belong to at least two cylinders;
    \item[$\bullet$]$V_1$ is the set of cylinders of $T$.
\end{enumerate}
The \emph{tree of cylinders} $T_c$ of $T$ is the bipartite tree whose set of vertices is $V_0\sqcup V_1$, and whose edges are defined as follows: there is an edge $\varepsilon=(x, Y)$ between $x\in V_0$ and $Y\in V_1$ in $T_c$ if and only if $x\in Y$. If $Y\in V_1$ is the cylinder associated with an edge $e\in T$ (i.e., $Y=\mathrm{Fix}(G_e)$), then the vertex group $G_{Y}$ is the global stabilizer of $Y$, that is $G_Y=N_G(G_e)$.

The following lemma will play a crucial role in the proof of our main result. This lemma appears in a very similar form in \cite{And19a}.


\begin{lemma}\label{perin}Let $G$ be a finitely generated group. Let $k\geq 1$ be an integer, and let $T$ be a splitting of $G$ all of whose edge groups are of order $k$. Let $\varphi$ be an endomorphism of $G$. Suppose that $\varphi$ maps every vertex group and every edge group of $T$ isomorphically to a conjugate of itself, and that $\varphi$ is injective on the normalizer of every edge group. Then $\varphi$ is injective. Moreover, if $\varphi$ maps the normalizer of every edge group isomorphically to a conjugate of itself, then $\varphi$ is an~automorphism~of~$G$.\end{lemma}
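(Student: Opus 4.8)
The strategy is to use the action of $G$ on the Bass-Serre tree $T$ and on the tree of cylinders $T_c$ associated with the deformation space of $T$. The key point is that, because $\varphi$ maps each vertex group, edge group, and normalizer of an edge group isomorphically to a conjugate of itself, the endomorphism $\varphi$ "respects" the structure of $T_c$ well enough that one can build a $\varphi$-equivariant map $T_c \to T_c$ and then argue that a nontrivial kernel would force this map to collapse an edge, contradicting the fact that $\varphi$ acts by isomorphisms on the vertex groups of $T_c$ (which are the normalizers $N_G(G_e)$). First I would set up: let $g \in \ker\varphi$, $g \neq 1$; the aim is a contradiction.

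First I would reduce to the action on $T_c$. Recall that $T_c$ is bipartite with vertex set $V_0 \sqcup V_1$, where the stabilizer of a vertex in $V_1$ (a cylinder $Y_C = \Fix(C)$ with $C$ an edge group of $T$) is $N_G(C)$, and the stabilizer of a vertex in $V_0$ is a vertex group of $T$ lying in at least two cylinders. Since $\varphi$ sends each edge group $C$ of $T$ to a conjugate $hCh^{-1}$ isomorphically, it induces a bijection on conjugacy classes of cylinders, and since $\varphi$ is injective on each $N_G(C)$ and sends it isomorphically to $N_G(hCh^{-1}) = hN_G(C)h^{-1}$ (using the second, stronger hypothesis; in the first case one only gets an injection), one can define a simplicial map $f : T_c \to T_c$ together with a homomorphism $\varphi$ such that $f$ is $\varphi$-equivariant, sending the vertex fixed by $N_G(C)$ to the vertex fixed by $N_G(hCh^{-1})$, and likewise on $V_0$-vertices using that $\varphi$ restricts to an isomorphism on vertex groups of $T$. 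The main technical care is checking this map is well-defined on $V_0$: a vertex of $V_0$ is determined by its stabilizer, a vertex group of $T$, and $\varphi$ sends it isomorphically to a conjugate vertex group, which again lies in at least two cylinders since $\varphi$ permutes cylinders; so $f$ is well-defined and simplicial.

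Now suppose $g \in \ker\varphi \setminus \{1\}$. Then $g$ acts on $T_c$, and $\varphi(g) = 1$ acts trivially on $T_c$, so $f$ maps the $g$-orbit of any vertex to a single vertex; equivalently, $f$ collapses the convex hull of some $g$-orbit. If $g$ is hyperbolic in $T_c$, its axis $A(g)$ is an infinite line collapsed by $f$, so $f$ collapses infinitely many edges and in particular collapses some edge $\varepsilon = (x, Y)$; but then the stabilizer $G_\varepsilon \subseteq G_Y = N_G(C)$ would be sent by $\varphi$ to something contained in the stabilizer of the image vertex $f(\varepsilon)$, and on the other hand $\varphi$ is injective on $N_G(C) \supseteq G_\varepsilon$ — here one needs that $f$ collapsing $\varepsilon$ is incompatible with $\varphi|_{N_G(C)}$ being injective, since the endpoints $x$ and $Y$ have the same image means $G_x$ and $G_Y$ together generate a subgroup $H$ with $\varphi(H)$ elliptic, yet $\varphi|_{G_Y}$ isomorphism onto $N_G(C')$ together with $\varphi|_{G_x}$ isomorphism onto a $V_0$-vertex group forces $f(x) \neq f(Y)$ by the bipartite structure of $T_c$ (a $V_0$-vertex group is not a normalizer of an edge group / has no infinite center issue, or more simply they are distinct vertex types) — contradiction. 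If $g$ is elliptic in $T_c$, fixing a vertex $z$, then $g \in G_z$, and $\varphi(g) = 1$ contradicts injectivity of $\varphi$ on $G_z$ (which holds since $\varphi$ maps $G_z$ isomorphically to a conjugate). Either way we reach a contradiction, so $\ker\varphi = 1$ and $\varphi$ is injective.

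Finally, for the "moreover": assuming $\varphi$ maps every $N_G(C)$ isomorphically onto a conjugate of itself, $f : T_c \to T_c$ is a simplicial bijection, i.e., an automorphism of $T_c$, and $\varphi$ is surjective onto the subgroup generated by all vertex groups of $T_c$; but the vertex groups of $T_c$ include all the vertex groups $G_v$ of $T$ (the $V_0$-vertices together with the handling of valence-one situations) and these generate $G$ — alternatively, $\varphi(G)$ acts cocompactly on $T_c$ with the same quotient graph and full vertex and edge stabilizers, so $\varphi(G) = G$ by a standard Bass–Serre argument (a subgroup acting with the same finite quotient and the same stabilizers is everything). Hence $\varphi$ is an automorphism.

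\smallskip\noindent\textbf{Expected main obstacle.} The delicate point is the well-definedness and $\varphi$-equivariance of the map $f$ on $T_c$ — in particular verifying that $\varphi$ sending an edge group $C$ of $T$ to a conjugate $hCh^{-1}$ really produces a consistent choice of image cylinder and image $V_0$-vertex, so that $f$ is simplicial — and then the case analysis showing that a collapsed edge of $T_c$ contradicts injectivity of $\varphi$ on edge-group normalizers. The hyperbolic case, where one must locate a collapsed edge on the axis and extract the contradiction from the bipartite structure, is where I expect to spend the most effort; the reference to \cite{And19a} suggests the cleanest route is to quote the tree-of-cylinders machinery of \cite{GL11} for the equivariance and functoriality statements rather than re-deriving them.
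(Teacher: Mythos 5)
Your overall strategy — pass to the tree of cylinders $T_c$, construct a $\varphi$-equivariant simplicial map $f_c : T_c \to T_c$, and leverage the hypothesis that $\varphi$ is injective on the normalizers $N_G(G_e)$ (which are exactly the $V_1$-vertex stabilizers of $T_c$) — is the same as the paper's. The well-definedness and equivariance of $f_c$, which you flag as the expected main obstacle, is actually the routine part: since $\varphi(C)$ is a conjugate of $C$ for every edge group $C$, $f$ sends cylinders into cylinders, and the bipartite structure of $T_c$ means $f_c$ sends $V_0$ to $V_0$, $V_1$ to $V_1$, and edges to edges (never to points). Your dichotomy for $g\in\ker\varphi\setminus\{1\}$ — elliptic versus hyperbolic in $T_c$ — is also fine in principle, and the elliptic case is immediate.

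The genuine gap is in the hyperbolic case. You assert that a hyperbolic $g\in\ker\varphi$ forces $f_c$ to \emph{collapse} an edge of the axis, and you dispose of collapsed edges by the bipartition (a $V_0$-vertex and a $V_1$-vertex cannot have the same image). But that is not what happens. Because $f_c$ preserves the bipartition it never collapses an edge; what $\varphi(g)=1$ actually forces is that $f_c$ maps the fundamental segment $[x_0,gx_0]$ of the axis to a \emph{closed walk} of positive length from $f_c(x_0)$ to itself, and a closed walk of positive length in a tree that collapses no edge must \emph{fold}: there are two distinct adjacent edges $\varepsilon,\varepsilon'$ at a common vertex with $f_c(\varepsilon)=f_c(\varepsilon')$. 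Ruling out such a fold is the entire content of the proof, and your proposal never addresses it. The paper handles it by a two-case analysis on the type of the common vertex $v$. If $v\in V_0$, the two incident edge stabilizers $G_e\neq G_{e'}$ are distinct subgroups of the vertex group $G_v$ of $T$, so $\varphi(G_e)=\varphi(G_{e'})$ contradicts injectivity of $\varphi$ on $G_v$. If $v=Y\in V_1$, then the two adjacent $V_0$-vertices satisfy $w'=gw$ with $\varphi(g)=1$; adjusting $g$ by an element of $G_w$ (using that $\varphi$ maps $G_w$ \emph{onto} a conjugate of $G_w$) one keeps $\varphi(g)=1$, concludes $g\notin N_G(G_e)$ by the normalizer hypothesis, and then $G_e$ and $gG_eg^{-1}$ are distinct subgroups of $G_{w'}$ with the same image under $\varphi$, contradicting injectivity of $\varphi$ on $G_{w'}$. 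This second case is exactly where the normalizer hypothesis earns its keep, and it is not a collapse argument at all. Your ``moreover'' part also needs the local-surjectivity-of-$f_c$ argument spelled out rather than the Bass--Serre hand-wave, but the real missing ingredient is the fold analysis.
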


\begin{proof}Let $T_c$ be the tree of cylinders of $T$. If $T_c$ is a point (i.e., if there is a unique cylinder in $T$), then $G$ is a vertex group of $T_c$ and $\varphi$ is an automorphism. From now on, we suppose that $T_c$ is not a point.

\smallskip \noindent First, let us define a $\phi$-equivariant map $f:T\rightarrow T$. Let $v_1,\ldots ,v_n$ denote some representatives of the orbits of vertices of $T$. For every $1\leq i\leq n$, there exists an element $g_i\in G$ such that $\phi(G_{v_i})=g_iG_{v_i}g_i^{-1}$. We let $f(v_i)=g_i v_i$, then we define $f$ on each vertex of $T$ by equivariance, and we define $f$ on the edges of $T$ in the obvious way (if $e=[v,w]$ is an edge of $T$, there exists a unique path $e'$ from $f(v)$ to $f(w)$ in $T$, and we let $f(e)=e'$; note that we may have $f(v)=f(w)$ \emph{a priori}, i.e., $f(e)$ may be a point).

\smallskip \noindent We claim that the map $f$ induces a $\phi$-equivariant map $f_c : T_c \rightarrow T_c$. Indeed, for each cylinder $Y=\mathrm{Fix}(C)\subseteq T$, the image $f(Y)$ is contained in $\mathrm{Fix}(\varphi(C))$ of $T$, which is a cylinder (not a point) since $\varphi(C)$ is conjugate to $C$. If $v\in T$ belongs to two cylinders, so does $f(v)$. This allows us to define $f_c$ on the vertices of $T_c$, by sending $v\in V_0(T_c)$ to $f(v)\in V_0(T_c)$ and $Y\in V_1(T_c)$ to $f(Y)\in V_1(T_c)$. If $(v,Y)$ is an edge of $T_c$, then $f_c(v)$ and $f_c(Y)$ are adjacent in $T_c$. Hence $f_c$ maps an edge of $T_c$ to an edge of $T_c$.

\smallskip \noindent We will prove that $f_c$ does not fold any pair of edges and, therefore, that $f_c$ is injective. Assume towards a contradiction that there exist a vertex $v$ of $T_c$ and two distinct vertices $w$ and $w'$ adjacent to $v$ such that $f_c(w)=f_c(w')$. 

\smallskip \noindent First, assume that $v$ is not a cylinder. Since $T_c$ is bipartite, $w$ and $w'$ are two cylinders, associated with two edges $e$ and $e'$ of $T$. Since $f_c(w)=f_c(w')$, we have $\phi(G_{e})=\phi(G_{e'})$ by definition of $f_c$. But $\phi$ is injective on $G_v$ by hypothesis, and $G_{e},G_{e'}$ are two distinct subgroups of $G_v$ (by definition of a cylinder). This is a contradiction.

\smallskip \noindent Now, assume that $v=Y_{e}$ is a cylinder. Since $f_c(w)=f_c(w')$, there exists an element $g\in G$ such that $w'=g w$. As a consequence $\phi(g)$ belongs to $G_{f_c(w)}=\phi(G_w)$ (here we use the assumption that $\phi$ maps $G_w$ surjectively to a vertex group of $T$, namely a conjugate of $G_w$), so we can assume that $\phi(g)=1$ after multiplying $g$ by an element of $G_w$. In particular, it follows that $g$ does not belong to $N_G(G_{e})$ since $\phi$ is injective in restriction to $N_G(G_{e})$ by assumption. Then, note that $G_{e}$ is contained in $G_w$ and in $G_{w'}$ as $w,w'$ belong to the cylinder of $G_e$ in $T$, and thus $gG_{e}g^{-1}$ is contained in $gG_wg^{-1}=G_{w'}$. But $G_{e}\neq gG_{e}g^{-1}$ since $g$ does not lie in $N_G(G_{e})$, and $\phi(G_{e})=\phi(gG_{e}g^{-1})$ since $\phi(g)=1$, which contradicts the injectivity of $\phi$ on $G_{w'}$.

\smallskip \noindent Hence, $f_c$ is injective. It follows easily that $\phi$ is injective: indeed, let $g$ be an element of $G$ such that $\phi(g)=1$; then $f_c(g v)=f_c(v)$ for each vertex $v$ of $T_c$, so $g v=v$ for each vertex $v$ of $T_c$. But $\phi$ is injective on vertex groups of $T_c$, so $g=1$.

\smallskip \noindent Now, suppose that $\varphi$ maps the normalizer of every edge group isomorphically to a conjugate of itself. In other words, $\varphi$ maps every vertex group of $T_c$ isomorphically to a conjugate of itself. Let us prove that $ \phi $ is surjective. We begin by proving the surjectivity of $ f_c $. It suffices to prove the local surjectivity of $f_c$. Let $ v $ be a vertex of $ T $ and $ e $ an edge adjacent to $ v $. After translating if necessary, we can assume that $ f_c(v) = v $ and $ f_c(e) = e $. We thus have $ f_c(G_v e) = \phi (G_v) f_c(e) = G_v f_c(e) = G_v e $. Therefore, all the translates of $ e $ by an element of $ G_v $ are in the image of $ f_c$, which proves the surjectivity of $ f_c$. It remains to prove the surjectivity of $ \phi $. Let $ g \in G $ and let $ w $ be a vertex. There are two vertices $ v $ and $ v'$ such that $ f_c(v) = w $ and $ f_c(v') = g w $. Hence there exists $ h \in G $ such that $ v'= hv $, so $ f_c(v') = f_c(hv) = \phi (h) w $, i.e., $ gw = \phi (h) w $, so $ g^{- 1} \phi (h) $ belongs to $ G_w = \phi (G_v) $, hence $ g = \phi (h) g'$ with $ g' \in G_v $. Therefore $ \phi $ is surjective, so $\phi$ is an automorphism of $G$.\end{proof}

The following lemma is a variant of Lemma \ref{perin}.

\begin{lemma}\label{perin2}Let $G,G'$ be finitely generated groups. Let $k\geq 1$ be an integer, let $T,T'$ be reduced splittings of $G,G'$ all of whose edge groups are of order $k$. Suppose that $T/G$ is a tree. Let $\varphi : G\rightarrow G'$ be a morphism. Suppose that the following assumptions hold:
\begin{enumerate}[(1)]
    \item[$\bullet$] $\varphi$ maps every vertex group (resp.\ edge group) of $T$ isomorphically to a vertex group (resp.\ edge group) of $T'$;
    \item[$\bullet$] $\varphi$ maps non-conjugate vertex groups (resp.\ edge groups) of $T$ isomorphically to non-conjugate vertex groups (resp.\ edge groups) of $T'$;
    \item[$\bullet$] $\varphi$ is injective on the normalizer of every edge group.
\end{enumerate}
Then $\varphi$ is injective.\end{lemma}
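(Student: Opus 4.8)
The plan is to imitate the proof of Lemma~\ref{perin}, replacing the single tree $T$ by a pair $T, T'$ and tracking the combinatorial data carefully. First I would form the trees of cylinders $T_c$ and $T'_c$ (both edge groups have the same order $k$, so the cylinder construction applies). Using the hypotheses, I would check that $\varphi$ sends cylinders of $T$ to cylinders of $T'$: indeed, if $Y=\mathrm{Fix}(C)$ is the cylinder of an edge $e$, then $\varphi(C)$ is an edge group of $T'$, so $\mathrm{Fix}(\varphi(C))$ is a genuine cylinder of $T'$ (not a point). Then I would build a $\varphi$-equivariant simplicial map $f\colon T\to T'$: choose orbit representatives $v_1,\dots,v_n$ of vertices of $T$; for each $i$, the hypothesis gives a vertex group $G'_{w_i}$ of $T'$ and an element $g_i\in G'$ with $\varphi(G_{v_i}) = g_i G'_{w_i} g_i^{-1}$, and set $f(v_i) = g_i w_i$, extending equivariantly and then sending edges to the unique connecting geodesic segment. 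As in Lemma~\ref{perin}, $f$ descends to a $\varphi$-equivariant map $f_c\colon T_c \to T'_c$ sending $V_0(T_c)$ to $V_0(T'_c)$ and $V_1(T_c)$ to $V_1(T'_c)$, and carrying edges to edges.

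The heart of the argument is to show $f_c$ does not fold any pair of edges, hence is injective (a simplicial map between trees that is locally injective is injective). Suppose $v$ is a vertex of $T_c$ with two distinct neighbours $w,w'$ such that $f_c(w)=f_c(w')$. If $v$ is a non-cylinder vertex, then $w=Y_e$ and $w'=Y_{e'}$ are cylinders of distinct edges $e,e'$ of $T$ sharing an endpoint inside $v$; since $f_c(Y_e)=\mathrm{Fix}(\varphi(G_e))$ and likewise for $e'$, we get $\varphi(G_e)=\varphi(G_{e'})$. Now $\varphi$ is injective on $G_v$ (the vertex group of $T_c$ at $v$ is a vertex group or normalizer appearing in $T$, on which $\varphi$ is injective — here one uses that $v\in V_0$ means $v$ is a vertex of $T$ lying in $\ge 2$ cylinders, and $\varphi$ is injective on its stabilizer because it is injective on vertex groups of $T$), and $G_e\ne G_{e'}$ as subgroups of $G_v$ by definition of distinct cylinders — contradiction. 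If $v=Y_e$ is a cylinder, then $f_c(w)=f_c(w')$ forces, as in Lemma~\ref{perin}, an element $g\in G$ with $w'=gw$ and $\varphi(g)$ lying in the vertex group $G'_{f_c(w)} = \varphi(G_w)$ (using that $\varphi$ maps $G_w$ isomorphically onto a vertex group of $T'$); after multiplying $g$ by an element of $G_w$ we may assume $\varphi(g)=1$, whence $g\notin N_G(G_e)$ (as $\varphi$ is injective on $N_G(G_e)$ and $g\ne 1$ — note $g\ne 1$ because $w\ne w'$), and then $G_e \ne gG_e g^{-1}$ while $\varphi(G_e)=\varphi(gG_eg^{-1})$, both sitting inside $G_{w'}$, contradicting injectivity of $\varphi$ on $G_{w'}$.

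Once $f_c$ is injective, injectivity of $\varphi$ follows exactly as in Lemma~\ref{perin}: if $\varphi(g)=1$ then $f_c(gv)=f_c(v)$ for every vertex $v$ of $T_c$, so $gv=v$ for all such $v$, i.e.\ $g$ fixes $T_c$ pointwise, hence $g$ lies in every vertex stabilizer of $T_c$; since $\varphi$ is injective on vertex groups of $T_c$ and $\varphi(g)=1$, we get $g=1$. The role of the new hypotheses — that $\varphi$ carries non-conjugate vertex (resp.\ edge) groups of $T$ to non-conjugate ones of $T'$, and that $T/G$ is a tree — is to guarantee that the equivariant map $f_c$ is well-defined and genuinely lands in $T'_c$ without collapsing the global combinatorial structure; in particular the tree condition on $T/G$ ensures the various orbit choices are consistent and $f$ extends to a bona fide simplicial map. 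The main obstacle I anticipate is the cylinder-vertex folding case: one must be careful that the element $g$ with $w'=gw$ really satisfies $\varphi(g)\in \varphi(G_w)$, which requires the surjectivity-onto-a-vertex-group part of the first hypothesis, and that $g$ can be taken nontrivial, using $w\ne w'$ together with the fact that distinct cylinders meet in at most one point so $g\notin G_e$ even before the normalizer argument.
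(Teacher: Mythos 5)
Your proposal follows essentially the same route as the paper: form the trees of cylinders, build a $\varphi$-equivariant map $f\colon T\to T'$, pass to $f_c\colon T_c\to T'_c$, rule out folding (the two cases you handle match the paper's proof of Lemma~\ref{perin} exactly), and conclude. Your treatment of the two folding cases and the final deduction that $\varphi$ is injective are all correct.

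The place where your write-up is thin is precisely the step on which the paper's proof concentrates its effort, and where the new hypotheses actually enter: you need to verify not just that $\mathrm{Fix}(\varphi(G_e))$ is a cylinder of $T'$ (which is immediate from the hypothesis on edge groups), but that $f$ sends $\mathrm{Fix}(G_e)$ \emph{into} $\mathrm{Fix}(\varphi(G_e))$, so that $f_c$ on $V_1$ is well-defined. The paper does this as follows. Take an edge $e=[v,w]$ of $T$. Because $T$ is reduced with edge groups of order $k$ and $T/G$ is a tree, the vertex groups $G_v$ and $G_w$ have order $>k$ and are non-conjugate in $G$ (conjugacy would force one of them to fix a non-trivial segment, hence to be contained in an order-$k$ edge group). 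By the second hypothesis, $\varphi(G_v)$ and $\varphi(G_w)$ are non-conjugate vertex groups of $T'$, so $f(v)\neq f(w)$ and $f(e)$ is a path of positive length. Then $\varphi(G_e)$, of order $k$, fixes the whole path $f(e)$, and since every edge group of $T'$ has order exactly $k$, $\varphi(G_e)$ \emph{equals} the stabilizer of each edge in $f(e)$. This is what puts $f(e)$ inside a single cylinder and makes $f_c$ well-defined. Your proposal says ``using the hypotheses I would check that $\varphi$ sends cylinders to cylinders'' and later gestures at the role of the two new hypotheses, but the justification you offer (that $\mathrm{Fix}(\varphi(C))$ is a cylinder) does not by itself establish $f(Y)\subseteq\mathrm{Fix}(\varphi(C))$. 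You should spell out this chain of implications; it is the only place where the reducedness of $T$, the tree condition on $T/G$, and the non-conjugacy hypothesis are genuinely used, and without it the cylinder map is not defined.
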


\begin{rk}
In fact, the assumption that $\varphi$ is injective on the normalizer of every edge group follows from the fact that $T/G$ is a tree and from the other assumptions satisfied by $\varphi$, but that is not the point here.
\end{rk}

\begin{proof}
We simply explain how to adapt the proof of Lemma \ref{perin}. First, we define $f: T\rightarrow T'$ in a similar way as in the proof of the previous lemma. Let us prove that $f$ induces a $\varphi$-equivariant map $f_c : T_c\rightarrow T'_c$. Let $Y=\mathrm{Fix}(G_e)\subseteq T$ be the cylinder associated with an edge group $G_e$. We claim that $\varphi(G_e)$ is an edge group of $T'$, and therefore that $f(Y)$ is contained in the cylinder of $\varphi(G_e)$ in $T'$. Write $e=[v,w]$ for some adjacent vertices $v,w$ of $T$. As $T/G$ is a reduced tree, $G_v$ and $G_w$ are non-conjugate in $G$, hence by assumption $G_v$ and $G_w$ are mapped to non-conjugate vertex groups of $T'$. It follows that the path $f(e)=[f(v),f(w)]$ is not a point, and that $\varphi(G_e)$ is contained in the stabilizer of $G'_{f(e)}$. But $G_e$ has order $k$, $\varphi$ is injective on $G_e$ and the edge groups of $T'$ have order $k$, so $\varphi(G_e)=G'_{f(e)}$. Therefore $\varphi(G_e)$ is the stabilizer of each edge in the path $f(e)$, and $f(Y)$ is contained in the cylinder of $\varphi(G_e)$ in $T'$. Then, let $v\in T$ be a vertex that belongs to two cylinders, i.e., there are two vertices $w,w'$ adjacent to $v$ such that the edges $e=[v,w]$ and $e'=[v,w']$ have distinct stabilizers in $G$. Then $f(v)$ belongs to the cylinders of $G'_{f(e)}=\varphi(G_e)$ and $G'_{f(e')}=\varphi(G_{e'})$ in $T'$, which are different as $\varphi$ is injective on $G_v$. This allows us to define $f_c$ on the vertices of $T_c$ and to prove that $f_c$ (and thus $\varphi$) are injective, as in the proof of Lemma \ref{perin}.\end{proof}

\section{Preretractions and splittings of torsion-generated groups}

\subsection{Related morphisms and preretractions}

The notions of related morphisms and preretractions were introduced by Perin in \cite{Per11} (see also the erratum \cite{Per13}). The definitions we give below are adapted to our context (we add a condition on finite subgroups, and we do not make any assumption on the images of QH vertex groups). 

\begin{de}\label{related}Let $G$ be a hyperbolic group and let $\mathbf{JSJ}_G$ be a JSJ splitting of $G$ over $\overline{\mathcal{Z}}$. Let $G'$ be a group. Two morphisms $\varphi, \psi : G\rightarrow G'$ are said to be \emph{$\mathbf{JSJ}_G$-related} if, for every finite subgroup or non-QH (with respect to $\mathbf{JSJ}_G$) vertex subgroup $H$ of $G$, there exists an element $g\in G$ such that $\varphi_{\vert H}=\mathrm{ad}(g)\circ \psi_{\vert H}$.\end{de}

\begin{rk}\label{crucial_observation}The following observation will be crucial: if $G=\langle s_1,\ldots,s_n \ \vert \ R\rangle$ is a finite presentation, there is an existential formula $\theta(x_1,\ldots,x_n,y_1,\ldots,y_n)$ such that two morphisms $\varphi, \psi : G\rightarrow G'$ defined by $\varphi(s_i)=a_i\in G'$ and $\psi(s_i)=b_i\in G'$ for $1\leq i\leq n$ are related if and only if $G'\models \theta(a_1,\ldots,a_n,b_1,\ldots,b_n)$ (see \cite{Per11,And18} for details).\end{rk}

\begin{de}\label{pre_JSJ}Let $G$ be a hyperbolic group and let $\mathbf{JSJ}_G$ be a JSJ splitting of $G$ over $\overline{\mathcal{Z}}$. A morphism $\varphi : G\rightarrow G$ is called a \emph{$\mathbf{JSJ}_G$-preretraction} if it is $\mathbf{JSJ}_G$-related to $\mathrm{id}_G$, i.e., if it coincides with an inner automorphism on every non-QH vertex group of $\mathbf{JSJ}_G$ and on every finite subgroup of $G$. A $\mathbf{JSJ}_G$-preretraction is said to be \emph{non-degenerate} if it sends each QH vertex group isomorphically to a conjugate of itself.\end{de}

We need a similar notion for centered splittings \textcolor{black}{(see Definition \ref{centered})}.

\begin{de}\label{pre_centered}Let $G$ be a group and let $\mathbf{C}_G$ be a centered splitting. A morphism $\varphi : G\rightarrow G$ is called a \emph{$\mathbf{C}_G$-preretraction} if it coincides with an inner automorphism on every non-central vertex group of $\mathbf{C}_G$ (and thus on every finite subgroup of $G$). A $\mathbf{C}_G$-preretraction is said to be \emph{non-degenerate} if it sends the central vertex group isomorphically to a conjugate of itself.\end{de}

\begin{rk}
Here for consistency we keep the terminology "non-degenerate" used in Definition 3.6 in \cite{And18}. Note that other authors use the opposite terminology and call "degenerate" such a morphism (see \cite{GLS}).
\end{rk}

\subsection{Every non-degenerate preretraction is injective}

\begin{lemma}\label{injective0}
Let $G$ be a one-ended hyperbolic group. Then every non-degenerate $\mathbf{JSJ}_G$-preretraction is injective.
\end{lemma}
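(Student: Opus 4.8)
The plan is to run Sela's shortening-type machinery together with the tree of cylinders, exactly in the spirit of Lemma~\ref{perin}. Let $\varphi$ be a non-degenerate $\mathbf{JSJ}_G$-preretraction and suppose for contradiction that $\varphi$ is not injective. First I would handle the trivial case: if $\mathbf{JSJ}_G$ is a single (necessarily rigid) vertex, then $\varphi=\mathrm{ad}(g)$ for some $g\in G$ by definition of a preretraction, hence is injective; so we may assume $\mathbf{JSJ}_G$ has at least two vertices. Since $G$ is one-ended and $\varphi$ coincides with an inner automorphism on every non-QH vertex group and on every finite subgroup, the only place where $\varphi$ can fail to be ``controlled'' is on the QH (flexible) vertices, and non-degeneracy says it is an isomorphism onto a conjugate there as well.

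The key step is to apply the shortening argument (Theorem~\ref{SA}) with $\Gamma=G$: there is a finite set $F\subseteq G\setminus\{1\}$ such that any non-injective $\phi:G\to G$ can be post-composed with a modular automorphism $\sigma\in\mathrm{Mod}(G)$ so that $\phi\circ\sigma$ kills an element of $F$. The point of the preretraction hypotheses is that modular automorphisms act trivially (up to conjugacy) precisely on rigid vertex groups and finite subgroups, and act on QH vertices by ``twists'' supported on the orbifold; so $\varphi\circ\sigma$ is again a $\mathbf{JSJ}_G$-preretraction, and it is still non-degenerate because $\sigma$ restricted to each QH factor is an automorphism (a mapping-class-type automorphism of the finite-by-orbifold group) and $\varphi$ sends that factor isomorphically onto a conjugate of itself. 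Iterating, one produces a preretraction killing an element of $F$; but a preretraction is injective on every non-QH vertex group and on every edge group, and one should check that, via the acylindricity and bipartition properties in Proposition~\ref{JSJ}, an element of $G$ lying in the kernel must actually be elliptic and hence lie in such a subgroup — contradiction. Alternatively, and more cleanly, one can bypass the iteration: a non-degenerate preretraction maps every vertex group and every edge group of $\mathbf{JSJ}_G$ isomorphically to a conjugate of itself (for rigid and finite groups by definition, for QH groups by non-degeneracy together with property (3) of Proposition~\ref{JSJ} which identifies incident edge groups with extended boundary subgroups), and it is injective on the normalizer of every edge group (here I would invoke the acylindricity of $\mathbf{JSJ}_G$: the normalizer of an infinite edge group is virtually cyclic and sits inside the adjacent maximal virtually cyclic vertex group on which $\varphi$ is inner, and finite edge groups are treated via finite-subgroup control). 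Then Lemma~\ref{perin}, applied to the splitting of $G$ over the relevant finite or virtually cyclic edge groups, yields that $\varphi$ is injective.

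For the one-ended case it is cleanest to reduce to the tree-of-cylinders argument of Lemma~\ref{perin} directly, but that lemma is stated for splittings over finite groups of a fixed order, whereas $\mathbf{JSJ}_G$ has virtually cyclic edge groups. So the actual route I would take is: first pass to the centered splitting $\mathbf{C}_G$ obtained from $\mathbf{JSJ}_G$ by collapsing each component of $\mathbf{JSJ}_G\setminus\{v\}$ around a chosen QH vertex $v$ (as described after Definition~\ref{centered}); a non-degenerate $\mathbf{JSJ}_G$-preretraction restricts to a non-degenerate $\mathbf{C}_G$-preretraction; and then prove injectivity of non-degenerate $\mathbf{C}_G$-preretractions by a direct analysis of the Bass--Serre tree of $\mathbf{C}_G$ using the acylindricity clause in Definition~\ref{centered}(4) — an element in the kernel fixes the whole tree, so it fixes a segment of length $\geq 2$ around every translate of $v$, forcing it into the fiber of $G_v$, which is finite, on which $\varphi$ is inner, hence the kernel is trivial. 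One does this for each QH vertex in turn; combined with the fact that $\varphi$ is inner on the non-QH part, injectivity on all of $G$ follows by a standard ping-pong/normal-form argument in the graph of groups.

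The main obstacle I anticipate is bookkeeping around finite edge groups and the fiber: one must be careful that the acylindricity and the finite-subgroup control genuinely combine to pin down kernel elements, since a naive argument only shows a kernel element acts with small fixed-segment length, and one then needs the precise intersection properties of incident edge groups (the displayed properties (1)--(2) in the construction of $\mathbf{C}_G$) to conclude it lies in the fiber. Getting the interaction between the modular group and the QH structure right — specifically that post-composition with a twist preserves non-degeneracy — is the conceptual crux, and everything else is a routine application of Bass--Serre theory.
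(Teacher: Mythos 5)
The paper does not actually prove this lemma from scratch: its ``proof'' is the one-line citation to \cite[Proposition 7.1]{And18}, and the self-contained argument one would emulate is the proof of the nearby Lemma~\ref{injective}, which builds a $\varphi$-equivariant map $f$ on the Bass--Serre tree, shows $f$ does not collapse edges (bipartition $+$ acylindricity), and then shows $f$ does not fold. Your three proposed routes each have a problem, and the most important of these is that you never supply the no-folding step, which is where all the content lives.

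Concretely: (1) Your shortening-argument route gets you a non-degenerate preretraction killing an element of a fixed finite set $F$, but the claimed contradiction --- that ``an element of $G$ lying in the kernel must actually be elliptic'' --- is false in general; in the paper's proof of Lemma~\ref{injective} the relevant kernel element $g$ is a \emph{hyperbolic} element of translation length $2$, and the contradiction comes instead from examining $\varphi(\langle G_e,G_{e'}\rangle)$, not from ellipticity of kernel elements. (2) Your ``actual route'' asserts that a non-degenerate $\mathbf{JSJ}_G$-preretraction restricts to a non-degenerate $\mathbf{C}_G$-preretraction after collapsing around a chosen QH vertex $v$. This fails whenever $\mathbf{JSJ}_G$ has more than one conjugacy class of QH vertices: the non-central vertex groups of $\mathbf{C}_G$ then contain other QH vertex groups, on which a $\mathbf{JSJ}_G$-preretraction is only required to be an isomorphism onto a conjugate (by non-degeneracy), not inner, so Definition~\ref{pre_centered} is not satisfied. (3) The ``kernel fixes the whole tree'' step is circular: from $g\in\ker\varphi$ one gets $f(gv)=f(v)$, and inferring $gv=v$ requires $f$ to be injective, which is precisely the no-folding conclusion you are trying to establish. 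What is actually needed --- and what \cite[Proposition 7.1]{And18} and Lemma~\ref{injective} supply --- is a direct argument that $f$ cannot identify two distinct edges at a vertex, proved by analyzing the two cases of which vertex type the fold occurs at and using the precise intersection pattern of incident edge groups; your write-up gestures at this but does not carry it out.
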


\begin{proof}
The group $G$ satisfies the assumption of \cite[Proposition 7.1]{And18}.
\end{proof}

The following lemma is a variant of Lemma \ref{injective0} where the $\mathcal{Z}$-JSJ splitting is replaced by a centered splitting. The proof of this lemma is very similar to that of \cite[Proposition 7.1]{And18} and Lemma \ref{injective0}. Note that the group $G$ below is not assumed to be one-ended. 
 
\begin{lemma}\label{injective}
Let $G$ be a finitely generated group. Let $\mathbf{C}_G$ be a centered splitting of $G$ with central vertex $v$. Then every non-degenerate $\mathbf{C}_G$-preretraction is injective.
\end{lemma}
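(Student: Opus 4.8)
The plan is to adapt the proof of \cite[Proposition 7.1]{And18}, with the appropriate modifications coming from the fact that we work with a centered splitting rather than a $\mathcal{Z}$-JSJ splitting and that $G$ need not be one-ended. Let $\varphi$ be a non-degenerate $\mathbf{C}_G$-preretraction, let $v$ denote the central vertex of $\mathbf{C}_G$, and let $T$ be the Bass-Serre tree of $\mathbf{C}_G$. By definition $\varphi$ coincides with an inner automorphism on every non-central vertex group, and after composing $\varphi$ with a suitable inner automorphism of $G$ we may assume that $\varphi$ restricts to the identity on $G_w$ for one fixed non-central vertex $w$ adjacent to $v$; by non-degeneracy $\varphi$ maps $G_v$ isomorphically to a conjugate of itself. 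The aim is to show $\ker\varphi=1$.

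The key step is the standard "building a $\varphi$-equivariant map and using acylindricity" argument. First I would produce a $\varphi$-equivariant simplicial map $f: T\to T$: send each vertex $u$ to a vertex whose stabilizer contains $\varphi(G_u)$ (possible since $\varphi$ maps every vertex group into a conjugate of a vertex group — for non-central vertices this is immediate, for $v$ it is non-degeneracy), then extend equivariantly and linearly on edges. Since $\varphi$ fixes $G_w$ we can normalize so that $f(w)=w$ and then $f(v)=v$, using that the edge $[v,w]$ has its group preserved. Now suppose $g\in\ker\varphi$; then $f(gu)=f(u)$ for every vertex $u$, so $f$ is not injective, and hence $f$ folds some pair of edges at some vertex. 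The whole point is to derive a contradiction from a fold, and this is where acylindricity (condition (4) of Definition~\ref{centered}) enters: if $f$ identifies two edges $[u,u_1]$ and $[u,u_2]$ with $u_1\neq u_2$, one analyses the stabilizers. If the common vertex $u$ is a translate of $v$, the two incident edge groups are extended boundary/conical subgroups of (a conjugate of) $G_v$, and $\varphi$ restricted to $G_v$ is injective, so it cannot collapse two distinct such subgroups together — contradiction. If $u$ is a non-central vertex, then $G_u$ lies in $V\setminus\{v\}$; I use that $\varphi$ is an inner automorphism on $G_u$, so again it is injective there and cannot fold. The subtle case, as in \cite{And18}, is when an element of infinite order fixes a long segment after applying $f$: here one invokes acylindricity to bound the fixed segment to length $2$ with central-vertex endpoints, and then the injectivity of $\varphi|_{G_v}$ plus condition (3) of Definition~\ref{centered} (uniqueness of the edge carrying each boundary/conical subgroup) forces the contradiction. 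One must also handle the possibility that $g$ is in the fiber $F$ of $G_v$: but $F\subseteq G_v$ and $\varphi|_{G_v}$ is injective, so $g\in F\cap\ker\varphi=1$ directly; thus in the argument above we may assume $H=\langle g\rangle$ is not contained in the fiber, which is exactly the hypothesis under which acylindricity applies.

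Having shown $f$ (hence $f_c$ on the tree of cylinders, if one prefers to route the argument through cylinders as in Lemma~\ref{perin}) folds no pair of edges, $f$ is injective on $T$, so $g$ fixes every vertex of $T$; since $\varphi$ is injective on each vertex group (being inner on non-central ones and an isomorphism onto a conjugate on the central one), $g=1$. Therefore $\varphi$ is injective.

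The main obstacle I expect is the careful bookkeeping in the fold analysis when finite edge groups are involved and $G$ is not one-ended: unlike the one-ended $\mathcal{Z}$-JSJ setting of \cite{And18}, here the central vertex can carry finite edge groups (the extended conical subgroups), and one must verify that the acylindricity clause of Definition~\ref{centered} — which explicitly excludes subgroups contained in the fiber — is strong enough to rule out the relevant folds. Concretely, the delicate point is showing that $\varphi|_{G_v}$ being an isomorphism onto a conjugate of $G_v$ is incompatible with $f$ identifying two edges at a translate of $v$; this uses conditions (2) and (3) of Definition~\ref{centered} to match incident edge groups bijectively with conjugacy classes of extended boundary/conical subgroups of $G_v$, and then the structure of finite-by-orbifold groups (distinct boundary/conical subgroups of an orbifold group are never conjugate, and an isomorphism of the orbifold group permutes them) to conclude. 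Everything else is a routine transcription of the $\mathbf{JSJ}_G$ argument to the centered setting.
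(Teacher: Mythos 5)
Your overall plan (build a $\varphi$-equivariant $f\colon T\to T$, show $f$ sends edges to edges via acylindricity, show $f$ has no folds, deduce $f$ injective hence $\varphi$ injective) is indeed the strategy used in the paper, and several of the pieces are fine: using acylindricity together with condition (2) of Definition~\ref{centered} to ensure $f(e)$ is a genuine edge, and disposing of $\ker\varphi\cap F$ by injectivity of $\varphi|_{G_v}$. However, there is a concrete gap in your fold analysis.

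First, the non-central case is not handled correctly. You write that if the folding vertex $u$ is non-central, then because $\varphi|_{G_u}$ is inner (hence injective) there can be no fold. This does not follow. A fold $f(e)=f(e')$ with $e=[u,w_1]$, $e'=[u,w_2]=g e$ (so $g\in G_u$) only tells you that $\varphi(g)\in G_{f(e)}$, not that $\varphi(g)=1$; injectivity of $\varphi$ on $G_u$ is silent about this. The paper instead first normalizes $g$ (multiplying on the right by $h\in G_{w_1}$ with $\varphi(h)=\varphi(g)$, so $g$ still carries $w_1$ to $w_2$ but now $\varphi(g)=1$), observes that $g$ is then hyperbolic of translation length $2$ (ellipticity would force $g\in G_u$ and hence $g=1$), and then notes that $f$ also folds the pair of edges $ge$ and $e'$, which meet at $w_2$ — a translate of the \emph{central} vertex by bipartition. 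So the non-central case is reduced to the central one; it is not dispatched directly.

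Second, for the central case your argument is sketched differently from the paper's. You propose to use that $\varphi|_{G_v}$ is an isomorphism onto a conjugate of $G_v$ and therefore permutes extended boundary/conical subgroups, so $\varphi(G_e)$ and $\varphi(G_{e'})$ are two distinct such subgroups of $G_{f(v)}$ that both lie inside the single extended boundary/conical subgroup $G_{f(e)}$, contradicting maximality. This can be made to work (one must first check $G_e\neq G_{e'}$, which follows from acylindricity since $G_e$ properly contains the fiber and $w,w'$ are non-central, and one must verify that distinct extended boundary/conical subgroups of a finite-by-orbifold group are never nested — true because boundary subgroups are generated by primitive elements modulo the fiber and conical subgroups are maximal finite), but it is a genuinely different route from the paper's. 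The paper instead looks at the subgroup $\langle G_e,G_{e'}\rangle\leq G_v$: modulo the fiber it is generated by two distinct conical or boundary elements, so it is either non-elementary or infinite virtually cyclic of dihedral type (hence with finite center), and neither can embed, via the injective $\varphi|_{G_v}$, into $G_{f(e)}\in\overline{\mathcal{Z}}$. Both routes work, but you should present the central-vertex argument precisely (your first formulation — ``$\varphi$ cannot collapse two distinct such subgroups together'' — is misleading, since the fold does not force $\varphi(G_e)=\varphi(G_{e'})$), and you must add the reduction of the non-central fold to a central one; as written, your proof does not close.

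A final small point: routing the argument through the tree of cylinders as in Lemma~\ref{perin} does not apply here, because the edge groups of $\mathbf{C}_G$ are not all of the same order (some are extended boundary subgroups, others extended conical subgroups), so the hypothesis of that cylinder construction fails.
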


\begin{proof}Let $T$ denote the Bass-Serre tree of the splitting $\mathbf{C}_G$ and let $\varphi$ be a non-degenerate $\mathbf{C}_G$-preretraction. First, we define a $\varphi$-equivariant map $f: T \rightarrow T$ as follows: let $v_1,\ldots ,v_n$ denote some representatives of the orbits of vertices of $T$. For every $1\leq i\leq n$, there exists an element $g_i\in G$ such that $\phi(G_{v_i})=g_iG_{v_i}g_i^{-1}$ (by definition of a non-degenerate preretraction). We define $f(v_i)=g_i v_i$, then we define $f$ on each vertex of $T$ by equivariance, and we define $f$ on the edges of $T$ in the obvious way (if $e=[v,w]$ is an edge of $T$, there exists a unique path $e'$ from $f(v)$ to $f(w)$ in $T$, and we let $f(e)=e'$).

\smallskip \noindent In order to prove that $\varphi$ is injective, we just have to prove that $f$ is injective. Let $e=[v,w]$ be an edge of $T$. Since the centered splitting $\mathbf{C}_G$ is bipartite, we can assume without loss of generality that $v$ is a translate of the central vertex, and after translating if necessary we can assume that $f(v)=v$ and $f(w)=gw$ for some $g\in G$. By bipartition, the vertices $v$ and $w$ do not belong to the same $G$-orbit, and thus $f(v)\neq f(w)$. Hence $f(e)$ is a path of length at least one. But $G_e$ coincides with an extended boundary or conical subgroup of $G_v$, therefore it is not contained in the fiber of $G_v$ and by acylindricity of $\mathbf{C}_G$ we conclude that $f(e)$ is an edge of~$T$. 

\smallskip \noindent It remains to prove that there is no folding. If $f$ folds two edges $e=[v,w]$ and $e'=[v,w']$ then there exists an element $g\in G$ such that $w'=gw$. It follows that $f(w')=\varphi(g)f(w)=f(w)$, and thus $\varphi(g)$ belongs to $\varphi(G_w)$. Hence there is an element $h\in G_w$ such that $\varphi(g)=\varphi(h)$, so after replacing $g$ by $h^{-1}g$ one can assume that $\varphi(g)=1$. Therefore $g$ is a hyperbolic element of translation length equal to 2. Note that $\varphi(G_e)=\varphi(G_v\cap G_w)\subseteq \varphi(G_v)\cap\varphi(G_w)=G_{f(v)}\cap G_{f(w)}=G_{f(e)}$, and thus that $\varphi(\langle G_e,G_{e'}\rangle)\subseteq G_{f(e)}$ (because $f(e)=f(e')$). Suppose towards a contradiction that $v$ is (a translate of) the central vertex of $\mathbf{C}_G$, and let $F$ denote the fiber of $G_v$. If $\vert G_e/F\vert > 2$ or $\vert G_{e'}/F\vert > 2$ then $\langle G_e,G_{e'}\rangle$ is non-elementary and thus $\varphi(\langle G_e,G_{e'}\rangle)$ is non-elementary (as $\varphi$ is injective on $G_v$), and if $\vert G_e/F\vert = 2$ and $\vert G_{e'}/F\vert = 2$ then $\langle G_e,G_{e'}\rangle$ is virtually cyclic infinite of dihedral type, hence it has finite center and thus $\varphi(\langle G_e,G_{e'}\rangle)$ is virtually cyclic with finite center. In both cases, this contradicts the fact that $\varphi(\langle G_e,G_{e'}\rangle)$ is contained in $G_{f(e)}$ with $G_{f(e)}$ in $\overline{\mathcal{Z}}$ (the class of groups that are either finite or virtually cylic infinite with infinite center). Hence $v$ cannot be a translate of the central vertex. But $f$ folds the edges $ge=[w'=gw,gv]$ and $e'=[w',v]$ since $\varphi(g)=1$, and by bipartition of the tree $T$ the vertex $w'=gw$ is a translate of the central vertex, and we get a contradiction.\end{proof}

\subsection{Splittings of torsion-generated groups}

\begin{lemma}\label{prelim0}Let $G=A\ast_{C_A=C_B} B$ be an amalgamated product. Let $C'_B$ denote the image of $C_B$ in $B^{\mathrm{ab}}$. Let $b\in B$ be an element whose image is non-trivial in $B^{\mathrm{ab}}/C'_B$. Then $b$ cannot be a product of conjugates of elements of $A$ in $G$.\end{lemma}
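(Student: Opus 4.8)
The plan is to exploit the universal property of the amalgamated free product through a carefully chosen homomorphism that kills everything coming from $A$ but detects $b$. Set $Q = B^{\mathrm{ab}}/C'_B$, an abelian group, and let $q\colon B \to Q$ be the natural quotient map (the composition $B \twoheadrightarrow B^{\mathrm{ab}} \twoheadrightarrow B^{\mathrm{ab}}/C'_B$). By construction $q$ sends every element of $C_B$ to $0$. Since $G = A \ast_{C_A = C_B} B$ is the pushout of the inclusions $C \hookrightarrow A$ and $C \hookrightarrow B$ (here $C = C_A = C_B$), and the trivial homomorphism $A \to Q$ together with $q\colon B \to Q$ agree on $C$ (both send $C$ to $0$), there is a well-defined homomorphism $\pi\colon G \to Q$ extending both. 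Explicitly, $\pi$ is trivial on the factor $A$ and equals $q$ on the factor $B$.

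Now suppose towards a contradiction that $b = \prod_{i=1}^{k} g_i a_i g_i^{-1}$ for some $g_i \in G$ and $a_i \in A$. Applying $\pi$ and using that $Q$ is abelian, we get $\pi(b) = \sum_{i=1}^k \pi(g_i) + \pi(a_i) - \pi(g_i) = \sum_{i=1}^k \pi(a_i)$. But each $a_i$ lies in the image of $A$ in $G$, on which $\pi$ is trivial, so $\pi(a_i) = 0$ for all $i$, whence $\pi(b) = 0$. On the other hand $\pi(b) = q(b)$, which is precisely the image of $b$ in $B^{\mathrm{ab}}/C'_B$, and this is non-trivial by hypothesis. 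This contradiction completes the proof.

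There is essentially no obstacle here: the only point requiring care is the verification that $\pi$ is well-defined, which is immediate from the universal property of the amalgam once one observes that the two maps $A \to Q$ and $B \to Q$ restrict to the same (zero) map on the amalgamated subgroup $C$. One should also note that the statement implicitly uses that the natural map $A \to G$ is injective (so that ``elements of $A$ in $G$'' makes sense), which is standard for amalgamated products; and the argument does not require $b$ itself to have any special form beyond the stated condition on its image in $B^{\mathrm{ab}}/C'_B$.
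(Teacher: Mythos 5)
Your proof is correct and takes essentially the same approach as the paper: both exploit the universal property of the amalgam to map $G$ to an abelian group in which $b$ has nonzero image while products of conjugates of elements of $A$ vanish. The only cosmetic difference is that the paper maps onto the direct product $(A^{\mathrm{ab}}/C'_A)\times(B^{\mathrm{ab}}/C'_B)$ and then observes the two factors are disjoint, whereas you project directly onto the second factor via the trivial map on $A$; your version is a minor streamlining of the same argument.
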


\begin{proof}Let $C'_A$ denote the image of $C_A$ in $A^{\mathrm{ab}}$. Define $Q=(A^{\mathrm{ab}}/C'_A)\times (B^{\mathrm{ab}}/C'_B)$. The natural epimorphisms $\varphi_A: A\rightarrow A^{\mathrm{ab}}/C'_A$ and $\varphi_B: B\rightarrow B^{\mathrm{ab}}/C'_B$ kill $C_A$ and $C_B$ respectively, therefore we can define a (surjective) morphism $\varphi$ from $A\ast_C B$ to $Q$ by $\varphi_{\vert A}=\varphi_A$ and $\varphi_{\vert B}=\varphi_B$. By assumption the element $b\in B$ is mapped to a non-trivial element in the right direct factor, whereas any product of conjugates of elements of $A$ in $G$ is mapped to an element in the left direct factor.\end{proof} 



\begin{lemma}\label{is_a_tree}
Let $G$ be a torsion-generated group. Let $\Delta_G$ be a splitting of $G$ as a graph of groups. Then
\begin{enumerate}[(1)]
    \item the underlying graph of $\Delta_G$ is a tree, and
    \item for any QH vertex $v$ of $\Delta_G$ (see Definition \ref{QH}), the underlying orbifold of $G_v$ is orientable of genus $0$.
\end{enumerate}
\end{lemma}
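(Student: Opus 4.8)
The whole proof rests on one elementary observation: if $G$ is generated by torsion elements, then any homomorphism out of $G$ that kills every torsion element of $G$ is trivial — in particular $G$ has no nontrivial homomorphism to a torsion-free group. For part (1) I would argue contrapositively: if the underlying graph $\Gamma$ of $\Delta_G$ is not a tree, then the homomorphism $G=\pi_1(\Delta_G)\to\pi_1(\Gamma)$ collapsing every vertex group is onto, and $\pi_1(\Gamma)$ is a nontrivial free group, so $G$ surjects onto $\mathbb{Z}$ — contradicting the observation. Hence $\Gamma$ is a tree, so $G$ is the fundamental group of a tree of groups; the consequence I shall use below is that a homomorphism out of $G$ is exactly a family of homomorphisms out of the vertex groups of $\Delta_G$ that agree on the edge groups.

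Now let $v$ be a QH vertex, write $1\to F\to G_v\overset{q}{\twoheadrightarrow}\pi_1^{\mathrm{orb}}(\mathcal{O})\to 1$, let $\Sigma$ be the underlying surface of $\mathcal{O}$, and let $\widehat\Sigma$ be the closed surface obtained by capping each boundary component of $\Sigma$ with a disc. A closed surface has trivial first integral homology precisely when it is a sphere, so $\Sigma$ is orientable of genus $0$ iff $H_1(\widehat\Sigma;\mathbb{Z})=0$; by the observation it therefore suffices to build a surjection $\psi:G\twoheadrightarrow H_1(\widehat\Sigma;\mathbb{Z})$ killing every torsion element of $G$. I would use the composition
\[
\rho:\ G_v\overset{q}{\twoheadrightarrow}\pi_1^{\mathrm{orb}}(\mathcal{O})\twoheadrightarrow\pi_1(\Sigma)\twoheadrightarrow\pi_1(\widehat\Sigma)\twoheadrightarrow H_1(\widehat\Sigma;\mathbb{Z}),
\]
where the second map forgets the orbifold structure, i.e.\ kills the conical elements (as in the proof of Lemma \ref{complexity}), and the third caps off the boundary. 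One checks that $\rho$ kills: the fiber $F$; every extended conical subgroup of $G_v$, since it maps onto a cone-point subgroup of $\pi_1^{\mathrm{orb}}(\mathcal{O})$ and the second map kills such subgroups; every extended boundary subgroup of $G_v$, since it maps onto a peripheral subgroup, which becomes null-homotopic in $\widehat\Sigma$; and every torsion element of $G_v$, because such an element either lies in $F$ or its $q$-image is a nontrivial finite-order element of the Fuchsian group $\pi_1^{\mathrm{orb}}(\mathcal{O})$ — hence, as $\mathcal{O}$ is conical (no reflections), an elliptic, conjugate to a power of a cone-point generator, which is again killed by the second map.

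Granting this, I would define $\psi:G\to H_1(\widehat\Sigma;\mathbb{Z})$ by $\psi|_{G_v}=\rho$ and $\psi|_{G_u}=1$ on every other vertex group. By part (1) this is well-defined provided the restrictions agree on edge groups: every edge incident to $v$ has edge group contained in an extended boundary subgroup of $G_v$ — or finite and thus (by Definition \ref{QH}) contained in an extended conical subgroup or in $F$ — so it is killed by $\rho$, and it is trivially killed on the other endpoint (which is $\neq v$ as $\Gamma$ is a tree); edges not incident to $v$ are killed on both ends. The map $\psi$ is surjective because $\rho$ is, and it kills every torsion element of $G$: the target is abelian, so $\psi$ is constant on conjugacy classes, and every torsion element of $G$ acts elliptically on the Bass–Serre tree of $\Delta_G$ (a finite-order element of a group acting without edge-inversions on a tree fixes a vertex), hence is conjugate into some vertex group, on which $\psi$ is either trivial or kills torsion by the previous paragraph. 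This produces the desired $\psi$, forcing $H_1(\widehat\Sigma;\mathbb{Z})=0$, i.e.\ $\Sigma$ orientable of genus $0$.

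The one genuinely delicate point is the list of subgroups killed by $\rho$: I would need to make sure that "forgetting the orbifold structure and capping off the boundary" kills \emph{all} torsion of $G_v$ and \emph{all} the subgroups of $G_v$ that can appear as (or contain) an edge group of $\Delta_G$, while staying surjective; the rest is bookkeeping with the tree structure and the universal property of graphs of groups over a tree. (Note that no hypothesis is hidden in the non-orientable case: the map $\pi_1^{\mathrm{orb}}(\mathcal{O})\to\pi_1(\Sigma)$ forgetting the orbifold structure exists for every conical $\mathcal{O}$, orientable or not.)
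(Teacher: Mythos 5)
Your proof of part (1) is the same as the paper's (graph not a tree implies an HNN splitting, hence a surjection onto $\mathbb{Z}$, impossible for a torsion-generated group). For part (2), you take a genuinely different route. The paper performs a case analysis on the topology of the underlying surface $\Sigma$: when $\Sigma$ is orientable of genus $\geq 1$ or nonorientable of genus $\geq 2$, it refines the splitting to an HNN extension relative to the extended boundary and conical subgroups, again producing a surjection onto $\mathbb{Z}$; the leftover case of nonorientable genus $1$ admits no such HNN splitting, so the paper cuts off a Möbius band to get an amalgam $G=H\ast_{F\rtimes\langle\gamma^2\rangle}(F\rtimes\langle\gamma\rangle)$ and appeals to the auxiliary Lemma \ref{prelim0} on abelianizations of amalgams, together with the fact that all torsion is conjugate into $H$, to rule out that case. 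You avoid the case split entirely: you build a single surjection $\psi:G\twoheadrightarrow H_1(\widehat\Sigma;\mathbb{Z})$ which, by construction, kills the fiber, the extended boundary and conical subgroups (hence every edge group incident to $v$), and all torsion of $G_v$ (via the Fuchsian-group fact that torsion in a conical orbifold group is conjugate into cone-point subgroups); extending by the trivial map on the other vertex groups — which is consistent precisely because part (1) makes $\Delta_G$ a tree — yields a surjection killing all torsion of $G$, which forces $H_1(\widehat\Sigma;\mathbb{Z})=0$ and hence $\widehat\Sigma=S^2$. This is correct. The subtle point you got right is the nonorientable-genus-$1$ case: $H_1(\mathbb{RP}^2;\mathbb{Z})=\mathbb{Z}/2$ is itself a torsion group, so a generic map to it need not die on torsion; but your $\psi$ is built to vanish on the torsion of $G$ itself, so torsion-generatedness still forces triviality. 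The tradeoff: the paper's argument reuses a combinatorial lemma that it proves anyway, while yours buys uniformity (one homological computation instead of three topological cases) and renders Lemma \ref{prelim0} unnecessary for this result.
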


\begin{proof}
If the underlying graph of $\Delta_G$ is not a tree, then $G$ splits as an HNN extension, therefore $G$ maps onto $\mathbb{Z}$. But $G$ is generated by elements of finite order, so every morphism from $G$ to a torsion-free group has trivial image. This is a contradiction.

\smallskip \noindent For the second point, notice that the fundamental groups of the torus (orientable surface of genus $1$) and of the Klein bottle (non-orientable surface of genus $2$) split as HNN extensions, respectively $\langle a,b \ \vert \ aba^{-1}=b\rangle$ and $\langle a,b \ \vert \ aba^{-1}=b^{-1}\rangle$, and thus the fundamental group of any orientable surface of genus $\geq 1$ and of any non-orientable surface of genus $\geq 2$ splits as an HNN extension. Therefore, if the underlying orbifold of $G_v$ is orientable of genus $\geq 1$ or non-orientable of genus $\geq 2$, then $G_v$ has a splitting as an HNN extension relative to the extended boundary and conical subgroups of $G_v$, and thus $G$ splits as an HNN extension and we get a contradiction as in the previous paragraph. Then, suppose that the underlying orbifold $\mathcal{O}$ of $G_v$ is non-orientable of genus $1$. Thus one can cut $\mathcal{O}$ along a simple closed curve $\gamma$ so that the connected components of $\mathcal{O}\setminus \gamma$ are respectively an orientable orbifold $\mathcal{O}'$ of genus $0$ and a Möbius band. Let $F$ denote the fiber of $G_v$. The decomposition of $\mathcal{O}$ gives rise to a splitting of $G_v/F$ as an amalgamated product $\pi_1^{\mathrm{orb}}(\mathcal{O'})\ast_{\langle \gamma^2\rangle}\langle \gamma\rangle$ relative to the boundary and conical subgroups, and thus $G$ splits as $H\ast_{F\rtimes \langle \gamma^2\rangle}(F\rtimes \langle \gamma\rangle)$ for some subgroup $H\subseteq G$. Let $\pi : F\rtimes \langle \gamma\rangle \rightarrow (F\rtimes \langle \gamma\rangle)^{\mathrm{ab}}$ be the abelianization map. Clearly, $\pi(\gamma)$ does not belong to $\pi(F\rtimes \langle \gamma^2\rangle)$. So Lemma \ref{prelim0} applies and tells us that $\gamma$ is not a product of conjugates of elements of $H$ in $G$. But by assumption $G$ is torsion-generated, so $\gamma$ is a product of elements of finite order of $G$. Moreover, every element $g\in G$ of finite order is conjugate to an element of $H$ (indeed, such an element is elliptic in the Bass-Serre tree of the splitting $H\ast_{F\rtimes \langle \gamma^2\rangle}(F\rtimes \langle \gamma\rangle)$, and every torsion element of the vertex group $F\rtimes \langle \gamma\rangle$ belongs to $F$, which is contained in the vertex group $H$). This is a contradiction. Hence the underlying orbifold of $G_v$ is necessarily orientable of genus $0$.\end{proof}


\subsection{Every preretraction of a finitely torsion-generated group is non-degenerate and injective}

\begin{lemma}\label{magical_lemma}
Let $G$ be a finitely generated group. Let $\mathbf{C}_G$ be a centered splitting of $G$ with central vertex $v$. Suppose that the underlying graph of $\mathbf{C}_G$ is a tree and that the underlying orbifold of $G_v$ is orientable of genus $0$. Then every $\mathbf{C}_G$-preretraction is non-degenerate \textcolor{black}{(see Definition \ref{pre_centered}).}
\end{lemma}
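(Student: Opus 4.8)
The plan is to analyze a $\mathbf{C}_G$-preretraction $\varphi$ and show it must send the central vertex group $G_v$ isomorphically to a conjugate of itself, by exploiting the fact that the underlying orbifold $\mathcal{O}$ of $G_v$ is a sphere with holes and conical points (orientable, genus $0$). First I would recall that, by definition of a $\mathbf{C}_G$-preretraction, $\varphi$ restricts to an inner automorphism on every non-central vertex group, hence on every edge group (since each edge group is an extended boundary or conical subgroup of $G_v$, but also lies in a non-central vertex group). After composing with an inner automorphism of $G$, we may assume $\varphi$ restricts to the identity on one chosen non-central vertex group adjacent to $v$, and hence restricts to a conjugation on every edge group incident to $v$; using the acylindricity of $\mathbf{C}_G$ and the tree structure of the underlying graph, one can in fact arrange that $\varphi$ is the identity on \emph{all} edge groups incident to $v$ simultaneously (here the tree assumption is what prevents the usual obstruction, since there are no independent conjugating elements floating around a loop). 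The key point will then be to understand the induced map on $G_v$, or rather on the orbifold group $Q = G_v/F = \pi_1^{\mathrm{orb}}(\mathcal{O})$.

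The heart of the argument is an Euler characteristic / complexity computation in the spirit of Lemma~\ref{complexity}. Restricting $\varphi$ to $G_v$ and post-composing with the retraction $G \to G_v$ (which exists because the graph is a tree: collapse everything outside $v$), one obtains an endomorphism $\bar\varphi$ of $G_v$ which is the identity on all extended boundary and conical subgroups. One shows $\bar\varphi$ is a non-pinching morphism of finite-by-orbifold groups: it cannot pinch any essential simple closed curve $\alpha$, because on a genus-$0$ orientable orbifold the complement of $\alpha$ has a component $\mathcal{O}'$ that is again a planar orbifold carrying some boundary component, and the subgroup $F \rtimes \langle \alpha \rangle = C_\alpha$ would have to map into something finite while $\varphi$ is injective on the adjacent edge/boundary groups — more carefully, one uses a homological argument as in Lemma~\ref{is_a_tree} together with Lemma~\ref{prelim0}: if $C_\alpha$ were pinched, the curve $\alpha$ would fail to be a product of conjugates of torsion elements, contradicting that $G$ is torsion-generated (this is where the genus-$0$, orientable, tree hypotheses get used in concert). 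Since $\bar\varphi$ is non-pinching and not contained in an extended conical or boundary subgroup (it is the identity on a boundary subgroup and the orbifold has another one, or else $G_v$ is already small and the statement is trivial), Lemma~\ref{complexity} gives $\chi(G_v) \geq \chi(\bar\varphi(G_v))$; but $\bar\varphi(G_v) \subseteq G_v$ and complexity is monotone under finite-by-orbifold embeddings only via Lemma~\ref{complexity} itself, so in fact equality holds, forcing $\bar\varphi$ — and hence $\varphi$ restricted to $G_v$ — to be an isomorphism onto $G_v$ (up to conjugacy). That is precisely non-degeneracy.

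Concretely I would structure the proof as: (1) reduce to $\varphi$ being the identity on the edge groups incident to $v$, using the tree structure and Definition~\ref{pre_centered}; (2) build the quotient map $\bar\varphi : G_v \to G_v$ via the tree-retraction $G \twoheadrightarrow G_v$, and check it is a morphism of finite-by-orbifold groups (injective on $F$ because $\varphi$ is injective on finite subgroups of $G$, and sending extended boundary subgroups into themselves because it fixes them); (3) prove $\bar\varphi$ is non-pinching, invoking Lemma~\ref{prelim0} and the torsion-generation of $G$ exactly as in the genus-$0$ step of Lemma~\ref{is_a_tree} — any pinched curve $\alpha$ on the planar orbifold yields a splitting of $G$ in which $\alpha$ is not a product of conjugates of torsion elements; (4) apply Lemma~\ref{complexity} to conclude $\bar\varphi$ is an isomorphism, hence $\varphi(G_v)$ is a conjugate of $G_v$, i.e.\ $\varphi$ is non-degenerate.

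The main obstacle I anticipate is step~(3), establishing that no essential simple closed curve on $\mathcal{O}$ can be pinched by $\bar\varphi$. The subtlety is that $\bar\varphi$ is only known to fix the \emph{boundary and conical} subgroups, not to be injective globally, so a curve $\alpha$ in the interior could a priori collapse; ruling this out requires combining the planar topology of $\mathcal{O}$ (so that cutting along $\alpha$ always leaves a planar piece with nonempty boundary on which the homological argument of Lemma~\ref{prelim0} bites) with the global hypothesis that $G$ is torsion-generated. One has to be careful that the splitting of $G$ obtained from cutting $\mathcal{O}$ along $\alpha$ interacts correctly with the ambient tree $\mathbf{C}_G$ and with the fiber $F$ — exactly the bookkeeping carried out in the non-orientable-genus-$1$ case of Lemma~\ref{is_a_tree}, which here becomes the model for the argument. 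Once pinching is excluded, the complexity inequality does the rest essentially for free.
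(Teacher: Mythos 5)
Your proposal has a genuine gap, and it stems from a misreading of the hypotheses of the lemma. Lemma~\ref{magical_lemma} assumes only that $G$ is finitely generated, that the underlying graph of $\mathbf{C}_G$ is a tree, and that the underlying orbifold of $G_v$ is orientable of genus $0$; it does \emph{not} assume that $G$ is torsion-generated. The torsion-generation hypothesis enters only later, in Corollary~\ref{isom to a conjugate}, where it is fed into Lemma~\ref{is_a_tree} to \emph{deduce} the tree and genus-$0$ conclusions that Lemma~\ref{magical_lemma} takes as input. Your step~(3), the heart of your argument, invokes ``the global hypothesis that $G$ is torsion-generated'' together with Lemma~\ref{prelim0} to rule out pinching; this hypothesis is simply not available. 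Moreover, even if it were, the homological obstruction you invoke would not bite: an essential simple closed curve on a genus-$0$ orbifold is generally a product of conjugates of torsion elements (unlike the nonorientable-genus-$1$ curve $\gamma$ of Lemma~\ref{is_a_tree}, which is precisely what makes that case different), so Lemma~\ref{prelim0} gives no contradiction. There are two further problems: in step~(1) you assert that, after composing with an inner automorphism, $\varphi$ can be made the identity on \emph{all} edge groups incident to $v$ simultaneously because the underlying graph is a tree; this does not follow — the various conjugating elements $g_i$ on the non-central vertex groups $G_{w_i}$ need not be compatible, and the tree structure does nothing to reconcile them. In step~(2) the alleged retraction $G\twoheadrightarrow G_v$ obtained by ``collapsing everything outside $v$'' does not exist: collapsing edges of a graph of groups yields splittings of $G$, not quotient morphisms, and a general amalgam $A\ast_C B$ does not retract onto $A$.

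The paper's proof takes a different and fully combinatorial route. It does not attempt to construct an endomorphism of $G_v$; instead, assuming $\varphi$ has a nonempty maximal pinched set $S$, it forms the pinched quotient $\pi:G\to Q$ and the induced $\psi:Q\to G$ with $\varphi=\psi\circ\pi$. The genus-$0$, orientable hypothesis is used exactly where your intuition said it should matter: cutting a planar orbifold along the curves in $S$ leaves each piece $Q_x$ with at least two extended boundary or conical subgroups. Combined with \cite[Lemma 7.3]{And18}, the acylindricity axiom of the centered splitting, and the tree structure (which guarantees that distinct non-central vertices $w_1, w_j$ are non-conjugate), this forces some boundary subgroup $B_j$ to fix a path of length at least $2$ in the Bass-Serre tree, contradicting acylindricity since $\varphi$ is injective on $B_j$ and $B_j$ properly contains the fiber. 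Non-pinching then follows, and only at the very end is Lemma~\ref{complexity} applied to conclude that $\varphi$ maps $G_v$ isomorphically to a conjugate. So the paper's argument relies on the Bass-Serre tree geometry of the centered splitting rather than on any homological obstruction, and this is precisely what allows the lemma to hold without a torsion-generation assumption.
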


\begin{proof}Let $\lbrace w_i\rbrace_{i\in I}$ be the vertices of $\mathbf{C}_G$ distinct from the central vertex $v$. Let $T_G$ be the Bass-Serre tree of $\mathbf{C}_G$. With a slight abuse of notation we denote by $v$ a preimage of $v$ in $T_G$, and we denote by $w_i$ a preimage of $w_i$ in $T_G$ that is adjacent to $v$. 

\smallskip \noindent Let $S$ be a maximal pinched set for $\varphi$ (see Definition 7.2 in \cite{And18}), and suppose towards a contradiction that $S$ is non-empty. Let $\pi : G\rightarrow Q$ be the pinched quotient associated with $S$, and let $\psi : Q \rightarrow G$ be the unique morphism such that $\varphi = \psi \circ \pi$. Note that $Q$ has a natural decomposition as a graph of groups, denoted by $\Delta_Q$, obtained from $\mathbf{C}_G$ by replacing the central vertex $v$ by the splitting of $G_v$ dual to $S$. The vertices of $\Delta_Q$ arising from this replacement are called the new vertices, and the other vertices are still denoted by $\lbrace w_i\rbrace_{i\in I}$ with a slight abuse of notation. 

\smallskip \noindent Let $T_Q$ denote the Bass-Serre tree of $\Delta_Q$, and let $x\in T_Q$ be a new vertex that is adjacent to $w_1$ in $T_Q$. After postcomposing $\psi$ by an inner automorphism if necessary, one can assume that $\psi(Q_{w_1})=G_{w_1}$. By \cite[Lemma 7.3]{And18}, $\psi(Q_x)$ is elliptic in $T_G$. By the acylindricity condition in the definition of a centered splitting, the distance between $w_1$ and the fixed-point set of $\psi(Q_x)$ in $T_G$ is at most 1 (because $w_1$ is not a translate of $v$ and the fiber is a proper subgroup of $Q_x$). 

\smallskip \noindent \emph{First case:} if the distance equals $0$, then $\psi(Q_x)$ is contained in $\psi(Q_{w_1})=G_{w_1}$. 

\smallskip \noindent \emph{Second case:} if the distance equals $1$, then after conjugating by an element of $G_{w_1}$ if necessary, one can assume that $\psi(Q_x)\subseteq G_v$ (and still that $\psi(Q_{w_1})=G_{w_1}$). Then by Lemma \ref{complexity}, $\psi(Q_x)$ is contained in a conical or boundary subgroup of $G_v$ and thus $\psi(Q_x)$ is contained in a vertex adjacent to $v$ in $T_G$ (indeed, by definition of a centered splitting, edge groups incident to $v$ coincide with a conical subgroup or with a boundary subgroup of $G_v$). 

\smallskip \noindent Hence, in both cases, $\psi(Q_x)$ is contained in the stabilizer of a vertex adjacent to $v$ in $T_G$. After renumbering the elements of $\lbrace w_i\rbrace_{i\in I}$ if necessary, one may assume without loss of generality that $\psi(Q_x)$ is contained in $\psi(Q_{w_1})=G_{w_1}$.

\smallskip \noindent Then, recall that the underlying orbifold $\mathcal{O}$ of $G_v$ is orientable of genus $0$, and that the elements of $S$ are (by definition) not parallel to boundaries or conical points of $\mathcal{O}$. 

\smallskip \noindent It follows that the underlying orbifold of $Q_x$ has at least two boundary components or conical points, and thus there is a vertex $w_j\in\Delta_Q$ with $j\neq 1$ that is adjacent to $x$. Let $B_j$ denote the boundary or conical subgroup of $G_v$ adjacent to $w_j$. By the previous paragraph, $\varphi(B_j)$ is contained in $\varphi(G_{w_1})=G_{w_1}$. But $\varphi(B_j)\subseteq \varphi(G_{w_j})=gG_{w_j}g^{-1}$ for some $g\in G$, so $\varphi(B_j)$ fixes the path $[w_1,g\cdot w_j]$ which not reduced to a point since $w_1$ and $w_j$ are not conjugate in $G$ (as the underlying graph of $\Delta_G$ is a tree). Moreover the length of this path is at least 2 since $\Delta_G$ is bipartite. This is a contradiction: indeed, by definition of a centered splitting, if a subgroup of $G$ fixes a path of length 2 then it is contained in (a conjugate of) the fiber of the central vertex group $G_v$, which is not the case here since the fiber is a proper subgroup of $B_j$ and $\varphi$ is injective on $B_j$. Therefore $\varphi$ is non-pinching. 

\smallskip \noindent In fact, our argument shows that $\varphi(G_v)$ is contained in a conjugate of $G_v$. Moreover, $\varphi(G_v)$ is not contained in a boundary or conical subgroup (of this conjugate of $G_v$). Hence, by Lemma \ref{complexity}, $G_v$ is mapped isomorphically by $\varphi$ to a conjugate of $G_v$.\end{proof}


By combining Lemmas \ref{is_a_tree}, \ref{magical_lemma} and \ref{injective}, we obtain the following corollary.


\begin{co}\label{isom to a conjugate}
Let $G$ be a finitely torsion-generated group. Let $\mathbf{C}_G$ be a centered splitting of $G$ with central vertex $v$. Then every $\mathbf{C}_G$-preretraction is non-degenerate (i.e., the central vertex group $G_v$ is mapped isomorphically to a conjugate of itself), and thus injective.
\end{co}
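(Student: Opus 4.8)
The plan is to simply invoke the three lemmas that have just been proven and chain them together, since all the work has been done. First I would let $G$ be finitely torsion-generated, let $\mathbf{C}_G$ be a centered splitting with central vertex $v$, and let $\varphi$ be an arbitrary $\mathbf{C}_G$-preretraction; the goal is to show $\varphi$ is non-degenerate and injective. The key observation is that a finitely torsion-generated group is in particular a torsion-generated group, so Lemma~\ref{is_a_tree} applies to the splitting $\mathbf{C}_G$: its underlying graph is a tree, and since the central vertex $v$ is QH (by the definition of a centered splitting, condition (1) of Definition~\ref{centered}), the underlying orbifold of $G_v$ is orientable of genus $0$.

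Next I would feed exactly these two conclusions into Lemma~\ref{magical_lemma}: since $\mathbf{C}_G$ has tree underlying graph and the underlying orbifold of $G_v$ is orientable of genus $0$, every $\mathbf{C}_G$-preretraction is non-degenerate. In particular $\varphi$ is non-degenerate, which by Definition~\ref{pre_centered} means precisely that $\varphi$ maps the central vertex group $G_v$ isomorphically to a conjugate of itself. Finally, Lemma~\ref{injective} says that every non-degenerate $\mathbf{C}_G$-preretraction of a finitely generated group is injective, so $\varphi$ is injective. This completes the proof.

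Honestly, there is no real obstacle here: the corollary is a pure bookkeeping combination, and the only thing to be slightly careful about is making sure the hypotheses line up — specifically that ``finitely torsion-generated'' implies ``torsion-generated'' so that Lemma~\ref{is_a_tree} may be applied, that the central vertex of a centered splitting is automatically QH so that the orbifold statement of Lemma~\ref{is_a_tree}(2) is triggered, and that $G$ is finitely generated so that Lemma~\ref{injective} applies. All three hold trivially. If I were to write out the proof it would be essentially one sentence: ``Combine Lemmas~\ref{is_a_tree}, \ref{magical_lemma} and \ref{injective}.''

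\begin{proof}
Let $\varphi$ be a $\mathbf{C}_G$-preretraction. Since $G$ is finitely torsion-generated it is in particular torsion-generated, so by Lemma~\ref{is_a_tree} the underlying graph of $\mathbf{C}_G$ is a tree, and since the central vertex $v$ is QH by definition of a centered splitting, the underlying orbifold of $G_v$ is orientable of genus $0$. Hence Lemma~\ref{magical_lemma} applies and $\varphi$ is non-degenerate, i.e., $G_v$ is mapped isomorphically to a conjugate of itself. Finally, $G$ is finitely generated, so by Lemma~\ref{injective} every non-degenerate $\mathbf{C}_G$-preretraction is injective, and thus $\varphi$ is injective.
\end{proof}
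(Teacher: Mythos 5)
Your proof is correct and follows exactly the same route as the paper: combine Lemma~\ref{is_a_tree} (tree underlying graph, genus-$0$ orientable orbifold at the QH central vertex), Lemma~\ref{magical_lemma} (non-degenerate), and Lemma~\ref{injective} (injective). The paper's own proof is a three-line chaining of the same lemmas, so there is nothing to add.
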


\begin{proof}Let $\varphi$ be a $\mathbf{C}_G$-preretraction. By Lemma \ref{is_a_tree}, as $G$ is finitely torsion-generated, the underlying graph of $\mathbf{C}_G$ is a tree and the underlying orbifold of $G_v$ is orientable of genus 0. Hence $\varphi$ is non-degenerate by Lemma \ref{magical_lemma}, and $\varphi$ is injective by Lemma \ref{injective}.\end{proof}

\section{A key proposition}

\begin{prop}\label{main_theorem1}
Let $G$ be a torsion-generated  hyperbolic group, let $G'$ be a finitely torsion-generated group. Suppose that $G$ and $G'$ are AE-equivalent. Then every one-ended factor of $G$ embeds into $G'$, and every one-ended factor of $G'$ embeds into $G$. In fact, there exist two morphisms $\varphi : G\rightarrow G'$ and $\varphi' : G'\rightarrow G$ that are injective on the one-ended factors and on the finite subgroups of $G$ and $G'$ respectively, and that induce one-to-one correspondences between the conjugacy classes of one-ended factors of $G$ and $G'$, and between the conjugacy classes of \mbox{finite subgroups of $G$ and $G'$}.\end{prop}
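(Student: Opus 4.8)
The plan is to use the $\mathrm{AE}$-equivalence together with the shortening argument (Theorem~\ref{SA}) to produce morphisms $\varphi\colon G\to G'$ and $\varphi'\colon G'\to G$ whose composite $\varphi'\circ\varphi$ is a $\mathbf{JSJ}_G$-preretraction, and then to read off the embeddings and the bijections between conjugacy classes from the injectivity and rigidity of such a composite. First note that $G$ is finitely torsion-generated: being finitely generated and generated by its torsion elements, it is generated by the finitely many torsion elements occurring in fixed words expressing a finite generating set; by hypothesis $G'$ is finitely torsion-generated as well. Both $G$ and $G'$ have Stallings decompositions --- for $G'$ because its existential theory equals that of the hyperbolic group $G$, so the order of its finite subgroups is bounded (cf.\ Subsection~\ref{stallings_definition}). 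Fix reduced Stallings decompositions of $G$ and $G'$, the refined splittings $\mathbf{JSJ}_G$ and $\mathbf{JSJ}_{G'}$ over $\overline{\mathcal{Z}}$ built from them as in Subsection~\ref{canonical_JSJ}, and the centered splittings around their QH vertices as in Subsection~\ref{centered_construction}. Recall that the one-ended factors are exactly the one-ended vertex groups.

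\medskip\noindent\textbf{Constructing the morphisms.} This is the heart of the argument. Fix a finite presentation $G=\langle s_1,\dots,s_n\mid R\rangle$ with all $s_i$ of finite order. Combining Theorem~\ref{SA} for the hyperbolic group $G$ with the observation of Remark~\ref{crucial_observation} that ``$\mathbf{JSJ}_G$-relatedness'' of two morphisms is captured by an existential formula in the images of the generators, one writes a $\forall\exists$-sentence $\Theta_G$, true in $G$, of the following informal shape: for every tuple $\bar a$ in the ambient group which satisfies the relations $R$, avoids a prescribed finite set of elements (the one provided by Theorem~\ref{SA}, transported through $R$) and is shortest in its $\mathrm{Mod}(G)$-orbit, there exists a return tuple realizing the corresponding composite as a $\mathbf{JSJ}_G$-preretraction; in $G$ this is witnessed by $\mathrm{id}_G$ together with an inverse. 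Since $G$ and $G'$ are $\mathrm{AE}$-equivalent, $\Theta_G$ holds in $G'$, and unwinding it yields morphisms $\varphi\colon G\to G'$ and $\varphi'\colon G'\to G$ with $\varphi'\circ\varphi$ a $\mathbf{JSJ}_G$-preretraction. Running the symmetric construction with the roles of $G$ and $G'$ exchanged produces morphisms $\psi\colon G\to G'$ and $\psi'\colon G'\to G$ with $\psi\circ\psi'$ a $\mathbf{JSJ}_{G'}$-preretraction.

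\medskip\noindent\textbf{Injectivity and rigidity of the composites.} By definition, $\varphi'\circ\varphi$ coincides with an inner automorphism on every non-QH vertex group of $\mathbf{JSJ}_G$ and on every finite subgroup of $G$. Since $G$ is finitely torsion-generated, Lemma~\ref{is_a_tree} guarantees that the underlying graphs involved are trees and that every QH vertex group of $\mathbf{JSJ}_G$ has an underlying orbifold which is orientable of genus $0$; this is exactly what is needed to apply Corollary~\ref{isom to a conjugate} to the centered splittings associated with the QH vertices and to conclude that $\varphi'\circ\varphi$ is non-degenerate, i.e.\ maps each QH vertex group isomorphically onto a conjugate of itself. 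Lemmas~\ref{injective0} and~\ref{injective} then give that $\varphi'\circ\varphi$ is injective, and a routine induction along the Bass--Serre tree (using that $\varphi'\circ\varphi$ is inner on finite edge groups and on rigid vertices and is an isomorphism onto a conjugate on QH vertices) shows that $\varphi'\circ\varphi$ restricts on each one-ended factor $H$ of $G$ to an isomorphism onto a conjugate of $H$; in particular it fixes the conjugacy class of every one-ended factor of $G$. Symmetrically, $\psi\circ\psi'$ is injective and restricts on each one-ended factor of $G'$ to an isomorphism onto a conjugate of itself.

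\medskip\noindent\textbf{Conclusion.} Injectivity of $\varphi'\circ\varphi$ forces $\varphi$ to be injective, hence injective on every one-ended factor and every finite subgroup of $G$; likewise $\psi'$ is injective on every one-ended factor and every finite subgroup of $G'$, so every one-ended factor of $G$ embeds in $G'$ and every one-ended factor of $G'$ embeds in $G$. For a one-ended factor $H$ of $G$, the image $\varphi(H)\cong H$ is one-ended, hence elliptic in the Bass--Serre tree of the Stallings decomposition of $G'$ and therefore conjugate into a one-ended factor of $G'$; thus $\varphi$ induces a well-defined map $\Phi$ from conjugacy classes of one-ended factors of $G$ to those of $G'$, and $\psi'$ induces such a map $\Psi'$ in the opposite direction. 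If $H_1\not\sim H_2$ in $G$ but $\varphi(H_1)\sim\varphi(H_2)$ in $G'$, then applying $\varphi'$ and using that $\varphi'\circ\varphi$ preserves the conjugacy class of each one-ended factor of $G$ would give $H_1\sim H_2$, a contradiction; hence $\Phi$ is injective, and the same argument makes $\Psi'$ injective. Since $G$ (hyperbolic) and $G'$ (with the same existential theory) have only finitely many conjugacy classes of one-ended factors and their Stallings decompositions are finite graphs of groups, the injections $\Phi$ and $\Psi'$ between these finite sets are bijections. The identical reasoning applied to the (finitely many) conjugacy classes of finite subgroups shows that $\varphi$ and $\psi'$ induce bijections there as well. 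Setting $\varphi':=\psi'$ finishes the proof. The step I expect to be the main obstacle is the construction of the morphisms: making $\Theta_G$ genuinely first-order requires handling the infinite group $\mathrm{Mod}(G)$ through a finite generating set and a ``shortest representative'' device, and arranging that the truth of $\Theta_G$ in $G'$ really forces the composite to be a $\mathbf{JSJ}_G$-preretraction rather than merely a map related to one; a secondary subtlety is making the preretraction technology (Corollary~\ref{isom to a conjugate}, Lemmas~\ref{injective0} and~\ref{injective}) interact correctly with all the finitely many QH vertices of $\mathbf{JSJ}_G$ simultaneously.
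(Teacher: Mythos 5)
Your overall plan is in the right spirit (use AE-equivalence plus the shortening argument plus the preretraction/centered-splitting machinery), and some of your secondary steps (e.g.\ the deduction of bijections between conjugacy classes once good morphisms are in hand) are close to what the paper does, but the central construction has a genuine gap.

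The problem is the claimed existence of $\varphi\colon G\to G'$ and $\varphi'\colon G'\to G$ with $\varphi'\circ\varphi$ a $\mathbf{JSJ}_G$-preretraction, extracted from a single $\forall\exists$-sentence $\Theta_G$. When $\Theta_G$ is interpreted in $G'$, both the universally and the existentially quantified tuples live in $G'$. A tuple $\bar a\in(G')^n$ satisfying the relations of $G$ does encode a morphism $G\to G'$, but the return tuple $\bar b\in(G')^n$ cannot encode a morphism $G'\to G$: it lives in the wrong group, and there is no first-order way for $\Theta_G$ to mention elements of $G$ or to assert that ``the composite $G\to G'\to G$ is inner on each non-QH vertex group.'' The paper does \emph{not} produce the preretraction as a composite $\varphi'\circ\varphi$. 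It runs a proof by contradiction: supposing every $\varphi\colon G\to G'$ with property $(\ast)$ is non-injective on some non-QH one-ended factor, Theorem~\ref{SA} yields a finite set $F_i$ in each such factor and an AE-sentence (true in $G'$, hence in $G$) asserting that every endomorphism with property $(\ast)$ has a $\mathbf{JSJ}_G$-related one killing an element of some $F_i$; feeding in $\bar a=\bar s$ (i.e.\ $\mathrm{id}_G$) produces a non-injective $\mathbf{JSJ}_G$-preretraction \emph{of $G$}, which, via the associated centered splitting, contradicts Corollary~\ref{isom to a conjugate}. The preretraction is an endomorphism of $G$ obtained inside $G$, not a round-trip $G\to G'\to G$.

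There are two further points where your argument asserts more than can be obtained. First, a $\mathbf{JSJ}_G$-preretraction is by definition \emph{inner} on every finite subgroup; but $\varphi'\circ\varphi$ will in general only \emph{permute} the (finitely many) conjugacy classes of finite subgroups, and the paper must pass to a power $(\varphi'\circ\varphi)^n$ to make that permutation trivial before the preretraction machinery can be invoked on the relevant centered splitting. Second, the QH one-ended factors are not covered by the preretraction argument at all (preretractions say nothing about QH vertex groups); the paper handles them by a separate minimal-complexity induction using Lemma~\ref{complexity} to show $\varphi$ (respectively $\varphi'$) carries each QH factor isomorphically onto a QH factor. Your ``routine induction along the Bass--Serre tree'' and your clean ``$\varphi$ injective $\Rightarrow$ done'' step both silently rely on the preretraction claim that the construction cannot deliver; as written these steps do not go through.
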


\begin{rk}
Note that we cannot conclude directly from this proposition that $G$ and $G'$ are isomorphic, even if $G$ and $G'$ are Coxeter groups (see Section \ref{example}, where a counterexample is given).\end{rk}




\begin{proof}By \cite{And18} the group $G'$ is hyperbolic. The QH one-ended factors of $G$ and $G'$ must be treated separately from the non-QH one-ended factors. The first step of the proof consists in proving that there exist two morphisms $\varphi : G\rightarrow G'$  and $\varphi' : G'\rightarrow G$ such that the following conditions hold:
\begin{enumerate}[(1)]
\item[$\bullet$] $\varphi$ is injective on the finite subgroups of $G$ and on the non-QH one-ended factors of $G$, and $\varphi'$ is injective on the finite subgroups of $G'$ and on the non-QH one-ended factors of $G'$;
\item[$\bullet$] in fact, $\varphi$ maps every non-QH one-ended factor of $G$ isomorphically to a non-QH one-ended factor of $G'$, and $\varphi'$ maps every non-QH one-ended factor of $G'$ isomorphically to a non-QH one-ended factor of $G$;
\item[$\bullet$] moreover, $\varphi'\circ\varphi$ and $\varphi\circ\varphi'$ induce permutations of the conjugacy classes of finite subgroups and non-QH one-ended factors of $G$ and $G'$ respectively. 
\end{enumerate}

\smallskip \noindent The proof of this first step is very similar to the proof of Lemma 5.11 in \cite{And18}, so we will only explain how to adapt the proof and we refer the reader to \cite{And18} for details (note that we cannot apply \cite[Lemma 5.11]{And18} directly because it is not clear that $G$ and $G'$ are quasicores in the sense of \cite[Definition 5.11]{And18}, but the idea behind \cite[Lemma 5.11]{And18} works here without change because $G$ and $G'$ don't have non-injective preretractions (with respect to a centered splitting), by Corollary \ref{isom to a conjugate}).

\smallskip \noindent Recall that any hyperbolic group is finitely presented and has only a finite number of conjugacy classes of finite subgroups. 

\smallskip \noindent Let $G=\langle s_1,\ldots,s_n \ \vert \ \Sigma(s_1,\ldots,s_n)=1\rangle$ be a finite presentation for $G$. For any group $\Gamma$, there is a one-to-one correspondence between $\mathrm{Hom}(G,\Gamma)$ and the set \[\lbrace (\gamma_1,\ldots,\gamma_n)\in \Gamma^n, \ \Sigma(\gamma_1,\ldots,\gamma_n)=1\rbrace.\]

\smallskip \noindent Given that $G$ has only a finite number of conjugacy classes of finite subgroups, there exists a universal formula $\theta(x_1,\ldots,x_n)$ such that, for any group $\Gamma$ and for any tuple $(\gamma_1,\ldots,\gamma_n)\in \Gamma^n$, we have: $\Gamma\models \theta(\gamma_1,\ldots,\gamma_n)$ if and only if the map $ \lbrace s_1,\ldots,s_n \rbrace \rightarrow \Gamma : s_i\mapsto \gamma_i$ extends to a morphism $\varphi : G\rightarrow \Gamma$  that enjoys the following property, denoted by $(\ast)$: $\varphi$ is injective on the finite subgroups of $G$ and it maps any two non-conjugate finite subgroups of $G$ to non-conjugate subgroups of $\Gamma$. 

\smallskip \noindent Suppose towards a contradiction that every morphism $\varphi : G\rightarrow G'$ with property $(\ast)$ is non-injective on at least one of the non-QH one-ended factors of $G$. Hence, by Theorem \ref{SA}, there exists a finite set $F_i\subseteq G_i\setminus \lbrace 1\rbrace$ in each non-QH one-ended factor of $G$ such that, for every morphism $\varphi : G\rightarrow G'$ with property $(\ast)$, there exists a one-ended factor $G_i$ of $G$ and a modular automorphism $\alpha$ of $G_i$ (that is a conjugation on each finite subgroup of $G$ and on each rigid subgroup of each $G_i$), that can be naturally extended to an automorphism of $G$, still denoted by $\alpha$, such that the morphism $\psi=\varphi\circ\alpha$ kills an element in some $F_i$. Note that $\varphi$ and $\psi$ are $\mathbf{JSJ}_G$-related in the sense of Definition \ref{related}.

\smallskip \noindent The fact established in the previous paragraph is expressible via an AE sentence satisfied by $G'$ (see Remark \ref{crucial_observation}). But $G$ and $G'$ are AE-equivalent, so the following holds: for every endomorphism $\varphi$ of $G$ with property $(\ast)$, there exists an endomorphism $\psi$ of $G$ that is $\mathbf{JSJ}_G$-related to $\varphi$ and that kills one element in some $F_i$. 

\smallskip \noindent Now, take for $\varphi$ the identity of $G$: we get a $\mathbf{JSJ}_G$-preretraction $\varphi$ (see Definition \ref{pre_JSJ}) that is non-injective on $G_i$. Note that $\varphi(G_i)$ is contained in a conjugate of $G_i$, therefore one can suppose, after composing by an inner automorphism if necessary, that $\varphi_{\vert G_i}$ is a non-injective $\mathbf{JSJ}_{G_i}$-preretraction (see Definition \ref{pre_centered}) of $G_i$. By Lemma \ref{injective0}, $\varphi_{\vert G_i}$ is degenerate (recall that this means that there is a QH vertex $v$ in $\mathbf{JSJ}_{G_i}$ such that $G_v$ is not mapped isomorphically to a conjugate of itself by $\varphi_{\vert G_i}$).

\smallskip \noindent Then, let $\mathbf{C}_G$ be the centered splitting of $G$ obtained from $\mathbf{JSJ}_G$ and from the QH vertex $v$, whose construction is described in Subsection \ref{centered_construction}. Using the degenerate $\mathbf{JSJ}_{G}$-preretraction $\varphi$, one can define a $\mathbf{C}_G$-preretraction that coincides with $\varphi$ on $G_v$. This $\mathbf{C}_G$-preretraction is degenerate, which contradicts Corollary \ref{isom to a conjugate}.

\smallskip \noindent Hence, there exists a morphism $\varphi : G\rightarrow G'$ with property $(\ast)$ that is injective on the non-QH one-ended factors of $G$, and similarly there exists a morphism $\varphi' : G'\rightarrow G$ with property $(\ast)$ that is injective on the non-QH one-ended factors of $G'$. Moreover, exactly as in the proof of Lemma 5.11 in \cite{And18}, we can get two such morphisms such that $\varphi$ maps every non-QH one-ended factor of $G$ isomorphically to a non-QH one-ended factor of $G'$, and $\varphi'$ maps every non-QH one-ended factor of $G'$ isomorphically to a non-QH one-ended factor of $G$, and such that $\varphi'\circ\varphi$ and $\varphi\circ\varphi'$ induce permutations of the conjugacy classes of finite subgroups and non-QH one-ended factors of $G$ and $G'$ respectively (we refer the reader to Lemma 5.11 in \cite{And18} for details).

\smallskip \noindent It remains to deal with the QH one-ended factors. We will prove that $\varphi$ maps every QH one-ended factor of $G$ isomorphically to a QH one-ended factor of $G'$, that $\varphi'$ maps every QH one-ended factor of $G'$ isomorphically to a QH one-ended factor of $G$, and that $\varphi'\circ\varphi$ and $\varphi\circ\varphi'$ induce permutations of the conjugacy classes of QH one-ended factors of $G$ and $G'$ respectively.

\smallskip \noindent We denote by $\mathbf{S}_G$ and $\mathbf{S}_{G'}$ two Stallings decompositions of $G$ and $G'$ respectively. Let $c$ (respectively $c'$) be the smallest complexity of a QH factor of $\mathbf{S}_{G}$ (respectively $\mathbf{S}_{G'}$) \textcolor{black}{in the sense of Definition \ref{chi}}. Suppose without loss of generality that $c\leq c'$ and let $v$ be a vertex of $\mathbf{S}_{G}$ such that $G_v$ is a QH group of complexity $c$. \textcolor{black}{As $G$ has only a finite number of conjugacy classes of finite subgroups, and since $\varphi'\circ\varphi$ maps two non-conjugate finite subgroups to non-conjugate subgroups, there exists an integer $n\geq 1$ such that the endomorphism $p:=(\varphi'\circ\varphi)^n$ of $G$ coincides with an inner automorphism on each finite subgroup of $G$,}
and thus on each conical subgroup of $G_v$.

\smallskip \noindent Note that $G_v$ has at least one conical point (indeed, by Lemma \ref{is_a_tree}, the underlying orbifold of $G_v$ has genus $0$, and moreover its boundary is empty), thus the construction described in Subsection \ref{centered_construction} applies and produces a centered splitting $\mathbf{C}_G$ of $G$. We can define a $C_G$-preretraction $q$ that coincides with $p$ on $G_v$. By Corollary \ref{isom to a conjugate}, $q$ is non-degenerate, i.e., it maps $G_v$ isomorphically to a conjugate of $G_v$, therefore $p$ maps $G_v$ isomorphically to a conjugate of $G_v$. In particular $p$ is non-pinching on $G_v$, and thus $\varphi$ is non-pinching on $G_v$. It follows that $\varphi(G_v)$ is contained in a conjugate of some vertex group $G'_w$ of $\mathbf{S}_{G'}$ (by Proposition 2.31 in \cite{And18}). Clearly, this vertex group is QH, otherwise $p(G_v)$ would be contained in a non-QH vertex group of $G$. But the complexity of $G_v$ is minimal among the QH vertex groups of $G$ and $G'$, so $\chi(G'_w)\geq \chi(G_v)$, but $\chi(G_v)\geq \chi(G'_w)$ by Lemma \ref{complexity}, so $\chi(G_v)=\chi(G'_w)$ and thus $\varphi$ induces an isomorphism between $G_v$ and $G'_w$ (again by Lemma \ref{complexity}). Then, we can repeat the same process with the smallest complexity $>c$, and so on.\end{proof}

Recall that a graph of groups $\Delta$ is said to be \emph{reduced} if, for any edge of $\Delta$ with distinct endpoints, the edge group is strictly contained in the vertex groups. Equivalently, if $e=[v,w]$ is an edge in the Bass-Serre tree of $\Delta$ such that $G_e=G_v$, then $w$ is a translate of $v$ (i.e., there exists an element $g\in G$ such that $w=gv$, and so the image of $e$ in $\Delta$ is a loop).

We deduce from Proposition \ref{main_theorem1} the following corollary. 

\begin{co}\label{main_theorem1.1}
Let $G$ be a torsion-generated  hyperbolic group, let $G'$ be a finitely torsion-generated group. Let $\Delta$ and $\Delta'$ be reduced Stallings splittings of $G$ and $G'$ respectively. If $G$ and $G'$ are AE-equivalent, then there exist two morphisms $\varphi : G\rightarrow G'$ and $\varphi' : G'\rightarrow G$ that induce one-to-one correspondences between the conjugacy classes of vertex groups of $\Delta$ and $\Delta'$.\end{co}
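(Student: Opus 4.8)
The plan is to read off the conjugacy classes of vertex groups of a reduced Stallings splitting from Proposition~\ref{main_theorem1}. Recall that the vertex groups of such a splitting are either finite or one-ended, and that the conjugacy classes of the one-ended vertex groups are exactly the conjugacy classes of one-ended factors of the ambient group; so for these, the morphisms $\varphi$ and $\varphi'$ provided by Proposition~\ref{main_theorem1} already induce the desired one-to-one correspondences. It therefore remains to show that $\varphi$ and $\varphi'$ also induce one-to-one correspondences between the conjugacy classes of \emph{finite} vertex groups of $\Delta$ and of $\Delta'$, and the key point is to describe these finite vertex groups intrinsically inside $G$ and $G'$.

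First I would establish the following description: up to conjugacy, the finite vertex groups of a reduced Stallings splitting $\Delta$ of $G$ are precisely the maximal finite subgroups of $G$ that are not conjugate into any one-ended factor of $G$. One inclusion is easy: a maximal finite subgroup $F$ is elliptic in the Bass--Serre tree $T$ of $\Delta$, hence is conjugate into some vertex group, which is either one-ended or, by maximality of $F$, conjugate to $F$ itself. For the other inclusion one shows that a finite vertex group $G_v$ is a maximal finite subgroup of $G$ and is not conjugate into a one-ended factor: if $G_v \subseteq F$ with $F$ finite, or if a conjugate of $G_v$ lies in a one-ended vertex group $G_w$, then $G_v$ (respectively that conjugate) fixes the geodesic in $T$ joining $v$ to a fixed point of $F$ (respectively of $G_w$), and walking along this geodesic the reducedness of $\Delta$ forces the successive edge groups to be conjugate to $G_v$; hence the geodesic can only terminate at a vertex whose stabilizer is conjugate to $G_v$, which contradicts $F \supsetneq G_v$ (respectively $G_w$ one-ended). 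Since maximal finite subgroups and one-ended factors are intrinsic to $G$, this description in passing re-proves that the conjugacy classes of finite vertex groups do not depend on the chosen reduced Stallings splitting.

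Finally I would transfer this description across $\varphi$ and $\varphi'$. By Proposition~\ref{main_theorem1} these induce one-to-one correspondences between the (finite) sets of conjugacy classes of finite subgroups of $G$ and of $G'$, and since $E_1 \subseteq g E_2 g^{-1}$ implies $\varphi(E_1)\subseteq \varphi(g)\varphi(E_2)\varphi(g)^{-1}$, the induced maps respect the partial order ``is conjugate into''; as $\varphi'\circ\varphi$ and $\varphi\circ\varphi'$ induce permutations of these finite sets (again by Proposition~\ref{main_theorem1}) that are order-preserving, they are poset automorphisms, so the correspondences induced by $\varphi$ and $\varphi'$ are poset isomorphisms and in particular match maximal classes with maximal classes. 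Moreover, $\varphi$ sends a finite subgroup that is conjugate into a one-ended factor of $G$ to one conjugate into a one-ended factor of $G'$ (because $\varphi$ maps one-ended factors into conjugates of one-ended factors, by Proposition~\ref{main_theorem1}), and symmetrically for $\varphi'$; using finiteness once more, this membership is respected in both directions, so these correspondences restrict to one-to-one correspondences between the classes conjugate into a one-ended factor and, hence, between their complements. Consequently $\varphi$ and $\varphi'$ induce one-to-one correspondences between the conjugacy classes of maximal finite subgroups that are not conjugate into a one-ended factor, i.e.\ between the conjugacy classes of finite vertex groups of $\Delta$ and of $\Delta'$; combined with the one-ended case this yields the corollary. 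I expect the main obstacle to lie in the intrinsic description of the finite vertex groups — specifically in getting the walk-along-the-geodesic argument with the reducedness hypothesis completely correct — rather than in the final transfer step, which is essentially formal once everything in sight is finite.
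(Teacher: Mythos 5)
Your proof is correct and rests on the same geometric core as the paper's, namely the ellipticity of finite subgroups in the Bass--Serre tree combined with a walk-along-the-geodesic argument that exploits reducedness to propagate the vertex stabilizer along the fixed path. The organization differs: the paper directly tracks where $\varphi$ sends a finite vertex group $G_v$, splitting into cases on whether the vertex group of $\Delta'$ containing $\varphi(G_v)$ is one-ended or finite, and in each case derives a contradiction or an equality $\varphi(G_v)=G'_w$ by that walk argument; you instead first isolate an intrinsic characterization of the conjugacy classes of finite vertex groups --- the maximal finite subgroups not conjugate into a one-ended factor --- and then transfer it through $\varphi$ and $\varphi'$ by a finite-poset argument. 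Your packaging is a bit more modular and re-proves, as you note, the independence of the finite vertex-group classes from the chosen reduced Stallings splitting. Two small points worth making explicit when writing it up: in the walk argument, the successive edge stabilizers are not merely \emph{conjugate} to $G_v$ but literally \emph{equal} to $G_v$ (each is sandwiched between $G_v$ and a vertex stabilizer), and it is this equality together with reducedness that forces the successive vertices into the $G$-orbit of $v$; and for the transfer step one should observe that the set $Q$ of classes conjugate into a one-ended factor is downward closed in the ``is conjugate into'' order, so that $P\setminus Q$ is an up-set and its maximal elements are exactly the maximal elements of $P$ lying outside $Q$, which is the form of the statement your poset isomorphism actually matches up.
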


\begin{proof}Let $\varphi$ and $\varphi'$ be the morphisms given by Proposition \ref{main_theorem1}. \textcolor{black}{Recall that each vertex group of $\Delta$ or $\Delta'$ is either one-ended or finite.} For the one-ended vertex groups, the result is a consequence of Proposition \ref{main_theorem1}. Then, let $G_v$ be a finite vertex group of $\Delta$. The group $\varphi(G_v)$ is elliptic (i.e., fixes a point) in the Bass-Serre tree $T'$ of $\Delta'$, hence there exists a vertex $w\in T'$ such that $\varphi(G_v)$ is contained in $G'_w$. We will prove that $\varphi(G_v)=G'_w$, and thus that $\varphi$ maps $G_v$ isomorphically to $G'_w$.

\smallskip \noindent First, suppose towards a contradiction that $G'_w$ is one-ended. Then, by Proposition \ref{main_theorem1}, there exists a vertex $x$ in the Bass-Serre tree $T$ of $\Delta$ whose stabilizer $G_x$ is mapped isomorphically to $G'_w$ by $\varphi$. Let $F\subseteq G_x$ be the finite subgroup of $G_x$ that is mapped isomorphically to $\varphi(G_v)$ by $\varphi$. By Proposition \ref{main_theorem1}, $F$ and $G_v$ are conjugate in $G$ as they have the same image in $G'$, therefore there exists an element $g\in G$ such that $G_v$ fixes the vertex $gx\in T$. Note that $v\neq gx$ because the stabilizer of $v$ is finite whereas the stabilizer of $x$ is infinite (as $G_x$ is mapped isomorphically by $\varphi$ to $G'_w$, which is infinite). So the path $[v,gx]$ is not reduced to a point, and it is fixed by $G_v$ since $G_v$ fixes $v$ and $gx$. Let $v_0=v,\ldots,v_n=gx$ be the consecutive vertices of this path, with $n\geq 1$. The graph of groups $\Delta$ being reduced (by assumption), $v_1$ is a translate of $v_0$ and thus $G_{v_1}=G_{v}$ since $G_v$ is finite. By iteration we get $G_{gx}=G_v$, contradicting the fact that $G_{gx}$ is infinite.

\smallskip \noindent Hence, $G'_w$ is necessarily finite. It remains to prove that $\varphi(G_v)=G'_w$. Let $F\subseteq G$ be an isomorphic preimage by $\varphi$ of $G'_w$. Let $x\in T$ be a vertex fixed by $F$ and let $y\in T'$ be a vertex fixed by $\varphi(G_x)$. Then $G'_w$ fixes the path $[w,y]$ (possibly reduced to a point). As in the previous paragraph, using the fact that $\Delta'$ is reduced, we conclude that $y$ is a translate of $w$ and that $G'_w=G'_y$. It follows that $F=G_x$. Then, by Proposition \ref{main_theorem1}, $G_v$ is conjugate in $G$ to a subgroup of $G_x$. Hence there exists an element $g\in G$ such that $G_v$ fixes the vertex $gx\in T$, and as above we conclude that $G_v=G_{gx}$. Therefore $G_v$ and $G_w$ have the same order, and thus $\varphi(G_v)=G'_w$, which proves that $G_v$ is mapped isomorphically to $G'_w$ by $\varphi$.

\smallskip \noindent Last, the same proof applies to $\varphi'$ instead of $\varphi$.\end{proof}

We can easily deduce from Corollary \ref{main_theorem1.1} the following interesting finiteness result.

\begin{co}\label{finitely_many}
Let $G$ be a torsion-generated  hyperbolic group. Then the number of (isomorphism classes of) finitely torsion-generated groups that are AE-equivalent to $G$ is finite. 
\end{co}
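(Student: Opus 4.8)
The plan is to derive Corollary~\ref{finitely_many} from Corollary~\ref{main_theorem1.1} by a simple counting argument. Let $G$ be a torsion-generated hyperbolic group and fix a reduced Stallings splitting $\Delta$ of $G$. By Stallings' theorem the vertex groups of $\Delta$ are finite or one-ended, and since $G$ is hyperbolic there are only finitely many of them (up to conjugacy). Now suppose $G'$ is any finitely torsion-generated group which is $\mathrm{AE}$-equivalent to $G$. First I would note that $G'$ is itself hyperbolic: this follows from \cite{And18} (as already invoked inside the proof of Proposition~\ref{main_theorem1}), and in particular the order of finite subgroups of $G'$ is bounded, so $G'$ admits a reduced Stallings splitting $\Delta'$. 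Moreover, $G'$ is torsion-generated (being finitely torsion-generated), so the symmetric hypotheses of Corollary~\ref{main_theorem1.1} are met.

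\medskip
\noindent
Applying Corollary~\ref{main_theorem1.1} to the pair $(G,G')$ with the splittings $\Delta,\Delta'$, we obtain morphisms $\varphi : G\to G'$ and $\varphi' : G'\to G$ inducing mutually inverse one-to-one correspondences between the conjugacy classes of vertex groups of $\Delta$ and those of $\Delta'$. In particular, each vertex group of $\Delta'$ is isomorphic to a vertex group of $\Delta$, and the number of conjugacy classes of vertex groups of $\Delta'$ equals that of $\Delta$. Since $G'$ is the fundamental group of the finite graph of groups $\Delta'$, the group $G'$ is determined up to isomorphism by the following finite amount of data: the underlying finite graph of $\Delta'$ (together with which vertex/edge is which), the isomorphism type of each vertex group (each of which is isomorphic to one of the finitely many vertex groups of the fixed splitting $\Delta$), the isomorphism type of each edge group (a finite group whose order is bounded, hence finitely many possibilities), and the finitely many conjugacy classes of edge inclusions into the vertex groups. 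The key point is that all of these pieces of data range over \emph{finite} sets: the size of the underlying graph is controlled because each edge group strictly embeds into each adjacent vertex group and the vertex groups have bounded ``size'' in the relevant sense (e.g. bounded number of conjugacy classes of subgroups of bounded order for the finite vertices, and Stallings vertex groups come from a fixed finite list), while the vertex and edge groups themselves have only finitely many isomorphism types and the gluing maps only finitely many choices up to the relevant equivalence.

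\medskip
\noindent
Therefore there are only finitely many isomorphism types of graphs of groups $\Delta'$ that can arise, and hence only finitely many isomorphism types of groups $G'$ that are $\mathrm{AE}$-equivalent (a fortiori elementarily equivalent) to $G$. I expect the main subtlety to be the bookkeeping in the previous paragraph: one must argue carefully that the underlying graph of a reduced Stallings splitting of $G'$ cannot be arbitrarily large, and that the number of choices for the edge-group inclusions (up to conjugacy and automorphisms of the vertex groups) is finite. Both follow from standard accessibility-type facts for groups with a uniform bound on the order of finite subgroups together with the constraint, coming from Corollary~\ref{main_theorem1.1}, that the vertex groups of $\Delta'$ are drawn from the finite list of vertex groups of the fixed splitting $\Delta$ of $G$; no new ideas beyond those already developed are needed.
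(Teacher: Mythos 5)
Your argument follows the paper's proof quite closely for the first half: you correctly apply Corollary~\ref{main_theorem1.1} to conclude that $\Delta'$ has the same number of vertex orbits as $\Delta$, with vertex groups isomorphic to those of $\Delta$, and that all edge groups lie in a fixed finite list. The gap is in the step where you claim that ``the size of the underlying graph is controlled because each edge group strictly embeds into each adjacent vertex group'' and attribute the rest to ``accessibility-type facts.'' Neither of these is quite enough. Reducedness alone does not bound the number of edges in a graph of groups on a fixed vertex set: with fixed finite vertex and edge groups you can still form arbitrarily many loops or parallel edges, giving infinitely many non-isomorphic graphs of groups. Accessibility does not rescue this either, since Linnell's bound on the complexity of a Stallings splitting depends on the rank of $G'$, and we have no a priori uniform bound on the rank of the groups $G'$ being considered.

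The missing ingredient is Lemma~\ref{is_a_tree}: since $G'$ is torsion-generated, any splitting of $G'$ as a graph of groups has underlying graph a \emph{tree} (otherwise $G'$ would surject onto $\mathbb{Z}$). Once $\Delta'$ is known to be a tree with $n$ vertices, it has exactly $n-1$ edges, and the paper's concluding observation applies: there are only finitely many isomorphism classes of trees of groups on $n$ vertices labelled by a fixed finite list of groups (each with finitely many conjugacy classes of finite subgroups) with finite edge groups. Your outline is otherwise correct, and you even remark that no new ideas are needed, but the explicit appeal to Lemma~\ref{is_a_tree} should replace the appeal to accessibility in order to make the finiteness argument go through.
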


\begin{rk}
If, moreover, $G$ is assumed to be a Coxeter group, we will prove in the next section that all the finitely torsion-generated groups that are AE-equivalent to $G$ are Coxeter groups as well.
\end{rk}

\begin{proof}
Let $G'$ be a finitely torsion-generated group that is AE-equivalent to $G$. Let $\Delta$ and $\Delta'$ be reduced Stallings splittings of $G$ and $G'$ respectively. By Corollary \ref{main_theorem1.1}, we know that $\Delta$ and $\Delta'$ have the same number of vertices and the same vertex groups (up to isomorphism). Moreover, $\Delta$ and $\Delta'$ are trees by Lemma \ref{is_a_tree}. The conclusion follows from the following easy observation: given a finite collection of groups $G_1,\ldots,G_n$ such that each $G_i$ has only a finite number of conjugacy classes of finite subgroups, there is only a finite number of graphs of groups (up to isomorphism) whose underlying graph is a tree with $n$ vertices labelled by $G_1,\ldots,G_n$ and whose edge groups are finite.\end{proof}

\section{Being a hyperbolic Coxeter group is preserved under AE-equivalence}

In this section we will prove that the property of being a hyperbolic Coxeter group is preserved under AE-equivalence among finitely torsion-generated groups.

\subsection{Injectivity results}

\begin{lemma}\label{pingpong}Let $G$ be a finitely generated group and let $T$ be a splitting of $G$. Let $G_1,\ldots,G_n$ be elliptic subgroups of $G$, and let $H$ be the subgroup of $G$ generated by $G_1,\ldots,G_n$. Suppose that, for every $i\neq j$ and $h\in H$, $\mathrm{Fix}_T(G_i)\cap \mathrm{Fix}_T(hG_jh^{-1})$ is empty. Then $H$ splits as a graph of groups whose vertex groups are conjugates of $G_1,\ldots,G_n$, and whose underlying graph is a tree.\end{lemma}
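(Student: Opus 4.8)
\medskip\noindent
The plan is to produce the splitting of $H$ directly from the action of $H$ on the minimal subtree it determines inside $T$, combined with a standard Bass--Serre/ping-pong argument. First I would let $Y\subseteq T$ be the union of the fixed-point sets $\mathrm{Fix}_T(G_i)$ together with all geodesics between them, i.e.\ the smallest $H$-invariant subtree containing a chosen basepoint in each $\mathrm{Fix}_T(G_i)$; since $G$ is finitely generated and each $G_i$ is elliptic, $Y$ is an $H$-invariant subtree of $T$ on which $H$ acts without inversions, and we may pass to the minimal $H$-invariant subtree $T_H\subseteq Y$ (if $H$ fixes a point the statement is trivial, so assume not). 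The hypothesis that $\mathrm{Fix}_T(G_i)\cap\mathrm{Fix}_T(hG_jh^{-1})=\emptyset$ for all $i\neq j$ and $h\in H$ is exactly the kind of "no common fixed point" condition that drives a ping-pong argument: it guarantees that distinct conjugates of the $G_i$ have disjoint fixed trees, so that any reduced product $g_{i_1}g_{i_2}\cdots g_{i_k}$ with consecutive indices distinct and each $g_{i_\ell}\in G_{i_\ell}\setminus\{1\}$ moves a suitably chosen basepoint a positive distance, hence is nontrivial.

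\medskip\noindent
The key steps, in order, are as follows. (1) Fix points $x_i\in\mathrm{Fix}_T(G_i)$ and show, using the disjointness hypothesis, that for $i\neq j$ the bridge (shortest geodesic segment) between $\mathrm{Fix}_T(G_i)$ and $\mathrm{Fix}_T(hG_jh^{-1})$ is nondegenerate; this lets one run a ping-pong on the "half-tree" partition of $T_H$ determined by the midpoints of these bridges. (2) Conclude that $H=\langle G_1,\ldots,G_n\rangle$ is in fact the free product $G_1\ast\cdots\ast G_n$ \emph{amalgamated along the trivial intersections}, or more precisely that the natural map from the fundamental group of the graph of groups built from the $x_i$ and the bridges to $H$ is an isomorphism: injectivity is the ping-pong conclusion, surjectivity is immediate since the $G_i$ generate. (3) Identify the quotient graph $T_H/H$: because distinct conjugates of the $G_i$ have disjoint fixed sets, each $\mathrm{Fix}_T(G_i)$ contributes one vertex orbit with stabilizer (a conjugate of) $G_i$, the intermediate vertices along bridges have trivial (or at worst, controlled) stabilizers, and there are no loops — any loop in $T_H/H$ would give a hyperbolic element realized as a reduced product, contradicting that all our generators are elliptic with pairwise-disjoint fixed sets; hence $T_H/H$ is a tree. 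This yields the desired graph-of-groups decomposition of $H$ with vertex groups conjugate to $G_1,\ldots,G_n$ and underlying graph a tree.

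\medskip\noindent
The main obstacle I anticipate is the bookkeeping in step (3): showing that the underlying graph of the induced splitting is genuinely a \emph{tree} rather than merely a graph of groups, and that no spurious vertices with large stabilizers appear. This requires arguing that the minimal invariant subtree $T_H$ has no "cycles" in its quotient, which comes down to the observation that if $T_H/H$ had a nontrivial loop then, by Bass--Serre theory, $H$ would split over a subgroup with an associated hyperbolic element, and one would have to check that such an element can be written as a reduced alternating word in the $G_i$ — forcing a violation of the disjoint-fixed-set hypothesis. A clean way to organize this is to first establish the free-product-type normal form (step 2) and then read off the graph of groups abstractly from that normal form, invoking the standard fact that a group acting on a tree with elliptic, pairwise "independent" generating subgroups acts so that the quotient is a tree; but since $G$ is only assumed finitely generated (not that the action is cocompact), a small amount of care is needed to ensure $T_H/H$ is a finite tree, which follows from $H$ being finitely generated together with minimality of $T_H$.
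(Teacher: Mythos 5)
The proposal has the right basic geometry (collapse the fixed sets, build the quotient, invoke ping-pong) and is broadly in the spirit of the paper's proof, but it glosses over two points that the paper handles carefully and that, as written, leave genuine gaps.

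First, you treat the intersections as trivial: you describe $H$ as ``the free product $G_1\ast\cdots\ast G_n$ amalgamated along the trivial intersections'' and then run a ping-pong on reduced words with each letter $g_{i_\ell}\in G_{i_\ell}\setminus\{1\}$. But disjointness of $\mathrm{Fix}_T(G_i)$ and $\mathrm{Fix}_T(hG_jh^{-1})$ does \emph{not} force $G_i\cap hG_jh^{-1}$ to be trivial (an element of the intersection fixes the whole bridge between the two fixed trees, not just their intersection). The edge groups of the splitting are the bridge stabilizers $C$, which are typically nontrivial in the applications (e.g.\ Lemma \ref{injectivity_normalizer}, where $C$ is a common normal subgroup). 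The correct normal form is therefore over reduced words with $g_{i_\ell}\notin C$, and the ping-pong must be set up for amalgamated products, not free products. Second, even the amalgamated ping-pong in its usual form requires one factor to have index $\geq 3$ over the edge group; when every relevant index is $2$ the ping-pong argument simply does not apply, and one needs a separate observation. The paper handles this explicitly in the $n=2$ base case: when $[G_1:C]=[G_2:C]=2$, it notes that $\langle G_1,G_2\rangle/C$ is a dihedral quotient which must be infinite because $\langle G_1,G_2\rangle$ cannot fix a point (its generators have disjoint fixed sets), hence it is $D_\infty$ and the amalgam structure follows. Your proposal has no counterpart to this step. A third, smaller issue is that your step (3) reads off the graph of groups from $T_H/H$ by asserting that each collapsed $\mathrm{Fix}_T(G_i)$ has stabilizer exactly $G_i$ and that there are no loops; both facts need proof (and the ``no loops'' heuristic via hyperbolic elements is not by itself conclusive, since a group generated by elliptics can still map onto $\mathbb{Z}$). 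The paper sidesteps all of this by proving the statement by induction on $n$: it collapses the cylinder of the closest pair to a vertex, amalgamates that pair first, and inducts, so the tree structure of the quotient comes out of the construction rather than having to be verified post hoc.
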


\begin{proof}Let $T_H\subseteq T$ be the minimal subtree of $H$, and let $S$ be the tree obtained from $T_H$ by collapsing $\mathrm{Fix}_{T_H}(G_i)$ to a point, for every $1\leq i\leq n$. Let $v_i$ denote the unique vertex of $S$ fixed by $G_i$. We will prove the result by induction.

\smallskip \noindent First, suppose that $n=2$. Let $p$ denote the midpoint of the path $[v_1,v_2]$, and let $S_i$ be the connected component of $S\setminus\lbrace p\rbrace$ containing $v_i$. Let $C$ denote the stabilizer of $[v_1,v_2]$. If $\vert G_1\setminus C\vert =2$ and $\vert G_2\setminus C\vert =2$ then $\langle G_1,G_2\rangle/C$ is a dihedral group, which is necessarily infinite as it acts non-elliptically on $S$, and therefore we have $\langle G_1,G_2\rangle\simeq G_1\ast_{C} G_2$. So let us suppose that $\vert G_1\setminus C\vert \geq 3$ or $\vert G_2\setminus C\vert \geq 3$. Note that $(G_1\setminus C)(S_2)\subseteq S_1$ and $(G_2\setminus C)(S_1)\subseteq S_2$, so it follows from the ping-pong lemma that $\langle G_1,G_2\rangle\simeq G_1\ast_{C} G_2$. 

\smallskip \noindent Then, suppose that $n\geq 3$, and suppose that the lemma has already been proved for $n-1$ elliptic subgroups. After renumbering the subgroups $G_1,\ldots,G_n$ if necessary, we can assume that the smallest distance in $S$ between two $H$-translates of $v_i$ and $v_j$ for $i\neq j$ is achieved for $i=1$ and $j=2$. So let $v_1$ and $hv_2$, with $h\in H$, be two vertices for which the minimum is achieved. Let $K$ be the subgroup of $H$ generated by $G_1$ and $h G_2 h^{-1}$, and let $S_K\subseteq S$ be the $K$-orbit of the path $[v_1,hv_2]$. Note that no $H$-translate of $v_i$ with $i\geq 3$ belongs to $S_K$, otherwise $v_i$ would be in the $H$-orbit of $v_1$ or $v_2$ (by minimality of the distance between $v_1$ and $hv_2$), which contradicts the assumption that, for every $i\neq j$ and $h\in H$, $\mathrm{Fix}_T(G_i)\cap \mathrm{Fix}_T(hG_jh^{-1})$ is empty.

\smallskip \noindent Let us collapse the subtree $S_K\subseteq S$ to a point $v_K$. By the induction hypothesis, $H$ splits as a graph of groups whose vertex groups are conjugate of $K,G_3,\ldots,G_n$, and whose underlying graph $\Delta$ is a tree. Furthermore, the stabilizer $K$ of $v_K$ splits as the amalgamated product of a conjugate of $G_1$ and a conjugate of $G_2$, and it is clear by construction that the stabilizer of the edge of $\Delta$ joining the vertex fixed by $K$ to the rest of the graph is contained in a conjugate of $G_1$ or $G_2$. Hence $H$ splits as a tree of groups whose vertex groups are conjugates of $G_1,\ldots,G_n$.\end{proof}

\begin{lemma}\label{normalizer_is_a_tree}Let $G$ be a finitely generated group, let $k\geq 1$ be an integer, and let $T$ be a splitting of $G$ over edge groups of order $k$. Suppose that $T/G$ is a tree. Let $G_e$ be an edge group of $T$. Then $N_G(G_e)$ splits as a tree of groups.
\end{lemma}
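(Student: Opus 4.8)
The plan is to realize $N_G(G_e)$ as the global stabilizer of the cylinder of $G_e$ and then transfer the tree structure of $T/G$ to this cylinder. Write $C = G_e$ (of order $k$) and $N = N_G(C)$, and let $Y = \Fix(C) \subseteq T$ be the cylinder of $C$, as in Subsection~\ref{tree of cylinders}; it is a subtree of $T$ containing the edge $e$. As recalled there, $N$ is exactly the global stabilizer of $Y$: if $g \in N$ then $gY = \Fix(gCg^{-1}) = \Fix(C) = Y$, and conversely if $gY = Y$ then $g^{-1}Cg$ fixes $Y$ pointwise, hence is contained in the stabilizer $C$ of any edge of $Y$, and comparing orders gives $g \in N$. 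Thus $N$ acts on the tree $Y$ and therefore splits as a graph of groups whose underlying graph is the quotient $Y/N$; it remains to show that $Y/N$ is a tree.

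The crucial point is that every edge of $Y$ has stabilizer exactly $C$: such an edge is fixed by $C$, so its stabilizer contains $C$, and all edge groups of $T$ have order $k = |C|$. From this I claim that the graph morphism $p : Y/N \to T/G$ induced by the inclusion $Y \hookrightarrow T$ is injective on (oriented) edges. Indeed, if $f_1$ and $f_2$ are edges of $Y$ lying in the same $G$-orbit, say $gf_1 = f_2$, then $g$ conjugates $G_{f_1} = C$ onto $G_{f_2} = C$, so $g \in N$, and hence $f_1$ and $f_2$ already lie in the same $N$-orbit.

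To finish, I will show $Y/N$ is a tree using the hypothesis that $T/G$ is a tree. The graph $Y/N$ is connected, being a quotient of the connected graph $Y$. If it were not a tree, it would contain a reduced closed edge-path $\gamma = (f_1, \dots, f_m)$ of positive length; its image $p(\gamma)$ is then a closed edge-path of positive length in the tree $T/G$, which must backtrack at some position $i$, that is, $p(f_{i+1}) = \overline{p(f_i)} = p(\overline{f_i})$. By the edge-injectivity of $p$ this forces $f_{i+1} = \overline{f_i}$, contradicting that $\gamma$ is reduced. Hence $Y/N$ is a tree and $N_G(G_e)$ splits as a tree of groups.

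I expect the argument to be essentially routine; the points that require the most care are the identification of $N_G(G_e)$ with the global stabilizer of $\Fix(G_e)$ and the edge-injectivity of $p$ — both genuinely using that every edge group of $T$ has order exactly $k$ — together with the standard fact that the image of a reduced closed path under a graph morphism into a tree must backtrack, which is exactly where the hypothesis that $T/G$ is a tree enters.
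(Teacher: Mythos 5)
Your proof is correct, but it takes a genuinely different, non-inductive route from the paper's. The paper's proof reduces by induction to the base case where $T/G$ is a single edge, $G = A\ast_C B$; there one argues directly that since $G$ is transitive on edges of $T$ and the conjugation argument shows $N_G(C)$ is then transitive on edges of $\mathrm{Fix}_T(C)$, the quotient $\mathrm{Fix}_T(C)/N_G(C)$ has exactly one edge, and it cannot be a loop since adjacent vertices of $T$ lie in distinct $G$-orbits; hence $N_G(C) = N_A(C)\ast_C N_B(C)$. You instead work globally with the cylinder $Y = \mathrm{Fix}_T(C)$ at once: you verify $N_G(C) = \mathrm{Stab}(Y)$, show the induced graph morphism $p : Y/N_G(C) \to T/G$ is injective on edges (here both arguments use the same local fact, namely that since all edge stabilizers of $T$ have order exactly $k = |C|$, an element carrying one edge of $Y$ to another must normalize $C$), and then conclude that $Y/N_G(C)$ is a tree because a connected graph mapping edge-injectively into a tree admits no reduced closed loop. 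Your version has the advantage of being self-contained and of making the role of the hypothesis that $T/G$ is a tree completely transparent, whereas the induction step in the paper's proof is left implicit and takes a little care to set up; the paper's version, in exchange, makes the amalgam structure $N_G(C) = N_A(C)\ast_C N_B(C)$ explicit in the base case. Both are valid.
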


\begin{proof}By induction, it suffices to prove the result when $T/G$ has only one edge. Write $G=A\ast_C B$. Note that $G$ acts transitively on the edges of $T$, so $N_G(C)$ acts transitively on the edges of $\mathrm{Fix}_T(C)$. But $G$ does not act transitively on the vertices of $T$, so $N_G(C)$ does not act transitively on the vertices of $\mathrm{Fix}_T(C)$. It follows that $\mathrm{Fix}_T(C)/N_G(C)$ is simply an edge, and thus that $N_G(C)=N_A(C)\ast_C N_B(C)$. 
\end{proof}

\begin{lemma}\label{injectivity_normalizer}
Let $G$ be a finitely generated group, let $C$ be a normal subgroup of $G$. Suppose that $G$ splits as a reduced splitting $G_1\ast_C \cdots \ast_C G_n$. Let $\varphi$ be an endomorphism of $G$ such that, for every $1\leq i\leq n$, $\varphi$ maps $G_i$ injectively into a conjugate of $G_i$. Then $\varphi$ is injective.
\end{lemma}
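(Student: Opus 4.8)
First, a word on the statement: I read ``$\varphi$ maps $G_i$ injectively into a conjugate of $G_i$'' in the sense that there is $g_i\in G$ with $\varphi(G_i)=g_iG_ig_i^{-1}$ and $\varphi|_{G_i}$ injective (so that $\varphi$ restricts to an isomorphism $G_i\to g_iG_ig_i^{-1}$); this is what is needed and what is used in the applications --- without the surjectivity onto the conjugate the conclusion fails already for $G=\mathbb{Z}\ast_{2\mathbb{Z}}\mathbb{Z}$ with the ``doubling'' endomorphism. The case $n=1$ being trivial, assume $n\geq 2$. The plan is to push everything to the quotient $\bar G:=G/C$. Since $C$ is normal in $G$ and lies in every $G_i$, we have $\bar G=\bar G_1\ast\cdots\ast\bar G_n$ with $\bar G_i:=G_i/C$, each $\bar G_i$ nontrivial because the splitting over the common subgroup $C$ is reduced. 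The Bass--Serre tree $T$ of the amalgam $G_1\ast_C\cdots\ast_C G_n$ is simultaneously the Bass--Serre tree of the free product $\bar G$, with $G$ acting on it through $G\twoheadrightarrow\bar G$; in particular $C$ acts trivially, and because $C$ is normal every edge of $T$ has stabiliser exactly $C$.

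Step 1: $\varphi(C)=C$. Let $v_i\in T$ be the vertex with $\mathrm{Stab}_T(v_i)=G_i$, so $\varphi(G_i)=g_iG_ig_i^{-1}=\mathrm{Stab}_T(g_iv_i)$. Since $v_1$ and $v_2$ are in distinct $G$-orbits, $g_1v_1\neq g_2v_2$, so $\varphi(C)\subseteq\mathrm{Stab}_T(g_1v_1)\cap\mathrm{Stab}_T(g_2v_2)$, which is the pointwise stabiliser of the geodesic from $g_1v_1$ to $g_2v_2$; this geodesic contains an edge, hence that stabiliser equals $C$. Thus $\varphi(C)\subseteq C$, and as $\varphi|_C$ is injective this gives $\varphi(C)=C$ whenever $C$ is finite --- the case in all applications of this lemma, where $C$ is an edge group of a Stallings splitting --- and more generally whenever the $\bar G_i$ are Hopfian. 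Consequently $\varphi$ induces $\bar\varphi:\bar G\to\bar G$, and because $\varphi$ sends $G_i$ isomorphically onto $g_iG_ig_i^{-1}$ and $C$ onto $C=g_iCg_i^{-1}$, the induced map $\bar\varphi$ sends $\bar G_i$ isomorphically onto $\bar g_i\bar G_i\bar g_i^{-1}$.

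Step 2: $\bar\varphi$ is injective. Set $\theta_i:=\mathrm{ad}(\bar g_i^{-1})\circ\bar\varphi|_{\bar G_i}\in\mathrm{Aut}(\bar G_i)$ and $\Theta:=\theta_1\ast\cdots\ast\theta_n\in\mathrm{Aut}(\bar G)$, and let $\iota:\bar G\to\bar G$ be the homomorphism with $\iota|_{\bar G_i}=\mathrm{ad}(\bar g_i)|_{\bar G_i}$. Checking on each free factor, $\bar\varphi=\iota\circ\Theta$, so it suffices to see that $\iota$ is injective. Its image is the subgroup $\langle\bar g_1\bar G_1\bar g_1^{-1},\dots,\bar g_n\bar G_n\bar g_n^{-1}\rangle$ of $\bar G$, and I would apply Lemma \ref{pingpong} to the subgroups $\bar g_i\bar G_i\bar g_i^{-1}$ acting on the Bass--Serre tree of $\bar G$: each $\bar g_i\bar G_i\bar g_i^{-1}$ fixes exactly one vertex, namely $\bar g_iv_i$, and vertices of distinct ``type'' lie in distinct $\bar G$-orbits, so the disjointness hypothesis of Lemma \ref{pingpong} holds. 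The lemma then says $\langle\bar g_i\bar G_i\bar g_i^{-1}:i\rangle$ is the free product of the $\bar g_i\bar G_i\bar g_i^{-1}$, which is precisely the injectivity of $\iota$. Hence $\bar\varphi$ is injective.

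To finish: if $g\in\ker\varphi$, its image in $\bar G$ lies in $\ker\bar\varphi=1$, so $g\in C\subseteq G_1$; since $\varphi|_{G_1}$ is injective, $g=1$. The main (indeed the only) non-formal point is Step 1's upgrade of $\varphi(C)\subseteq C$ to $\varphi(C)=C$, which is automatic in the situation where the lemma is applied, $C$ being finite; the rest is Bass--Serre bookkeeping together with Lemma \ref{pingpong}.
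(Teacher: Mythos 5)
Your proof is correct and reaches the conclusion by a route that, while resting on the same key ingredient (Lemma~\ref{pingpong}), is organized quite differently from the paper's. The paper applies Lemma~\ref{pingpong} directly to $\varphi(G_1),\ldots,\varphi(G_n)$ acting on $T$, deduces $\varphi(G)\cong\varphi(G_1)\ast_{\varphi(C)}\cdots\ast_{\varphi(C)}\varphi(G_n)$, and observes that reduced normal forms are sent to reduced normal forms. You instead work modulo $C$: you extract $\varphi(C)=C$ from the tree geometry, factor $\bar\varphi$ on the free product $\bar G=G/C$ as a free product of automorphisms $\Theta$ followed by an embedding $\iota$ (itself supplied by Lemma~\ref{pingpong}), and then kill the residual kernel inside $C$ by injectivity on $G_1$. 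The ping-pong input is the same, but your organization has the virtue of making explicit two points the paper leaves tacit: (i) the hypothesis must be read as $\varphi(G_i)=g_iG_ig_i^{-1}$ (onto a conjugate), not merely $\varphi(G_i)\subseteq g_iG_ig_i^{-1}$ -- your $\mathbb{Z}\ast_{2\mathbb{Z}}\mathbb{Z}$ squaring example shows the weaker reading is false, and the paper's own appeal to Lemma~\ref{pingpong} silently requires the stronger one, since otherwise some $\varphi(G_i)$ could sit inside $C$ and fix all of $T$, destroying the disjointness hypothesis; and (ii) one needs $\varphi(C)=C$, automatic for $C$ finite (the only case in which the lemma is ever invoked), and this is also what lets the paper identify the new edge group as $\varphi(C)$ rather than the a priori possibly larger subgroup $\varphi(G_i)\cap\varphi(G_j)$ of $C$.
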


\begin{proof}By Lemma \ref{pingpong}, $\varphi(G)$ splits as a reduced splitting $\varphi(G_1)\ast_{\varphi(C)} \cdots \ast_{\varphi(C)} \varphi(G_n)$. It is clear that an element $g\in G$ written as a normal form in $G_1\ast_C \cdots \ast_C G_n$ is mapped to an element written as a normal form in $\varphi(G_1)\ast_{\varphi(C)} \cdots \ast_{\varphi(C)} \varphi(G_n)$. Furthermore, $\varphi$ is injective on $G_1,\ldots,G_n$ by assumption. It follows that $\varphi$ is injective.
\end{proof}

\begin{lemma}\label{injectivity}
Let $G$ be a finitely generated group, let $\varphi$ be an endomorphism of $G$ and let $T$ be a reduced splitting of $G$ over finite groups. Suppose that $T/G$ is a tree. If $\varphi$ maps every vertex group $G_v$ of $T$ injectively into a conjugate of $G_v$, and every edge group $G_e$ of $T$ isomorphically to a conjugate of $G_e$, then $\varphi$ is injective.
\end{lemma}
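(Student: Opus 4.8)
The plan is to combine the tree-of-cylinders machinery from Lemma~\ref{perin} (and its variant Lemma~\ref{perin2}) with the ping-pong decomposition of Lemma~\ref{pingpong} and the normalizer-splitting of Lemma~\ref{normalizer_is_a_tree}. The key point is that the hypotheses here are weaker than in Lemma~\ref{perin}: we only assume $\varphi$ maps each vertex group $G_v$ \emph{injectively} into a conjugate of itself, not \emph{isomorphically} onto a conjugate of itself, and we only control edge groups up to isomorphism onto a conjugate. So the target $\varphi(G)$ need not have the same vertex groups as $G$, and we cannot directly invoke Lemma~\ref{perin}. Instead, I would first reduce to the subgroup $\varphi(G)$ and build an intermediate splitting for it.

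First I would set up notation: let $T$ be the Bass-Serre tree of the reduced splitting, with $T/G$ a tree; since $T/G$ is a reduced tree, distinct vertex orbits remain distinct. Pick representatives $v_1,\dots,v_m$ of the vertex orbits and $g_i\in G$ with $\varphi(G_{v_i})\subseteq g_i G_{v_i}g_i^{-1}$; define a $\varphi$-equivariant map $f:T\to T$ sending $v_i\mapsto g_iv_i$ as in the proof of Lemma~\ref{perin}. The crucial observation is that for an edge $e=[v,w]$ of $T$, since $G_v$ and $G_w$ lie in distinct conjugacy classes (reduced tree) they are sent into distinct conjugacy classes, so $f(v)\ne f(w)$ and $f(e)$ is a genuine (non-degenerate) path. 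Because $\varphi$ is injective on $G_e$ and the edge groups of $T$ all have order $k$, $\varphi(G_e)$ is exactly the stabilizer of each edge along $f(e)$, hence $f(e)$ lies inside a single cylinder. This is precisely the input needed to push $f$ down to a map $f_c:T_c\to T_c$ on the tree of cylinders and to run the no-folding argument of Lemma~\ref{perin}/Lemma~\ref{perin2} verbatim: at a non-cylinder vertex, folding would force $\varphi(G_e)=\varphi(G_{e'})$ for distinct edge groups $G_e,G_{e'}$ of a common vertex group on which $\varphi$ is injective, a contradiction; at a cylinder vertex $Y_e$, folding would produce $g\notin N_G(G_e)$ with $\varphi(g)=1$, and then $G_e\ne gG_eg^{-1}$ but $\varphi(G_e)=\varphi(gG_eg^{-1})$, again contradicting injectivity of $\varphi$ on the vertex group $G_{w'}$ containing both. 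Hence $f_c$ is injective, and then $\varphi$ is injective exactly as in Lemma~\ref{perin}: if $\varphi(g)=1$ then $g$ fixes every vertex of $T_c$, and $\varphi$ is injective on the vertex groups of $T_c$ (which are the $N_G(G_e)$, on which injectivity must itself be checked).

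The one genuine gap to fill is that Lemma~\ref{perin} assumed injectivity of $\varphi$ on the normalizers $N_G(G_e)$ of edge groups, which is what makes the cylinder-vertex case work and also gives the final kill-step; here this is not hypothesized, so I would derive it. This is exactly the content flagged in the remark after Lemma~\ref{perin2}: using that $T/G$ is a tree, Lemma~\ref{normalizer_is_a_tree} gives that $N_G(G_e)$ splits as a tree of groups with vertex groups the $N_{G_v}(G_e)$ over the edge group $G_e$ (all conjugates of $G_e$), i.e.\ a reduced amalgam of the form $N_1\ast_{G_e}\cdots\ast_{G_e}N_r$ with $G_e\triangleleft N_G(G_e)$; each $N_{G_v}(G_e)\subseteq G_v$ is mapped injectively by $\varphi$ into a conjugate of $G_v$, hence (being elliptic) into a conjugate of $N_{G'_{?}}(\varphi(G_e))$, and by Lemma~\ref{injectivity_normalizer} (applied with $C=G_e$) $\varphi$ is injective on $N_G(G_e)$. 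Feeding this back, the tree-of-cylinders argument closes.

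The main obstacle I expect is precisely this bootstrapping of injectivity on normalizers: one must check that $\varphi$ restricted to each $N_{G_v}(G_e)$ lands injectively in a single conjugate of a vertex group in a way compatible with Lemma~\ref{pingpong}/Lemma~\ref{injectivity_normalizer}, using the normality of $G_e$ in $N_G(G_e)$ and the order-$k$ hypothesis to identify $\varphi(G_e)$ with the amalgamating subgroup. Once that is in hand, the rest is a transcription of Lemma~\ref{perin}'s tree-of-cylinders folding analysis, with ``isomorphically onto a conjugate'' replaced by ``injectively into a conjugate'' everywhere — a replacement that is harmless because every contradiction in that proof was derived solely from injectivity of $\varphi$ on a vertex or normalizer subgroup, never from surjectivity.
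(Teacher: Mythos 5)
Your proposal has two genuine gaps, one structural and one a flat contradiction with the paper's own proof of Lemma~\ref{perin}.

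First, you implicitly assume throughout that all edge groups of $T$ have the same order $k$ (you write ``the edge groups of $T$ all have order $k$''), but the statement of Lemma~\ref{injectivity} only assumes that $T$ is a reduced splitting over finite groups. The tree-of-cylinders construction in Subsection~\ref{tree of cylinders}, and hence Lemmas~\ref{perin} and~\ref{perin2}, are only set up for splittings in which all edge groups share a fixed order. The paper handles this by an induction on the number of distinct orders of edge groups: collapse all edges whose stabilizer has order $> m$ (where $m$ is the minimal order), check that the resulting map on vertex groups of the collapsed splitting still satisfies the hypotheses, and apply the inductive hypothesis both within each collapsed vertex group and globally. This reduction step is entirely missing from your argument; without it you are proving a strictly weaker statement than the lemma.

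Second, even in the equal-order base case, your claim that Lemma~\ref{perin}'s folding argument ``was derived solely from injectivity of $\varphi$ on a vertex or normalizer subgroup, never from surjectivity'' is wrong. In the cylinder-vertex case of Lemma~\ref{perin}, after writing $w'=gw$ one deduces that $\varphi(g)$ lies in $G_{f_c(w)}=\varphi(G_w)$ --- and the paper explicitly flags ``here we use the assumption that $\phi$ maps $G_w$ surjectively to a vertex group of $T$''. Only because $\varphi(g)\in\varphi(G_w)$ can one multiply $g$ by an element of $G_w$ to reduce to $\varphi(g)=1$, which is the whole engine of the contradiction you want to invoke. Under your weaker hypothesis ($\varphi(G_w)$ merely contained in a conjugate of $G_w$), $\varphi(g)$ lands in $G_{f_c(w)}$ but not necessarily in $\varphi(G_w)$, so you cannot produce $g$ with $\varphi(g)=1$, and the folding argument at cylinder vertices stalls. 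This is precisely why the paper does \emph{not} try to map $T_c\to T_c$: in the base case it instead uses Lemma~\ref{pingpong} to build a new splitting $\Delta$ of $\varphi(G)$ whose vertex groups are exactly the $\varphi(G_i)$, so that $\varphi$ \emph{does} map vertex groups of $T$ isomorphically onto vertex groups of the Bass-Serre tree $T'$ of $\Delta$, and then applies Lemma~\ref{perin2} (which targets a different tree $T'$, not $T$ itself). Your identification of the need to bootstrap injectivity on normalizers via Lemmas~\ref{normalizer_is_a_tree} and~\ref{injectivity_normalizer} is the correct observation and matches the paper, but it does not compensate for the missing target-tree construction, which is what actually rescues the cylinder-vertex case.
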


\begin{proof}
We will prove the result by induction on the number $N$ of distinct orders of edge groups of $T$. Let us prove the induction step first. Suppose that the result is true for any splitting with at most $N\geq 1$ distinct orders of edge groups, and suppose that the number of distinct orders of edge groups of $T$ is exactly $N+1$. Let $m\geq 1$ denote the smallest order of an edge group of $T$, and collapse to a point every edge whose stabilizer has order $> m$. We get a new reduced splitting $S$ of $G$, all of whose edge groups have order $m$. Let $v$ be a vertex of $S$. Note that $G_v$ does not split non-trivially over a finite group of order $\leq m$, so $\varphi(G_v)$ fixes a vertex $w$ of $S$. Moreover, $w$ is a translate of $v$: indeed, $G_v$ contains a vertex group $G_x$ of $T$ of order $> m$, and by assumption there exists an element $g\in G$ such that $\varphi(G_x)$ fixes $gx$ in $T$, therefore $\varphi(G_x)$ fixes $gv$. Hence $\varphi(G_x)$ fixes the path $[w,gv]$, whose stabilizer has order $<m$. But $\varphi$ is injective on $G_x$, thus $w=gv$. Note that $G_v$ has a natural splitting as a tree of groups with $\leq N$ distinct orders of edge groups, so the induction hypothesis applies and shows that the endomorphism $\mathrm{ad}(g^{-1})\circ \varphi_{\vert G_v}$ of $G_v$ is injective. Finally, the induction hypothesis applies to the splitting $S$ of $G$ and shows that $\varphi$ is injective.

\smallskip \noindent It remains to prove the base case of the induction, that is the case where all the edge groups of $T$ have the same order $k$. Note that Lemma \ref{perin2} (relying on the tree of cylinders) does not apply immediately because $\varphi$ maps each vertex group injectively, but possibly not isomorphically, to a conjuagte of itself. Let $G_1,\ldots,G_n$ be some representatives of the conjugacy classes of the vertex groups of $T$ such that $G_1,\ldots,G_n$ generate $G$. Define $G'_1=\varphi(G_1),\ldots,G'_n=\varphi(G_n)$.

\smallskip \noindent First, suppose that $n=2$, and define $C=G_1\cap G_2$ and $C'=G'_1\cap G'_2$. In this case, we have $\varphi(G)=\langle G'_1,G'_2\rangle\simeq G'_1\ast_{C'} G'_2$ (as in the proof of Lemma \ref{pingpong}). Note that $C$ is an edge group of $T$, so $C$ has order $k$, and $\varphi(C)$ has order $k$ as well since $\varphi(C)$ is conjugate to $C$ by assumption. Note also that $C'$ is contained in an edge group of $T$, so $\vert C'\vert\leq k$. Furthermore, $\varphi(C)=\varphi(G_1\cap G_2)\subseteq \varphi(G_1)\cap \varphi(G_2)=C'$, and therefore $\varphi(C)=C'$. Let $g\in G$ be a non-trivial element, and let $g=a_1b_1\cdots a_rb_r$ be a reduced normal form in the splitting $G=G_1\ast_C G_2$, with $r\geq 1$, $a_i\in G_1\setminus C$ (except $a_1$ that may be trivial) and $b_i\in G_2\setminus C$ (except $b_r$ that may be trivial). If $g$ belongs to $G_1$ or $G_2$ (i.e., $g=a_1$ or $g=b_1$) then $\varphi(g)\neq 1$ as by assumption $\varphi$ is injective on $G_1$ and $G_2$. If $g$ does not belong to $G_1$ or $G_2$, then $\varphi(g)=\varphi(a_1)\varphi(b_1)\cdots \varphi(a_r)\varphi(b_r)$ is a reduced normal form in $G'_2\ast_{C'} G'_2$ (because $C'=\varphi(C)$), and therefore $\varphi(g)\neq 1$. Hence $\varphi$ is injective. 

\smallskip \noindent Then, suppose that $n\geq 2$. By Lemma \ref{pingpong}, $\varphi(G)$ splits as a graph of groups $\Delta$ whose vertex groups are conjugates of $G'_1,\ldots,G'_n$, and whose underlying graph is a tree. We can assume without loss of generality that $\Delta$ is reduced, by collapsing edges if necessary. We could prove that $\varphi$ is injective by using an argument based on normal forms as we did for $n=2$, but the argument is less immediate when $n\geq 3$, so we will use Lemma \ref{perin2} instead. First, note that the edge groups of $\Delta$ have order $k$, otherwise we would have a splitting of $\varphi(G)$ over a finite group of order $<k$, contradicting the injectivity of $\varphi$ on the edge groups of $T$. Let $G'=\varphi(G)$ and let $T'$ be the Bass-Serre tree of $\Delta$. Note that $\varphi$ maps every vertex group (resp.\ edge group) of $T$ isomorphically to a vertex group (resp.\ edge group) of $T'$, and that $\varphi$ maps non-conjugate vertex groups (resp.\ edge groups) of $T$ isomorphically to non-conjugate vertex groups (resp.\ edge groups) of $T'$. It remains to prove that $\varphi$ is injective on the normalizer of every edge group: this is an immediate consequence of Lemmas \ref{normalizer_is_a_tree} and \ref{injectivity_normalizer}. Hence Lemma \ref{perin2} applies and tells us that $\varphi$ is injective.\end{proof}

\subsection{Being a hyperbolic Coxeter group is preserved under AE-equivalence}

We are ready to prove the main result of this section. Recall that if $(G,S)$ is a Coxeter system, a subgroup of $G$ is called a reflection subgroup if it generated by elements of $S^G$. By \cite{Deodhar}, a reflection subgroup is a Coxeter group. 

\begin{te}\label{preservation}Let $G$ be a hyperbolic Coxeter group, let $G'$ be a finitely torsion-generated group. Suppose that $G$ and $G'$ are AE-equivalent. Then $G'$ is a Coxeter group. In fact, $G'$ embeds into $G$ as a reflection subgroup, and $G$ embeds into $G'$ as a reflection subgroup. Moreover, $G'$ is hyperbolic.
\end{te}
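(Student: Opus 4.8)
The plan is to feed the vertex-group correspondence of Corollary~\ref{main_theorem1.1} into Deodhar's theorem on reflection subgroups (\cite{Deodhar}), after upgrading that correspondence to honest embeddings by means of the injectivity criterion of Lemma~\ref{injectivity}. A Coxeter group is generated by involutions, so $G$ is a torsion-generated hyperbolic group and Proposition~\ref{main_theorem1} applies to the pair $(G,G')$. Fix a Coxeter generating set $S$ of $G$ and, using Proposition~\ref{davis}, a reduced Stallings splitting $\Delta$ of $G$ whose vertex and edge groups are $S$-special and whose underlying graph is a tree (when $G$ is one-ended or finite, $\Delta$ is the trivial splitting with unique vertex group $G=\langle S\rangle$). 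By \cite{And18} the group $G'$ is hyperbolic, so it admits a reduced Stallings splitting $\Delta'$, whose underlying graph is a tree by Lemma~\ref{is_a_tree}. Corollary~\ref{main_theorem1.1} then produces morphisms $\varphi\colon G\to G'$ and $\varphi'\colon G'\to G$, each sending every vertex group of the source splitting isomorphically onto a conjugate of a vertex group of the target splitting, and inducing mutually inverse bijections between the conjugacy classes of vertex groups (and of finite subgroups) of $\Delta$ and $\Delta'$.

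Next I would establish that $\varphi$ and $\varphi'$ are injective. Set $p=\varphi'\circ\varphi$. Since $\Delta$ has finitely many vertex orbits and $G$ has finitely many conjugacy classes of finite subgroups, some power $q=p^{N}$ sends each vertex group of $\Delta$ isomorphically onto a conjugate of itself and each (finite) edge group of $\Delta$ isomorphically onto a conjugate of itself. As $\Delta$ is a reduced splitting over finite groups whose underlying graph is a tree, Lemma~\ref{injectivity} applies and shows that $q$ is injective; hence $\varphi$ is injective, and the symmetric argument applied to $\varphi\circ\varphi'$ and to $\Delta'$ shows that $\varphi'$ is injective.

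Now for the reflection-subgroup statement. Since the underlying graph of $\Delta$ is a tree, $G$ is generated by the vertex groups of $\Delta$, so $\varphi'(G')$ is generated by the subgroups $\varphi'(G'_{w})$, $w$ ranging over the vertices of $\Delta'$; each of these is a conjugate of an $S$-special subgroup of $G$, hence is generated by elements of $S^{G}$. Therefore $\varphi'(G')$ is a reflection subgroup of $G$, so by Deodhar's theorem it is a Coxeter group, and since $\varphi'$ is injective, $G'\cong\varphi'(G')$ is a Coxeter group. Fix now a Coxeter generating set $S'$ of $G'$; by the invariance of the conjugacy classes of vertex groups under change of reduced Stallings decomposition (Subsection~\ref{stallings_definition}) together with Proposition~\ref{davis} applied to $G'$, every vertex group of $\Delta'$ is conjugate to an $S'$-special subgroup of $G'$. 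Running the argument of this paragraph with $\varphi$ in place of $\varphi'$ (and $\Delta,\Delta'$ exchanged) shows that $\varphi(G)$ is a reflection subgroup of $G'$. Combined with the hyperbolicity of $G'$ already noted, this proves the theorem.

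The step I expect to be the main obstacle is the injectivity argument: one has to check that the mere vertex-group correspondence of Corollary~\ref{main_theorem1.1} forces a suitable power of $\varphi'\circ\varphi$ to carry each edge group \emph{isomorphically onto a conjugate of itself} --- not just injectively into one --- so that Lemma~\ref{injectivity} is genuinely applicable; and one must be careful to deduce injectivity of $\varphi$ and $\varphi'$ before (and independently of) knowing that $G'$ is Coxeter, so that the final part of the argument is not circular.
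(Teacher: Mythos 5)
Your proposal is correct and follows essentially the same route as the paper: Corollary~\ref{main_theorem1.1} plus Proposition~\ref{davis} to set up special-subgroup vertex groups, Lemma~\ref{injectivity} for injectivity, and Deodhar's theorem to conclude. The main difference is a point of care in the injectivity step. The paper applies Lemma~\ref{injectivity} directly to $\varphi\circ\varphi'\colon G'\to G'$; but that lemma, as stated, requires the endomorphism to send each vertex (resp.\ edge) group into a conjugate of \emph{the same} vertex (resp.\ edge) group, and Corollary~\ref{main_theorem1.1} only guarantees that $\varphi'\circ\varphi$ and $\varphi\circ\varphi'$ \emph{permute} the conjugacy classes, not that they fix each one. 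Your move of passing to a power $q=(\varphi'\circ\varphi)^N$ that kills this permutation on conjugacy classes of vertex groups and of finite subgroups (so in particular of edge groups) is precisely what makes Lemma~\ref{injectivity} literally applicable, and it is the detail the paper elides. You also explicitly prove the symmetric assertion that $G$ embeds into $G'$ as a reflection subgroup (via the canonical nature of conjugacy classes of vertex groups in reduced Stallings splittings and a second application of Proposition~\ref{davis} inside $G'$, which requires knowing first that $G'$ is Coxeter — you correctly sequence this). Two minor caveats: your sentence claiming Corollary~\ref{main_theorem1.1} yields ``mutually inverse bijections'' overstates what that corollary says (it only gives two bijections, in each direction), but this is harmless since you take a power anyway; and when checking the hypothesis of Lemma~\ref{injectivity} for edge groups you should note that the power preserving conjugacy classes of finite subgroups suffices, since edge groups of a reduced Stallings splitting are finite and $\varphi,\varphi'$ are injective on finite subgroups.
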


\begin{proof}Let $(G,S)$ be a Coxeter system. Let $\Delta$ and $\Delta'$ be reduced Stallings splittings of $G$ and $G'$ respectively. By Proposition \ref{davis}, we can assume that the vertex groups $G_1,\ldots,G_n$ of $\Delta$ are $S$-special. Let $G'_1,\ldots,G'_n$ be the vertex groups of $\Delta'$. By Corollary \ref{main_theorem1.1}, there exist two morphisms $\varphi : G\rightarrow G'$ and $\varphi' : G'\rightarrow G$ that induce one-to-one correspondences between the conjugacy classes of $G_1,\ldots,G_n$ and $G'_1,\ldots,G'_n$. Hence, after renumbering the $G_i$ if necessary, we can assume that $\varphi'(G'_i)=g_iG_ig_i^{-1}$ for some $g_i\in G$. By Lemma \ref{injectivity}, $\varphi\circ \varphi' : G'\rightarrow G'$ is injective, and therefore $\varphi':G'\rightarrow G$ is injective, so $G'$ is isomorphic to $\varphi'(G')$. But $\varphi'(G')$ is generated by $\varphi'(G'_1)=g_1G_1g_1^{-1},\ldots,\varphi'(G'_n)=g_nG_ng_n^{-1}$, so $\varphi'(G')$ is a reflection subgroup of $G$. Hence $\varphi'(G')$ is a Coxeter group by \cite{Deodhar}. 

\smallskip \noindent It remains to prove that $G'$ is a hyperbolic group. This is a consequence of \cite{And18}, but we can give a direct proof here. First, it is not difficult to prove that if $H=A\ast_C B$ is a hyperbolic group and $C$ is quasiconvex in $H$, then so are $A$ and $B$. Hence the one-ended factors $G_1,\ldots,G_n$ of $G$ are hyperbolic, and thus the one-ended factors $G'_1,\ldots,G'_n$ of $G'$ are hyperbolic. Then, the result follows from the combination theorem of Bestvina and Feighn \cite{BF92}.\end{proof}

\begin{rk}\label{remark_even}
Note that it follows from Lemma \ref{Stallings_even} and from the proof above that $G'$ is even if $G$ is even.
\end{rk}

\section{First-order torsion-rigidity of some hyperbolic groups}\label{FOrigidity}

Our main result in this section is the following theorem, whose proof is postponed to Subsection \ref{postponed}.

\begin{te}\label{main_theorem2}
Let $G$ be a torsion-generated hyperbolic group, and let $\Delta$ be a reduced Stallings splitting of $G$. Suppose that the following condition holds: for every edge group $F$ of $\Delta$, the image of the natural map $N_G(F)\rightarrow \mathrm{Aut}(F)$ is equal to $\mathrm{Inn}(F)$. Then $G$ is first-order torsion-rigid (in fact, AE torsion-rigid).\end{te}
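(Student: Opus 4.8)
The idea is to run the machinery of Corollary~\ref{main_theorem1.1} in the degenerate case $G' = G$, and then upgrade ``there exist mutually inverse morphisms inducing bijections on conjugacy classes of vertex groups'' into ``$G \cong G'$'' by using the hypothesis on edge groups to force surjectivity. So suppose $G'$ is finitely torsion-generated and AE-equivalent to $G$; we must produce an isomorphism $G \to G'$. Fix reduced Stallings splittings $\Delta$ of $G$ (the given one) and $\Delta'$ of $G'$; by Corollary~\ref{main_theorem1.1} there are morphisms $\varphi : G \to G'$ and $\varphi' : G' \to G$ inducing one-to-one correspondences between the conjugacy classes of vertex groups of $\Delta$ and $\Delta'$, each isomorphic on the matched vertex groups. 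By Lemma~\ref{is_a_tree} both $\Delta$ and $\Delta'$ have underlying graph a tree, and the vertex groups of a torsion-generated hyperbolic group have no QH factor of positive genus, so the combinatorics is tightly controlled. After renumbering, $\varphi'\circ\varphi$ maps every vertex group of $\Delta$ isomorphically to a conjugate of itself; composing $\varphi$ with a suitable inner automorphism we may also assume $\varphi'$ maps each $G'_i$ isomorphically onto $G_i$ (not just a conjugate).

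\textbf{Injectivity.} First I would show $\varphi$ and $\varphi'$ are injective. By Lemma~\ref{injectivity}, an endomorphism of $G$ that maps every vertex group injectively into a conjugate of itself and every edge group isomorphically to a conjugate of itself is injective; applying this to $\varphi'\circ\varphi$ (which does this by the correspondence above, once one checks edge groups are sent isomorphically to conjugates — this follows because an edge group sits inside two matched vertex groups and $\varphi'\circ\varphi$ is injective on each, and orders are preserved since $\Delta$ is reduced and the splitting has bounded finite edge groups) we get $\varphi'\circ\varphi$ injective, hence $\varphi$ injective; symmetrically $\varphi'$ is injective. So $G \hookrightarrow G' \hookrightarrow G$, and $\varphi'\circ\varphi$ is an injective endomorphism of $G$.

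\textbf{Surjectivity via the edge-group hypothesis.} Now comes the point where the hypothesis ``$N_G(F) \to \mathrm{Aut}(F)$ has image $\mathrm{Inn}(F)$ for every edge group $F$ of $\Delta$'' is used. The obstruction to $\varphi'\circ\varphi$ being onto is that it might permute the vertices of $\Delta$ (or rather their Bass-Serre translates) while twisting the edge-group identifications. The strategy is: since $\varphi'\circ\varphi$ is injective and sends each vertex group isomorphically onto a conjugate of itself and (modulo inner automorphisms) fixes each vertex group, the induced map on the Bass-Serre tree is an injective equivariant map $f : T \to T$; I want to show it is surjective, i.e. no edge is ``swallowed'' and $f$ folds nothing. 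The edge-group condition guarantees that any matching of the two edges of $\Delta$ incident to a given vertex is realized by an element of the vertex group, so that after further inner adjustments $\varphi'\circ\varphi$ can be taken to restrict to a modular-type automorphism that is inner on each vertex group and each edge group — and such an automorphism of the fundamental group of a reduced tree of groups is actually inner, hence $\varphi'\circ\varphi$ is an isomorphism. Concretely I would use a variant of Lemma~\ref{perin} / Lemma~\ref{perin2}: when all vertex groups are mapped isomorphically to conjugates of themselves and the normalizer of every edge group is mapped appropriately, the tree-of-cylinders argument gives surjectivity. The condition on $N_G(F)$ is exactly what lets one control the normalizers of the edge groups and conclude that $\varphi'\circ\varphi$ is surjective, hence an automorphism of $G$; therefore $\varphi$ is surjective, so $\varphi : G \to G'$ is an isomorphism.

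\textbf{AE-torsion-rigidity and the main obstacle.} Since the above used only that $G$ and $G'$ are AE-equivalent (Corollary~\ref{main_theorem1.1} needs only AE-equivalence), we conclude that any finitely torsion-generated $G'$ which is AE-equivalent to $G$ is isomorphic to $G$; together with the trivial converse, $G$ is AE-torsion-rigid, a fortiori first-order torsion-rigid. The main obstacle I anticipate is the surjectivity step: Lemma~\ref{injectivity} and the results on preretractions readily give injectivity of $\varphi, \varphi'$, but getting that $\varphi'\circ\varphi$ is onto requires genuinely exploiting that each edge group $F$ of $\Delta$ has the property that elements of $N_G(F)$ realize only inner automorphisms of $F$ — without this, $\varphi'\circ\varphi$ could be a non-surjective injective endomorphism (an ``expansion'' that rescrambles edge identifications), as Theorem~\ref{counterexample_Cox_intro} shows can really happen when the hypothesis fails. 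Making precise that, under the hypothesis, no such rescrambling is possible — i.e. lifting the vertex-group isomorphisms to a global automorphism of the graph-of-groups fundamental group — is the technical heart, and I would carry it out by induction on the number of distinct orders of edge groups exactly as in the proof of Lemma~\ref{injectivity}, now tracking surjectivity and invoking the edge-group hypothesis at the base case via the tree-of-cylinders lemma (Lemma~\ref{perin}).
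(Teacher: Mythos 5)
Your proposal follows essentially the same route as the paper's proof: invoke Corollary~\ref{main_theorem1.1} to obtain $\varphi : G \to G'$ and $\varphi' : G' \to G$ inducing bijections on conjugacy classes of vertex groups of reduced Stallings splittings, use Lemma~\ref{injectivity} to get injectivity, and then exploit the hypothesis on $N_G(F)\to\mathrm{Aut}(F)$ via the tree-of-cylinders Lemma~\ref{perin} (with an induction to reduce to the case where all edge groups have the same order) to produce an actual isomorphism --- this is precisely what the paper does through Theorem~\ref{particular_case} and Proposition~\ref{induction}. One caveat: the parenthetical claim that a single inner adjustment lets you assume $\varphi'(G'_i)=G_i$ exactly for every $i$ simultaneously is not true in general; that ``rescrambling'' of edge identifications is exactly the obstruction the edge-group hypothesis and the cylinder analysis are there to control, and the paper handles it by iterating $\varphi'\circ\varphi$ to kill the permutation on conjugacy classes and then adjusting piece by piece inside each cylinder's normalizer using that the image of $N_G(F)\to\mathrm{Aut}(F)$ is $\mathrm{Inn}(F)$ --- but since you explicitly fall back on the cylinder machinery for the real argument, the overall plan is sound.
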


Theorem \ref{main_theorem2} does not remain true without the assumption on the edge groups of $\Delta$, even if $G$ is a Coxeter hyperbolic group, as shown by the counterexample given in Section \ref{example}. 


The main application of Theorem \ref{main_theorem2} is first-order torsion-rigidity for hyperbolic even Coxeter groups and torsion-generated hyperbolic one-ended groups.

\begin{co}
The following groups are AE torsion-rigid:
\begin{enumerate}[(1)]
    \item[$\bullet$] hyperbolic even Coxeter groups;
    \item[$\bullet$] torsion-generated hyperbolic one-ended groups (and thus free products of such groups).
\end{enumerate}
\end{co}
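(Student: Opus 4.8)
The plan is to derive both statements from Theorem~\ref{main_theorem2}. In each case the group $G$ is finitely torsion-generated and hyperbolic (a Coxeter group is generated by the involutions of $S$; a free product of finitely many torsion-generated hyperbolic groups is torsion-generated and hyperbolic, e.g.\ by \cite{BF92}), so it suffices to produce a reduced Stallings splitting $\Delta$ of $G$ whose edge groups satisfy the hypothesis of that theorem: for every edge group $F$ of $\Delta$, the image of $N_G(F)\to\mathrm{Aut}(F)$ is $\mathrm{Inn}(F)$.

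Case~(2) is immediate. A torsion-generated one-ended hyperbolic group does not split over any finite subgroup, so its reduced Stallings splitting is trivial (one vertex, no edges) and the hypothesis of Theorem~\ref{main_theorem2} is vacuous. For a free product $G=G_1\ast\cdots\ast G_m$ of such groups, the free splitting is a reduced Stallings splitting with one-ended vertex groups, trivial edge groups and underlying graph a tree, and the hypothesis holds trivially since $\mathrm{Aut}(\{1\})=\mathrm{Inn}(\{1\})$. Hence Theorem~\ref{main_theorem2} yields $\mathrm{AE}$-torsion-rigidity in case~(2).

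Case~(1) is the substantial one. Let $(W,S)$ be a hyperbolic even Coxeter system. If $W$ is finite or one-ended its Stallings splitting is trivial and there is nothing to check, so assume $W$ has more than one end. By Proposition~\ref{davis} there is a reduced Stallings splitting $\Delta$ of $W$ whose underlying graph is a tree and all of whose edge groups are finite standard parabolic subgroups $F=\langle S_0\rangle$, $S_0\subseteq S$. Fix such an $F$. Since $\mathrm{Inn}(F)$ always lies in the image of $N_W(F)\to\mathrm{Aut}(F)$ (via conjugation by elements of $F\subseteq N_W(F)$), it remains to show every $w\in N_W(F)$ acts on $F$ by an inner automorphism. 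Write $w=w'f$ with $f\in F$ and $w'$ the minimal-length element of the coset $wF$; then $w'\in N_W(F)$ and $\mathrm{ad}(w)|_F=\mathrm{ad}(w')|_F\circ\mathrm{ad}(f)|_F$, so it is enough to treat $w'$. Being $S_0$-reduced and normalizing $F$, the element $w'$ preserves the set of positive roots of the root subsystem spanned by $S_0$, hence permutes its simple roots; consequently conjugation by $w'$ restricts on $F$ to a diagram automorphism, $w's_i(w')^{-1}=s_{\sigma(i)}$ for some permutation $\sigma$ of $S_0$ (this is the classical structure of normalizers of parabolic subgroups, due to Howlett). Now evenness enters: since each $m_{kl}$ is even or $\infty$, for every $j\in S$ the assignment $s_j\mapsto 1$, $s_k\mapsto 0$ $(k\neq j)$ extends to a homomorphism $\pi_j\colon W\to\mathbb{Z}/2\mathbb{Z}$ (each defining relator maps to $0$). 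Applying $\pi_j$ to $w's_i(w')^{-1}=s_{\sigma(i)}$ gives $\delta_{ij}=\pi_j(s_i)=\pi_j(s_{\sigma(i)})=\delta_{\sigma(i),j}$ for all $j$, so $\sigma=\mathrm{id}$, $\mathrm{ad}(w')|_F=\mathrm{id}_F$, and $\mathrm{ad}(w)|_F=\mathrm{ad}(f)|_F\in\mathrm{Inn}(F)$. The hypothesis of Theorem~\ref{main_theorem2} is verified, so $W$ is $\mathrm{AE}$-torsion-rigid.

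The main obstacle is precisely this last verification. It would be tempting to argue only that conjugation by $w$ preserves the ``$\pi$-type'' of each reflection of $F$, but this is not enough: a finite even Coxeter group --- already a dihedral group $I_2(2k)$ with $k\geq 2$ --- admits type-preserving automorphisms that are \emph{not} inner. What makes the argument work is that the automorphism at hand is induced by an element of the ambient group $W$ normalizing the \emph{standard} parabolic $F$, which forces it (by Howlett's theorem) to be an honest diagram automorphism of $(F,S_0)$; evenness then collapses every diagram automorphism to the identity via the characters $\pi_j$. This is also why the edge groups of the Coxeter groups in Theorem~\ref{counterexample_Cox_intro} must receive non-inner automorphisms from their normalizers, so those groups cannot be even.
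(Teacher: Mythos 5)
Your proof is correct, and both parts ultimately rest on Theorem~\ref{main_theorem2}, but your treatment of the even Coxeter case takes a genuinely different and heavier route than the paper's.

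For case (1), the paper quotes Bahls' observation that in an even Coxeter group $(W,S)$ there is a \emph{retraction} $\rho\colon W\to F=\langle S_0\rangle$ onto each standard parabolic, defined by $s\mapsto s$ for $s\in S_0$ and $s\mapsto 1$ otherwise; evenness makes this a homomorphism. The verification is then a one-line computation: for $g\in N_W(F)$ and $h\in F$, applying $\rho$ to $ghg^{-1}\in F$ gives $ghg^{-1}=\rho(ghg^{-1})=\rho(g)h\rho(g)^{-1}$ with $\rho(g)\in F$, so $\mathrm{ad}(g)|_F\in\mathrm{Inn}(F)$. No decomposition of $g$, no root-theoretic input, no Howlett. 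Your argument instead factors $w=w'f$ with $w'$ the minimal-length coset representative, invokes Howlett's structure theorem to conclude $\mathrm{ad}(w')|_F$ is a diagram automorphism $s_i\mapsto s_{\sigma(i)}$, and then kills $\sigma$ with the $\mathbb{Z}/2$ characters $\pi_j$. This is correct and instructive --- in particular, the caveat you raise (type-preservation alone is not enough; one needs the stronger fact that the automorphism is a diagram automorphism) is a real pitfall, and your closing remark that the edge groups in Theorem~\ref{counterexample_Cox_intro} necessarily receive non-inner automorphisms, hence those groups cannot be even, is a nice structural observation not made in the paper. But notice that your characters $\pi_j$ are exactly the coordinates of Bahls' retraction $\rho$ composed with the obvious projections; the paper simply applies the ``full'' retraction $\rho$ once instead of isolating its coordinate characters after a detour through Howlett. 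Your route buys extra structural insight at the cost of extra machinery; the paper's buys brevity. Case (2) you handle exactly as the paper does.
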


\begin{rk}
In particular, hyperbolic 2-spherical Coxeter groups are AE torsion-rigid. Indeed, if $G$ is such a group, then it has property (FA) of Serre (and thus it is one-ended) because it is generated by a set $S$ composed of involutions such that, for every $s_1,s_2\in S$, $s_1s_2$ has finite order. 
\end{rk}

\begin{rk}
It is worth noting that Theorem \ref{main_theorem2} covers more hyperbolic Coxeter groups than Corollary \ref{corollary}. For example, the Coxeter group \[\mathrm{PGL}_2(\mathbb{Z})=\langle s_1,s_2,s_3 \ \vert \ s_1^2=s_2^2=s_3^2=(s_1s_2)^2=(s_1s_3)^3=(s_2s_3)^{\infty}=1\rangle\]is not even, nor one-ended, but Theorem \ref{main_theorem2} applies since $\mathrm{PGL}_2(\mathbb{Z})$ can be written as an amalgamated product $D_2\ast_{\mathbb{Z}/2\mathbb{Z}}D_3$ where $D_n$ denotes the dihedral group of order $2n$ and $\mathbb{Z}/2\mathbb{Z}$ has no non-trivial outer automorphism. More generally, if $A$ and $B$ are hyperbolic Coxeter groups and $C$ is a special subgroup of $A$ and $B$ with $\mathrm{Out}(C)$ trivial, then the hyperbolic Coxeter group $A\ast_C B$ is AE torsion-rigid. This is true for more general trees of hyperbolic Coxeter groups where edge groups have no non-trivial outer automorphism.\end{rk}


\begin{proof}[Proof of Corollary \ref{corollary}]
The second point is an immediate consequence of Theorem \ref{main_theorem2} since a reduced Stallings splitting of a one-ended group is simply a point, so the condition on the edge groups is empty. 

\smallskip \noindent Let us prove the first point. By \cite[Proposition 8.8.2]{davis}, an edge group $F$ in a Stallings splitting of a Coxeter group $G=\langle S\rangle$ is a special finite subgroup, which means that there exists a subset $T\subseteq S$ such that $F=\langle T\rangle$. As observed by Bahls in \cite[Proposition 5.1]{Bahls}, one can define a retraction $\rho : G\rightarrow F$ by $\rho(s)=s$ if $s\in T$ and $\rho(s)=1$ otherwise (this morphism is well-defined because every defining relation in $G$ is of the form $(ss')^m=1$ with $s,s'\in S$ and $m$ even). Now, for $g\in N_G(F)$ and for every $h\in F$, we have $ghg^{-1}=\rho(g)h\rho(g)^{-1}$, which shows that the image of the natural map $N_G(F)\rightarrow \mathrm{Aut}(F)$ is contained in $\mathrm{Inn}(F)$. Hence Theorem \ref{main_theorem2} applies.
\end{proof}

\begin{rk}Note that the existence of the retraction $\rho : G\rightarrow F$ is not true if $G$ is not assumed to be even. For instance, the finite dihedral group $G=D_3=\langle a,b \ \vert \ a^2=b^2=(ab)^3=1\rangle$ does not retract onto $\langle a \rangle$.\end{rk}

The following problem seems particularly interesting.

\begin{pb}
Find necessary and sufficient conditions (on the edge groups of a reduced Stallings splitting) under which a torsion-generated hyperbolic group (in particular a hyperbolic Coxeter group) is first-order torsion-rigid, or AE torsion-rigid.
\end{pb}

\subsection{Proof of Theorem \ref{main_theorem2}}\label{postponed}

\subsubsection{Virtually cyclic groups}

For expository purposes, we will first prove in detail (a strong version of) Theorem \ref{main_theorem2} in the particular case of virtually cyclic groups. \color{black}This result is of independent interest as we prove full first-order rigidity (not only first-order torsion-rigidity).

We will need the following well-known lemma. As we don't know a precise reference, we include a proof for the convenience of the reader.

\begin{lemma}
Let $G$ be an infinite virtually cyclic group. Then $G$ maps either onto $\mathbb{Z}$ or onto $D_{\infty}$ with finite kernel $C$. In the first case, $G$ is isomophic to $C\rtimes \mathbb{Z}$ and we say that $G$ is of \emph{cyclic type}; in the latter case, $G$ is isomorphic to an amalgamated product $A\ast_C B$ with $A,B$ finite and $[A:C]=[B:C]=2$, and we say that $G$ is of \emph{dihedral type}.
\end{lemma}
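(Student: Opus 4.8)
The plan is to argue in three stages. \emph{Stage 1: normalising the cyclic subgroup.} Starting from any finite-index infinite cyclic subgroup $H_0 \le G$, I would pass to its normal core $H := \bigcap_{g \in G} g H_0 g^{-1}$: this is normal in $G$, of finite index, and infinite (being finite-index in the infinite group $G$), hence infinite cyclic. So after replacing $H_0$ by $H$ we may assume $H = \langle h \rangle \triangleleft G$ with $G/H$ finite. I would also record the elementary observation, used repeatedly, that every infinite subgroup $K \le G$ has finite index: indeed $[K : K \cap H] \le [G:H] < \infty$ forces $K \cap H$ to be infinite, hence of finite index in $H \cong \mathbb{Z}$, hence of finite index in $G$. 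Finally, the conjugation action of $G$ on $H$ gives a homomorphism $\rho : G \to \mathrm{Aut}(H) \cong \mathbb{Z}/2\mathbb{Z}$; set $G^+ = \ker \rho$, so that $[G : G^+] \le 2$ and $H$ is central in $G^+$.

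\emph{Stage 2: the structure of $G^+$.} Since the centre of $G^+$ contains $H$ and hence has finite index, Schur's theorem shows that $[G^+, G^+]$ is finite. Consequently $G^+_{\mathrm{ab}} = G^+/[G^+,G^+]$ is a finitely generated abelian group in which the image of $H$ is injective (its kernel $H \cap [G^+,G^+]$ is a finite subgroup of $H \cong \mathbb{Z}$, so trivial) and of finite index; therefore $G^+_{\mathrm{ab}} \cong \mathbb{Z} \times T$ with $T$ finite. Composing $G^+ \to G^+_{\mathrm{ab}}$ with the projection onto $\mathbb{Z}$, I get a surjection $G^+ \twoheadrightarrow \mathbb{Z}$ whose kernel $C_0$ meets $H$ trivially (the image of $H$ in $\mathbb{Z}$ is non-zero, hence the restriction to $H$ is injective), so $C_0$ embeds into $G^+/H$ and is finite. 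In particular $G^+ \cong C_0 \rtimes \mathbb{Z}$, and $C_0$ is precisely the set of torsion elements of $G^+$ (an element with non-zero $\mathbb{Z}$-component has infinite order), so it is a characteristic subgroup of $G^+$.

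\emph{Stage 3: conclusion.} If $\rho$ is trivial then $G = G^+$ maps onto $\mathbb{Z}$ with finite kernel, which is the cyclic case. Otherwise $[G : G^+] = 2$; fix $t \in G \setminus G^+$. Since $C_0$ is the torsion subgroup of $G^+$ and $t$ normalises $G^+$, we get $C_0 \triangleleft G$, and $G/C_0$ is an extension $1 \to G^+/C_0 \to G/C_0 \to \mathbb{Z}/2\mathbb{Z} \to 1$ with $G^+/C_0 \cong \mathbb{Z}$ on which the image $\bar t$ of $t$ acts by inversion (it inverts $H$, whose image in $G^+/C_0$ has finite index). Writing $a$ for a generator of $G^+/C_0$: since $\bar t^{\,2}$ lies in $\langle a \rangle$ and $\bar t$ inverts $\langle a \rangle$, we obtain $\bar t^{\,2} = \bar t\, \bar t^{\,2}\, \bar t^{-1} = (\bar t^{\,2})^{-1}$, so $\bar t^{\,2} = 1$ because $\langle a \rangle$ is torsion-free; hence the extension splits and $G/C_0 \cong D_{\infty}$. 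Thus $G$ maps onto $D_{\infty}$ with finite kernel $C := C_0$. To finish, I would note the two standard consequences: a surjection onto $\mathbb{Z}$ with finite kernel $C$ splits (as $\mathbb{Z}$ is free), giving $G \cong C \rtimes \mathbb{Z}$; and for a surjection $\pi : G \twoheadrightarrow D_{\infty} = \mathbb{Z}/2\mathbb{Z} \ast \mathbb{Z}/2\mathbb{Z}$ with finite kernel $C$, pulling back the action of $D_{\infty}$ on its Bass--Serre tree (the line, with trivial edge stabilisers and the two free factors as vertex stabilisers) makes $G$ act on that tree with quotient a single edge, edge stabiliser $C$, and vertex stabilisers $A := \pi^{-1}(\mathbb{Z}/2\mathbb{Z})$ and $B := \pi^{-1}(\mathbb{Z}/2\mathbb{Z})$; by Bass--Serre theory $G \cong A \ast_C B$ with $[A:C] = [B:C] = 2$.

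There is no serious obstacle in this argument: the only inputs beyond elementary group theory are Schur's theorem on the commutator subgroup of a centre-by-finite group and the elementary structure of groups acting on a line. The one point deserving a little care is the dihedral case --- checking that the finite subgroup $C_0$ manufactured inside $G^+$ is in fact normal in the whole of $G$, and that the resulting quotient $G/C_0$ is the \emph{split} extension $D_{\infty}$ rather than some other extension of $\mathbb{Z}/2\mathbb{Z}$ by $\mathbb{Z}$.
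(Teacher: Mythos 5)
Your proof is correct and takes essentially the same route as the paper's: both pass to a normal infinite cyclic subgroup $H$, consider its centralizer $G^+ = C_G(H)$, invoke Schur's theorem and the abelianization of $G^+$ to produce a surjection $G^+ \twoheadrightarrow \mathbb{Z}$ with finite kernel, and then split into the cyclic and dihedral cases according to whether $[G:G^+]$ equals $1$ or $2$. You are slightly more careful at two points where the paper is terse --- you justify that the finite kernel $C_0$ is characteristic in $G^+$ by identifying it as the torsion subgroup, and you obtain the amalgam $G\cong A\ast_C B$ by pulling back the $D_\infty$-action on its Bass--Serre line rather than by directly exhibiting the presentation of $G/C$ --- but these are cosmetic differences within the same strategy.
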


\begin{proof}
Let $N$ be an infinite normal cyclic subgroup of $G$, and let $H=C_G(N)$ be its centralizer. As $Z(H)$ contains $N$, it has finite index in $H$, and so the derived subgroup $D(H)$ is finite by Schur's theorem. Wet get a morphism from $H$ onto the infinite finitely generated abelian group $H/D(H)$, which maps onto $\mathbb{Z}$. Let $C$ denote the finite kernel of this morphism from $H$ onto $\mathbb{Z}$, so that we have $H=\langle C,h\rangle\simeq C\rtimes \langle h\rangle$ for some $h\in H$ of infinite order. As $\mathrm{Aut}(\mathbb{Z})$ has order two, we distinguish two cases: either $H=G$ (in which case we are done) or $[G:H]=2$. In the latter case, as $C\subseteq H$ is characteristic, this subgroup is also normal in $G$. Writing $G=H\cup gH$ for some $g\in G\setminus H$, we have $g^2\in H$ and $ghg^{-1}=h^{-1}$. Hence $G/C=\langle \bar{h},\bar{g} \ \vert \ \bar{g}^2=1, \bar{g}\bar{h}\bar{g}^{-1}=\bar{h}^{-1}\rangle\simeq D_{\infty}$. Denoting by $\pi : G\rightarrow G/C$ the corresponding epimorphism and defining $A=\pi^{-1}(\bar{g})$ and $B=\pi^{-1}(\bar{g}\bar{h})$, we get a decomposition of $G$ as the amalgamated product $A\ast_C B$.\end{proof}\color{black}

Therefore, an infinite virtually cyclic group is torsion-generated if and only it is of dihedral type.

We will also need the following easy lemma.

\color{black}
\begin{lemma}\label{non_conj_to_non_conj}
Let $G$ be a finitely presented group and let $G'$ be a group. Suppose that $G$ has only a finite number of conjugacy classes of finite subgroups. If $G$ and $G'$ are AE-equivalent, then there exists a morphism $\varphi : G \rightarrow G'$ that is injective on finite subgroups and that maps any two non-conjugate finite subgroups to non-conjugate finite subgroups.
\end{lemma}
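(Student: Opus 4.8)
The plan is to encode the desired conclusion into a single $\exists\forall$ first-order sentence, observe that $G$ satisfies it (with the identity as a witness), and then use $\mathrm{AE}$-equivalence to transfer it to $G'$. The point is that $\mathrm{AE}$-equivalence also entails that $G$ and $G'$ satisfy the same $\exists\forall$-sentences: if $\tau=\exists\bar x\,\forall\bar y\,\psi(\bar x,\bar y)$ with $\psi$ quantifier-free, then $\neg\tau$ is (equivalent to) an $\forall\exists$-sentence, so $G\models\tau\iff G\not\models\neg\tau\iff G'\not\models\neg\tau\iff G'\models\tau$. This is the same mechanism exploited in Remark~\ref{crucial_observation} and in the proof of Proposition~\ref{main_theorem1}.

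Concretely, I would fix a finite presentation $G=\langle s_1,\dots,s_n\mid r_1=\dots=r_p=1\rangle$ and, using the hypothesis, choose representatives $F_1,\dots,F_m$ of the conjugacy classes of finite subgroups of $G$; for each $j$, list all elements of $F_j$ as words $u^{(j)}_1=1,u^{(j)}_2,\dots,u^{(j)}_{|F_j|}$ in the $s_i$. For a tuple $\bar y=(y_1,\dots,y_n)$ in a group $\Gamma$ with $r_i(\bar y)=1$ for all $i$ — equivalently, the assignment $s_i\mapsto y_i$ extends to a morphism $\varphi\colon G\to\Gamma$ — the set $\varphi(F_j)$ is exactly $\{u^{(j)}_1(\bar y),\dots,u^{(j)}_{|F_j|}(\bar y)\}$. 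Therefore ``$\varphi$ is injective on $F_j$'' is the quantifier-free formula $\bigwedge_{\ell\geq 2}u^{(j)}_\ell(\bar y)\neq 1$; for $x\in\Gamma$, ``$x\in\varphi(F_j)$'' is the quantifier-free formula $\bigvee_\ell\big(x=u^{(j)}_\ell(\bar y)\big)$; and, for $j\neq k$, ``$t\,\varphi(F_j)\,t^{-1}=\varphi(F_k)$'' — namely $t\,u^{(j)}_\ell(\bar y)\,t^{-1}\in\varphi(F_k)$ for all $\ell$ together with $t^{-1}\,u^{(k)}_\ell(\bar y)\,t\in\varphi(F_j)$ for all $\ell$ — is quantifier-free in $(\bar y,t)$. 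Finiteness of all these Boolean combinations is exactly where finite presentability of $G$, finiteness of each $F_j$, and finiteness of the number of classes are used.

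Letting $\theta_0(\bar y,t)$ be the conjunction of the equations $r_i(\bar y)=1$, of the injectivity formulas for $F_1,\dots,F_m$, and of $\neg\big(t\,\varphi(F_j)\,t^{-1}=\varphi(F_k)\big)$ for all $j<k$, the sentence $\exists\bar y\,\forall t\,\theta_0(\bar y,t)$ holds in $G$ via $\bar y=(s_1,\dots,s_n)$ (the identity is injective on each finite subgroup and maps non-conjugate subgroups to non-conjugate ones), hence in $G'$; this yields a morphism $\varphi\colon G\to G'$ injective on each $F_j$ with $t\varphi(F_j)t^{-1}\neq\varphi(F_k)$ for all $t\in G'$ and $j\neq k$. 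To finish I would transfer both properties from the $F_j$ to all finite subgroups: any finite $F\leq G$ is $gF_jg^{-1}$ for some $j$ and $g\in G$, so $\varphi_{\vert F}=\mathrm{ad}(\varphi(g))\circ\varphi_{\vert F_j}\circ\mathrm{ad}(g^{-1})$ is injective; and if $F,F'$ are non-conjugate finite subgroups then $F$ is conjugate to some $F_j$ and $F'$ to some $F_k$ with $j\neq k$ (otherwise $F$ and $F'$ would be conjugate), so $\varphi(F)$ is conjugate to $\varphi(F_j)$ and $\varphi(F')$ to $\varphi(F_k)$, and these cannot be conjugate to each other without contradicting the last clause of $\theta_0$; moreover images of finite groups are finite. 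I do not expect a genuine obstacle here: the only thing needing care is the verification of the $\exists\forall$ quantifier pattern and the corresponding transfer under $\mathrm{AE}$-equivalence.
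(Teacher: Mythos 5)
Your proposal is correct and follows essentially the same approach as the paper's own proof: encode the desired properties of a morphism $G\to\Gamma$ as an $\exists\forall$ sentence (using a finite presentation, finiteness of the $F_j$'s and of the number of conjugacy classes), observe $G$ satisfies it via the identity, and transfer to $G'$ by $\mathrm{AE}$-equivalence. Your version is a bit more explicit than the paper's (you carefully exclude the identity from the injectivity clause, use both inclusions for set equality, and spell out the transfer from the representatives $F_j$ to arbitrary finite subgroups), but these are only bookkeeping details and the underlying argument is identical.
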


\begin{proof}
Let $G=\langle s_1,\ldots,s_n \ \vert \ R(s_1,\ldots,s_n)=1\rangle$ be a finite presentation for $G$. Let $F_1,\ldots, F_k$ be non-conjugate finite subgroups of $G$ such that any finite subgroup of $G$ is conjugate to some $F_i$. For each $1\leq i\leq k$, let $o_i$ denote the order of $F_i$, set $F_i=\lbrace g_{i,1},\ldots,g_{i,o_i}\rbrace $ and write every $g_{i,j}$ as a word $w_{i,j}(s_1,\ldots,s_n)$. For any group $\Gamma$, as the set $\lbrace (\gamma_1,\ldots,\gamma_n)\in \Gamma^n \ \vert \ R(\gamma_1,\ldots,\gamma_n)=1\rbrace$ is in one-to-one correspondence with the set $\mathrm{Hom}(G,\Gamma)$, the following EA sentence $\theta$ is true in $\Gamma$ if and only if there exists a morphism $\varphi : G \rightarrow \Gamma$ that is injective on finite subgroups and that maps any two non-conjugate finite subgroups to non-conjugate finite subgroups ($\varphi$ is defined by mapping $s_{\ell}$ to $x_{\ell}$ for every $1\leq \ell \leq n$):\[\exists \bar{x} \ \forall g \ (R(\bar{x})=1) \bigwedge_{i=1}^k\bigwedge_{j=1}^{o_i}(w_{i,j}(\bar{x})\neq 1) \bigwedge_{i=1}^k\bigwedge_{\substack{i'=1\\ i'\neq i \\ o_i=o_{i'}}}^k\bigvee_{j=1}^{o_i}\bigwedge_{j'=1}^{o_{i'}} (gw_{i,j}(\bar{x})g^{-1}\neq w_{i',j'}(\bar{x})).\]
Clearly $G\models \theta$ since the identity of $G$ 
is injective on finite subgroups and maps any two non-conjugate finite subgroups to non-conjugate finite subgroups, so $G'\models\theta$ as well.
\end{proof}
\color{black}

\begin{te}\label{virtually_cyclic}
Let $G$ be an infinite torsion-generated virtually cyclic group. Write $G=A\ast_C B$ with $A,B$ finite and $[A:C]=[B:C]=2$, and suppose that the image of the natural map $N_G(C)\rightarrow \mathrm{Aut}(C)$ is equal to $\mathrm{Inn}(C)$. Then $G$ is first-order rigid (in fact, AE rigid).\end{te}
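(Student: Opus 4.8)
\emph{Outline of the proof.} Write $n=\vert C\vert$, so every finite subgroup of $G$ has order at most $2n$; recall that $C$ is the unique maximal finite normal subgroup of $G$, that $G/C\cong D_\infty$, that the preimage $H$ of the translation subgroup of $D_\infty$ has index $2$ in $G$ and is virtually cyclic with $C\trianglelefteq H$, that every infinite-order element of $G$ lies in $H$, and that $G$ acts on the line $A\ast_C B$ with all vertex stabilizers of order $2n$ and all edge stabilizers equal to $C$ (since $C$ is normal). The goal is to show that every finitely generated group $G'$ which is $\mathrm{AE}$-equivalent to $G$ is isomorphic to $G$; as only $\forall\exists$-sentences are used this yields $\mathrm{AE}$-rigidity. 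First I would record the easy reductions: $G'$ is infinite (it satisfies each sentence ``there exist $k$ distinct elements'', true in $G$) and all its finite subgroups have order $\leq 2n$ (for $m>2n$, ``there is no subgroup of order $m$'' is a universal sentence true in $G$); hence $G'$ has a reduced Stallings splitting $\Delta'$, and moreover $G'$ is hyperbolic by \cite{And18}.

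\emph{Step 1: the coarse structure of $G'$ (the crux).} Fix $M=M(n)$ divisible by $\mathrm{lcm}\{1,\dots,2n\}$ and by enough further integers so that $h^{M}\in Z(H)$ for every $h\in H$ — possible because, up to isomorphism, there are only finitely many candidates for $H$, each virtually cyclic with finite commutator subgroup. Then $G$ satisfies the universal sentence $\forall x\,\forall y\;[x^{M},y^{M}]=1$: if $x$ or $y$ has finite order its $M$-th power is trivial, and otherwise both lie in $H$, where $M$-th powers are central. Transferring this, the set $\{g^{M}:g\in G'\}$ is a conjugation-invariant set of pairwise commuting elements, so $N:=\langle g^{M}:g\in G'\rangle$ is an abelian normal subgroup of $G'$; it is nontrivial because the infinite hyperbolic group $G'$ contains infinite-order (loxodromic) elements. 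An abelian normal subgroup of a hyperbolic group is finite unless the group is virtually cyclic; and if $N$ were finite, $G'/N$ would be a hyperbolic group of exponent dividing $M$, hence finite, contradicting that $G'$ is infinite. So $G'$ is infinite virtually cyclic. Using Lemma~\ref{non_conj_to_non_conj} one obtains a morphism $\varphi\colon G\to G'$ that is injective on finite subgroups and sends non-conjugate finite subgroups to non-conjugate ones; its kernel is a torsion-free normal subgroup of $G$, hence (as $G$ maps onto $D_\infty$ with finite kernel) trivial or of finite index, and finite index is impossible since then $\varphi$ would identify the non-conjugate subgroups $A$ and $B$ with the single finite group $\varphi(G)$. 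Thus $\varphi$ is injective and $G$ embeds in $G'$. Since a virtually cyclic group of cyclic type has no subgroup of dihedral type (any infinite subgroup surjects onto $\mathbb{Z}$) and $G$ is of dihedral type, $G'$ must be of dihedral type too; so $G'=A'\ast_{C'}B'$ with $A',B'$ finite and $[A':C']=[B':C']=2$, and in particular $G'$ is finitely torsion-generated. Extracting this dihedral-type virtually cyclic structure of $G'$ from $\mathrm{AE}$-equivalence alone, \emph{without} assuming $G'$ torsion-generated, is the main obstacle; everything afterwards is essentially the virtually cyclic instance of Theorem~\ref{main_theorem2}.

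\emph{Step 2: correspondence of the pieces.} Now $G$ is a torsion-generated hyperbolic group and $G'$ is finitely torsion-generated and $\mathrm{AE}$-equivalent to it, so Corollary~\ref{main_theorem1.1} (and the underlying Proposition~\ref{main_theorem1}) applies: there are morphisms $\varphi\colon G\to G'$ and $\varphi'\colon G'\to G$ that are injective on finite subgroups, send non-conjugate finite subgroups to non-conjugate ones, and induce one-to-one correspondences between the conjugacy classes of vertex groups of $\Delta=(A\ast_C B)$ and of $\Delta'=(A'\ast_{C'}B')$. As in Step~1, $\varphi'\circ\varphi$ and $\varphi\circ\varphi'$ have torsion-free, hence — by the dihedral-type structure — trivial kernels (finite index being excluded by the separation of conjugacy classes of $A,B$, resp.\ $A',B'$), so $\varphi$ and $\varphi'$ are injective. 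Comparing orders of maximal finite subgroups through these mutual embeddings gives $\vert C'\vert=\vert C\vert=n$.

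\emph{Step 3: conclusion, via the edge-group condition.} After possibly relabelling $A'\leftrightarrow B'$, the correspondence of conjugacy classes forces $\varphi'(A')$ to be a maximal finite subgroup of $G$ conjugate to $A$, say $\varphi'(A')=g_1Ag_1^{-1}$, and $\varphi'(B')=g_2Bg_2^{-1}$. Since $\varphi'$ is injective, $\varphi'(C')=\varphi'(A')\cap\varphi'(B')$; and because every edge stabilizer of the action of $G$ on its line equals $C$ (the two fixed vertices being distinct and in different orbits), this intersection is $C$, so $\varphi'(G')=g_1Ag_1^{-1}\ast_C g_2Bg_2^{-1}$. As $C\trianglelefteq G$, conjugation by $g_i$ restricts to an automorphism $\theta_i$ of $C$, and as an abstract group $\varphi'(G')$ is the amalgam of $A$ and $B$ over $C$ with the two inclusions of $C$ twisted by $\theta_1,\theta_2$. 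Here the hypothesis is decisive: since $N_G(C)=G$, the assumption that $N_G(C)\to\mathrm{Aut}(C)$ has image $\mathrm{Inn}(C)$ says exactly that each $\theta_i$ is inner in $C$; the twists can therefore be absorbed (replacing, e.g., the $B$-copy of $C$ by a conjugate amounts to post-composing with an inner automorphism of $B$), so $\varphi'(G')\cong A\ast_C B=G$. Since $\varphi'$ is injective, $G'\cong\varphi'(G')\cong G$, which completes the proof and, as all sentences used were $\forall\exists$, yields $\mathrm{AE}$-rigidity. (When $n=1$ the last step is vacuous — an amalgam $A'\ast_{C'}B'$ with $\vert A'\vert=\vert B'\vert=2$ is $D_\infty$ — consistently with $D_\infty$ being first-order rigid; the genuine content of the edge-group hypothesis lies in the case $n>1$.)
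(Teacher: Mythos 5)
Your proof is correct, and while the overall skeleton (establish $G'$ is infinite virtually cyclic of dihedral type, produce mutually injective morphisms, use the edge-group hypothesis to untwist) matches the paper's, several of your individual steps are genuinely different, and a comparison is worth recording.

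For the reduction to virtually cyclic, the paper simply cites \cite[Proposition~2.17]{And18}; you instead give a pleasant self-contained argument via the universal sentence $\forall x\forall y\,[x^M,y^M]=1$ (valid once $M$ is chosen so that $M$-th powers of finite-order elements die and $M$-th powers of infinite-order elements are central in the cyclic-type index-$2$ subgroup $H$; note $M$ can just be chosen for the fixed $G$ at hand, one does not need it to depend only on $n$). The resulting conjugation-invariant commuting set generates a nontrivial abelian normal subgroup of the hyperbolic group $G'$, which forces $G'$ to be virtually cyclic. To get dihedral type, the paper argues through the unique maximal finite subgroup of a cyclic-type group; you instead first show $\varphi\colon G\to G'$ is injective (torsion-free kernel of a dihedral-type virtually cyclic group is trivial or finite index, and finite index is excluded by separation of conjugacy classes of $A,B$) and then observe that a cyclic-type group has no dihedral-type subgroup. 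Both are fine. In the second half you invoke Corollary~\ref{main_theorem1.1} (and Proposition~\ref{main_theorem1}); this is legitimate once $G'$ is known to be dihedral-type (hence finitely torsion-generated), but it imports the shortening-argument machinery into what the paper keeps as a self-contained "warm-up" proof relying only on Lemma~\ref{non_conj_to_non_conj}. Finally, your conclusion is cleaner and avoids the Hopf property: you show $\varphi'$ is injective, identify $\varphi'(G')=g_1Ag_1^{-1}\ast_C g_2Bg_2^{-1}$ on the Bass--Serre line, and use the hypothesis $N_G(C)\to\mathrm{Aut}(C)$ has image $\mathrm{Inn}(C)$ to replace $\mathrm{ad}(g_i^{-1})$ by $\mathrm{ad}(c_i^{-1}g_i^{-1})$ so that both gluing maps restrict to $\mathrm{id}_C$, giving $\varphi'(G')\cong A\ast_C B=G$ directly. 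The paper instead builds a surjection $G'\to G$, transfers the normalizer hypothesis to $G'$, builds a surjection $G\to G'$, and concludes via Hopfianity. Your route and the paper's trade off in style: yours is shorter to close out but leans on Proposition~\ref{main_theorem1}, whereas the paper's is longer but deliberately elementary, serving as a template for the general Theorem~\ref{main_theorem2}.
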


\begin{rk}
Note that $G$ is not only first-order torsion-rigid, it is first-order rigid. This is very specific to virtually cyclic groups and does not hold anymore for non-elementary hyperbolic groups. However the strategy of the proof of Theorem \ref{virtually_cyclic} is very similar to that of Theorem \ref{main_theorem2}: \textcolor{black}{if $G'$ is a finitely generated group elementarily equivalent to $G$ (or simply AE-equivalent to $G$),} using the assumption on the edge group $C$, we prove that $G'$ surjects onto $G$, and symmetrically that $G$ surjects onto $G'$. Virtually cyclic groups (and more generally hyperbolic groups) being Hopfian, we conclude that $G$ and $G'$ are isomorphic.

It is worth pointing out that virtually cyclic groups of dihedral type are not necessarily first-order rigid, as shown by Example \ref{dihedral}. Hence the condition on the edge group $C$ cannot be removed.
\end{rk}

\begin{proof}
Let $G'$ be a finitely generated group. Suppose that $G$ and $G'$ are AE-equivalent. \color{black}It is not difficult to prove that $G'$ is virtually cyclic (see for instance \cite[Proposition 2.17]{And18}). By Lemma \ref{non_conj_to_non_conj}, since $G$ and $G'$ have the same AE theory, there exists a morphism $\varphi : G\rightarrow G'$ that is injective on finite subgroups and that maps any two non-conjugate finite subgroups to non-conjugate finite subgroups, and there exists a similar morphism $\varphi' : G'\rightarrow G$.

\smallskip \noindent First, let us prove that $G'$ is of dihedral type. Suppose towards a contradiction that $G'$ is of cyclic type. Then $G'$ has a unique maximal finite subgroup $F$. Note in particular that $\varphi(A)$ and $\varphi(B)$ are contained in $F$. Note also that $\varphi'(F)$ is contained in a conjugate of $A$ or $B$. As $\varphi$ and $\varphi'$ are injective on finite subgroups, we get $\vert F\vert=\vert A\vert =\vert B\vert$. It follows that $\varphi(A)=\varphi(B)$, which contradicts the fact that $\varphi$ maps any two non-conjugate finite subgroups to non-conjugate subgroups (as clearly $A$ and $B$ are non-conjugate in $G$). Hence $G'$ is of dihedral type, so we can write $G'=A'\ast_{C'} B'$ with $A',B'$ finite and $[A':C']=[B':C']=2$.

\smallskip \noindent Note that $\varphi(A)$ is equal to a conjugate of $A'$ or $B'$, and that $\varphi(B)$ is equal to a conjugate of $B'$ or $A'$. As $\varphi$ maps $A$ and $B$ to non-conjugate subgroups (since $A$ and $B$ are non-conjugate), either $\varphi(A)$ is a conjugate of $A'$ and $\varphi(B)$ is a conjugate of $B'$ or $\varphi(A)$ is a conjugate of $B'$ and $\varphi(B)$ is a conjugate of $A'$. Hence, after renaming $A$ and $B$ to $B$ and $A$ if necessary, and after composing $\varphi'$ with an inner automorphism of $G$, one can assume that $\varphi'(A')=A$ and $\varphi'(B')=gBg^{-1}$ for some $g\in G$.\color{black}

\smallskip \noindent Let $(T,d)$ be the Bass-Serre tree of the splitting $G=A\ast_C B$ (which is simply a line), endowed with the simplicial metric $d$ \textcolor{black}{(see Subsection \ref{simplicial})}. Let $v,w$ be the unique vertices of $T$ fixed by $A$ and $B$ respectively, and let $e=[v,w]$. After composing $\varphi'$ by $\mathrm{ad}(a)$ for some element $a\in A$ if necessary, one can assume that $d(w,gw)<d(v,gw)$ (see the picture below).

\begin{figure}[h!]
  \centering
    \begin{tikzpicture}
\draw[black,very thick] (0,0) -- (5,0);
\node[text=black] at (0,0.5) {$v$};
\node[text=black] at (0.5,-0.5) {$e$};
\node[text=black] at (1,0.5) {$w$};
\node[text=black] at (5,0.5) {$gw$};
\fill[black] (0,0) circle (0.1cm);
\fill[black] (1,0) circle (0.1cm);
\fill[black] (5,0) circle (0.1cm);
\end{tikzpicture}
\end{figure}

\smallskip \noindent Note that $\varphi'(C')=\varphi'(A'\cap B')=A\cap gBg^{-1}$, so $\varphi'(C')$ is contained in the stabilizer of the path $[v,gw]$, which contains $[v,w]$. Therefore $\varphi'(C')$ is contained in $C$. Symmetrically, $\varphi(C)$ is contained in a conjugate of $C'$. It follows that $C$ and $C'$ have the same order. Hence $\varphi'(C')=C$ and the stabilizer of the path $[v,gw]$ is $C$. 

\smallskip \noindent We will prove that $G'$ surjects onto $G$. Let us distinguish two cases.

\smallskip \noindent \emph{First case:} suppose that $g$ is hyperbolic. Then $e$ and $ge$ have the same orientation, so the edge $ge$ is contained in the path $[v,gw]$ whose stabilizer is $C$. So $g$ belongs to the normalizer $N_G(C)$.

\smallskip \noindent \emph{Second case:} suppose that $g$ is elliptic. Then $g$ fixes a point $x\in [w,gw]$ such that $d(w,x)=d(x,gw)$. If the path $[x,gw]$ is not a point, then it is fixed by $C$ (as $[x,gw]$ is contained in $[v,gw]$) and it is fixed by $gCg^{-1}$ (indeed $[w,x]$ is contained in $[v,gw]$). Hence $g$ belongs to $N_G(C)$. If $[x,gw]$ is a point then $x=gw$ and thus $gw=w$, so $g$ belongs to $B$ and thus $\varphi'$ is surjective.

\smallskip \noindent Hence, if $\varphi'$ is not surjective, then $g$ belongs to $N_G(C)$. By assumption the image of the natural map $N_G(C)\rightarrow \mathrm{Aut}(C)$ is equal to $\mathrm{Inn}(C)$, so there exists an element $c\in C$ such that $gxg^{-1}=cxc^{-1}$ for every $x\in C$. One can define a morphism $\psi' : G'\rightarrow G$ as follows: $\psi'$ coincides with $\varphi'$ on $A'$ and $\psi'$ coincides with $\mathrm{ad}(cg^{-1})\circ \varphi'$ on $B'$ \textcolor{black}{(recall that $\mathrm{ad}(cg^{-1})$ denotes the inner automorphism $x\mapsto (cg^{-1})x(cg^{-1})^{-1}$).} This morphism is well-defined because $\varphi'$ and $\mathrm{ad}(cg^{-1})\circ \varphi'$ coincide on $A'\cap B'=C'$ (since $C'$ is mapped to $C$ and $\mathrm{ad}(cg^{-1})$ coincides with the identity map on $C$). The morphism $\psi'$ is clearly surjective: indeed, $\psi'(A')=A$ and $\psi'(B')=B$. 

\smallskip \noindent We could prove that $\psi'$ is injective, and therefore an isomorphism. Instead, we will prove symmetrically that $G$ surjects onto $G'$, and conclude that $G$ and $G'$ are isomorphic by the Hopf property. We need to prove that the image of the map $N_{G'}(C')\rightarrow \mathrm{Aut}(C')$ is equal to $\mathrm{Inn}(C')$. Let $g'\in N_{G'}(C')$. Note that $\varphi'(g')$ belongs to $N_G(C)$, so there is an element $c\in C$ such that $\varphi'(g')x\varphi'(g')^{-1}=cxc^{-1}$ for every $x\in C$. There is an element $c'\in C'$ such that $c=\varphi'(c')$. Define $h=c'^{-1}g'$, and let $\theta\in \mathrm{Aut}(C')$ be such that $hyh^{-1}=\theta(y)$ for every $y\in C'$. We have $\varphi'(hyh^{-1})=\varphi'(\theta(y))=\varphi'(h)\varphi'(y)\varphi'(h)^{-1}=\varphi'(y)$ \color{black}(this last equality holds as $\varphi'(h)=\varphi'(c'^{-1}g')=c^{-1}\varphi'(g')$, and $c^{-1}\varphi'(g')$ acts trivially on $C$ by our choice of the preimage $c$ of $c'$ by $\varphi'$). \color{black}Hence $\theta$ is the identity of $C'$, which proves that $g'yg'^{-1}=c'yc'^{-1}$ for every $y\in C'$. Therefore the image of $N_{G'}(C')\rightarrow \mathrm{Aut}(C')$ is equal to $\mathrm{Inn}(C')$, and one can define a surjective morphism $\psi : G\twoheadrightarrow G'$ in the same way as we defined $\psi' : G'\twoheadrightarrow G$.

\smallskip \noindent But $G$ is Hopfian, so $\psi'\circ \psi : G \twoheadrightarrow G$ is an automorphism, and thus $\psi$ is injective. Hence $\psi$ is an isomorphism.\end{proof}

The following example shows that virtually cyclic groups of dihedral type are not necessarily first-order rigid. However, we do not know whether such an example exists among virtually cyclic Coxeter groups.

\begin{ex}\label{dihedral}Consider the following three matrices in $\mathrm{GL}_2(\bbZ/11\bbZ)$:
\[M_0=\begin{pmatrix}
    0 & 1\\
    1 & 0
  \end{pmatrix} \hspace{1cm} M_1=\begin{pmatrix}
    1 & 0\\
    0 & 2
  \end{pmatrix} \hspace{1cm} M_2=M_1^3=\begin{pmatrix}
    1 & 0\\
    0 & 8
  \end{pmatrix}\]Define $H=(\bbZ/11\bbZ)^2$ and $A,B=H\rtimes \langle M_0\rangle$, and let $\iota_i$ be the automorphism of $H$ given by the multiplication by $M_i$ for $i\in\lbrace 1,2\rbrace$. Define $G_i=\langle A,B \ \vert \ \iota_i(h)=h, \ \forall h\in H\rangle$ for $i\in\lbrace 1,2\rbrace$. Note that $H$ has index 2 in $A$ and $B$, so $H$ is normal in $G_i$ and $G_i/H$ is an infinite dihedral group (in particular, $G_i$ is virtually cyclic and hence abelian-by-finite). By \cite[Example 4.11]{GZ11}, $G_1$ and $G_2$ are not isomorphic but they have the same finite quotients, thus by \cite{Oger88} $G_1$ and $G_2$ are elementarily equivalent.
\end{ex}

\subsubsection{Proof of Theorem \ref{main_theorem2}}

We will first prove a particular case of Theorem \ref{main_theorem2}, when all the edge groups in a reduced Stallings splitting have the same order. The proof relies mainly on Lemma \ref{perin} (based on the tree of cylinders). 

\begin{te}\label{particular_case}
Let $G$ be a torsion-generated hyperbolic group. Suppose that all edge groups in a reduced Stallings splitting of $G$ have the same order, and that the following condition holds: for every edge group $F$ of a reduced Stallings splitting of $G$, the image of the natural map $N_G(F)\rightarrow \mathrm{Aut}(F)$ is equal to $\mathrm{Inn}(F)$. Then $G$ is first-order torsion-rigid (in fact, AE torsion-rigid).
\end{te}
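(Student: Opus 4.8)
The goal is to show that every finitely torsion‑generated group $G'$ which is $\mathrm{AE}$‑equivalent to $G$ is isomorphic to $G$; since hyperbolic groups are Hopfian, it suffices to construct surjections $G\twoheadrightarrow G'$ and $G'\twoheadrightarrow G$, exactly as in the proof of Theorem~\ref{virtually_cyclic}. First I would record the set‑up. By \cite{And18} the group $G'$ is hyperbolic. Let $\Delta$ and $\Delta'$ be reduced Stallings splittings of $G$ and $G'$, with Bass--Serre trees $T$ and $T'$; by Lemma~\ref{is_a_tree} their underlying graphs are trees, and by hypothesis all edge groups of $\Delta$ have a common order $k$. By Corollary~\ref{main_theorem1.1} (together with Proposition~\ref{main_theorem1}) there are morphisms $\varphi\colon G\to G'$ and $\varphi'\colon G'\to G$ that are injective on finite subgroups, send non‑conjugate finite subgroups to non‑conjugate subgroups, and induce a one‑to‑one correspondence between the conjugacy classes of vertex groups of $\Delta$ and $\Delta'$, each vertex group being sent isomorphically onto its partner. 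Since each edge group of $\Delta$ is the intersection of two (finite) vertex groups and has order $k$, one reads off that $\varphi$ sends every edge group of $\Delta$ isomorphically onto a conjugate of an edge group of $\Delta'$, so the edge groups of $\Delta'$ also have order $k$; moreover, pushing an element of $N_{G'}(F')$ through $\varphi'$ into $N_G(F)$ and invoking the hypothesis on $G$, exactly as in the proof of Theorem~\ref{virtually_cyclic}, shows that the normalizer condition holds for $\Delta'$ as well.

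The core of the argument is to upgrade $\varphi$ and $\varphi'$ to surjections by means of Lemma~\ref{perin}. After composing with inner automorphisms and passing to a power (legitimate because $\varphi'\circ\varphi$ permutes the finitely many conjugacy classes of vertex groups and of finite subgroups of $G$), we may assume that $p:=(\varphi'\circ\varphi)^{n}$ maps every vertex group and every edge group of $T$ isomorphically onto a conjugate of itself. I would then prove, for each edge group $F$ of $\Delta$, that $p$ maps the normalizer $N_G(F)$ isomorphically onto a conjugate of itself. This is the step where the hypothesis $\mathrm{Im}\bigl(N_G(F)\to\mathrm{Aut}(F)\bigr)=\mathrm{Inn}(F)$ is indispensable: it yields $N_G(F)=F\cdot C_G(F)$, and, combined with the fact (Lemma~\ref{normalizer_is_a_tree}) that $N_G(F)$ itself splits as a reduced tree of groups with all edge groups conjugate to $F$, it lets one choose the conjugating elements coherently so that $p$ restricts to an injective endomorphism of each vertex group $N_{G_v}(F)$ of that splitting, whence by Lemmas~\ref{injectivity_normalizer} and \ref{injectivity} applied inside $N_G(F)$ it is injective on all of $N_G(F)$.

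Granting this, Lemma~\ref{perin} shows that $p$, hence $\varphi'\circ\varphi$, is an automorphism of $G$, so $\varphi$ is injective and $\varphi'$ is surjective; the symmetric argument with $\Delta'$ (whose edge groups we arranged to satisfy the same hypotheses) shows that $\varphi\circ\varphi'$ is an automorphism of $G'$, so $\varphi$ is surjective. Therefore $\varphi$ is an isomorphism, and $G$ is $\mathrm{AE}$‑torsion‑rigid (a fortiori first‑order torsion‑rigid). It is worth noting that the case distinction on whether an element normalizes the edge group, and the explicit "correction" of the morphism by an element of $C_G(F)$ that acts trivially on $F$, are the natural generalizations of the two‑case analysis (hyperbolic vs.\ elliptic) carried out for the single amalgam in the proof of Theorem~\ref{virtually_cyclic}.

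The main obstacle is precisely the middle step — controlling the behaviour of $p$ on the cylinder stabilizers $N_G(F)$. Without the normalizer hypothesis there is no reason for $p$ to be injective on $N_G(F)$: a conjugation by an element that $p$ kills can be absorbed into a cylinder of $T$, and this is exactly the mechanism behind the counterexample of Section~\ref{example}, where $A_1\ast_C B_1$ and $A_2\ast_C B_2$ are $\mathrm{AE}$‑equivalent but not isomorphic. Turning the algebraic condition "$N_G(F)$ induces only inner automorphisms of $F$" into the geometric statement that the endomorphisms produced by $\mathrm{AE}$‑equivalence act injectively on the cylinders of the Stallings tree is thus the crux of the proof; once it is available, the tree‑of‑cylinders machinery of Lemma~\ref{perin} does the rest.
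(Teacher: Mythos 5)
Your high-level strategy matches the paper's: establish, via Corollary~\ref{main_theorem1.1}, morphisms $\varphi:G\to G'$ and $\varphi':G'\to G$ that respect the Stallings vertex groups; show that the edge groups of $\Delta'$ also have order $k$; then feed a suitable endomorphism of $G$ into Lemma~\ref{perin}, whose only non-automatic hypothesis is controlled via the assumption $\mathrm{Im}\bigl(N_G(F)\to\mathrm{Aut}(F)\bigr)=\mathrm{Inn}(F)$. You also correctly identify the crux: it is the behaviour of the maps on cylinder stabilizers $N_G(F)$ where the normalizer hypothesis is decisive, and where the counterexample of Section~\ref{example} lives.

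However, there is a genuine gap at that crux. You assert that (after composing with inner automorphisms and taking powers) the resulting endomorphism $p$ already ``maps the normalizer $N_G(F)$ isomorphically onto a conjugate of itself,'' and that one can ``choose the conjugating elements coherently.'' But the conjugating elements $g_v$ with $p(G_v)=g_vG_vg_v^{-1}$ are determined by $p$, not freely chosen, and they need not be compatible along a cylinder. Concretely, writing $N=N_G(F)=N_1\ast_F\cdots\ast_F N_r$ as in Lemma~\ref{normalizer_is_a_tree}, one can arrange $p(N_1)=N_1$, but then $p(N_i)=g_iN_ig_i^{-1}$ with $g_i\in N$ possibly nontrivial modulo $F$; the subgroup $\langle N_1,g_2N_2g_2^{-1},\ldots\rangle$ generated by the images can be a \emph{proper} subgroup of $N$ (already for $N=\mathbb{Z}/2\ast\mathbb{Z}/2$ with $g_2=ba$ the image has infinite index). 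So $p$ is in general not surjective onto a conjugate of $N$, and Lemma~\ref{perin}'s ``moreover'' clause does not apply to $p$. Lemmas~\ref{injectivity_normalizer} and~\ref{injectivity} likewise require $p$ to be an endomorphism of $N$ mapping each $N_i$ into a conjugate of $N_i$ \emph{inside $N$}, which is again exactly what fails before modification. The paper's proof does not run Lemma~\ref{perin} on $p$ at all: it builds a genuinely \emph{new} morphism $\chi$ by twisting $\psi$ on each $N_i$ by an inner automorphism $\mathrm{ad}(hg_i^{-1})$ with $h\in F'$, the existence of such an $h$ being precisely what the normalizer hypothesis buys, so that $\chi(N_i)=N'_i$ on the nose; one then checks compatibility on $F$, concludes surjectivity of $\chi'_r\circ\chi_r$ on $N$, and only afterward invokes Hopf and Lemma~\ref{perin} for the assembled $\chi,\chi'$. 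Your sketch collapses this construction into an unjustified claim about $p$.

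A secondary imprecision: your claim that the edge groups of $\Delta'$ have order $k$ ``since each edge group of $\Delta$ is the intersection of two vertex groups'' is too quick. The paper instead argues that a smaller-order edge of $\Delta'$ would split $G'$ in a way forcing $\varphi(G)$ to lie entirely on one side, contradicting the bijection on conjugacy classes of vertex groups; your one-line argument only gives an upper bound on the orders. Finally, once the morphism is modified, the isomorphism $G\simeq G'$ is realized by $\chi$, not by the original $\varphi$, so the sentence ``$\varphi$ is an isomorphism'' should be replaced accordingly.
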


\begin{rk}\label{rk_one_ended}In particular, this theorem proves that one-ended torsion-generated hyperbolic groups are AE torsion-rigid, since in that case the set of edges in a reduced Stallings splitting is empty.\end{rk}

\begin{proof}
Let $G'$ be a finitely torsion-generated group and suppose that $G$ and $G'$ are AE-equivalent. Let $\Delta$ and $\Delta'$ be reduced Stallings splittings of $G$ and $G'$ respectively. Let $\varphi : G\rightarrow G'$ and $\varphi' : G'\rightarrow G$ be the morphisms given by Corollary \ref{main_theorem1.1} (note that this is the only place in the proof where we use the assumption that $G$ and $G'$ are AE-equivalent). These morphisms induce one-to-one correspondences between the conjugacy classes of vertex groups of $\Delta$ and $\Delta'$. The proof of the theorem relies on the tree of cylinders defined in Subsection \ref{tree of cylinders}, more specifically on Lemma \ref{perin}. But before applying this lemma, we need to do some preparatory work.

\smallskip \noindent By assumption, all the edge groups of $\Delta$ have the same order $k$, for some integer $k\geq 1$. First, we will prove that all the edge groups of $\Delta'$ have order $k$ as well. Let us denote by $T$ and $T'$ the Bass-Serre trees of $\Delta$ and $\Delta'$ respectively. Let $e=[v,w]$ be an edge of $T'$. By Lemma \ref{is_a_tree}, the underlying graph of $\Delta'$ is a tree, moreover $\Delta'$ is reduced, therefore the vertex groups $G'_v$ and $G'_w$ are not conjugate, and thus they are mapped isomorphically by $\varphi'$ to non-conjugate vertex groups of $T$. Hence the image of the edge $[v,w]$ cannot be a point. But the edge groups of $T$ have order $k$, and $\varphi'$ is injective on finite subgroups of $G'$, therefore the edge group $G'_e$ has order $\leq k$. Then, suppose towards a contradiction that $\Delta'$ has an edge of order $<k$. Since the underlying graph of $\Delta'$ is a tree, the graph obtained by removing the interior of the edge $e$ has two connected components $X$ and $Y$, which gives rise to a splitting $G'=G'_X\ast_{G'_e} G'_Y$. \textcolor{black}{Note that for every vertex $v\in T$, the group $\varphi(G_v)$ is contained in a vertex group of $T'$ (by definition of $\varphi$), so $\varphi(G_v)$ is contained in a conjugate of $G'_X$ or $G'_Y$; moreover, since $\varphi$ is injective on the edge groups of $T$, which are of order $k$, the group $\varphi(G)$ is necessarily contained in a conjugate of $G'_X$ or $G'_Y$ (otherwise some edge group of $T$ (of order $k$ by assumption) would be mapped into a conjugate of the edge group $G'_e$, whose order is stricly less than $k$).} Without loss of generality, suppose that $\varphi(G)\subseteq G'_X$, and let $w$ be a vertex of $\Delta'$ belonging to the subtree $Y$. By assumption there is a vertex $v\in T$ such that $\varphi(G_v)$ is conjugate to $G'_w$, but no conjugate of $G'_w$ is contained in $G'_X$, which contradicts the inclusion $\varphi(G)\subseteq G'_X$. Hence the edge groups of $\Delta'$ has order exactly $k$.

\smallskip \noindent Note that $\varphi'\circ \varphi$ permutes the conjugacy classes of vertex groups of $\Delta$ and the conjugacy classes of finite subgroups of $G$, so there is an integer $m\geq 1$ such that $(\varphi'\circ \varphi)^m$ maps every vertex group of $\Delta$ and every finite subgroup of $G$ isomorphically to a conjugate of itself. Define $\psi=\varphi\circ (\varphi'\circ \varphi)^{m-1}$, so that $\varphi'\circ \psi$ maps every vertex group of $\Delta$ and every finite subgroup of $G$ isomorphically to a conjugate of itself. Similarly, one can find an integer $m'\geq 1$ such that $(\psi \circ \varphi')^{m'}$ maps every vertex group of $\Delta'$ and every finite subgroup of $G'$ isomorphically to a conjugate of itself. Define $\psi'=\psi\circ (\psi\circ \varphi')^{m'}$. From now on, let us denote by $G_1,\ldots,G_n$ and $G'_1,\ldots,G'_n$ the vertex groups of $\Delta$ and $\Delta'$ numbered in such a way that each $G_i$ is mapped isomorphically to a conjugate of $G'_i$ by $\psi$ and each $G'_i$ is mapped isomorphically to a conjugate of $G_i$ by $\psi'$.

\smallskip \noindent From now on, let us fix an edge $e$ of $T$. The edge group $G_e$ is mapped isomorphically to an edge group $G'_{e'}$ of $T'$, and thus the normalizer $N=N_G(G_e)$ is mapped (non isomorphically in general) to $N'=N_{G'}(G'_{e'})$ by $\psi$ and $N'$ is mapped (non isomorphically in general) to a conjugate of $N$ by $\psi'$ (indeed, $G_e$ is finite and $\psi'\circ \psi(G_e)$ is conjugate to $G_e$). We can assume without loss of generality that $\psi'(N')\subseteq N$ (after replacing $\psi'$ with $\mathrm{ad}(g)\circ \psi'$ for some $g\in G$ if necessary). We will modify $\psi$ and $\psi'$ so that $N$ is mapped isomorphically to $N'$ and vice versa, in order to be in a position to apply Lemma \ref{perin}.

\smallskip \noindent Recall that $N$ is the stabilizer of the cylinder $Y=\mathrm{Fix}(G_e)\subseteq T$ of $G_e$, and $N'$ is the stabilizer of the cylinder $Y'=\mathrm{Fix}(G'_{e'})\subseteq T'$. Hence the actions $N\actson Y$ and $N'\actson Y'$ give rise to splittings of $N$ and $N'$ as graphs of groups. 

\smallskip \noindent Let $v,w\in Y$ be two vertices. Suppose that their stabilizers $N_v=G_v\cap N$ and $N_w=G_w\cap N$ are non-conjugate in $N$, and let us prove that $\psi(N_v)$ and $\psi(N_w)$ are non-conjugate in $N'$. There exist two vertices $x,y$ of $Y'$ such that $\psi(G_v)=G'_x$ and $\psi(G_w)=G'_y$. We have $\psi(N_v)=\psi(G_v\cap N)\subseteq \psi(G_v)\cap \psi(N)\subseteq G'_x\cap N'=N'_x$. Let us prove that the inclusion $\psi(N_v)\subseteq N'_x$ is in fact an equality. Let $g'\in N'_x=G'_x\cap N'$, and let $g\in G_v$ be such that $\psi(g)=g'$. As $v$ belongs to the cylinder $Y$, the edge group $G_e$ is contained in $G_v$, and $gG_eg^{-1}$ is also contained in $G_v$. If $gG_eg^{-1}\neq G_e$ then, by injectivity of $\psi$ on $G_v$, we get $g'G'_{e'}g'^{-1}\neq G'_{e'}$, which contradicts the fact that $g'$ belongs to $N'$. Hence $g$ belongs to $G_v\cap N$, which proves that $\psi(N_v)=N'_x$. Similarly, $\psi(N_w)=N'_y$. Now, suppose towards a contradiction that $N'_x$ and $N'_y$ are conjugate in $N'$, and let $n'\in N'$ be such that $n'N'_xn'^{-1}=N'_y$. If $N'_x$ has order $k$ then $N'_x=N'_y=G'_{e'}$ and so $N_v=N_w=G_e$, which contradicts the fact that $N_v$ and $N_w$ are non-conjugate. Hence $\vert N'_x\vert = \vert N'_y\vert >k$, so $x$ if the only point of $T'$ fixed by $N'_x$ and $y$ is the only point of $T'$ fixed by $N'_y$, and it follows that $n'x=y$, and thus $n'G'_xn'^{-1}=G'_y$. So there is an element $g\in G$ such that $gG_vg^{-1}=G_w$. Applying $\psi$ to this equality, we obtain $\psi(g)G'_x\psi(g)^{-1}=G'_y$, therefore $n'^{-1}\psi(g)$ belongs to $G'_x=\psi(G_v)$. After replacing $g$ with $gh$ for some $h\in G_v$, we can assume that $n'=\psi(g)$. Note that $G_e\subseteq G_v$ and $G_e\subseteq G_w$ as the vertices $v,w$ are in the cylinder of $G_e$, so $gG_eg^{-1}$ and $G_e$ are contained in $G_w$, and by injectivity of $\psi$ on $G_w$ we must have $gG_eg^{-1}=G_e$, otherwise $n'G'_{e'}n'^{-1}\neq G'_{e'}$, contradicting the assumption that $n'$ belongs to $N'$. Therefore $g$ belongs to $N$, moreover we have $gG_vg^{-1}=G_w$, and thus $gN_vg^{-1}=N_w$, which is a contradiction. 

\smallskip \noindent Hence $\psi$ and $\psi'$ induce bijections between the conjugacy classes of the vertex groups of the cylinders $Y$ and $Y'$. Define $C=G_e$ and $C'=G'_{e'}$. Let $N_1,\ldots,N_r$ be the vertex groups of $Y/N$, and let $N'_1,\ldots,N'_r$ be the vertex groups of $Y'/N'$, numbered in such a way that $N_i$ is mapped to a conjugate of $N'_i$ by $\psi$, and vice versa. Since the underlying graphs of $Y/N$ and $Y'/N'$ are trees, we can write $N=N_1\ast_C N_2\ast_C \cdots \ast_C N_r$ and $N'=N'_1\ast_{C'} N'_2\ast_{C'} \cdots \ast_{C'} N'_r$. Without loss of generality, one can assume that $\psi(N_1)=N'_1$. There is an element $g\in N'$ such that $\psi(N_2)=gN'_2g^{-1}$. By assumption the image of the natural map $N'\rightarrow \mathrm{Aut}(C')$ is equal to $\mathrm{Inn}(C')$, therefore there exists an element $h\in C'$ such that $gxg^{-1}=hxh^{-1}$ for every $x\in C'$. Thus, one can define a new morphism $\theta : N \rightarrow N'$ as follows: $\theta_{\vert N_2}=\mathrm{ad}(hg^{-1})\circ \psi_{\vert N_2}$ and $\theta_{\vert N_i}=\psi_{\vert N_i}$ for $i\neq 2$. Note that $\theta(N_1)=N'_1$ and $\theta(N_2)=hN'_2h^{-1}=N'_2$. Define $\psi_2:=\theta$. Iteratively, one can define a morphism $\psi_r : N \rightarrow N'$ that coincides with $\psi$ up to conjugation on each vertex group $N_i$ of $N$, and such that $\psi_r(N_i)=N'_i$ for every $1\leq i\leq r$. Similarly, one can define a morphism $\psi_r' : N' \rightarrow N$ that coincides with $\psi'$ up to conjugation on each vertex group $N'_i$ of $N$, and such that $\psi_r'(N'_i)=N_i$ for every $1\leq i\leq r$. The endomorphism $\psi_r'\circ \psi_r$ of $N$ is surjective, and thus it is an automorphism since $N$ is Hopfian. Similarly $\psi_r\circ \psi_r'$ is an automorphism of $N'$. Hence $\psi_r$ and $\psi_r'$ are isomorphisms between $N$ and $N'$.

\smallskip \noindent Let $e_1,\ldots,e_n$ denote the edge groups of $\Delta$, and let $e'_1,\ldots,e'_n$ denote the edge groups of $\Delta'$ (the underlying graphs are trees and they have the same number of vertices, so they have the same number of edges as well), and let $C_1,\ldots,C_n$ and $C'_1,\ldots,C'_n$ denote the corresponding edge groups. By performing the same procedure as described above for each edge $e_i$ of $\Delta$, we get a collection of isomorphisms $\lbrace\chi_i : N_G(C_i)\rightarrow N_{G'}(C'_i)\rbrace_{1\leq i\leq n}$ (after renumbering the edges $e'_i$ if necessary) and $\lbrace\chi'_i : N_{G'}(C'_i)\rightarrow N_{G}(C_i)\rbrace_{1\leq i\leq n}$ that coincide with $\psi$ and $\psi'$ up to conjugacy on the vertex groups of the splittings of $N_G(C_i)$ and $N_{G'}(C'_i)$ respectively. 

\smallskip \noindent Let $T_c$ and $T'_c$ denote the trees of cylinders of $T$ and $T'$ respectively, and define $\Lambda=T_c/G$ and $\Lambda'=T'_c/G'$. Using the collection $\lbrace \chi_1,\ldots,\chi_n\rbrace$, one can define a morphism $\chi: G\rightarrow G'$ that coincides (up to conjugation) with $\chi_i$ on $N_{G}(G_{e_i})$ for every $1\leq i\leq n$ and with $\psi$ (up to conjugation) on the vertex groups of $\Delta$. Hence $\chi$ maps every vertex group of $\Lambda$ isomorphically to a vertex group of $\Lambda'$, and every edge group of $\Lambda$ isomorphically to an edge group of $\Lambda'$. One can define a similar morphism $\chi':G'\rightarrow G$. Now, the compositions $\chi'\circ \chi : G\rightarrow G$ and $\chi\circ \chi' : G'\rightarrow G'$ satisfy the assumptions of Lemma \ref{perin}, so $\chi'\circ \chi$ and $\chi\circ \chi'$ are isomorphisms. Therefore $G$ and $G'$ are isomorphic.\end{proof}

\color{black}In fact, in the proof of Theorem \ref{particular_case}, we proved the following result.


\begin{prop}Let $G$ and $G'$ be two torsion-generated hyperbolic groups. Let $\Delta$ and $\Delta'$ be reduced splittings of $G$ and $G'$ over finite groups, and suppose that all the edge groups of $\Delta$ have the same order. Moreover, suppose that the following condition holds: for every edge group $F$ of $\Delta$, the image of the natural map $N_G(F)\rightarrow \mathrm{Aut}(F)$ is equal to $\mathrm{Inn}(F)$.

\smallskip \noindent If there exist two morphisms $\varphi : G\rightarrow G'$ and $\varphi' : G'\rightarrow G$ that map each vertex group of $\Delta$ (respectively $\Delta'$) isomorphically to a vertex group of $\Delta'$ (respectively $\Delta$), that are injective on finite subgroups of $G$ (respectively $G'$), and that induce one-to-one correspondences between the conjugacy classes of vertex groups of $\Delta$ and $\Delta'$, and between the conjugacy classes of finite subgroups of $G$ and $G'$, then $G$ and $G'$ are isomorphic. More precisely, there exist two isomorphisms $\chi : G\rightarrow G'$ and $\chi' : G'\rightarrow G$ and an integer $m\geq 1$ such that:
\begin{enumerate}[(1)]
    \item[$\bullet$] for every vertex group $G_v$ of $\Delta$, there exists an element $g'\in G'$ such that $\chi$ coincides with $\mathrm{ad}(g')\circ \varphi\circ (\varphi'\circ \varphi)^{m-1}$ on $G_v$;
    \item[$\bullet$] for every vertex group $G'_v$ of $\Delta'$, there exists an element $g\in G$ such that $\chi'$ coincides with $\mathrm{ad}(g)\circ \varphi'\circ (\varphi\circ \varphi')^{m-1}$ on $G'_v$.
\end{enumerate}
\end{prop}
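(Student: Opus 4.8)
The plan is to recognise that this proposition is exactly the ``engine'' of the proof of Theorem~\ref{particular_case}, stripped of its model-theoretic input. In that proof the only consequence of AE-equivalence that was ever used was Corollary~\ref{main_theorem1.1}, which supplied precisely a pair of morphisms $\varphi:G\to G'$ and $\varphi':G'\to G$ enjoying the properties that are now taken as hypotheses. So I would re-run the argument of Theorem~\ref{particular_case} starting from the given $\varphi,\varphi'$ in place of Corollary~\ref{main_theorem1.1}, and keep careful track of the conjugacy bookkeeping so as to read off the two displayed formulas for $\chi$ and $\chi'$. Two preliminary remarks make the transcription legitimate: since $G$ and $G'$ are torsion-generated, Lemma~\ref{is_a_tree} guarantees that the underlying graphs of $\Delta$ and $\Delta'$ are trees (which is what the argument uses), and normalizers of finite subgroups of hyperbolic groups are quasiconvex, hence hyperbolic, hence Hopfian (which is what the argument uses at the end).

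The steps, in order, are as follows. \emph{Step 1.} Since $\varphi'\circ\varphi$ induces a permutation of the finitely many conjugacy classes of vertex groups of $\Delta$ and of finite subgroups of $G$, and $\varphi\circ\varphi'$ does the same on the $G'$-side (and likewise for the conjugacy classes of vertex groups of the normalizer splittings in Step~3), fix $m\geq 1$ with $m-1$ divisible by the least common multiple of all these periods and set $\psi:=\varphi\circ(\varphi'\circ\varphi)^{m-1}$; after relabelling so that $\psi$ carries $G_i$ isomorphically onto a conjugate of $G'_i$, a conjugation-acting power of $\varphi'\circ\varphi$ can be absorbed into the ambient conjugating elements, so that $\varphi'\circ\psi$ and its symmetric counterpart act as conjugations on all the relevant subgroups. \emph{Step 2.} Show every edge group of $\Delta'$ has the same order $k$ as every edge group of $\Delta$: injectivity of $\varphi$ on finite subgroups forces order $\leq k$, and the connected-component argument of Theorem~\ref{particular_case} (a cheaper edge of $\Delta'$ would confine $\varphi(G)$ to one of its sides, contradicting surjectivity of the vertex-group correspondence) rules out strict inequality. \emph{Step 3.} For each edge $e$ of the Bass--Serre tree $T$ of $\Delta$ with edge group $C=G_e$, analyse the cylinder $Y=\mathrm{Fix}(C)$, whose stabilizer $N=N_G(C)$ splits over a tree as $N_1\ast_C\cdots\ast_C N_r$, and similarly $N'=N_{G'}(C')$; show $\psi,\psi'$ induce mutually inverse bijections between the conjugacy classes of the $N_i$'s and the $N'_i$'s. \emph{Step 4.} Using the hypothesis on $N_G(F)\to\mathrm{Aut}(F)$ --- and its $\Delta'$-counterpart, deduced via $\varphi,\varphi'$ exactly as in the proof of Theorem~\ref{virtually_cyclic} --- correct $\psi,\psi'$ by inner automorphisms of $N'$ resp.\ $N$, one $N_i$ at a time, to obtain isomorphisms $\chi_i:N_G(C_i)\to N_{G'}(C'_i)$ and $\chi'_i$ in the reverse direction (Hopficity of the normalizers upgrades the surjective composites to isomorphisms). \emph{Step 5.} Glue: the vertex groups of the tree of cylinders $T_c$ (see Subsection~\ref{tree of cylinders}) are the vertex groups of $\Delta$ together with the $N_G(C_i)$, so $\psi$ and the $\chi_i$ assemble into a morphism $\chi:G\to G'$ carrying every vertex group of $T_c$ isomorphically onto a vertex group of $T'_c$ and every edge group onto an edge group, and symmetrically $\chi':G'\to G$. \emph{Step 6.} Then $\chi'\circ\chi$ and $\chi\circ\chi'$ satisfy the hypotheses of Lemma~\ref{perin}, hence are automorphisms, so $\chi,\chi'$ are isomorphisms. \emph{Step 7.} On a vertex group $G_v$ of $\Delta$ the isomorphism $\chi$ agrees, up to an inner automorphism of $G'$, with $\psi=\varphi\circ(\varphi'\circ\varphi)^{m-1}$, which is the first formula, and symmetrically for $\chi'$ and the second.

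The argument contains no genuinely new difficulty --- it is a transcription of the proof of Theorem~\ref{particular_case} --- so the work is in two points of bookkeeping. The more delicate one is Step~1: one must arrange that a \emph{single} exponent $m$ serves both displayed formulas at once, which is why $m-1$ is taken divisible by a common multiple of the periods of $\varphi'\circ\varphi$ and $\varphi\circ\varphi'$; the freedom to absorb a conjugation-acting power of $\varphi'\circ\varphi$ (resp.\ $\varphi\circ\varphi'$) into the $G'$-valued (resp.\ $G$-valued) conjugating elements is precisely what reconciles the asymmetric choice of $\psi,\psi'$ made in the proof of Theorem~\ref{particular_case} with the symmetric shape of the present statement. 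The second point is that the inner-automorphism corrections of Step~4, performed inside each normalizer $N'$ on the vertex groups $N'_i$ of its splitting, must be seen to amount --- once everything is glued into $\chi$ --- to post-composing $\psi$ by one inner automorphism of $G'$ on each ambient vertex group $G_v$ of $\Delta$; this follows at once from the description of $T_c$, but it is the step to double-check in order to justify the clause ``there exists an element $g'\in G'$''.
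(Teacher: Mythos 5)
Your overall strategy is correct and is exactly the paper's: the paper's ``proof'' of this proposition is the one-sentence remark that it was established in the course of proving Theorem~\ref{particular_case}, and you give a faithful expansion of that argument starting from the given $\varphi,\varphi'$ instead of invoking Corollary~\ref{main_theorem1.1}. Steps~2 through~7 transcribe the argument accurately, including the two non-obvious preliminary remarks (trees via Lemma~\ref{is_a_tree}, Hopficity of the normalizers).

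However, the bookkeeping in Step~1 has a genuine gap. Write $\pi$ for the permutation induced by $\varphi'\circ\varphi$ on the set of conjugacy classes of vertex groups of $\Delta$ and of finite subgroups of $G$ (which corresponds, under $\varphi$, to the permutation induced by $\varphi\circ\varphi'$ on the $G'$-side). If you choose $m-1$ to be a multiple of the order of $\pi$, then $\psi:=\varphi\circ(\varphi'\circ\varphi)^{m-1}$ induces the same class-bijection as $\varphi$ and $\psi':=\varphi'\circ(\varphi\circ\varphi')^{m-1}$ induces the same one as $\varphi'$; but these two bijections need not be mutually inverse, which is precisely what the downstream numbering of $G_1,\ldots,G_n$ versus $G'_1,\ldots,G'_n$ and the line ``$\psi'\circ\psi(G_e)$ is conjugate to $G_e$'' in the proof of Theorem~\ref{particular_case} require. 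Concretely, $\psi'\circ\psi$ induces $\pi^{2m-1}$ on classes, so mutual inversity amounts to $\pi^{2m-1}=\mathrm{id}$, which is solvable in $m$ if and only if $\pi$ has odd order; your ``absorption'' remark does not rescue this, since post- or pre-composing by inner automorphisms can never permute conjugacy classes. It is fair to add that the paper's own proof of Theorem~\ref{particular_case} is not clean at this point either: the printed formula $\psi'=\psi\circ(\psi\circ\varphi')^{m'}$ does not type-check, and the natural correction $\psi'=\varphi'\circ(\psi\circ\varphi')^{m'}$ gives exponent $mm'$ rather than $m-1$; so the single-$m$ shape of the conclusion really does need a separate argument. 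The robust fix is to allow two exponents $m,m'\geq 1$ subject to $m+m'-1\equiv 0$ modulo the order of $\pi$ (always solvable), and to record the conclusion with two exponents rather than one.
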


We need to prove that the previous proposition remains true without the assumption that all the edge groups of $\Delta$ have the same order. 


\begin{prop}\label{induction}Let $G$ and $G'$ be two torsion-generated hyperbolic groups. Let $\Delta$ and $\Delta'$ be reduced splittings of $G$ and $G'$ over finite groups. Moreover, suppose that the following condition holds: for every edge group $F$ of $\Delta$, the image of the natural map $N_G(F)\rightarrow \mathrm{Aut}(F)$ is equal to $\mathrm{Inn}(F)$.

\smallskip \noindent If there exist two morphisms $\varphi : G\rightarrow G'$ and $\varphi' : G'\rightarrow G$ that map each vertex group of $\Delta$ (respectively $\Delta'$) isomorphically to a vertex group of $\Delta'$ (respectively $\Delta$), that are injective on finite subgroups of $G$ (respectively $G'$), and that induce one-to-one correspondences between the conjugacy classes of vertex groups of $\Delta$ and $\Delta'$, and between the conjugacy classes of finite subgroups of $G$ and $G'$, then $G$ and $G'$ are isomorphic. More precisely, there exist two isomorphisms $\chi : G\rightarrow G'$ and $\chi' : G'\rightarrow G$ and an integer $m\geq 1$ such that:
\begin{enumerate}[(1)]
    \item[$\bullet$] for every vertex group $G_v$ of $\Delta$, there exists an element $g'\in G'$ such that $\chi$ coincides with $\mathrm{ad}(g')\circ \varphi\circ (\varphi'\circ \varphi)^{m-1}$ on $G_v$;
    \item[$\bullet$] for every vertex group $G'_v$ of $\Delta'$, there exists an element $g\in G$ such that $\chi'$ coincides with $\mathrm{ad}(g)\circ \varphi'\circ (\varphi\circ \varphi')^{m-1}$ on $G'_v$.
\end{enumerate}
\end{prop}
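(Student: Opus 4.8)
The plan is to reduce Proposition~\ref{induction} to Theorem~\ref{particular_case} (more precisely, to the intermediate Proposition stated just before it, which handles the single-order case) by induction on the number $N$ of distinct orders of edge groups appearing in $\Delta$. The base case $N\le 1$ is exactly the previous Proposition. For the inductive step, suppose $\Delta$ has edge groups of exactly $N+1$ distinct orders, and let $m_0\ge 1$ be the smallest order of an edge group of $\Delta$.

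\textbf{Step 1: peel off the smallest-order edges.}
First I would collapse in $\Delta$ every edge whose stabilizer has order $>m_0$; call the resulting reduced splitting $S$ of $G$, all of whose edge groups have order exactly $m_0$. By Lemma~\ref{is_a_tree} the underlying graph of $\Delta$ (hence of $S$) is a tree, so the vertex groups of $S$ are precisely the fundamental groups of the subtrees of $\Delta$ that were collapsed; each such vertex group $H$ carries a natural reduced splitting $\Delta_H$ over finite groups with at most $N$ distinct orders of edge groups, and — crucially — each edge group of $\Delta_H$ is an edge group of $\Delta$, so the hypothesis that $N_G(F)\to\operatorname{Aut}(F)$ has image $\operatorname{Inn}(F)$ is inherited (note $N_H(F)\le N_G(F)$, and the image in $\operatorname{Aut}(F)$ is then contained in $\operatorname{Inn}(F)$, which together with $\operatorname{Inn}(F)\le N_H(F)$-image gives equality). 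On the $G'$ side one argues symmetrically, but one must first check that $\varphi,\varphi'$ interact well with the collapsings: exactly as in the proof of Theorem~\ref{particular_case}, one shows that the edge groups of $\Delta'$ of minimal order also have order $m_0$ (using that $\Delta'$ is a tree, is reduced, and $\varphi'$ is injective on finite subgroups and sends non-conjugate vertex groups to non-conjugate ones, plus a connectedness argument ruling out an edge of $\Delta'$ of order $<m_0$), and that the associated collapsing $S'$ of $G'$ has vertex groups matching the vertex groups of $S$ in the sense required.

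\textbf{Step 2: promote $\varphi,\varphi'$ to the collapsed splittings and their vertex groups.}
Here I would first pass to a suitable power: since $G$ has finitely many conjugacy classes of finite subgroups and $\varphi'\circ\varphi$ permutes the conjugacy classes of vertex groups of $\Delta$ and of finite subgroups, there is $p\ge1$ with $\psi:=\varphi\circ(\varphi'\circ\varphi)^{p-1}$ mapping every vertex group of $\Delta$ and every finite subgroup of $G$ isomorphically to a conjugate of itself; similarly build $\psi'$ from $\varphi'$. Then, for a vertex $v$ of $S$ with vertex group $H=G_v$, the argument from the proof of Lemma~\ref{injectivity} (or Theorem~\ref{particular_case}) shows $\psi(H)$ is elliptic in $S'$ and in fact $\psi(H)$ is conjugate to the vertex group $H'$ of $S'$ corresponding to $v$: indeed $H$ contains a vertex group of $\Delta$ of order $>m_0$ which $\psi$ sends isomorphically to a conjugate of a vertex group of $\Delta'$ of order $>m_0$, forcing the fixed point in $S'$ to be the expected one (a path fixed by such a group would have a stabilizer of order $<m_0$ if it were nontrivial, contradicting injectivity of $\psi$ on that subgroup). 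After conjugating, $\psi_{|H}$ and $\psi'_{|H'}$ restrict to morphisms $H\to H'$, $H'\to H$ satisfying — with respect to $\Delta_H$ and $\Delta_{H'}$ — all the hypotheses of the inductive statement. By the induction hypothesis applied to $(H,H',\Delta_H,\Delta_{H'})$ there are isomorphisms $\chi_H:H\to H'$, $\chi'_H:H'\to H$ of the prescribed form, i.e. agreeing (up to conjugacy) with $\varphi\circ(\varphi'\circ\varphi)^{m_H-1}$ on each vertex group of $\Delta_H$, for some $m_H\ge1$; after replacing $\psi$ by a further common power of $\varphi'\circ\varphi$ one may take all the $m_H$ equal to a single $m$ and the composed maps to agree on \emph{all} vertex groups of $\Delta$ simultaneously.

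\textbf{Step 3: apply the single-order Proposition to $S,S'$.}
Now $S$ and $S'$ are reduced splittings of $G,G'$ over finite groups all of order $m_0$; the edge-normalizer hypothesis holds for $S$ because edge groups of $S$ are edge groups of $\Delta$; and the maps $\chi_H$ assembled over the vertices $v$ of $S$ define morphisms $G\to G'$ and $G'\to G$ sending each vertex group of $S$ isomorphically to a vertex group of $S'$, injective on finite subgroups, and inducing one-to-one correspondences on the conjugacy classes of vertex groups (of $S,S'$) and of finite subgroups (inherited from $\varphi,\varphi'$ via Corollary~\ref{main_theorem1.1}-type bookkeeping). The previous Proposition (single-order case) then yields isomorphisms $\chi:G\to G'$ and $\chi':G'\to G$ which, on each vertex group $G_v$ of $S$, coincide up to an inner automorphism with a fixed power of the assembled maps — and hence, unwinding Step~2, coincide on each vertex group of $\Delta$ with $\operatorname{ad}(g')\circ\varphi\circ(\varphi'\circ\varphi)^{M-1}$ for a suitable $M\ge1$, which is precisely the asserted normal form. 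This closes the induction.

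\textbf{Main obstacle.}
The delicate point is Step~1--2: verifying that the collapsings on the two sides are compatible, i.e. that $\varphi,\varphi'$ genuinely induce, after passing to a power, a matching between vertex groups of $S$ and $S'$ and between their induced sub-splittings, with the edge-normalizer condition correctly inherited. This is essentially a bookkeeping amplification of the argument already carried out in the proof of Theorem~\ref{particular_case} (where it was done for the top-level splitting), but one must be careful that the induced morphisms between the vertex groups $H\to H'$ really satisfy \emph{all four} hypotheses of the inductive statement — in particular the ``non-conjugate to non-conjugate'' clause at the level of $\Delta_H$, which follows from the same clause for $\Delta$ together with the fact that $S$ is a tree (so distinct subtrees meet in at most a point) and from injectivity of $\psi$ on the order-$>m_0$ vertex groups. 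Everything else is a routine induction assembling local isomorphisms through a tree of groups via Lemmas~\ref{pingpong}, \ref{normalizer_is_a_tree}, \ref{injectivity_normalizer}, and the Hopf property of hyperbolic groups.
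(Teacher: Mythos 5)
Your proof is correct and takes essentially the same approach as the paper's: both proceed by induction, collapse the edges of order greater than the minimum so that only the smallest-order edges remain, invoke the inductive hypothesis on the resulting vertex groups (endowed with their inherited sub-splittings), and then finish by applying the single-order proposition to the collapsed splittings. The only cosmetic difference is the induction measure — you induct on the number of distinct edge-group orders in $\Delta$, whereas the paper inducts on the total number of edges of $\Delta$ and $\Delta'$ together (which also lets them fold the final single-order step into one more recursive call); both measures decrease in the same way through the same collapse, so the arguments coincide up to bookkeeping.
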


\begin{proof}
We denote by $N(\Delta,\Delta')$ the sum of the numbers of edges in $\Delta$ and $\Delta'$. We will prove the proposition by induction on $N(\Delta,\Delta')$ for the tuple $(G,G',\Delta,\Delta',\varphi,\varphi')$.

\smallskip \noindent When $N(\Delta,\Delta')=0$ the result follows immediately from the assumption. So let $n\geq 0$ be an integer and suppose that the proposition is true for every $\Delta,\Delta'$ such that $N(\Delta,\Delta')\leq n$. Let $\Delta,\Delta'$ be such that $N(\Delta,\Delta')=n+1$. If all the edge groups of $\Delta$ have the same order, then the result is true by Proposition \ref{particular_case}, so let us suppose that there are edge groups of $\Delta$ of different order. Let $m$ be the minimal order of an edge group of $\Delta$ or $\Delta'$, and let $\Lambda$ and $\Lambda'$ be the splittings of $G$ and $G'$ obtained by collapsing all the edges whose stabilizer has order $> m$. Note that $N(\Lambda,\Lambda')\leq n$.

\smallskip \noindent Since $\varphi$ and $\varphi'$ are injective on the finite subgroups of $G$ and $G'$, the following holds: for every vertex group $G_v$ of $\Lambda$, $\varphi(G_v)$ is elliptic in $\Lambda'$, and for every vertex group $G'_v$ of $\Lambda'$, $\varphi'(G'_v)$ is elliptic in $\Lambda$. After replacing $\varphi$ and $\varphi'$ by $\varphi\circ (\varphi'\circ \varphi)^{k-1}$ and $\varphi'\circ (\varphi\circ \varphi')^{k-1}$ for some integer $k\geq 1$ if necessary, one can assume that for every vertex group $H=G_v$ of $\Lambda$, $\varphi(H)$ is contained in $H'=G'_{v'}$ and that $\varphi'(H')$ is contained in (a conjugate of) $H$. Let $\Delta_H$ and $\Delta'_{H'}$ be the reduced splittings of $H$ and $H'$ over finite groups coming from $\Delta$ and $\Delta'$. The induction hypothesis can be applied to $(H,H',\Delta_H,\Delta'_{H'},\varphi_{\vert H},\varphi'_{\vert H'})$ and provides a pair of isomorphisms $\psi_H : H\rightarrow H'$ and $\psi'_{H'} : H'\rightarrow H$ such that $\psi_H$ coincides up to conjugacy with $ \varphi\circ (\varphi'\circ \varphi)^{m-1}$ on $H$ and $\psi'_{H'}$ coincides up to conjugacy with $ \varphi'\circ (\varphi\circ \varphi')^{m-1}$ on $H'$ for some integer $m\geq 1$. Applying the induction hypothesis to each pair of vertex groups of $\Lambda$ and $\Lambda'$, we get a pair of morphisms $\theta : G\rightarrow G'$ and $\theta' : G'\rightarrow G$, and we conclude by applying the induction hypothesis to $(G,G',\Lambda,\Lambda',\theta,\theta')$.\end{proof}

It remains to prove the general case of Theorem \ref{main_theorem2}. 

\begin{proof}[Proof of Theorem \ref{main_theorem2}]
Let $G$ be a torsion-generated hyperbolic group, and let $G'$ be a finitely torsion-generated group that is AE-equivalent to $G$. Let $\Delta$ and $\Delta'$ be reduced Stallings splittings of $G$ and $G'$ respectively. By Corollary \ref{main_theorem1.1}, there exist two morphisms $\varphi : G\rightarrow G'$ and $\varphi' : G'\rightarrow G$ that are injective on the vertex groups of $\Delta$ and $\Delta'$, and that induce one-to-one correspondences between the conjugacy classes of vertex groups of $\Delta$ and $\Delta'$, and between the conjugacy classes of finite subgroups of $G$ and $G'$. It follows that $G'$ is a hyperbolic group (by the same argument as the one at the end of the proof of Corollary \ref{preservation}). Last, the result follows from Proposition \ref{induction}.\end{proof}\color{black}

\section{A counterexample to first-order torsion-rigidity}\label{example}

In this section, we give an example of two virtually free Coxeter groups $G$ and $G'$ that are AE-equivalent, but which are not isomorphic.

\subsection{Definition of the groups $G$ and $G'$}\label{def_of_groups}

Consider the finite Coxeter groups $A,B,C$ given by the following diagrams (let us recall that the vertices represent the elements of a Coxeter system $S$, and that there is no edge between two vertices if and only if the corresponding generators $s_i,s_j$ commute, and an edge with no label if and only if $(s_is_j)^3=1$):

\begin{figure}[h!]
  \centering
    \begin{tikzpicture}
\draw[black, very thick] (-2,0) -- (-1,0);
\draw[black, very thick] (0,0) -- (5,0);
\draw[black, very thick] (6,0) -- (8,0);
\node[text=black] at (-2,-0.5) {$a_1$};
\node[text=black] at (0,-0.5) {$a_2$};
\node[text=black] at (1,-0.5) {$x$};
\node[text=black] at (2,-0.5) {$a_3$};
\node[text=black] at (4,-0.5) {$a_4$};
\node[text=black] at (6,-0.5) {$a_5$};
\fill[black] (-2,0) circle (0.1cm);
\fill[black] (-1,0) circle (0.1cm);
\fill[black] (0,0) circle (0.1cm);
\fill[black] (1,0) circle (0.1cm);
\fill[black] (2,0) circle (0.1cm);
\fill[black] (3,0) circle (0.1cm);
\fill[black] (4,0) circle (0.1cm);
\fill[black] (5,0) circle (0.1cm);
\fill[black] (6,0) circle (0.1cm);
\fill[black] (7,0) circle (0.1cm);
\fill[black] (8,0) circle (0.1cm);
\end{tikzpicture}
  \caption{The finite group $A$, isomorphic to $S_3\times S_7\times S_4$.}
\end{figure}
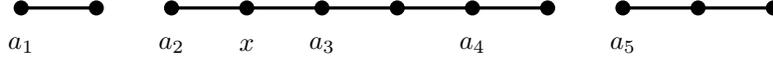

\begin{figure}[h!]
  \centering
    \begin{tikzpicture}
\draw[black,very thick] (-1,0) -- (4,0);
\draw[black,very thick] (5,0) -- (7,0);
\fill[black] (-1,0) circle (0.1cm);
\fill[black] (0,0) circle (0.1cm);
\fill[black] (1,0) circle (0.1cm);
\fill[black] (2,0) circle (0.1cm);
\fill[black] (3,0) circle (0.1cm);
\fill[black] (7,0) circle (0.1cm);
\fill[black] (4,0) circle (0.1cm);
\fill[black] (5,0) circle (0.1cm);
\fill[black] (6,0) circle (0.1cm);
\node[text=black] at (-1,-0.5) {$b_1$};
\node[text=black] at (1,-0.5) {$b_2$};
\node[text=black] at (3,-0.5) {$b_4$};
\node[text=black] at (5,-0.5) {$b_3$};
\node[text=black] at (7,-0.5) {$b_5$};
\end{tikzpicture}
  \caption{The finite group $B$, isomorphic to $S_7\times S_4$.}
\end{figure}
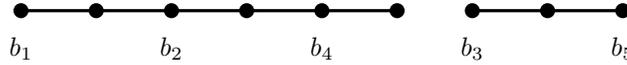

\begin{figure}[h!]
  \centering
    \begin{tikzpicture}
\fill[black] (0,0) circle (0.1cm);
\fill[black] (1,0) circle (0.1cm);
\fill[black] (2,0) circle (0.1cm);
\fill[black] (3,0) circle (0.1cm);
\fill[black] (4,0) circle (0.1cm);
\node[text=black] at (0,-0.5) {$c_1$};
\node[text=black] at (1,-0.5) {$c_2$};
\node[text=black] at (2,-0.5) {$c_3$};
\node[text=black] at (3,-0.5) {$c_4$};
\node[text=black] at (4,-0.5) {$c_5$};
\end{tikzpicture}
  \caption{The finite group $C$, isomorphic to $(\mathbb{Z}/2\mathbb{Z})^5$.}
\end{figure}
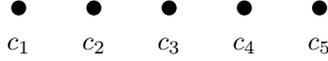

We will need the following easy lemmas.

\begin{lemma}\label{lemma_autom_inner}
$\mathrm{Aut}(A)=\mathrm{Inn}(A)$ and $\mathrm{Aut}(B)=\mathrm{Inn}(B)$.
\end{lemma}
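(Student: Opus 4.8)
The plan is to compute the automorphism groups of $A$ and $B$ directly from their decompositions as products of symmetric groups. Recall that $A \cong S_3 \times S_7 \times S_4$ and $B \cong S_7 \times S_4$. The key facts I would invoke are: (i) for $n \neq 2, 6$, the symmetric group $S_n$ is complete, i.e. $\mathrm{Aut}(S_n) = \mathrm{Inn}(S_n)$ and $Z(S_n) = 1$; (ii) a finite direct product of complete groups with pairwise non-isomorphic factors is again complete. Fact (i) applies to $S_3, S_7, S_4$ since none of $3, 7, 4$ is $2$ or $6$. Fact (ii) then applies since $S_3, S_7, S_4$ are pairwise non-isomorphic (different orders), and likewise $S_7, S_4$ are non-isomorphic.

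First I would state the completeness of $S_n$ for $n \notin \{2,6\}$ as a classical result (for instance, citing a standard reference, or simply recalling that the outer automorphism group of $S_n$ is trivial except for $n = 6$, and that $S_n$ has trivial center for $n \geq 3$). Then I would recall the elementary lemma that if $G = G_1 \times \cdots \times G_k$ where each $G_i$ is centerless and directly indecomposable, and the $G_i$ are pairwise non-isomorphic, then every automorphism of $G$ preserves the factors $G_i$ (since they are precisely the minimal non-trivial factors in any decomposition, matched up by isomorphism type), hence $\mathrm{Aut}(G) = \prod_i \mathrm{Aut}(G_i)$; if moreover each $G_i$ is complete, then $G$ is complete. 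Actually for the cleaner argument I would note the general statement: a finite direct product of complete groups is complete provided no two factors share a common non-trivial isomorphism type of direct factor — here it suffices that the factors $S_3, S_7, S_4$ (resp. $S_7, S_4$) are pairwise non-isomorphic simple-ish blocks, which holds by cardinality.

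Applying this: $\mathrm{Aut}(A) = \mathrm{Aut}(S_3) \times \mathrm{Aut}(S_7) \times \mathrm{Aut}(S_4) = \mathrm{Inn}(S_3) \times \mathrm{Inn}(S_7) \times \mathrm{Inn}(S_4) = \mathrm{Inn}(A)$, using that $A$ is centerless so $\mathrm{Inn}(A) \cong A \cong \prod \mathrm{Inn}(S_i)$. The same computation gives $\mathrm{Aut}(B) = \mathrm{Inn}(B)$. I would also double-check that the Coxeter diagrams drawn indeed present these symmetric-group products: each connected $A_{n-1}$-type string (a path with all edges unlabelled, i.e. consecutive generators satisfying $(s_is_{i+1})^3 = 1$ and commuting otherwise) on $n-1$ nodes gives $S_n$, and a disjoint union of diagrams gives the direct product; counting nodes in each component of the pictures yields the claimed factorizations.

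The main obstacle is essentially bookkeeping rather than mathematics: one must correctly read off the symmetric-group factors from the diagrams (getting the indices right, e.g. an $A_k$-string has $k$ nodes and gives $S_{k+1}$) and must make sure none of the relevant $n$ equals $6$ (the exceptional case for $\mathrm{Out}(S_n)$) or $2$ (where $S_2$ is abelian and the completeness argument breaks). Since the diagrams are specifically chosen to avoid these degeneracies, the proof is short; the only subtlety worth spelling out is why automorphisms cannot permute non-isomorphic direct factors, which follows from the Krull--Remak--Schmidt uniqueness of direct decompositions for finite groups together with the distinctness of the factors.
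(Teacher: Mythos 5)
Your proof takes essentially the same route as the paper: both decompose $A\simeq S_3\times S_7\times S_4$ and $B\simeq S_7\times S_4$, use the fact that $\mathrm{Aut}$ of a direct product of centerless, pairwise non-isomorphic (indecomposable) factors splits as the product of the $\mathrm{Aut}$'s (the paper cites Bidwell for this; you invoke Krull--Remak--Schmidt), and then use that $S_n$ has no outer automorphisms for $n\neq 6$. The argument is correct and matches the paper's.
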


\begin{proof}
The groups $S_3,S_7,S_4$ have trivial center and do not have common direct factors, so $\mathrm{Aut}(A)\simeq \mathrm{Aut}(S_3)\times \mathrm{Aut}(S_7)\times \mathrm{Aut}(S_4)$ (see for instance \cite{Bidwell,Bidwell2}). Moreover, every automorphism of $S_n$ is inner for $n\neq 6$, hence every automorphism of $A$ is inner.


\smallskip \noindent Similarly, every automorphism of $B$ is inner.
\end{proof}

Let $C_A$ be the subgroup of $A$ generated by $\lbrace a_1,a_2,a_3,a_4,a_5\rbrace$ and let $C_B$ be the subgroup of $B$ generated by $\lbrace b_1,b_2,b_3,b_4,b_5\rbrace$. Note that $C_A\simeq C_B\simeq C\simeq (\mathbb{Z}/2\mathbb{Z})^5$.

\begin{lemma}\label{lemma_A}
There is a natural morphism \[N_A(C_A)\rightarrow S(\lbrace a_1,a_2,a_3,a_4,a_5\rbrace)\] whose image is $S(\lbrace a_2,a_3,a_4\rbrace)$.
\end{lemma}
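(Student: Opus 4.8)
The plan is to exploit the product decomposition of $A$ visible in its Coxeter diagram. The diagram has three connected components, so $A = A^{(1)}\times A^{(2)}\times A^{(3)}$, where $A^{(1)}\cong S_3$ is carried by $a_1$ together with the unlabelled vertex adjacent to it, $A^{(2)}\cong S_7$ is the $A_6$-component carrying the path $a_2 - x - a_3 - \bullet - a_4 - \bullet$, and $A^{(3)}\cong S_4$ is the $A_3$-component carrying the path $a_5 - \bullet - \bullet$. Under the standard identification of a Coxeter group of type $A_{k-1}$ with $S_k$ sending the Coxeter generators, read along the path, to the adjacent transpositions $(1\,2),(2\,3),\dots,(k-1\,k)$, we obtain $a_1\mapsto(1\,2)$ in $S_3$; $a_2\mapsto(1\,2)$, $a_3\mapsto(3\,4)$, $a_4\mapsto(5\,6)$ in $S_7$ (no two of $a_2,a_3,a_4$ are joined by an edge in the diagram, so they pairwise commute); and $a_5\mapsto(1\,2)$ in $S_4$. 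Consequently $C_A = \langle a_1\rangle\times\langle a_2,a_3,a_4\rangle\times\langle a_5\rangle$, with middle factor $\langle(1\,2),(3\,4),(5\,6)\rangle\cong(\mathbb{Z}/2\mathbb{Z})^3$.

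First I would explain why the morphism is well defined. I claim $\{a_1,\dots,a_5\}$ is intrinsically the set of reflections of $A$ (the $A$-conjugates of elements of the Coxeter generating set of $A$) lying in $C_A$: in each symmetric-group factor the reflections of $A$ are exactly the transpositions, and the only transpositions inside $\langle a_1\rangle$, $\langle(1\,2),(3\,4),(5\,6)\rangle$ and $\langle a_5\rangle$ are $a_1$; $a_2,a_3,a_4$; and $a_5$ respectively. Since the set of reflections of $A$ is a union of conjugacy classes and elements of $N_A(C_A)$ preserve $C_A$, conjugation preserves $\{a_1,\dots,a_5\}$, giving the natural homomorphism $\rho:N_A(C_A)\to S(\{a_1,\dots,a_5\})$ (equivalently, the conjugation map $N_A(C_A)\to \mathrm{Aut}(C_A)\cong \mathrm{GL}_5(\mathbb{F}_2)$ lands in the permutation matrices for the basis $\{a_1,\dots,a_5\}$).

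To compute the image I would use the elementary fact that for an internal direct product $A=A_1\times A_2\times A_3$ and a subgroup $C=C_1\times C_2\times C_3$ with $C_i\le A_i$ one has $N_A(C)=N_{A_1}(C_1)\times N_{A_2}(C_2)\times N_{A_3}(C_3)$, conjugation acting coordinatewise. Writing $g=(g_1,g_2,g_3)\in N_A(C_A)$, the action of $g$ on $a_1$ is that of $g_1\in N_{A^{(1)}}(\langle a_1\rangle)=C_{S_3}((1\,2))=\langle a_1\rangle$, hence trivial; similarly $g$ fixes $a_5$ since $N_{A^{(3)}}(\langle a_5\rangle)=C_{S_4}((1\,2))=\langle(1\,2),(3\,4)\rangle$ centralises $(1\,2)$. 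Thus the image of $\rho$ is contained in $S(\{a_2,a_3,a_4\})$. For the reverse inclusion, in $S_7$ conjugation preserves cycle type, so $N_{S_7}(\langle(1\,2),(3\,4),(5\,6)\rangle)$ is exactly the stabiliser of the partition $\{\{1,2\},\{3,4\},\{5,6\}\}$ of $\{1,\dots,6\}$ — a group isomorphic to $(\mathbb{Z}/2\mathbb{Z})\wr S_3$ fixing $7$ — and its conjugation action on $\{a_2,a_3,a_4\}$ realises every permutation (for instance $(1\,3)(2\,4)$ swaps $a_2$ and $a_3$ while fixing $a_4$, and $(1\,3\,5)(2\,4\,6)$ cycles the three). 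Hence the image of $\rho$ equals $S(\{a_2,a_3,a_4\})$.

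All the ingredients are routine: the normalizer-of-a-product formula, the centralizers of a transposition in $S_3$ and $S_4$, and the partition-stabiliser description of $N_{S_7}(\langle(1\,2),(3\,4),(5\,6)\rangle)$. I do not anticipate any genuine obstacle; the only point needing care is correctly reading the diagram data — in particular that $a_2,a_3,a_4$ are pairwise non-adjacent, which is precisely what makes the middle factor of $C_A$ an elementary abelian group of rank $3$ sitting inside $S_7$ as the span of three disjoint transpositions, and therefore what gives the symmetric group $S(\{a_2,a_3,a_4\})$ in the end.
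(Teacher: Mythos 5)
Your proof is correct and follows essentially the same strategy as the paper's: use the direct-factor decomposition of $A$ to show that conjugation by $N_A(C_A)$ fixes $a_1$ and $a_5$ and permutes $\{a_2,a_3,a_4\}$, then exhibit explicit elements of $N_{S_7}(\langle(1\,2),(3\,4),(5\,6)\rangle)$ realizing $S(\{a_2,a_3,a_4\})$ (your $(1\,3)(2\,4)$ is in fact exactly the paper's $xa_2a_3x$). You spell out a few steps the paper leaves implicit (the intrinsic-reflections argument for well-definedness, the normalizer-of-a-product formula, and the partition-stabilizer description of $N_{S_7}$), but these are routine unpackings rather than a different route.
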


\begin{proof}
Let $g$ be an element of $N_A(C_A)$. As the inner automorphism $\mathrm{ad}(g)$ of $A$ preserves each direct factor of the decomposition of $A$ as a direct product, we have $ga_1g^{-1}=a_1$, $ga_5g^{-1}=a_5$ and $g\langle a_2,a_3,a_4\rangle g^{-1}=\langle a_2,a_3,a_4\rangle$. Moreover, since a transposition of $S_n$ is not conjugate to a product of two transpositions, we have $g\lbrace a_2,a_3,a_4\rbrace g^{-1}=\lbrace a_2,a_3,a_4\rbrace$. Then, one easily checks that the involution $a=xa_2a_3x\in A$ satisfies $aa_2a^{-1}=a_3$ (and so $aa_3a^{-1}=a_2$). Similarly, there is an involution swapping $a_3$ and $a_4$. Hence the image is the full group $S(\lbrace a_2,a_3,a_4\rbrace)$.
\end{proof}


One can prove the following fact in the same way.

\begin{lemma}\label{lemma_B}
There is a natural morphism \[N_B(C_B)\rightarrow S(\lbrace b_1,b_2,b_3,b_4\rbrace)\] whose image is $S(\lbrace b_1,b_2,b_4\rbrace)\times S(\lbrace b_3,b_5\rbrace)$.
\end{lemma}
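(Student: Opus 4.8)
The plan is to repeat, essentially word for word, the argument used for Lemma~\ref{lemma_A}, now applied to the decomposition $B\simeq S_7\times S_4$; I will only point out the changes. Writing the $S_7$-factor as the standard Coxeter copy in which the pairwise non-adjacent generators $b_1,b_2,b_4$ become three disjoint transpositions, and the $S_4$-factor as the one in which the non-adjacent generators $b_3,b_5$ become two disjoint transpositions, one has $C_B\cap S_7=\langle b_1,b_2,b_4\rangle\simeq(\mathbb{Z}/2\mathbb{Z})^3$ and $C_B\cap S_4=\langle b_3,b_5\rangle\simeq(\mathbb{Z}/2\mathbb{Z})^2$, with $C_B$ their direct product.

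First I would recall, exactly as in the proof of Lemma~\ref{lemma_autom_inner}, that $\mathrm{Aut}(B)\simeq\mathrm{Aut}(S_7)\times\mathrm{Aut}(S_4)$ since $S_7$ and $S_4$ are directly indecomposable, have trivial center and are non-isomorphic; in particular every inner automorphism of $B$ preserves each of the two direct factors and acts on it by conjugation by the corresponding component. Hence, for $g\in N_B(C_B)$, conjugation by $g$ preserves both $C_B\cap S_7$ and $C_B\cap S_4$; and since the only transpositions of $S_7$ lying in $\langle b_1,b_2,b_4\rangle$ are $b_1,b_2,b_4$ (every other non-trivial element of this subgroup is a product of two or three disjoint transpositions, a cycle type never conjugate to that of a transposition), $\mathrm{ad}(g)$ permutes the set $\lbrace b_1,b_2,b_4\rbrace$, and likewise it permutes $\lbrace b_3,b_5\rbrace$. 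This gives the natural morphism, with image contained in $S(\lbrace b_1,b_2,b_4\rbrace)\times S(\lbrace b_3,b_5\rbrace)$.

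For the reverse inclusion I would exhibit, just as the involution $a=xa_2a_3x$ was used in Lemma~\ref{lemma_A}, elements of $B$ realizing a generating set of the target: an element of the $S_7$-factor conjugating the transposition $b_1$ onto $b_2$ while fixing $b_4$ lies in $N_B(C_B)$ (it normalizes $C_B\cap S_7$ and centralizes the $S_4$-factor) and induces $(b_1\,b_2)$; an analogous element of $S_7$ induces $(b_2\,b_4)$; and an element of the $S_4$-factor conjugating $b_3$ onto $b_5$ lies in $N_B(C_B)$ and induces $(b_3\,b_5)$. As $(b_1\,b_2)$ and $(b_2\,b_4)$ generate $S(\lbrace b_1,b_2,b_4\rbrace)$ while $(b_3\,b_5)$ generates $S(\lbrace b_3,b_5\rbrace)$, the image is the full group $S(\lbrace b_1,b_2,b_4\rbrace)\times S(\lbrace b_3,b_5\rbrace)$. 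I do not anticipate any genuine obstacle: the only point requiring care is the choice of explicit transposition representatives for the $b_i$ and the verification of the normalizer memberships, which is routine bookkeeping, identical to the case of $A$.
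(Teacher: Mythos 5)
Your proof is correct and follows exactly the argument of Lemma~\ref{lemma_A}, which is what the paper intends (it states that Lemma~\ref{lemma_B} is proved ``in the same way''). The only cosmetic difference is that instead of writing down explicit Coxeter words (analogues of the element $x a_2 a_3 x$, e.g.\ using the unlabeled generators sitting between $b_1,b_2$, between $b_2,b_4$, and between $b_3,b_5$), you appeal directly to the existence of suitable involutions in the $S_7$- and $S_4$-factors; both routes give the surjectivity onto $S(\lbrace b_1,b_2,b_4\rbrace)\times S(\lbrace b_3,b_5\rbrace)$, and the containment in that subgroup is established identically via preservation of the direct factors and the conjugacy class of transpositions.
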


Now, define an embedding $i: C\rightarrow C_A\subseteq A$ by $i(c_n)=a_n$ for every $1\leq n\leq 5$, and two embeddings $j : C\rightarrow C_B\subseteq B$ by $j(c_n)=b_n$ for every $1\leq n\leq 5$, and $k : C\rightarrow C_B\subseteq B$ by $k(c_1)=b_3$, $k(c_2)=b_1$, $k(c_3)=b_5$, $k(c_4)=b_4$ and $k(c_5)=b_2$. 

Then, define two amalgamated products as follows:\[G=\langle A,B \ \vert \ i(c)=j(c), \ \forall c\in C\rangle,\]\[G'=\langle A,B \ \vert \ i(c)=k(c), \ \forall c\in C\rangle.\]

One easily sees that $G$ and $G'$ are Coxeter groups. Furthermore, $G$ and $G'$ are virtually free (and non-elementary) since $A$ and $B$ are finite. We will prove that $G$ and $G'$ are not isomorphic, but that they are AE-equivalent. From now on, let us forget $i$ and $j$, so that $C_A,C,C_B$ are identified via $a_n=c_n=b_n$ for every $1\leq n\leq 5$.

\subsection{The groups $G$ and $G'$ are not isomorphic}

\begin{lemma}Consider two amalgamated products $G_1=A_1\ast_{C_1}B_1$ and $G_2=A_2\ast_{C_2}B_2$, where the groups $A_1,A_2,B_1,B_2$ are finite and $\vert A_i\vert>\vert C_i\vert$ and $\vert B_i\vert >\vert C_i\vert$ for $i\in\lbrace 1,2\rbrace$. Suppose that $A_1$ and $B_1$ are not isomorphic, and that $A_2$ and $B_2$ are not isomorphic. If $G_1$ and $G_2$ are isomorphic, then there exists an isomorphism $\varphi : G_1\rightarrow G_2$ such that $\varphi(A_1)=A_2$ and $\varphi(B_1)=B_2$.\end{lemma}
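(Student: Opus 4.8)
The plan is to read off, from each amalgam, the conjugacy classes of its maximal finite subgroups, to observe that an isomorphism must respect them, and then to conjugate an arbitrary isomorphism into one that respects the amalgam structures.

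\emph{Step 1 (the pieces are canonical).} Since $|A_i|>|C_i|$ and $|B_i|>|C_i|$, the decomposition $G_i=A_i\ast_{C_i}B_i$ is nontrivial, so its Bass--Serre tree $T_i$ is infinite, reduced, and bipartite with vertex orbits $G_i/A_i$ and $G_i/B_i$. A finite subgroup of $G_i$ acts on $T_i$ with finite orbits, hence fixes a vertex, hence lies in a conjugate of $A_i$ or of $B_i$. Conversely $A_i$ is a \emph{maximal} finite subgroup: as $A_i$ is not contained in any edge stabilizer, $\mathrm{Fix}_{T_i}(A_i)=\lbrace v_{A_i}\rbrace$, so any finite subgroup containing $A_i$ also fixes $v_{A_i}$, hence is contained in $A_i$; likewise for $B_i$. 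Thus the maximal finite subgroups of $G_i$ are exactly the conjugates of $A_i$ and of $B_i$, and $A_i$ is not conjugate to $B_i$ (their fixed points lie in distinct vertex orbits; also $A_i\not\cong B_i$).

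\emph{Step 2 (match $A$ with $A$).} Let $\psi\colon G_1\to G_2$ be an isomorphism. It sends maximal finite subgroups to maximal finite subgroups, so $\psi(A_1)$ and $\psi(B_1)$ are each conjugate in $G_2$ to $A_2$ or to $B_2$; since $A_1\not\cong B_1$ they are not conjugate to the same one, and since $A_2\not\cong B_2$ we may, after exchanging the two factors of $G_2$ if necessary (which does not affect the hypotheses), assume $\psi(A_1)$ is conjugate to $A_2$ and $\psi(B_1)$ to $B_2$. Composing $\psi$ with a suitable inner automorphism of $G_2$ we arrange $\psi(A_1)=A_2$; then $\psi(B_1)=gB_2g^{-1}$ for some $g\in G_2$, and $\langle A_2,\,gB_2g^{-1}\rangle=G_2$ because $A_1,B_1$ generate $G_1$.

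\emph{Step 3 (match $B$ with $B$).} It suffices to show that in $T_2$ the vertices $v_{A_2}$ and $g\cdot v_{B_2}$ are adjacent: then $g\cdot v_{B_2}=a\cdot v_{B_2}$ for some $a\in A_2$, so $a^{-1}g\in B_2$, i.e.\ $g=ab$ with $a\in A_2$ and $b\in B_2$, and $\varphi:=\mathrm{ad}(a^{-1})\circ\psi$ satisfies $\varphi(A_1)=A_2$ and $\varphi(B_1)=a^{-1}gB_2g^{-1}a=bB_2b^{-1}=B_2$, as required. To see that they are adjacent, suppose $d(v_{A_2},g\cdot v_{B_2})\ge 2$. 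By the ping-pong Lemma~\ref{pingpong}, $G_2=\langle A_2,gB_2g^{-1}\rangle$ then splits as $A_2\ast_{C}(gB_2g^{-1})$, where $C$ is the pointwise stabilizer of the geodesic $[v_{A_2},g\cdot v_{B_2}]$, so $|C|$ is at most the order of an edge stabilizer of $T_2$; comparing the rational Euler characteristic of this splitting of $G_2$ with that of $A_2\ast_{C_2}B_2$ forces $|C|=|C_2|$, hence $C$ equals the stabilizer of every edge of the geodesic, and one then rules out a geodesic of length $\ge 3$ using that $T_2$ is, up to the usual equivalence, the reduced Stallings decomposition of $G_2$.

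\emph{Main obstacle.} The heart of the matter is the adjacency claim in Step 3, i.e.\ upgrading ``$\psi(B_1)$ is conjugate to $B_2$'' to ``$\psi(B_1)$ can be conjugated onto $B_2$ without disturbing $\psi(A_1)=A_2$''. The Euler-characteristic computation fixes the amalgamated subgroup up to order; what remains is to exclude the configurations in which the two vertex groups sit far apart in $T_2$ (which a priori can occur when $C_2$ is not self-normalizing in $A_2$ or $B_2$), and this is exactly the point at which one must invoke the rigidity of reduced Stallings decompositions of virtually free groups. Steps 1 and 2 are routine Bass--Serre theory.
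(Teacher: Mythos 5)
Your Steps 1 and 2 are correct and coincide with the paper's proof: one identifies the conjugates of $A_i$, $B_i$ as the maximal finite subgroups of $G_i$, distinguishes them via $A_i\not\cong B_i$, and composes with an inner automorphism to arrange $\psi(A_1)=A_2$. The crucial difficulty, as you rightly isolate, is Step 3: showing that after a further conjugation by an element of $A_2$ one can arrange $\psi(B_1)=B_2$, equivalently that $v_{A_2}$ and $g\cdot v_{B_2}$ are adjacent in $T_2$.

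However, your Step 3 is genuinely incomplete, and you say so yourself in the ``Main obstacle'' paragraph. The ping--pong argument and the Euler-characteristic comparison are fine (the latter because $\langle A_2,gB_2g^{-1}\rangle=G_2$ by surjectivity of $\psi$), and correctly pin down $|C|=|C_2|$. But the step ``one then rules out a geodesic of length $\ge 3$ using that $T_2$ is, up to the usual equivalence, the reduced Stallings decomposition of $G_2$'' is exactly where the content lies, and it is not proved. Note that a pointwise knowledge that all geodesic edges have stabilizer $C$ alone does not yield the contradiction: when $C_2$ is not self-normalizing in $A_2$ or $B_2$ (e.g.\ when $C_2$ is central), the elementary constraints on the consecutive edge stabilizers along the geodesic are automatically satisfied, so you really do need the stronger uniqueness statement. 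What you need is precisely that the deformation space of a reduced one-edge splitting over finite groups (with vertex groups strictly larger than the edge group) contains a unique reduced tree, so that the $G_2$-tree $T'=$ Bass--Serre tree of $A_2\ast_{C_2}gB_2g^{-1}$ is $G_2$-equivariantly isomorphic to $T_2$, forcing adjacency. This is true but is a nontrivial input from the theory of deformation spaces (Forester, Guirardel--Levitt). Interestingly, the paper explicitly acknowledges this alternative route (``We could use the fact that the deformation spaces $\mathcal{D}(T_1)$ and $\mathcal{D}(T_2)$ are reduced to a point, but instead we will give a direct construction of the isometry induced by $\varphi$'') and then deliberately avoids it: the paper builds a $\varphi$-equivariant map $f:T_1\to T_2$, subdivides $T_1$, shows $f$ is locally injective (no folding, using only injectivity of $\varphi$ on the finite vertex groups) hence an isometric embedding, and then uses \emph{surjectivity} of $\varphi$ directly to show that the image of the edge $[v,w]$ has length exactly one. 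So your plan is correct and coincides with a route the authors considered, but you have left precisely the hard step — the uniqueness of the reduced tree in its deformation space — unproved, whereas the paper supplies a self-contained Stallings-folding argument in its place.
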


\begin{proof}Let $\varphi : G_1\rightarrow G_2$ be an isomorphism. Note that the groups $\varphi(A_1)$ and $\varphi(B_1)$ are elliptic in the Bass-Serre tree of the splitting $G_2=A_2\ast_{C_2}B_2$. It follows that
$\varphi(A_1)$ and $\varphi(B_1)$ are conjugate to $A_2$ and $B_2$ respectively (because $A_i$ and $B_i$ are not isomorphic, for $i\in\lbrace 1,2\rbrace$). Thus we can assume, after composing $\varphi$ with an inner automorphism of $G_2$ if necessary, that $\varphi(A_1)=A_2$. We will prove that $\varphi(B_1)=aB_2a^{-1}$ for some element $a\in A_2$, and therefore that $\mathrm{ad}(a^{-1})\circ \varphi(A_1)=A_2$ and $\mathrm{ad}(a^{-1})\circ \varphi(B_1)=B_2$.

\smallskip \noindent In what follows, we denote by $(T_1,d_1)$ the  Bass-Serre tree of the splitting $A_1\ast_{C_1} B_1$ of $G_1$ equipped with the simplicial metric, and by $(T_2,d_2)$ the Bass-Serre tree of the splitting $A_2\ast_{C_2} B_2$ of $G_2$ equipped with the simplicial metric.

\smallskip \noindent We will prove that $\varphi$ induces an isometry between $(T_1,d_1)$ and $(T_2,d_2)$. We could use the fact that the deformation spaces $\mathcal{D}(T_1)$ and $\mathcal{D}(T_2)$ are reduced to a point, but instead we will give a direct construction of the isometry induced by $\varphi$.


\smallskip \noindent First, let us define a $\varphi$-equivariant map $f$ from $T_1$ to $T_2$ as follows: let $v$ and $w$ be two adjacent vertices of $T_1$ fixed respectively by $A_1$ and $B_1$, and let $v'$ and $w'$ be two adjacent vertices of $T_2$ fixed respectively by $A_2$ and $B_2$. Note that $v$ is the unique vertex of $T_1$ fixed by $A_1$ (as the order of $A_1$ is strictly greater than the order of the edges of $T_1$), and that $w$ is the unique vertex of $T_1$ fixed by $B_1$ (for the same reason). Similarly, $v'$ is the unique vertex of $T_2$ fixed by $\varphi(A_1)=A_2$, and $w'$ is the unique vertex of $T_1$ fixed by $B_2$. Let $g\in G_2$ be an element such that $\varphi(B_1)=gB_2g^{-1}$, and note that $\varphi(B_1)$ fixes $gw'$. We define $f(v)=v'$ and $f(w)=gw'$. Then, since every vertex of $T_1$ is in the orbit of $v$ or $w$, we define $f$ on each vertex of $T_1$ by $\varphi$-equivariance (if $u=hv$ we set $f(u)=\varphi(h)f(v)$ and if $u=hw$ we set $f(u)=\varphi(h)f(w)$). It remains to define $f$ on the edges of $T_1$: if $e$ is an edge of $T_1$, with endpoints $v_1$ and $v_2$, then, since $T_2$ is a tree, there exists a unique path $e'$ from $f(v_1)$ to $f(v_2)$ in $T_2$, and we define $f(e) = e'$. 

\smallskip \noindent Note that $f$ maps the edge $[v,w]$ to the path $[v',gw']$, which is not a vertex since $v'\neq gw'$ (as $v'$ and $w'$ are not in the same $G_2$-orbit). Therefore, as every edge of $T_1$ is a $G_1$-translate of $[v,w]$, the map $f$ cannot map an edge of $T_1$ to a vertex of $T_2$. Thus, after subdivising the edges in $T_1$ if necessary, one can assume that $f$ maps every edge of $T_1$ to an edge of $T_2$. Let $d'_1$ denote the new metric on $T_1$ obtained after subdivising the edges of $T_1$ (if necessary). Let us prove that $f$ is an isometry between $(T_1,d'_1)$ and $(T_2,d_2)$. Since $f$ does not map any edge to a vertex, it suffices to prove that there is no folding, i.e., that we cannot find two distinct edges $e,e'$ with a common vertex such that $f(e)=f(e')$. So let us suppose towards a contradiction that $f$ folds two distinct edges $e$ and $e'$ with a common vertex. One can assume, after translating the edges if necessary, that $e=[v,u]$ and $e'=[v,au]$ for some $a\in A_1$, where $u$ denotes a vertex of the path $[v,w]$ adjacent to $v$ (or $e=[w,u]$ and $e'=[w,bu]$ for some $b\in B_1$, where $u$ denotes a vertex of the path $[w,v]$ adjacent to $w$). Let $C\subseteq A_1$ denote the stabilizer of the edge $e$. We have $e'=ae$ and therefore $f(e')=\varphi(a)f(e)=f(e)$, thus $\varphi(a)$ normalizes $\varphi(C)$ and by injectivity of $\varphi$ on $A_1$ the element belongs to the normalizer of $C$ in $A_1$, contradicting the assumption that $e'\neq e$. Hence $f$ is an isometry from $(T_1,d'_1)$ to $(T_2,d_2)$.

\smallskip \noindent Then, let us prove that $d'_1=d_1$, or equivalently that the vertex $f(w)=gw'$ is at distance one from the vertex $f(v)=v'$. Let $x$ be a vertex on the path $[f(v),f(w)]$ that is adjacent to $f(v)=v'$. There is an element $g'\in (G_2)_{f(v)}=A_2$ such that $x=g'w'=g'g^{-1}f(w)$. Since $\varphi$ is assumed to be surjective, one can write $g'g^{-1}=\varphi(h)$ for some $h\in G_1$. Hence $x=\varphi(h)f(w)=f(hw)$. Now, since the path $[v,hw]$ is mapped to the edge $[v',x]$ in $T_2$ by the isometry $f$, the path $[v,hw]$ is an edge of $T_1$. But there is no folding, so $hw=w$ and thus $x=f(w)$, which proves that $[f(v),f(w)]$ is an edge of $T_2$. But $f(v)=v'$, so there exists an element $a\in A_2$ such that $f(w)=aw'$. Therefore, $\mathrm{ad}(a^{-1})\circ \varphi(A_1)=A_2$, $\mathrm{ad}(a^{-1})\circ \varphi(B_1)=B_2$ and $\mathrm{ad}(a^{-1})\circ\varphi(C_1)=C_2$.\end{proof}

We are ready to prove that $G$ and $G'$ are not isomorphic. So let us suppose towards a contradiction that there exists an isomorphism $\varphi : G \rightarrow G'$. Then, by the previous lemma, one can assume that $\varphi(A)=A$ and that $\varphi(B)=B$. It follows that $\varphi(A\cap B)=A\cap B$ (indeed, $\varphi(A\cap B)$ is contained in $\varphi(A)\cap \varphi(B)=A\cap B$, and both $\varphi(A\cap B)$ and $A\cap B$ have order $\vert C\vert$). Thus $\varphi(C_A)=C_A$ (and $C_A=C_B$ in $G$ by definition).

Since $\mathrm{Aut}(A)=\mathrm{Inn}(A)$ and $\mathrm{Aut}(B)=\mathrm{Inn}(B)$ (by Lemma \ref{lemma_autom_inner}), there exists an element $a\in A$ and an element $b\in B$ such that $\varphi_{\vert A}=\mathrm{ad}(a)$ and $\varphi_{\vert B}=\mathrm{ad}(b)$. Moreover, these elements belong to $N_A(C_A)$ and $N_B(C_B)$ respectively since $\varphi(C_A)=C_A$ and $\varphi(C_B)=C_B$. 

Then, let $\alpha, \beta$ denote the permutations of $\lbrace 1,2,3,4,5\rbrace$ such that $aa_ia^{-1}=a_{\alpha(i)}$ and $bb_ib^{-1}=b_{\beta(i)}$, and let $\kappa$ denote the permutation of $\lbrace 1,2,3,4,5\rbrace$ such that $k(c_i)=b_{\kappa(i)}$, namely $\kappa=(2 \ 1 \ 3 \ 5)$.

The relation $a_i=b_i$ in $G$ is mapped to the relation $\varphi(a_i)=\varphi(b_i)$ in $G'$, that is $a_{\alpha(i)}=b_{\beta(i)}$. But the relation $a_i=b_{\kappa(i)}$ holds in $G'$, therefore we get $\kappa\alpha=\beta$, so $\kappa=\beta\alpha^{-1}$. But this is not possible, because $\alpha$ fixes $5$ (by Lemma \ref{lemma_A}) and $\beta(5)$ belongs to $\lbrace 3,5\rbrace$ (by Lemma \ref{lemma_B}), thus $\kappa(5)$ cannot be equal to $2$.

\subsection{The groups $G$ and $G'$ are AE-equivalent}

The following definition was introduced in \cite{And19a}.

\begin{de}\label{legal}
Let $G$ be a hyperbolic group. Suppose that $G$ is non-elementary (i.e., $G$ is infinite and not virtually $\mathbb{Z}$). Let $C$ be a finite subgroup of $G$. The group $G\ast_C=\langle G,t \ \vert \ tc=ct, \ \forall c\in C\rangle$ is called a \emph{legal large extension of $G$} if $N_G(C)$ is non-elementary and $E_G(N_G(C))=C$, where $E_G(N_G(C))$ denotes the unique maximal finite subgroup of $G$ normalized by $N_G(C)$ (which always exists in a hyperbolic group).\end{de}

We will use the following theorem, proved in \cite{And19a}, to show that the groups $G$ and $G'$ defined in Subsection \ref{def_of_groups} are AE-equivalent.

\begin{te}\label{theorem_legal}
Let $G$ and $G'$ be non-elementary hyperbolic groups. If there exist legal large extensions $\Gamma$ and $\Gamma'$ of $G$ and $G'$ respectively such that $\Gamma\simeq \Gamma'$, then $G$ and $G'$ are AE-equivalent.
\end{te}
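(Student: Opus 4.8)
The natural plan is to reduce to the special case $\Gamma=\Gamma'$, i.e.\ to prove the following: \emph{a non-elementary hyperbolic group $H$ is $\mathrm{AE}$-equivalent to every legal large extension $\Lambda=H\ast_C$ of it.} Granting this, the theorem follows by transitivity of $\mathrm{AE}$-equivalence: since $\Gamma$ is a legal large extension of $G$ and $\Gamma'$ of $G'$, we get $G\equiv_{\mathrm{AE}}\Gamma\simeq\Gamma'\equiv_{\mathrm{AE}}G'$. Note that $\Lambda=H\ast_C$ is again non-elementary hyperbolic, being an HNN extension of a hyperbolic group over a finite subgroup, so the reduced statement is of the same type. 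To set up the reduced statement, fix $H$ non-elementary hyperbolic and a legal large extension $\Lambda=\langle H,t\mid tct^{-1}=c,\ c\in C\rangle$. Since $N_H(C)$ is non-elementary and $C_H(C)$ has finite index in it (the quotient embeds in the finite group $\mathrm{Aut}(C)$), the centralizer $C_H(C)$ is non-elementary, so we may fix $\gamma\in C_H(C)$ of infinite order. For each $n\geq 1$ the map $\rho_n\colon\Lambda\to H$ which is the identity on $H$ and sends $t\mapsto\gamma^n$ is a well-defined retraction (the relations $tct^{-1}=c$ go to $\gamma^n c\gamma^{-n}=c$, which hold since $\gamma^n$ centralizes $C$), and we have the inclusion $\iota\colon H\hookrightarrow\Lambda$.

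We must then show $H\equiv_{\mathrm{AE}}\Lambda$, i.e.\ that $H$ and $\Lambda$ satisfy the same $\mathrm{AE}$-sentences $\phi=\forall\bar x\,\exists\bar y\,\theta(\bar x,\bar y)$ with $\theta$ quantifier-free. The implication ``$\Lambda\models\phi\Rightarrow H\models\phi$'' is the easy one and is handled by the stretched-parameter trick: given a tuple $\bar a$ in $H$, choose $\bar b$ in $\Lambda$ with $\Lambda\models\theta(\bar a,\bar b)$; the quantifier-free formula $\theta(\bar a,\bar b)$ is a conjunction of finitely many equalities $w_i(\bar a,\bar b)=1$ and inequalities $v_j(\bar a,\bar b)\neq 1$, all true in $\Lambda$. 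Since $\rho_n$ is a homomorphism fixing $H$ pointwise, $\rho_n(w_i(\bar a,\bar b))=1$ for all $n$, while, $\gamma$ having infinite order, for $n$ large enough (depending on the fixed tuple $(\bar a,\bar b)$) one has $\rho_n(v_j(\bar a,\bar b))\neq 1$ in $H$ for every $j$ — this is the standard estimate saying that the retractions $\rho_n$ of the HNN/free-product-like splitting ``converge to the identity'' and hence do not kill a prescribed nontrivial element once $n$ is large. Thus $H\models\theta(\bar a,\rho_n(\bar b))$; as $\bar a$ was arbitrary, $H\models\phi$.

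The reverse implication ``$H\models\phi\Rightarrow\Lambda\models\phi$'' is the crux and the main obstacle. Here one is given an arbitrary tuple $\bar x$ in $\Lambda$ — not merely in $H$ — and must produce $\bar y$ in $\Lambda$ with $\Lambda\models\theta(\bar x,\bar y)$; the retractions $\rho_n$ are of no direct use, since projecting $\bar x$ to $\rho_n(\bar x)\in H$, solving the formula in $H$, and re-including the solution via $\iota$ preserves equalities but destroys control of the inequalities, so the quantifier-free type of $(\bar x,\bar y)$ is lost. This forces the use of the machinery behind Sela's theorem that $H\preceq H\ast\mathbb{Z}$ for $H$ torsion-free \cite{Sel09} and its extension to the presence of torsion \cite{And19a}, resting on \cite{RW14}: one analyses the definable (Diophantine) set $\{\bar x\in\Lambda:\Lambda\models\exists\bar y\,\theta(\bar x,\bar y)\}$ through the JSJ decomposition of $\Lambda$, its modular group, and the shortening argument (Theorem \ref{SA} and its torsion analogue), and shows that this set is all of $\Lambda^{|\bar x|}$ because the corresponding set in $H$ is all of $H^{|\bar x|}$.

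I expect essentially all of the technical difficulty to be concentrated in this last step. The legality hypothesis on $C$ — namely that $N_H(C)$ is non-elementary and $E_H(N_H(C))=C$ — is exactly what is used throughout it: it guarantees that the HNN splitting of $\Lambda$ over $C$ is acylindrical and ``canonical'' enough, with edge group essentially universally elliptic and with rigid normalizer, so that the shortening flow does not degenerate and elliptic subgroups can be conjugated back into $H$. Without this hypothesis the argument fails, which is consistent with the fact that $\mathrm{AE}$-rigidity phenomena genuinely depend on the edge-group conditions appearing elsewhere in this paper (cf.\ Theorem \ref{main_theorem2} and the examples following it).
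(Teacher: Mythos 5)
The paper does not actually prove this theorem: it is quoted from \cite{And19a} (a paper by the first author) and used as a black box; no argument appears in the present text. Your proposal correctly identifies how the cited result is obtained: one reduces to proving that a non-elementary hyperbolic group $H$ is AE-equivalent to each of its legal large extensions $\Lambda=H\ast_C$, and then transitivity through the given isomorphism $\Gamma\simeq\Gamma'$ gives the statement. Your treatment of the easy implication via the retractions $\rho_n : t\mapsto\gamma^n$ (with $\gamma\in C_H(C)$ of infinite order, whose existence you justify correctly from the legality hypothesis) is sound, and you rightly locate essentially all of the difficulty in the converse implication, which \cite{And19a} establishes via a test-sequence and shortening analysis in the spirit of \cite{Sel09,RW14}, with the condition $E_H(N_H(C))=C$ and the non-elementarity of $N_H(C)$ supplying the acylindricity and canonicity properties of the splitting that the argument needs. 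Your outline is therefore faithful to the source; the only caveat is that it remains an outline of the hard direction, and filling it in would mean reproducing a substantial portion of \cite{And19a} -- something that neither the present paper nor your proposal attempts, which is the appropriate resolution given that the theorem is imported here as a citation.
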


We will define two groups $\Gamma$ and $\Gamma'$ as in the theorem above. 

Define $\alpha = (2 \ 3 \ 4)$ and $\beta = (3 \ 5)(1 \ 2 \ 4)$. One easily checks that $\kappa \alpha \kappa^{-1}=\beta\kappa$. By Lemmas \ref{lemma_A} and \ref{lemma_B}, there exist two elements $a\in A$ and $b\in B$ such that, for every $1\leq i\leq 5$: \[aa_ia^{-1}=a_{\alpha(i)} \ \ \ \text{and} \ \ \  bb_ib^{-1}=b_{\beta(i)}.\]Then, define $\varphi : G \rightarrow G'$ by $\varphi_{\vert A}=\mathrm{ad}(b)$ and $\varphi_{\vert B}=\mathrm{ad}(a)$. This morphism is well-defined because the relation $a_i=b_i$, which holds in $G$, is mapped to $ba_ib^{-1}=ab_ia^{-1}$ in $G'$, that is $bb_{\kappa(i)}b^{-1}=aa_{\kappa^{-1}(i)}a^{-1}$, i.e., $b_{\beta\kappa(i)}=b_{\kappa\alpha\kappa^{-1}(i)}$. This latter relation holds in $G'$ by our choice of $\alpha$ and $\beta$. Note that the image of $\varphi$ is the subgroup of $G'$ generated by $bAb^{-1}$ and $aBa^{-1}$.

Moreover, note that $\kappa=(2 \ 3)(2 \ 1)(3 \ 5)$. Define $\alpha'=(2 \ 3)$ and $\beta'=(2 \ 1)(3 \ 5)$. Again by Lemmas \ref{lemma_A} and \ref{lemma_B}, there exist $a'\in A$ and $b'\in B$ such that, for every $1\leq i\leq 5$: \[a'a_ia'^{-1}=a_{\alpha'(i)} \ \ \ \text{and} \ \ \ b'b_ib'^{-1}=b_{\beta'(i)}.\] Last, define $\varphi' : G'\rightarrow G$ by $\varphi'_{\vert A}=\mathrm{ad}(b')$ and $\varphi'_{\vert B}=\mathrm{ad}(a')$. This morphism is well-defined because the relation $a_i=b_{\kappa(i)}$, which holds in $G'$, is mapped to $b'a_ib'^{-1}=a'b_{\kappa(i)}a'^{-1}$, that is $b_{\beta'(i)}=b_{\alpha'\kappa(i)}$. This relation holds in $G$ by our choice of $\alpha'$ and $\beta'$. Note that the image of $\varphi'$ is the subgroup of $G$ generated by $b'Ab'^{-1}$ and $a'Ba'^{-1}$.

Define \[\Gamma=\langle G, t \ \vert \ tc=ct \ \forall c\in C\rangle\] and \[\Gamma'=\langle G', t' \ \vert \ t'c=ct' \ \forall c\in C\rangle,\] and a morphism $\psi : \Gamma\rightarrow \Gamma'$ by $\psi= \mathrm{ad}(b^{-1})\circ \varphi=\mathrm{id}$ on $A$, $\psi=\mathrm{ad}(t'b^{-1}a^{-1})\circ \varphi=\mathrm{ad}(t'b^{-1})$ on $B$ and $\psi(t)=t'$. This morphism is well-defined because $t'$ centralizes $C$. Similarly, let us define a morphism $\psi' : \Gamma'\rightarrow \Gamma$ by $\psi'= \mathrm{ad}(b'^{-1})\circ \varphi'=\mathrm{id}$ on $A$, $\psi'=\mathrm{ad}(tb'^{-1}a'^{-1})\circ \varphi=\mathrm{ad}(tb'^{-1})$ on $B$ and $\psi'(t')=t$. 

Note that $\psi$ is surjective since its image contains $A$, the stable letter $t'$ and $\psi(B)=t'Bt'^{-1}$, and similarly $\psi'$ is surjective. But virtually free groups (and more generally hyperbolic groups) are Hopfian, so $\psi\circ \psi'$ is an automorphism of $\Gamma'$ and therefore $\psi$ and $\psi'$ are isomorphisms.

Moreover, the assumptions of Definition \ref{legal} are clearly satisfied by $\Gamma$ and $\Gamma'$ since $C$ is normal in $G$ and $G'$. Therefore, by Theorem \ref{theorem_legal}, $G$ and $G'$ are AE-equivalent.

\section{Domains}
We follow \cite{KVASCHUK200578}. In particular, we expand on \cite[Remark 1, pg. 92]{KVASCHUK200578} where we replace abelian by abelian-by-finite. We invite the reader to refer to \cite{KVASCHUK200578} for details.
	
	\begin{de} Let $G$ be a group. For $x, y \in G$ we define the following operation:
	$$x \diamond y = [\mathrm{gp}_G(x), \mathrm{gp}_G(y)],$$
where $\mathrm{gp}_G(z)$ denotes the normal closure of $x$ in $G$, i.e., the smallest normal subgroup of $G$ containing $x$. We call a non-trivial element $x \in G$ a zero-divisor in $G$ if there exists a non-trivial element
$y \in G$ such that $x \diamond y = 1$. We say that the group $G$ is a domain if it has no zero-divisors. Finally, we write $x \perp y$ when $x \diamond y = 1$.
\end{de} 

The following lemma follows from standard techniques.

\begin{lemma}\label{standard}Every non-elementary hyperbolic group without a non-trivial normal finite subgroup is a domain. 
\end{lemma}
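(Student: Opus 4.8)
The plan is to prove the equivalent statement that for \emph{every} pair of non-trivial elements $x,y\in G$ one has $[\mathrm{gp}_G(x),\mathrm{gp}_G(y)]\neq 1$, which is exactly the assertion that $G$ has no zero-divisors. Write $M=\mathrm{gp}_G(x)$ and $N=\mathrm{gp}_G(y)$; since $x,y\neq 1$ these are non-trivial normal subgroups of $G$. The point to exploit is that $[M,N]=1$ is equivalent to $M\subseteq C_G(N)$, so it suffices to show $C_G(N)$ is trivial for every non-trivial normal subgroup $N$ of $G$; then $M\subseteq C_G(N)=1$ forces $x=1$, a contradiction, and we are done.

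I would deduce $C_G(N)=1$ from the following classical facts about the hyperbolic group $G$. First, a hyperbolic group has bounded torsion, so every infinite subgroup contains an element of infinite order; since $G$ has no non-trivial finite normal subgroup, $N$ is infinite and hence contains such an element $n$, which is then loxodromic. Second, the centralizer of a loxodromic element in a hyperbolic group is virtually cyclic (elementary), so $C_G(N)\subseteq C_G(n)$ is elementary. Third, the centralizer of a normal subgroup is normal: for $g\in G$ and $c\in C_G(N)$ one checks that $gcg^{-1}$ centralizes $N$ using $g^{-1}Ng=N$, so $C_G(N)$ is a normal subgroup of $G$. Fourth, a non-elementary hyperbolic group has no infinite virtually cyclic normal subgroup: an infinite virtually cyclic $E$ has a canonical two-point limit set $\Lambda(E)\subseteq\partial G$, and if $E$ is normal then $G$ preserves $\Lambda(E)$, so a subgroup of index at most $2$ in $G$ fixes each of the two points, which (by the standard description of stabilizers of boundary pairs in hyperbolic groups) makes $G$ elementary — a contradiction. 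Therefore $C_G(N)$, being an elementary normal subgroup of the non-elementary group $G$, must be finite, hence trivial by hypothesis.

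Assembling these steps gives the result: $M\subseteq C_G(N)=1$ is impossible, so $x\diamond y=[\mathrm{gp}_G(x),\mathrm{gp}_G(y)]\neq 1$ for all non-trivial $x,y$, i.e.\ $G$ is a domain. The only step that needs a word of care is the fourth one — translating "non-elementary, no finite normal subgroup" into "every non-trivial normal subgroup has trivial centralizer" — but this is entirely routine given the notions of elementary subgroups and limit sets, and the ingredients (bounded torsion, virtual cyclicity of loxodromic centralizers, elementary closures) are all standard, which is why the lemma genuinely "follows from standard techniques."
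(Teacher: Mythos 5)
Your argument is correct, and since the paper offers no proof (it simply asserts that the lemma ``follows from standard techniques''), your write-up is a valid explication of what those standard techniques are. The reduction to showing $C_G(N)=1$ for every non-trivial normal $N\triangleleft G$, and then the chain ``$N$ infinite $\Rightarrow$ $N$ contains a loxodromic $n$ $\Rightarrow$ $C_G(N)\subseteq C_G(n)$ is elementary $\Rightarrow$ $C_G(N)$ is an elementary normal subgroup, hence finite, hence trivial'' are exactly the right ingredients.

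One small imprecision: in your first step, the justification ``$G$ has bounded torsion, so every infinite subgroup contains an element of infinite order'' is not quite a correct implication on its own (bounded exponent does not preclude infinite torsion groups in general, as the Burnside groups show). The fact you need — that every infinite subgroup of a hyperbolic group contains a loxodromic element — is true, but it rests on the Tits alternative for subgroups of hyperbolic groups (every subgroup is elementary or contains a free group of rank $2$), or on an analysis of the action on the boundary, not merely on bounded torsion. This is still a standard fact and does not affect the validity of the proof, but the phrase ``so'' overstates what bounded torsion alone buys you. The remaining steps (normality of $C_G(N)$, virtual cyclicity of loxodromic centralizers, and the argument that an infinite elementary normal subgroup forces $G$ to stabilize a pair of boundary points and hence to be elementary) are all correctly invoked.
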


	\begin{notation} 
	$\mathrm{Comp}(x,z) = \forall y (y \diamond z = 1 \rightarrow x \diamond y = 1)$ (cf. \cite[Proof of Lemma~4]{KVASCHUK200578}).
\end{notation}

	\begin{notation}\label{Dk_notation} As in \cite{KVASCHUK200578}, we denote by $D_k$ the groups of the forms $G_1 \times \cdots \times G_k$ with each $G_i$ a domain.
\end{notation}

	\begin{te}\label{the_domain_ab_by_finite_th} Let $H = H_1 \times H_2$ with $H_2 \in D_k$ and $H_1$ abelian-by-finite. Then for every $K \equiv H$ there are $K_1, K_2 \leq K$ such that we have the following:
	\begin{enumerate}[(1)]
	\item $K = K_1 \times K_2$;
	\item $K_2 \in D_k$;
	\item $K_1 \equiv H_1$ and $K_2 \equiv H_2$. 
\end{enumerate}	
	\end{te}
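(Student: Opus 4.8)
The plan is to mimic the structure of the corresponding argument in \cite{KVASCHUK200578} for the purely-domain case, but to carry the abelian-by-finite factor $H_1$ along as an extra block. The starting point is the observation that the ``complementary'' relation $\mathrm{Comp}(x,z)$ is first-order definable and that, in a group of the form $G_1 \times \cdots \times G_k$ with each $G_i$ a domain, it captures precisely ``$x$ lies in the product of those factors $G_i$ that also contain $z$''. More precisely, in a $D_k$-group the elements $z$ that are ``primitive'' (each supported in a single factor) together with $\mathrm{Comp}(\cdot,\cdot)$ let one recover the direct-product decomposition up to reordering of the factors; this is the content of Lemma~4 and its surrounding material in \cite{KVASCHUK200578}. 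What I would do first is isolate a first-order formula $\delta(z)$ expressing ``$z$ is a nonzero element of the $D_k$-part'' and a formula $\zeta(x)$ expressing ``$x$ lies in the abelian-by-finite part'', valid in $H = H_1 \times H_2$; then I would transfer these to $K$ by elementary equivalence.

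Concretely, here are the steps in order. (1) In $H$, the centre $Z(H) = Z(H_1) \times Z(H_2)$, and since $H_2 \in D_k$ each domain factor $G_i$ has trivial centre (a nontrivial central element would be perpendicular to nothing, hence a zero-divisor is avoided, but a central element gives $x\diamond x' $ small... more carefully: in a domain there are no zero divisors, and one checks $Z(G_i)=1$ for a non-abelian domain; the abelian domains are exactly $\mathbb{Z}$ by a standard argument). Wait — the cleaner route, and the one I would actually take, is: $H_1$ is abelian-by-finite, so it has a characteristic finite-index abelian subgroup, and in particular $H_1$ has finite commutator-related ``width'' that is not shared by the non-elementary domain factors. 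I would make precise, following \cite[Remark 1]{KVASCHUK200578} with ``abelian'' replaced by ``abelian-by-finite'', that there is a first-order property distinguishing the $H_1$-block: e.g. ``$x \diamond y$ has the property that $\mathrm{gp}_G(x\diamond y)$ is abelian-by-finite of bounded index'' is not expressible directly, so instead one uses that an abelian-by-finite group satisfies a fixed group law on a finite-index subgroup, which is a first-order-expressible constraint once the index bound $n$ is fixed; one then lets $K_1$ be the set of elements of $K$ satisfying the corresponding first-order condition and $K_2$ the ``$D_k$-support''. (2) Show $K = K_1 \times K_2$: that $K_1, K_2$ are subgroups, that they commute elementwise, that they intersect trivially, and that they generate $K$ — each of these is the transfer along $H \equiv K$ of the corresponding true sentence in $H$, using that membership in $K_1$ and in $K_2$ is given by first-order formulas with parameters eliminated (the decomposition in $H$ gives us the sentences to transfer). (3) Show $K_2 \in D_k$: transfer the sentences saying ``there exist $z_1,\dots,z_k$ such that every element of $K_2$ is a product of the $\mathrm{Comp}(\cdot,z_i)$-parts, each such part is a domain, and distinct $z_i$ are perpendicular'', which hold in $H$ by the $D_k$-structure of $H_2$ and Lemma~4 of \cite{KVASCHUK200578}. (4) Show $K_1 \equiv H_1$ and $K_2 \equiv H_2$: since $K_1$ and $K_2$ are defined by formulas $\phi_1, \phi_2$ uniformly (the same formulas defining the analogous subgroups of $H$), and $H_1, H_2$ are relatively definable inside $H$, any sentence true in $H_1$ translates to a relativized sentence true in $H$, hence in $K$, hence in $K_1$; and conversely. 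This is the standard ``elementary equivalence passes to uniformly relatively definable factors of a direct product'' principle.

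The main obstacle, and where I would spend the most care, is step (1): producing genuinely first-order formulas that separate the abelian-by-finite block from the $D_k$-block and that are moreover \emph{uniform}, i.e.\ define subgroups rather than arbitrary subsets. In the pure-domain case of \cite{KVASCHUK200578} the separation is done via perpendicularity and $\mathrm{Comp}$, using that a nontrivial element of a domain is perpendicular to nothing inside its own factor; an abelian(-by-finite) factor behaves differently because \emph{everything} in an abelian group is perpendicular to everything (as $\mathrm{gp}_G$ of a central element can be small). So the abelian-by-finite part is exactly the ``radical'' where $\diamond$ degenerates, and I would define $K_1$ as $\{x : \forall y\ (x \diamond y \text{ is ``small''})\}$ with ``small'' made precise by a bounded law — this requires knowing a uniform bound (coming from $H_1$, which is fixed) on the index of the abelian subgroup and on its exponent behaviour, and checking that no non-elementary domain satisfies the resulting sentence. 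Once the defining formulas are pinned down and shown to be respected under $\equiv$, the remaining steps (2)–(4) are routine transfers of sentences that are visibly true in $H$ by hypothesis, so I do not expect trouble there.
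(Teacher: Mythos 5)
Your high-level strategy matches the paper's: find a first-order description of the decomposition $H = H_1 \times H_2$ and transfer it to $K$ by elementary equivalence, using $\mathrm{Comp}(\cdot,\cdot)$ and perpendicularity for the $D_k$-block. You also correctly identify where the real difficulty sits, namely in producing a first-order handle on the abelian-by-finite factor $H_1$. However, you do not actually resolve that difficulty, and the two candidate formulas you float would not work.

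The formula $\{x : \forall y\ (x \diamond y = 1)\}$ (or any version where ``small'' is forced to be trivial) returns $Z(H)$, which is generally a proper subgroup of $H_1$: an abelian-by-finite group need not have finite-index or even nontrivial center. The alternative you gesture at — encoding ``$x$ lies in an abelian-by-finite subgroup of bounded index'' via a group law satisfied on a finite-index subgroup — is never made into a condition on a single element and would still not isolate $H_1$, since it is not a statement about $x$ alone. The paper's device is sharper and different from either of these: it uses the \emph{self}-perpendicularity condition $x \perp x$ (equivalently, $\mathrm{gp}_H(x)$ is abelian). This set is visibly parameter-free definable, is contained in $H_1$ (since in the domain coordinates $g_i \perp g_i$ forces $g_i = 1$), and contains the maximal normal abelian subgroup $A$ of $H_1$ — but it is \emph{not} all of $H_1$ and need not be a subgroup. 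The paper then corrects this by multiplying by a finite subgroup $C'$ isomorphic to the finite complement $C$ (so $H_1 = AC$), and it existentially quantifies over $C'$ together with the $k$ parameters $a_2,\dots,a_{k+1}$ that cut out the domain factors via $\mathrm{Comp}$. The whole description lives inside a single existential sentence $\psi$, so there is no need for the parameter-free formulas you are chasing. Without the self-perpendicularity idea and the finite correction $C'$, your step (1) does not go through, and since steps (2)--(4) rest on it, the proposal has a genuine gap precisely at the point you flagged as the main obstacle.
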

	
	\begin{proof} Let $A, C \leq H$ be such that $H_1 = AC$ for $A$ abelian and normal in $H_1$, $A$ maximal with respect to these properties, and $C$ finite. Let us consider the first-order sentence $\psi$ that says that there are $a_2, ..., a_{k+1}$ and $\bar{c} \in H^{< \omega}$ such that:
	\begin{enumerate}[(i)]
	\item $C' = \{c : c \in \bar{c} \}$ is a finite subgroup of $H$ isomorphic to $C$;
	\item $\mathrm{Aff}(H) = \{g \in H : g \perp g \}C'$ is a subgroup of $H$;
	\item for every $i \in [2, k+1]$, $\mathrm{Comp}(H, a_i)$ is a domain;
	\item $H = \mathrm{Aff}(H) \times \mathrm{Comp}(H, a_2) \times \cdots \times \mathrm{Comp}(H, a_{k+1})$.
\end{enumerate}	
We claim that $\mathrm{Aff}(H) = H_1$ for $C'=C$, so that $H \models \psi$, by an appropriate choice of elements $a_2, ..., a_{k+1} \in H_2$, in fact then, by an adaptation of \cite[Proposition~1(3)]{KVASCHUK200578}, we have that $H_2$ is the direct product of the $\mathrm{Comp}(H, a_i)$ for $i \in [2, k+1]$. Let $\pi_2 : H \rightarrow H_2$ be the canonical projection onto $H_2$ and suppose that there is $g \in H$ such that $H \models g \perp g$ and $e \neq \pi_2(g)$, then there is $h \in H_2$ such that $H_2 \models h \perp h$, which contradicts the fact that $H_2 \in D_k$. Hence, $\{g \in H : g \perp g \} \subseteq H_1$. On the other hand we have that $A \subseteq \{g \in H : g \perp g \}$, and so we are done.
Let now $K \equiv H$, then $K \models \psi$ and so we have that $K = K_1 \times K_2$ with $K_1 = \mathrm{Aff}(K) \equiv H_1$ and $K_2 = \mathrm{Comp}(K, b_2) \times \cdots \times \mathrm{Comp}(K, b_{k+1})$, for appropriate witnesses $b_2, ..., b_{k+1} \in K$. To conclude it suffices to observe that from our assumptions it follows that:
\[H/H_1 \equiv K/K_1, \; K/K_1 = K_2 \text{ and } H/H_1 = H_2.\]\end{proof}

	\begin{proof}[Proof of \ref{final_main_theorem}] Let $W$ be a Coxeter group and suppose that $W$ has an irreducible decomposition of the form $W_1 \times \cdots \times W_n$ with each component either finite (spherical), affine or infinite and hyperbolic. Without loss of generality there is $m < \omega$ such that $W_i$ is infinite hyperbolic iff $m < i \leq n$. Let $k = n - m$. Notice that is possible that $k = 0$ in which case there are no components which are infinite hyperbolic, or that $k = n$, in which case there are no components which are spherical or affine. Then letting $W^*_1 = \{W_i : 1 \leq i \leq m \}$ and $W^*_2 = \{W_i : m < i \leq n\}$ we have that $W = W^*_1 \times W^*_2$. By convention, if $k = 0$, then $W^*_2$ is the trivial subgroup and it can be ignored, and if $k = n$, then $W^*_1$ is the trivial subgroup and it can be ignored.
 Now, easily we have that $W^*_1$ is abelian-by-finite, and by Lemma \ref{standard} we have that $W^*_2 \in D_k$ (recall \ref{Dk_notation}). Let now $G \equiv W$ and suppose that $G$ is finitely torsion-generated. Then by Theorem \ref{the_domain_ab_by_finite_th} we have that $G = G_1 \times G_2$ with $G_2 \in D_k$, $G_1 \equiv W^*_1$ and $G_2 \equiv W^*_2$. Then by \cite[Corollary~1.8]{MUHLHERR2022297} and \ref{preservation} we have that $G_2$ is Coxeter as easily $G_2$ is a direct product of $k$-many torsion-generated groups. Finally, clearly, $G_1$ is abelian-by-finite, as $G_1 \equiv W^*_1$. Then by \cite[Theorem~2.2]{LASSERRE2014213} and the first-order rigidity of irreducible affine Coxeter groups proved in \cite{PS23} we have $G_1 \simeq W^*_1$.\end{proof} 

    \begin{proof}[Proof of \ref{reduction_to_dragon}] Argue as in the proof of \ref{final_main_theorem}.
    \end{proof}

\section{Rigidity among even Coxeter groups}

    In this final section we prove Theorem~\ref{rigidity_Coxeter+}.

\begin{fact}\label{finite_subgroups_fact} Let $(W, S)$ be a Coxeter system. If $A \leq W$ is finite, then $A$ is contained in a finite special parabolic subgroup of $W$, i.e., there is $w \in W$ and $T \subseteq S$ such that $W_T = \langle T \rangle$ is finite and $A \leq wW_T w^{-1}$.
\end{fact}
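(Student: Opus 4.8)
The plan is to exploit the action of $W$ on its Davis complex $\Sigma = \Sigma(W,S)$, which is a complete $\mathrm{CAT}(0)$ space when equipped with the Moussong piecewise-Euclidean metric, together with the Bruhat--Tits fixed point theorem. Recall (see \cite{davis}) that the cells of $\Sigma$ are indexed by the \emph{spherical cosets} $wW_T$ of $W$ (those cosets $wW_T$ for which $W_T = \langle T\rangle$ is finite), that $W$ acts on $\Sigma$ by cellular isometries, and that the stabilizer in $W$ of the cell corresponding to $wW_T$ is exactly $wW_Tw^{-1}$; in particular every cell stabilizer is a conjugate of a finite special parabolic subgroup, and conversely every such subgroup arises this way.

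The key steps, in the order I would carry them out, are the following. First, recall Moussong's theorem that $\Sigma$ with the Moussong metric is complete and $\mathrm{CAT}(0)$. Second, since $A$ is finite, apply the Bruhat--Tits fixed point theorem to get a point $x\in\Sigma$ fixed by $A$. Third, let $\sigma$ be the unique cell of $\Sigma$ whose relative interior contains $x$; for every $g\in A$ the cell $g\sigma$ has $gx=x$ in its relative interior, so $g\sigma=\sigma$ by uniqueness, hence $A\subseteq\mathrm{Stab}_W(\sigma)$. Fourth, conclude by the description of cell stabilizers recalled above that $\mathrm{Stab}_W(\sigma)=wW_Tw^{-1}$ for some $w\in W$ and some $T\subseteq S$ with $W_T$ finite, so that $A\leq wW_Tw^{-1}$, as desired.

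An alternative, more elementary route avoids the Davis complex and uses instead the Tits cone $\mathcal{U}\subseteq V^*$ of the geometric representation of $(W,S)$: the set $\mathcal{U}$ is convex, $W$ acts linearly on it preserving the open subset $\mathcal{U}^{\circ}$, and the fundamental chamber $C$ lies inside $\mathcal{U}^{\circ}$. Averaging a point of $C$ over the finite group $A$ then produces an $A$-fixed point $\bar{x}\in\mathcal{U}^{\circ}$, whose stabilizer in $W$ is a parabolic subgroup $wW_Tw^{-1}$ which is finite precisely because $\bar{x}$ lies in the interior of the Tits cone (Bourbaki, Ch.~V, \S4). Either way, the one point that genuinely needs care --- and the step I would treat as the main obstacle, although it is entirely standard --- is the precise identification of the relevant point stabilizers (cell stabilizers in $\Sigma$, or stabilizers of interior points of $\mathcal{U}$) with conjugates of the finite special parabolics; this is what turns the existence of a fixed point into the statement of the Fact, while everything else is a direct invocation of the fixed point theorem.
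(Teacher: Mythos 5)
The paper states this result as a \emph{Fact} without any proof or explicit citation, so there is no in-paper argument to compare against. Your two proposed arguments are both correct and standard. The Davis-complex route (Moussong's theorem that $\Sigma(W,S)$ with the piecewise-Euclidean Moussong metric is complete $\mathrm{CAT}(0)$, Bruhat--Tits fixed point theorem, uniqueness of the cell containing a fixed point in its relative interior, identification of cell stabilizers with conjugates of finite special parabolics) is exactly the argument given in Davis's book, which the paper does cite elsewhere as \cite{davis}, so it is almost certainly the intended justification. The Tits-cone alternative (average a point of the fundamental chamber over $A$ to get a fixed point in the interior of the Tits cone, then use Bourbaki's description of point stabilizers as finite parabolics) is the classical proof and is equally valid; it avoids $\mathrm{CAT}(0)$ geometry at the cost of the linear-algebraic bookkeeping around the geometric representation. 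You correctly flag the identification of point/cell stabilizers as the step that carries the content; both references handle it cleanly. No gaps.
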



\begin{fact}[{\cite[Section~4]{caprace_conj_separability}}]\label{proj_fact}  Let $(W, S)$ be an even Coxeter system. For every $I \subseteq S$ there is a canonical retraction $\pi_I \in \mathrm{End}(W)$ of $W$ onto the
standard parabolic subgroup $W_I$, defined by $\pi_I(s) = s$ for all $s \in I$ and
$\pi_I(t) = e$ for all $t \in S \setminus I$. Furthermore, for any other subset $J \subseteq S$, the retractions $\pi_I$ and $\pi_J$ commute.
\end{fact}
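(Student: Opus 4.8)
The plan is to construct $\pi_I$ directly from the Coxeter presentation of $W$ and then verify the retraction and commutativity claims by checking equalities on the generating set $S$. First I would define $\pi_I$ on $S$ by the prescribed rule, $\pi_I(s) = s$ for $s \in I$ and $\pi_I(t) = e$ for $t \in S \setminus I$, and check that this assignment respects all the defining relations of $W$, so that it extends uniquely (since $S$ generates $W$) to an endomorphism $\pi_I \in \mathrm{End}(W)$. The relations to verify are $s^2 = e$ for $s \in S$ and $(s_is_j)^{m_{ij}} = e$ for every pair with $m_{ij} < \infty$. The relation $s^2 = e$ is immediate, as $\pi_I(s)^2$ is either $s^2 = e$ or $e^2 = e$. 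For $(s_is_j)^{m_{ij}} = e$ there are three cases: if $s_i, s_j \in I$, the image relation $(s_is_j)^{m_{ij}} = e$ already holds in $W$ (indeed in $W_I$); if $s_i, s_j \notin I$, the image is $e^{m_{ij}} = e$; and if exactly one of the two, say $s_i$, lies in $I$, the image is $s_i^{m_{ij}}$, which equals $e$ precisely because $m_{ij}$ is even. This last case is the only place where evenness of $(W,S)$ is used, and it is the crux of the statement: for odd $m_{ij}$ the prescription "kill the generators outside $I$" would fail to respect the braid-type relation.

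Next I would observe that $\pi_I$ is a retraction of $W$ onto $W_I$. Its image is generated by $\pi_I(S) = I \cup \{e\}$, hence equals $W_I = \langle I \rangle$; and $\pi_I$ fixes each element of $I$, so it restricts to the identity on $W_I = \langle I\rangle$. In particular $\pi_I \circ \pi_I = \pi_I$, so $\pi_I$ is idempotent with image $W_I$, i.e., a retraction onto the standard parabolic subgroup $W_I$.

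For the commutativity assertion, since $\pi_I \circ \pi_J$ and $\pi_J \circ \pi_I$ are both endomorphisms of $W$, it suffices to compare them on $S$. For $s \in S$ one computes directly that $\pi_I(\pi_J(s)) = s$ if $s \in I \cap J$, and $\pi_I(\pi_J(s)) = e$ otherwise; by the symmetry of this description, $\pi_J(\pi_I(s))$ is given by the same formula. Hence in fact $\pi_I \circ \pi_J = \pi_J \circ \pi_I = \pi_{I \cap J}$, which is stronger than the stated commutativity. I do not expect any genuine obstacle: the entire argument is a routine verification on generators, and the single substantive input is the evenness of the off-diagonal entries $m_{ij}$, which is exactly what makes the defining map compatible with the Coxeter relations.
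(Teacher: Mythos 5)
Your proof is correct and complete; it is the standard verification (define $\pi_I$ on generators, check the Coxeter relations using that $m_{ij}$ is even in the mixed case, then compare $\pi_I\circ\pi_J$ and $\pi_J\circ\pi_I$ on generators). The paper itself gives no proof of this Fact --- it only cites \cite[Section~4]{caprace_conj_separability} --- and your argument, including the correct identification of evenness as the single substantive hypothesis and the stronger conclusion $\pi_I\circ\pi_J=\pi_{I\cap J}$ (which the paper records separately as Fact~\ref{commuting_retraction}), matches the intended one.
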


	\begin{de} Assume that $A$ and $B$ are two retracts of a group $G$ and $\pi_A, \pi_B \in \mathrm{End}(G)$ are the corresponding retractions. We will say $\pi_A$ commutes with $\pi_B$ if they
commute as elements of the monoid of endomorphisms $\mathrm{End}(G)$, that is:
$$\pi_A(\pi_B(g)) = \pi_B(\pi_A(g)), \text{ for all } g \in G.$$
\end{de} 

	\begin{fact}[{\cite[2.4]{caprace_conj_separability}}]\label{commuting_retraction}  If the retractions $\pi_A$ and $\pi_B$ commute, then $\pi_A(B) = A \cap B = \pi_B(A)$ and the endomorphism $\pi_{A\cap B} := \pi_A \circ \pi_B = \pi_B \circ
\pi_A$ \mbox{is a retraction onto $A \cap B$.}
\end{fact}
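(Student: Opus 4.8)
The plan is to unwind the definitions and verify both assertions by a short diagram chase, following the argument in \cite[2.4]{caprace_conj_separability}. Recall that $\pi_A$ being a retraction onto $A$ means $\pi_A(G) = A$ and $\pi_A$ restricts to the identity on $A$ (equivalently $\pi_A \circ \pi_A = \pi_A$ with image $A$), and likewise for $\pi_B$; the standing hypothesis is $\pi_A \circ \pi_B = \pi_B \circ \pi_A$ in $\mathrm{End}(G)$.

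First I would establish $\pi_A(B) = A \cap B$. For the inclusion $\subseteq$, take $b \in B$: then $\pi_A(b) \in \pi_A(G) = A$, and since $\pi_B$ fixes $B$ pointwise we may write $\pi_A(b) = \pi_A(\pi_B(b)) = \pi_B(\pi_A(b)) \in \pi_B(G) = B$, using the commutation hypothesis for the middle equality; hence $\pi_A(b) \in A \cap B$. For the inclusion $\supseteq$, take $x \in A \cap B$: since $x \in A$ we have $\pi_A(x) = x$, so $x = \pi_A(x) \in \pi_A(B)$. Thus $\pi_A(B) = A \cap B$, and by the symmetric argument (interchanging the roles of $A$ and $B$, the two retractions still commuting) $\pi_B(A) = A \cap B$ as well.

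Next I would analyse the endomorphism $\pi_{A \cap B} := \pi_A \circ \pi_B = \pi_B \circ \pi_A$. Its image equals $\pi_A(\pi_B(G)) = \pi_A(B) = A \cap B$ by the previous step. Its restriction to $A \cap B$ is the identity: for $x \in A \cap B$ we have $\pi_B(x) = x$ and then $\pi_A(x) = x$, so $\pi_{A \cap B}(x) = x$. These two facts together say precisely that $\pi_{A \cap B}$ is a retraction onto $A \cap B$; in particular $\pi_{A \cap B} \circ \pi_{A \cap B} = \pi_{A \cap B}$, since for any $g \in G$ the element $\pi_{A \cap B}(g)$ lies in $A \cap B$ and is therefore fixed by $\pi_{A \cap B}$.

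There is essentially no serious obstacle here; the only point requiring care is to keep track of which retraction is applied first and to invoke the commutation hypothesis exactly where one needs $\pi_A(b) \in B$ (equivalently $\pi_B(a) \in A$), which is the step that would fail for non-commuting retractions.
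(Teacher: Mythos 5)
Your proof is correct and is the standard verification; the paper itself states this as a Fact cited from \cite[2.4]{caprace_conj_separability} without reproducing a proof, and your argument (using commutation exactly once to show $\pi_A(b)\in B$, then checking that $\pi_A\circ\pi_B$ has image $A\cap B$ and fixes $A\cap B$ pointwise) is precisely the intended one.
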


\begin{de}\label{special_monoid} Let $(W, S)$ be a Coxeter system. We say that $\alpha \in End(W)$ is  an $S$-self-similarity (or a special $S$-endomorphism) if for every $s, t \in S$ we have:
	\begin{enumerate}[(1)]
	\item $\alpha(s) \in s^W$;
	\item $o(\alpha(s) \alpha(t)) = o(st)$.
\end{enumerate}
\end{de}

	\begin{fact}[{\cite[3.23]{MUHLHERR2022297}}]\label{the_even_isomorphism_fact} Let $(W,S)$ be an even Coxeter system of finite rank, and let $\alpha$ be a self-similarity of $(W,S)$ (cf. Definition~\ref{special_monoid}). Then $(\langle \alpha(S) \rangle_W, \alpha(S))$ is a Coxeter system and thus the map $\alpha: W \rightarrow \langle \alpha(S) \rangle_W$ is an isomorphism.
\end{fact}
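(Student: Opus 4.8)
The plan is to reduce the statement to the injectivity of the endomorphism $\alpha$. Conditions (1)--(2) of Definition~\ref{special_monoid} say exactly that the family $\{\alpha(s)\}_{s\in S}$ consists of involutions satisfying all the defining relations $(\alpha(s)\alpha(t))^{m(s,t)}=1$ of the Coxeter presentation of $(W,S)$; hence by von Dyck's theorem there is a surjection of the abstract Coxeter group on $S$ with matrix $(m(s,t))_{s,t}$ --- which is $W$ itself --- onto $H:=\langle\alpha(S)\rangle_W$, and this surjection is precisely $\alpha$. If $\alpha$ is injective it is therefore an isomorphism $W\to H$ carrying $S$ bijectively onto $\alpha(S)$ and the Coxeter presentation of $(W,S)$ onto one of $(H,\alpha(S))$ with matrix $(o(\alpha(s)\alpha(t)))_{s,t}=(m(s,t))_{s,t}$; so $(H,\alpha(S))$ is a Coxeter system and $\alpha$ is an isomorphism. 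Conversely, if $(H,\alpha(S))$ is a Coxeter system its matrix is forced by (2) to be $(m(s,t))_{s,t}$, whence $H\cong W$ and $\alpha$ is injective; so the two conclusions are equivalent and it suffices to prove injectivity.

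To prove injectivity I would work inside the reflection subgroup $H=\langle\alpha(S)\rangle_W$ --- a reflection subgroup since each $\alpha(s)\in S^W$. By Deodhar's theorem (\cite{Deodhar}) $H$ is a Coxeter group, and by Dyer's sharpening it carries a canonical simple system $R\subseteq S^W$ (the reflections of $H$ whose wall is not separated from the fundamental chamber of $W$ by that of any other reflection of $H$), with $(H,R)$ a Coxeter system. First I would extract the cheap consequences of evenness. Since $(W,S)$ is even, $W^{\mathrm{ab}}\cong(\mathbb{Z}/2\mathbb{Z})^S$ and each $\alpha(s)$, being conjugate to $s$, has image the basis vector $e_s$; as $\alpha(S)$ generates $H$, the natural map $H^{\mathrm{ab}}\to W^{\mathrm{ab}}$ sends a generating set of $H^{\mathrm{ab}}$ onto the linearly independent set $\{e_s\}_{s\in S}$, so it is injective and $\{\overline{\alpha(s)}\}_{s\in S}$ is an $\mathbb{F}_2$-basis of $H^{\mathrm{ab}}$. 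Since $H^{\mathrm{ab}}=(\mathbb{Z}/2\mathbb{Z})^{R/\!\sim}$, where $\sim$ is the odd-label graph on $R$, this gives $|R/\!\sim|=|S|$, and the $W$-conjugacy-class map $R\to S$ descends along $H^{\mathrm{ab}}\hookrightarrow W^{\mathrm{ab}}$ to a bijection $R/\!\sim\;\to S$.

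It remains --- and this is the main obstacle --- to upgrade this to $\alpha(S)=R$: one must show that $R$ has no odd labels (so $|R|=|S|$, since all simple systems of a Coxeter group have the same cardinality) and that the generating set of reflections $\alpha(S)$ actually is a simple system of $H$, i.e.\ that $\alpha$ does not ``fold'' two simple generators onto a configuration of reflections with the right pairwise orders. The pairwise data $o(\alpha(s)\alpha(t))=m(s,t)$ alone do not suffice for this: in general a triple of reflections in a Coxeter group with prescribed pairwise orders need not generate the Coxeter group carrying that matrix, even for even matrices, as small dihedral examples show; the point that has to be used is that the $\alpha(s)$ live inside the \emph{same} group $W$ with $\alpha(s)\in s^W$, and that by evenness the classes $s^W$ are pairwise disjoint. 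Concretely I would run Dyer's inversion-set/depth criterion for recognizing simple systems while tracking, for each $s$, that $\alpha(s)$ stays in $s^W$, and exploit the family of pairwise commuting retractions $\pi_I\colon W\to W_I$ available for even systems (Fact~\ref{proj_fact}, Fact~\ref{commuting_retraction}): these retractions detect enough of the wall combinatorics of $W$ that a folded configuration would violate either the class constraint or an order constraint after projecting to a suitable standard parabolic $W_I$. Once $\alpha(S)=R$ is established, $(H,\alpha(S))=(H,R)$ is a Coxeter system with matrix $(m(s,t))_{s,t}$, and the reduction of the first paragraph finishes the proof.
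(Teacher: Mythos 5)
The paper does not prove this statement; it is quoted as Fact 3.23 of \cite{MUHLHERR2022297}, so there is no internal proof to compare against. Judged on its own, your proposal is sound in its framing: the reduction to injectivity of $\alpha$ is correct (with the small addendum that the converse direction uses Hopfianness of $W$, available as Fact~\ref{hopfian}), and the abelianization computation is correct --- evenness gives $W^{\mathrm{ab}}\cong(\mathbb{Z}/2\mathbb{Z})^S$, each $\overline{\alpha(s)}=e_s$, and the chain $(\mathbb{Z}/2\mathbb{Z})^S\twoheadrightarrow H^{\mathrm{ab}}\to W^{\mathrm{ab}}$ with injective composite does force $H^{\mathrm{ab}}\cong(\mathbb{Z}/2\mathbb{Z})^S$ and a bijection between the odd-components of Dyer's canonical simple system $R$ of $H$ and $S$.

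However, the argument stops precisely at the step that carries all the content. You still need to show (a) that $(H,R)$ has no odd labels, so that $|R|=|S|$, and (b) that $\alpha(S)$ is itself a simple system for $H$ rather than a set of $|S|$ reflections with the right pairwise orders that ``folds.'' You explicitly flag this as the main obstacle and then offer only a programme --- ``run Dyer's inversion-set/depth criterion \ldots and exploit the family of pairwise commuting retractions \ldots these detect enough of the wall combinatorics that a folded configuration would violate either the class constraint or an order constraint.'' This is an assertion, not a proof. Dyer's criterion requires establishing a nontrivial condition on the reflection walls of $\alpha(S)$ inside $W$, and it is not shown how the retractions $\pi_I$ pin those walls down: in particular, $\pi_I\circ\alpha$ need not be a self-similarity of $W_I$ (a retraction can collapse orders and need not send $\alpha(s)$ into $W_I$ at all), so there is no evident inductive reduction to standard parabolics. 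The abelianization controls only the mod-2 conjugacy-class data and, as you yourself note, cannot by itself rule out non-simple configurations with the prescribed pairwise orders. So the proposal has the right shape and correct preliminary reductions, but there is a genuine gap at the step it labels as such; in its current form it does not constitute a proof.
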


	\begin{fact}[{\cite[3.2.16]{davis}}]\label{deletion_condition} Let $(W, S)$ be a Coxeter system and denote with $\ell$ the corresponding length function. If $w = s_1s_2 \cdots s_k$ is a word in the alphabet $S$ and $\ell(w) <
k$, then $w = s_1 \cdots \hat{s}_i \cdots \hat{s}_j \cdots s_k$ for some $1 \leq i < j \leq k$, where, for $m \in [1, k]$, $\hat{s}_m$ denotes omission of the letter $s_m$ in the word  
$w = s_1s_2 \cdots s_k$.
\end{fact}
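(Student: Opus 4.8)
The plan is to prove the Deletion Condition by the classical route: derive it from the \emph{Exchange Condition}, which in turn is extracted from the canonical reflection representation of $(W,S)$. Nothing below uses anything model-theoretic; it is the standard argument (as in Bourbaki, Humphreys, or Davis's book itself), so I only indicate the shape of each step.

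First I would construct the reflection representation. Let $V$ be the real vector space with basis $\{\alpha_s : s\in S\}$, and let $B$ be the symmetric bilinear form with $B(\alpha_s,\alpha_t)=-\cos(\pi/m(s,t))$, interpreting $\pi/\infty$ as $0$. For $s\in S$ put $\sigma_s(v)=v-2B(\alpha_s,v)\alpha_s$; this is a $B$-orthogonal involution fixing the hyperplane $\{v:B(\alpha_s,v)=0\}$. The one genuinely computational point is that $\sigma_s\sigma_t$ has order exactly $m(s,t)$: one restricts to the plane $\mathbb{R}\alpha_s\oplus\mathbb{R}\alpha_t$ and diagonalises $B$ there, the degenerate case $m(s,t)=\infty$ being exactly the one yielding an element of infinite order. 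Hence $s\mapsto\sigma_s$ extends to a homomorphism $\sigma\colon W\to\mathrm{GL}(V)$; write $w(v)$ for $\sigma(w)(v)$. Next I would introduce the roots $\Phi=\{w(\alpha_s):w\in W,\ s\in S\}$ and prove the sign dichotomy: each $\alpha=\sum_s c_s\alpha_s\in\Phi$ has either all $c_s\geq 0$ (then $\alpha\in\Phi^{+}$) or all $c_s\leq 0$ (then $\alpha\in\Phi^{-}$), so $\Phi=\Phi^{+}\sqcup\Phi^{-}$. The engine is the trivial observation that $\sigma_s$ alters only the $\alpha_s$-coordinate, so $\sigma_s$ sends $\alpha_s$ to $-\alpha_s$ and permutes $\Phi^{+}\setminus\{\alpha_s\}$; propagating this along words gives the dichotomy, and then a further induction on $\ell(w)$ yields the \emph{length criterion}: $\ell(ws)=\ell(w)+1$ if $w(\alpha_s)\in\Phi^{+}$ and $\ell(ws)=\ell(w)-1$ if $w(\alpha_s)\in\Phi^{-}$. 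In particular $\sigma$ is faithful, since for a reduced word $w=s_1\cdots s_k$ with $k\geq 1$ one has $\ell(ws_k)<\ell(w)$, hence $w(\alpha_{s_k})\in\Phi^{-}$, so $\sigma(w)\neq\mathrm{id}$.

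Now the Exchange Condition: if $w=s_1\cdots s_k$ is reduced and $\ell(ws)<\ell(w)$, then $ws=s_1\cdots\widehat{s_i}\cdots s_k$ for some $i$. Indeed $w(\alpha_s)\in\Phi^{-}$ by the criterion; tracking the roots $\gamma_j=s_{j+1}s_{j+2}\cdots s_k(\alpha_s)$ as $j$ decreases from $k$ (where $\gamma_k=\alpha_s\in\Phi^{+}$) to $0$ (where $\gamma_0=w(\alpha_s)\in\Phi^{-}$), let $p$ be such that $\gamma_p\in\Phi^{+}$ but $s_p(\gamma_p)=\gamma_{p-1}\in\Phi^{-}$; since the only positive root sent negative by $s_p$ is $\alpha_{s_p}$, this forces $s_{p+1}\cdots s_k(\alpha_s)=\alpha_{s_p}$. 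As a $B$-orthogonal reflection is determined by its root and $\sigma$ is faithful, $(s_{p+1}\cdots s_k)\,s\,(s_{p+1}\cdots s_k)^{-1}=s_p$, i.e. $s_p s_{p+1}\cdots s_k=s_{p+1}\cdots s_k\,s$; substituting into $ws=(s_1\cdots s_{p-1})(s_p\cdots s_k)\,s$ and cancelling $s^2$ gives $ws=s_1\cdots\widehat{s_p}\cdots s_k$. Finally the Deletion Condition follows formally: given $w=s_1\cdots s_k$ with $\ell(w)<k$, choose $j$ minimal with $\ell(s_1\cdots s_j)<j$; then $u:=s_1\cdots s_{j-1}$ is reduced and $\ell(us_j)=\ell(s_1\cdots s_j)<\ell(u)$, so the Exchange Condition applied to $u$ and $s_j$ deletes some letter $s_i$ with $i<j$, namely $us_j=s_1\cdots\widehat{s_i}\cdots s_{j-1}$. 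Substituting, $w=(us_j)s_{j+1}\cdots s_k=s_1\cdots\widehat{s_i}\cdots\widehat{s_j}\cdots s_k$ with $i<j$, which is the assertion.

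The main obstacle is concentrated in the construction step: checking that $\sigma_s\sigma_t$ has order precisely $m(s,t)$ for every pair (with the correct behaviour when $m(s,t)=\infty$), and then running carefully the two nested inductions that yield the root-sign dichotomy and the length criterion. Once those are in place, both the Exchange Condition and the Deletion Condition are essentially formal, as indicated above.
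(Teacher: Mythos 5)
The paper does not prove this statement at all: it is quoted as a Fact with a citation to Davis's book (Theorem 3.2.16 there), so there is no in-paper argument to compare against. Your proposal is the standard and correct proof, and its outline is sound: build the Tits reflection representation, establish the root sign dichotomy and the length criterion ($\ell(ws)=\ell(w)\pm 1$ according to the sign of $w(\alpha_s)$), deduce faithfulness, prove the Exchange Condition by tracking the first index at which the root $s_{j+1}\cdots s_k(\alpha_s)$ turns negative, and then derive Deletion from Exchange by taking $j$ minimal with $\ell(s_1\cdots s_j)<j$. All of these steps are correct, including the small parity point that $\ell(us_j)<j$ together with $\ell(u)=j-1$ forces $\ell(us_j)=j-2<\ell(u)$. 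It is worth noting that this route differs from the one in the cited source: Davis proves the equivalence of the Coxeter, Exchange and Deletion conditions combinatorially in Chapter 3, via pre-reflection systems, walls and galleries in the Cayley graph, and only introduces the geometric representation later; your argument instead follows Bourbaki/Humphreys. The trade-off is that your route requires the analytic input (the order of $\sigma_s\sigma_t$, the root dichotomy) but immediately yields faithfulness and linearity as by-products, whereas Davis's is purely combinatorial. One presentational caution: in your sketch of the dichotomy you invoke ``$\sigma_s$ permutes $\Phi^{+}\setminus\{\alpha_s\}$'' as the engine, but in the standard development that statement is itself a consequence of the dichotomy (or of the key lemma ``$\ell(ws)>\ell(w)\Rightarrow w(\alpha_s)\in\Phi^{+}$'', proved by induction on $\ell(w)$); the induction must be organized around that key lemma to avoid circularity, which is exactly the care you flag in your final paragraph.
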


	\begin{fact}\label{hopfian} As the geometric representation of $(W,S)$ is faithful (see e.g. \cite{Humphreys_1990}), $W$ is a finitely  generated linear group over the real numbers. It follows that $W$ is a residually finite group, and in particular that it is Hopfian.
\end{fact}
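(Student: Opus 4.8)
The plan is to prove the statement exactly along the lines indicated: realize $W$ as a finitely generated linear group via the Tits geometric representation, and then invoke two classical theorems of Malcev.

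First I would recall the geometric representation. Let $V$ be the $\mathbb{R}$-vector space with basis $\{e_s : s \in S\}$, and let $B$ be the symmetric bilinear form on $V$ with $B(e_s,e_s)=1$, $B(e_s,e_t) = -\cos(\pi/m_{st})$ if $m_{st} < \infty$, and $B(e_s,e_t) = -1$ if $m_{st} = \infty$. For each $s \in S$ let $\sigma_s \in \mathrm{GL}(V)$ be the linear map $\sigma_s(v) = v - 2 B(e_s,v)\, e_s$. A direct computation shows that each $\sigma_s$ is an involution and that $\sigma_s \sigma_t$ has order exactly $m_{st}$, so $s \mapsto \sigma_s$ extends to a group homomorphism $\rho : W \to \mathrm{GL}(V)$. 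The nontrivial input, which I would cite from \cite{Humphreys_1990}, is that $\rho$ is injective; this is precisely the faithfulness already asserted in the statement. Since $S$ is finite, fixing a basis of $V$ identifies $\mathrm{GL}(V)$ with $\mathrm{GL}_n(\mathbb{R})$ where $n = |S|$, so $W$ is isomorphic to a finitely generated subgroup of $\mathrm{GL}_n(\mathbb{R})$, i.e.\ a finitely generated linear group over $\mathbb{R}$.

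Then I would finish using Malcev's theorems. By Malcev's theorem, every finitely generated linear group over a field is residually finite; applied to the embedding $W \hookrightarrow \mathrm{GL}_n(\mathbb{R})$, this gives that $W$ is residually finite. Since $W$ is moreover finitely generated, Malcev's theorem that finitely generated residually finite groups are Hopfian applies and yields that $W$ is Hopfian, i.e.\ every surjective endomorphism $W \to W$ is an isomorphism.

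I do not anticipate a genuine obstacle here: the only substantive ingredient, the faithfulness of the geometric representation, is classical and is quoted rather than reproved, while the passage from linearity to residual finiteness and then to Hopficity is a textbook application of Malcev's results, using only that $W$ is finitely generated (immediate from $|S| < \infty$).
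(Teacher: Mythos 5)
Your proposal is correct and follows exactly the route the paper takes: faithfulness of the Tits geometric representation (quoted from Humphreys) gives that $W$ is a finitely generated linear group over $\mathbb{R}$, and then Malcev's theorems yield residual finiteness and hence Hopficity. The paper states this as a Fact without further proof, and your write-up simply makes the same chain of implications explicit.
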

%
%

	We fix even Coxeter systems $(W, S)$ and $(W', T)$ such that $W \equiv W'$. Let also $|S| = n$, $S = \{s_1, ..., s_n\}$, and $N = N_W$ be the maximal size of a finite subgroup of $W$ (cf. \ref{finite_subgroups_fact}). Notice that, as $W \equiv W'$, $N$ is also the maximal size of a finite subgroup of $W'$. Finally, let $u_1, ..., u_m \subseteq S$ the an injective enumeration of the maximal spherical subsets of $S$.	

	\begin{notation}\label{formula_notation}
	Let $\psi = \exists x_1, ..., x_n \varphi$, where $\varphi(x_1, ..., x_n)$ is the formula that says:
	\begin{enumerate}[(i)]
	\item for every $i \in [1, m]$, $\langle X_{u_i} \rangle \simeq W_{S_{u_i}}$ via the isomorphism $x_i \mapsto s_i$;
	\item for every $i \in [1, m]$, $\langle X_{u_i} \rangle$ is maximal among the subgroups of $G$ of order~$\leq N$;
	\item if $F$ is a finite subgroup of order $\leq N$, then there are $i \in [1, m]$ and $g$ s.t.:
	 $$F \leq g\langle X_{u_i} \rangle g^{-1}.$$
\end{enumerate}
\end{notation}

	\medskip
	
		\begin{te}\label{rigidity_Coxeter} If $W \equiv W'$ are even Coxeter groups, then we have $W \simeq W'$.
\end{te}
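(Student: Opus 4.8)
The plan is to use the first-order sentence $\psi$ from Notation~\ref{formula_notation} as a bridge: since $W \models \psi$ (witnessed by the generators $s_1,\dots,s_n$ themselves, by Fact~\ref{finite_subgroups_fact}), elementary equivalence gives $W' \models \psi$, so there exist elements $x_1,\dots,x_n \in W'$ satisfying $\varphi$. I would then analyze what structure such a tuple forces on $W'$. Writing $X = \{x_1,\dots,x_n\}$, condition (i) says each maximal-spherical block $\langle X_{u_i}\rangle$ is isomorphic to the corresponding standard spherical parabolic of $W$ via $x_i \mapsto s_i$; in particular each $x_i$ is an involution and each product $x_i x_j$ has the same (finite) order as $s_i s_j$ whenever $\{i,j\}$ sits inside some maximal spherical $u_\ell$. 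The first substantive step is to bootstrap this to \emph{all} pairs: for $\{i,j\}$ not contained in any maximal spherical subset, $o(s_i s_j) = \infty$, and I must show $o(x_i x_j) = \infty$ as well. Here I expect to use the parabolic/retraction structure — apply the retractions $\pi_{\{s_i,s_j\}}$ of Facts~\ref{proj_fact} and \ref{commuting_retraction} inside $W$, transported through the isomorphisms on spherical blocks, together with a length/deletion-condition argument (Fact~\ref{deletion_condition}) — to rule out $\langle x_i,x_j\rangle$ being finite: if it were finite it would lie in a conjugate of some $\langle X_{u_\ell}\rangle$ by condition (iii), and comparing with the spherical presentation forces $o(x_ix_j)$ to equal $o(s_is_j)$, contradiction.

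Once every pair satisfies $o(x_i x_j) = o(s_i s_j)$, the assignment $s_i \mapsto x_i$ extends to a homomorphism $\alpha : W \to W'$, because all Coxeter relators of $(W,S)$ are satisfied by the $x_i$. Now I would run the argument symmetrically: $W' \models \psi$ with $W'$ even of finite rank, so by the same reasoning (using that $W'$ is even, hence has a unique Coxeter system and the retractions of Fact~\ref{proj_fact}) one gets that $(\langle X\rangle_{W'}, X)$ is itself a Coxeter system whose Coxeter matrix matches that of $(W,S)$; then Fact~\ref{the_even_isomorphism_fact} (the even self-similarity isomorphism theorem, applied with $(W,S)$ and the self-similarity built from $\alpha$ — note $\alpha(s_i)=x_i$ need not a priori lie in $s_i^{W'}$, but conditions force $x_i$ to be a reflection in the Coxeter system $(\langle X\rangle_{W'},X)$) shows $\alpha : W \to \langle X\rangle_{W'}$ is an isomorphism onto its image. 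It remains to see this image is all of $W'$: conditions (ii) and (iii) say the conjugates of the blocks $\langle X_{u_i}\rangle$ contain every finite subgroup of $W'$, and since $W'$ is generated by its involutions (being Coxeter) — indeed every generator $t \in T$ lies in some finite special parabolic, hence in some $g\langle X_{u_i}\rangle g^{-1}$ — one deduces $\langle X\rangle_{W'}$ has finite index, and then a rank/Hopfian argument (Fact~\ref{hopfian}, $W'$ residually finite hence Hopfian) combined with the symmetric embedding $W' \hookrightarrow W$ forces $\langle X\rangle_{W'} = W'$. Concretely, having embeddings $W \hookrightarrow W'$ and $W' \hookrightarrow W$ as finite-index subgroups, or directly as retracts, the composite is a surjective endomorphism of $W$, hence an automorphism by Hopfianity, so both maps are isomorphisms and $W \simeq W'$.

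The main obstacle, I expect, is the bootstrapping step: promoting the isomorphisms on maximal spherical blocks to a global statement that $o(x_i x_j) = o(s_i s_j)$ for \emph{every} pair, and in particular handling the $\infty$ case. The subtlety is that $\langle X\rangle_{W'}$ is a priori just some subgroup of $W'$ generated by involutions with prescribed finite-order relations on spherical pairs, and one must prevent "accidental" finite order in $x_i x_j$ for non-spherical pairs. The even hypothesis is essential here — it is what makes the retractions $\pi_I$ available in both $W$ and $W'$ (Fact~\ref{proj_fact}) and what guarantees rigidity of the Coxeter system — and the right-angled precursor in \cite{https://doi.org/10.1112/blms/bdp103} is exactly the special case where all finite $m_{ij}$ equal $2$, so the block isomorphisms are trivial and only the $\infty$-detection is needed. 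I would model the general even case on that argument, using finite parabolic subgroups in place of the rank-$2$ analysis, and using Fact~\ref{the_even_isomorphism_fact} to package the final isomorphism cleanly rather than reconstructing the Coxeter matrix by hand.
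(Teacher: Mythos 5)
There is a genuine gap, and it lies precisely at the step you flag as "the main obstacle." The paper's proof does not work with the raw witnesses $a_1,\ldots,a_n$ for $\psi$ in $W'$; its crucial technical move is to \emph{project} each $a_i$ to $t_i := \pi_{T_{\ell_i}}(a_i)$, where $\pi_{T_{\ell_i}}$ is the canonical even retraction of Fact~\ref{proj_fact} onto the standard parabolic $W'_{T_{\ell_i}}$ underlying the maximal finite subgroup $G_{\ell_i}$ containing $a_i$. Well-definedness of $t_i$ independent of the choice of $\ell_i$ comes from Fact~\ref{commuting_retraction}, and because the $t_i$ now sit in \emph{standard} parabolics, the spherical relations $(\star_1)$ and the infinite-order claim $(\star_2)$ become transparent: if $t_i t_j$ had finite order, it would lie in some $W'_{T_*}$ with $T_*$ maximal spherical in $T$, but $T_*$ cannot coincide with any $T_{u_\ell}$ and cannot be new either, by counting conjugacy classes of maximal finite subgroups. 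Your proposal skips this projection entirely and tries to argue with the raw $x_i$: invoking condition (iii) places $\langle x_i, x_j\rangle$ in a \emph{conjugate} $g\langle X_{u_\ell}\rangle g^{-1}$, but that conjugation destroys the identification of $x_i$ with a specific spherical generator, so "comparing with the spherical presentation" does not close the argument. Without the projection, there is no handle on where the $x_i$ actually live relative to the standard parabolics of $W'$.

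A second, independent problem is the application of Fact~\ref{the_even_isomorphism_fact}. That fact concerns a self-similarity $\alpha \in \mathrm{End}(W)$ of a \emph{single} Coxeter system, requiring $\alpha(s) \in s^W$; it cannot be applied as you propose to a map $W \to \langle X\rangle_{W'}$ between two different groups, and your parenthetical attempt to repair this ("conditions force $x_i$ to be a reflection in the Coxeter system $(\langle X\rangle_{W'},X)$") is circular, since whether $(\langle X\rangle_{W'},X)$ is a Coxeter system is exactly what is at stake. The paper resolves this via an indirect argument you do not reproduce: if $(W',T)$ were not a Coxeter system, the Deletion Condition (Fact~\ref{deletion_condition}) would fail, this failure is expressible by a first-order sentence $\varphi'$, and by elementary equivalence $W$ would also satisfy $\varphi'$; running the whole construction inside $W$ then yields a surjective endomorphism $\alpha$ of $W$, Hopfianity (Fact~\ref{hopfian}) promotes it to an automorphism, and only at that point — as a self-map of $W$ with $\alpha^{-1}(a_i) \in s_i^W$ — does Fact~\ref{the_even_isomorphism_fact} legitimately apply, producing the contradiction. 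Your surjectivity argument ("finite index plus Hopfian") is likewise not closed: having $\langle X\rangle_{W'}$ of finite index in $W'$ does not give a surjection $W \twoheadrightarrow W'$, so Hopfianity of $W$ does not directly conclude. So while your overall template (use $\psi$, exploit evenness, finish by Hopfianity) points in the right direction, the two load-bearing ideas — the even-retraction projection and the Deletion-Condition contradiction — are absent, and the substitutes you sketch do not work.
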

	
	\begin{proof} Let $a_1, ..., a_n \in W'$ be witnesses of the fact that $W' \models \psi$. Let then $A  = \{a_1, ..., a_n \}$ and, for $\ell \in [1, m]$, let $G_\ell$ be the group $\langle A_{u_\ell} \rangle_{W'}$. Now, for every $\ell \in [1, m]$, $G_\ell$ is a maximal finite subgroup of $W'$ and so $G_\ell = w_\ell W'_{T_\ell} w_\ell^{-1}$ for some $w_\ell \in W'$ and maximal spherical $T_\ell \subseteq T$. We claim that $\langle T_1 \cup \cdots \cup T_m \rangle_{W'} = W'$. In fact, if this is not the case, then there is $t \in T \setminus T_1 \cup \cdots \cup T_m$ and taking a maximal spherical subset of $T$ containing $t$ we get a group which is necessarily different from all the $G_i$'s, and so either some $G_\ell$ were not maximal to start with (a contradiction), or there is a $G_{m+1}$ finite maximal in $W'$ which is not conjugate to the others, but then, $W'$ has $m+1$ non-conjugate maximal finite subgroups, which also leads to a contradiction. Hence, $T = T_1 \cup \cdots \cup T_m$ and $\langle T_1 \cup \cdots \cup T_m \rangle_{W'} = W'$. Now, for every $i \in [1, n]$ there is $\ell_i \in [1, m]$ such that $a_i \in G_{\ell_i}$. Define $t_i$ as $\pi_{T_{\ell_i}}(a_i)$, where $\pi_{T_{\ell_i}}$ is as in \ref{proj_fact} with respect to $T_{\ell_i} \subseteq T$. Notice crucially that by \ref{commuting_retraction} the choice of $\ell_i$ does not matter, and so, for $i \in [1, n]$,  $t_i$ is well-defined. Let then $T = \{t_1, ..., t_n\}$.
	
	\begin{enumerate}[$(\star_1)$]
	\item If $u \subseteq [1, n]$ is such that $S_u$ is spherical, then $\langle S_u \rangle_W \simeq \langle T_u \rangle_{W'}$ via  $s_i \mapsto t_i$. 
\end{enumerate}

\noindent We prove $(\star_1)$. Suppose that $u \subseteq [1, n]$ and $S_u$ is spherical. Let $\ell \in [1, m]$ be such that $u \subseteq u_\ell$. Then $\langle S_{u_\ell} \rangle_W \simeq \langle A_{u_\ell} \rangle_{W'}$ via the map $s_i \mapsto a_i$ and $\langle A_{u_\ell} \rangle_{W'} = G_\ell = w_\ell W'_{T_\ell} w_\ell^{-1}$ is such that for every $i \in u_\ell$ we have that $a_{i} \in G_{\ell}$. Now, for $i \in u_\ell$, let $b_i = w^{-1}_\ell a_i w_\ell \in W'_{T_\ell}$. Then clearly $\langle S_{u_\ell} \rangle_W \simeq \langle B_{u_\ell} \rangle_{W'}$ via the map $s_i \mapsto b_i$, where $B_{u_\ell} = \{b_i : i \in u_\ell\}$. Let $\pi$ be the canonical projection of $W$ onto $W'_{T_{u_\ell}}$. Then, recalling that the definition of $t_i$ did not depend on the choice of the maximal spherical subgroup to which $a_i$ belongs, for all $i \in u_\ell$ we have that:
	$$t_i = \pi(a_i) = \pi(wb_iw^{-1}) = \pi(w)\pi(b_i)\pi(w)^{-1} = \pi(w)b_i\pi(w)^{-1}.$$
Thus, $\langle B_{u_\ell} \rangle_{W'}$ and $\langle T_{u_\ell} \rangle_{W'}$ are isomorphic via the map $b_i \mapsto t_i$ as witnessed by the inner automorphism of $W'_{T_{u_\ell}}$ induced by $\pi(w) \in W'_{T_{u_\ell}}$. Thus, we proved~$(\star_1)$.

\noindent

%

	\begin{enumerate}[$(\star_2)$] 
	\item If $i, j \in [1, n]$ and $o(s_is_j)$ is infinite, then $o(t_it_j)$ is infinite.
\end{enumerate} 

\noindent We prove $(\star_2)$. Suppose not, and let $i, j \in [1, n]$ be such that $t_it_j$ is a counterexample. Let $T_* \subseteq T$ be a maximal spherical subset of $T$ such that $t_it_j \in W'_T$. Then, by $(\star_2)$, necessarily the maximal spherical $W'_T \leq W$ must be different from $W'_{T_{u_\ell}}$, for every $\ell \in [1, m]$, and this is a contradiction. This concludes the proof of $(\star_2)$.

\smallskip 
\noindent As $\langle T_1 \cup \cdots \cup T_m \rangle_{W'} = W'$, by $(\star_1)$, $\langle t_1, ..., t_n \rangle_{W'} = W'$. Now, if $(W', T)$ is a Coxeter system, then clearly $W \simeq W'$ and so we are done. If not, then the Deletion Condition (cf. \ref{deletion_condition}) fails for the pre-Coxeter system $(W', T)$ and so $W' \models \exists x_1, ..., x_n \varphi'(x_1, ..., x_n)$, where $\varphi'(x_1, ..., x_n)$ is the conjunction of $\varphi(x_1, ..., x_n)$ together with the formula saying there is a word in the variables $x_1, ..., x_n$ which fails the Deletion Condition with respect to the alphabet $\{x_1, ..., x_n\}$. But then $W \models \exists x_1, ..., x_n \varphi'(x_1, ..., x_n)$ and so replacing the role of $W$ and $W'$ and repeating the argument above (so in particular now $a_i, t_i \in W$) we have that the map $\alpha: s_i \mapsto t_i$, for every $i \in [1, n]$, is a surjective endomorphism of $W$, and so, as $W$ is Hopfian (cf. \ref{hopfian}), we have that $\alpha \in \mathrm{Aut}(W)$. Hence, modulo an automorphism, for every $i \in [1, n]$, $a_i \in s_i^W$ and so, by \ref{the_even_isomorphism_fact}, $(\langle A \rangle_W, A)$ is a Coxeter system, but this contradicts the fact that the $a_i$'s are witnesses of the fact that $W \models \exists x_1, ..., x_n \varphi'(x_1, ..., x_n)$, \mbox{recalling how $\varphi'(x_1, ..., x_n)$ was chosen.}
\end{proof}

	\begin{lemma}\label{alg_prime_lemma} Let $(W, S)$ be even, $G$ elementarily equivalent to $W$ and $\varphi(\bar{x})$ as in \ref{formula_notation}. Then for every $\bar{a} \in G^n$ such that $G \models \varphi(\bar{a})$ we have that $(\langle \bar{a} \rangle_G, \{a_1, ..., a_n \})$ is a Coxeter system (necessarily of the same type as $(W, S)$, because of how $\varphi(\bar{x})$ was defined). And so, in particular, $W$ is an algebraically prime model of its theory.
\end{lemma}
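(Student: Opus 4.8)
The plan is to mimic the proof of Theorem~\ref{rigidity_Coxeter}, the crucial difference being that here $G$ is merely a model of $\mathrm{Th}(W)$, so the retractions of Facts~\ref{proj_fact} and~\ref{commuting_retraction} are available on $W$ but \emph{not} on $G$; accordingly I will only argue about the subgroup $H:=\langle\bar a\rangle_G$ rather than about $G$ itself. First I would record that $G$ has no finite subgroup of order $>N$: a finite nonempty multiplicatively closed subset of a group is a subgroup, so $W$ satisfies the first-order sentence asserting that no $N+1$ pairwise distinct elements form a multiplicatively closed set (using Fact~\ref{finite_subgroups_fact}), hence so does $G$. Combining this with clauses (ii) and (iii) of $\varphi$ evaluated at $\bar a$, the subgroups $H_\ell:=\langle a_k:k\in u_\ell\rangle$ for $\ell\in[1,m]$ are, up to $G$-conjugacy, exactly the maximal finite subgroups of $G$; each $H_\ell$ is isomorphic to $W_{S_{u_\ell}}$ via $a_k\mapsto s_k$ by clause (i); and every finite subgroup of $G$ is $G$-conjugate into one of the $H_\ell$.

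Next I would build the comparison epimorphism. Since every spherical subset $J\subseteq S$ is contained in some $u_\ell$, clause (i) yields embeddings $W_J\hookrightarrow H_\ell\le G$, $s_k\mapsto a_k$; in particular all the defining relations of $(W,S)$ hold among the $a_i$, so $s_i\mapsto a_i$ extends to an epimorphism $\pi:W\twoheadrightarrow H$ whose restriction to every spherical parabolic of $(W,S)$ is injective. It follows that $\ker\pi$ is torsion-free (a nontrivial torsion element of $W$ lies, by Fact~\ref{finite_subgroups_fact}, in a conjugate of a spherical parabolic, on which $\pi$ is injective) and that $\pi(W_T)=\langle a_k:k\in T\rangle_G$ for every $T\subseteq S$. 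Moreover $o(a_ia_j)=m_{ij}$ for all $i,j$: for $m_{ij}<\infty$ this is clause (i), while if $m_{ij}=\infty$ but $o(a_ia_j)<\infty$ then $\langle a_i,a_j\rangle$ would be a finite subgroup of $G$ and, running the $(\star_2)$-argument from the proof of Theorem~\ref{rigidity_Coxeter} with the groups $H_\ell$ in place of the maximal special parabolics of a Coxeter group, one obtains an $(m{+}1)$-st $G$-conjugacy class of maximal finite subgroups, a contradiction. Hence once we know $(H,\{a_1,\ldots,a_n\})$ is a Coxeter system it is automatically of type $(W,S)$, as asserted in the statement.

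The heart of the argument, and the step I expect to be the main obstacle, is proving $\ker\pi=1$. Here I would adapt the combinatorial core of the proof of Theorem~\ref{rigidity_Coxeter}: one takes a nontrivial $w\in\ker\pi$ of minimal $S$-support $T$; since $\pi$ is injective on spherical parabolics, $T$ must be non-spherical, so $W_T$ is infinite, yet $\pi$ maps each $W_{T'}$ with $T'\subsetneq T$ isomorphically onto $\langle a_k:k\in T'\rangle_G$. Using that all finite subgroups of $G$ are controlled by the finitely many maximal finite subgroups $H_\ell$ (clause (iii) together with the order bound $N$), one shows that $\langle a_k:k\in T\rangle_G$ cannot be a proper quotient of $W_T$ — the required isomorphic copies of the groups $W_{T'}$, $T'\subsetneq T$, together with the restricted structure of finite subgroups of $G$, force $\pi|_{W_T}$ to be injective — contradicting $w\in\ker\pi$. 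This is precisely where clauses (ii)–(iii) of $\varphi$ must be used in full strength, exactly as the analogous rigidity of finite-parabolic data is used in the proof of Theorem~\ref{rigidity_Coxeter}.

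Finally, I would conclude algebraic primeness. Given any model $G$ of $\mathrm{Th}(W)$ we have $G\models\psi=\exists\bar x\,\varphi$, so there is a tuple $\bar a$ with $G\models\varphi(\bar a)$, and by the previous steps $s_i\mapsto a_i$ defines an isomorphism of $W$ onto $\langle\bar a\rangle_G\le G$; thus $W$ embeds into $G$, i.e. $W$ is an algebraically prime model of its theory.
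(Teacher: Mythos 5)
The paper gives no proof for this lemma; it is left implicit, and the intended argument is the self-contained "transfer-to-$W$" half of the proof of Theorem~\ref{rigidity_Coxeter}. Your plan — build the epimorphism $\pi:W\twoheadrightarrow H=\langle\bar a\rangle_G$ from clause~(i), observe $\ker\pi$ is torsion-free, pin down the Coxeter matrix, and then show $\ker\pi=1$ — is the right shape, but both of the places where real work is required are left unaddressed, and the method you sketch for them would not work inside $G$.

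The first gap is the claim $o(a_ia_j)=\infty$ when $m_{ij}=\infty$. You propose to run the $(\star_2)$-argument "with the groups $H_\ell$ in place of the maximal special parabolics" and assert that one obtains an $(m{+}1)$-st conjugacy class of maximal finite subgroups. This does not follow: clause~(iii) only tells you that $\langle a_i,a_j\rangle$ lies in some $gH_\ell g^{-1}$, and in an arbitrary model $G$ there is nothing stopping such a containment. The $(\star_2)$-argument in Theorem~\ref{rigidity_Coxeter} is carried out \emph{inside an even Coxeter group} and crucially uses the retractions $\pi_I$ of Fact~\ref{proj_fact} (together with the fact that distinct $s\in S$ lie in distinct $W$-conjugacy classes) to force both indices into the same spherical subset; those retractions do not exist in $G$, so the argument does not transfer. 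The second, deeper gap is $\ker\pi=1$: your "minimal-support counterexample" sketch says "one shows that $\langle a_k:k\in T\rangle_G$ cannot be a proper quotient of $W_T$" but gives no mechanism for this. Controlling finite subgroups via clauses~(ii)–(iii) does not rule out an infinite, torsion-free kernel (e.g.\ $W_T\twoheadrightarrow D_\infty$ when $W_T$ is a free product of more than two copies of $\bbZ/2\bbZ$), and there is no combinatorial machinery available in a bare elementary equivalent of $W$ to exclude it.

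The missing idea is the transfer trick. Suppose $w\in W\setminus\{1\}$ with $\pi(w)=1$, i.e.\ $w(\bar a)=1$ in $G$. Then $G\models\exists\bar x\,(\varphi(\bar x)\wedge w(\bar x)=1)$; since $G\equiv W$, the same sentence holds in $W$, giving $\bar b\in W^n$ with $W\models\varphi(\bar b)$ and $w(\bar b)=1$. Now one works entirely in the even Coxeter group $W$, exactly as in the second half of the proof of Theorem~\ref{rigidity_Coxeter}: using the canonical retractions of Fact~\ref{proj_fact} one produces $\bar t$ with $t_i\in b_i^W$, shows that $s_i\mapsto t_i$ is a surjective endomorphism of $W$, hence an automorphism by Hopficity (Fact~\ref{hopfian}), deduces (after an automorphism) that $b_i\in s_i^W$ and $o(b_ib_j)=m_{ij}$ for all $i,j$ — here the infinite case is handled by the projections $\pi_I$ — and invokes Fact~\ref{the_even_isomorphism_fact} to conclude that $(\langle\bar b\rangle_W,\bar b)$ is a Coxeter system of type $(W,S)$, so $w(\bar b)\neq 1$, a contradiction. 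The same transfer handles the $o(a_ia_j)=\infty$ issue simultaneously, so there is no need for a separate argument; indeed clause~(i) alone already forces $\pi$ to be a well-defined surjection onto $H$, and the transfer trick is what shows it is injective.
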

	
\begin{rk}Note that we can recover the fact that hyperbolic even Coxeter groups are first-order torsion-rigid (already proved in Section \ref{FOrigidity}, see Corollary \ref{corollary}) by combining Theorem \ref{rigidity_Coxeter}, Theorem \ref{preservation} and Remark \ref{remark_even}. Indeed, if $W'$ is a finitely torsion-generated group that is AE-equivalent to a hyperbolic even Coxeter group $W$, then $W'$ is a Coxeter group by Theorem \ref{preservation}, and it is even by Remark \ref{remark_even}, therefore $W'\simeq W$ by Theorem \ref{rigidity_Coxeter}.\end{rk}

\renewcommand{\refname}{Bibliography}
\bibliographystyle{alpha}
\bibliography{biblio}

\vspace{2cm}

\setlength{\parindent}{0pt}
Simon Andr{\' e}
\\
Sorbonne Universit{\' e} and Universit{\' e} Paris Cit{\' e}
\\
CNRS, IMJ-PRG
\\
F-75005 Paris, France.
\\
Email address: \href{mailto:simon.andre@imj-prg.fr}{simon.andre@imj-prg.fr}

\bigskip

Gianluca Paolini
\\
Department of Mathematics ``Giuseppe Peano'', University of Torino
\\
Via Carlo Alberto 10, 10123, Italy.
\\
Email address: \href{mailto:gianluca.paolini@unito.it}{gianluca.paolini@unito.it}

\end{document}